\DeclareMathAlphabet{\mathpzc}{OT1}{pzc}{m}{it}
\DeclareMathAlphabet{\mathpzc}{OT1}{pzc}{m}{it}
\newtheorem{thm}{Theorem}[section]
\newtheorem{lem}[thm]{Lemma}
\newtheorem{prop}[thm]{Proposition} 
\newtheorem{cor}[thm]{Corollary}
\newtheorem{rem}[thm]{Remark}
\newtheorem{ex}[thm]{Example}
\newtheorem{Defn}[thm]{Definition}
\newcommand{\m}{\mathpzc{m}}
\newcommand{\p}{\mathpzc{p}}
\newcommand{\bZ}{\mathbb Z}
\newcommand{\Z}{\mathbb Z}
\newcommand{\bC}{\mathbb C}
\newcommand{\bN}{\mathbb \bZ_{>0}}
\newcommand{\A}{\mathbb A}
\newcommand{\x}{x_1,\dots,x_m}
\newcommand{\X}{X_1,\dots,X_m}
\newcommand{\mi}{1\leqslant i \leqslant m}
\newcommand{\td}{\operatorname{tr.deg}}
\newcommand{\Aut}{\operatorname{Aut}}
\newcommand{\gr}{\operatorname{gr}}
\newcommand{\dk}{\operatorname{DK}}
\newcommand{\ml}{\operatorname{ML}}
\title{ 
	%	
	%	On the Epimorphism Problem for a family of linear hypersurfaces in $\A_k^{n}$.
	%A study of linear hyperplanes in higher dimension\\
	%On rectifiability of the family  $a(X)Y-F(X,Z,T)$
	%and $\alpha(\X)Y-f(Z,T)-h(\X,Z,T)$
	%	On rectifiability of linear hyperplanes
	%	On the triviality and rectifiability of linear  affine varieties
	%	On Epimorphism and related problems for \\linear hypersurfaces
	On embedding of linear hypersurfaces
}
\author{
	Parnashree Ghosh$^*$, Neena Gupta$^{**}$ and Ananya Pal$^{***}$\\
	{\small{\it  Theoretical Statistics and Mathematics  Unit, Indian Statistical Institute,}}\\ 
	{\small{\it 203 B.T.Road, Kolkata-700108, India}}\\
	{\small{\it e-mail : $^*$parnashree$\_$r@isical.ac.in, ghoshparnashree@gmail.com}}\\
	{\small{\it e-mail : $^{**}$neenag@isical.ac.in, rnanina@gmail.com}}\\
	{\small {\it e-mail: $^{***}$palananya1995@gmail.com }}
}
\begin{document}
	\date{}
	\maketitle
	
	\abstract
	Linear hypersurfaces over a field $k$ have been playing a central role in the study of some of the challenging problems on affine spaces. Breakthroughs on such problems have occurred by examining two difficult questions on linear polynomials of the form\\ $H:=\alpha(X_1,\dots,X_m)Y - F(X_1,\dots, X_m,Z,T)\in D:=k[\X, Y,Z,T]$:\\
	(i) Whether $H$ defines a closed embedding of $\mathbb{A}^{m+2}$ into $\mathbb{A}^{m+3}$, i.e., whether the affine variety $\mathbb{V}\subseteq  \A^{m+3}_k$ defined by $H$ is isomorphic to $\A^{m+2}_k$.\\
	(ii) If $H$ defines a closed embedding $\mathbb{A}^{m+2}\hookrightarrow \mathbb{A}^{m+3}$ then whether $H$ is a coordinate in~$D$.
	
	\noindent
	Question~(i) connects to the Characterization Problem of identifying affine spaces among affine varieties; Question~(ii) is a special case of the formidable Embedding Problem for affine spaces.
	
	In \cite{adv2}, the first two authors had addressed these questions
	when $\alpha$ is a monomial of the form $\alpha(\X) = X_1^{r_1}\dots X_m^{r_m}$; $r_i>1,\, \mi$ and $F$ is of a certain type.  
	In this paper, using $K$-theory and $\mathbb{G}_a$-actions, we address these questions for a wider family of linear varieties.
	In particular, we obtain certain families of
	higher-dimensional hyperplanes $H$ (i.e., $H$ defines closed embedding) satisfying the Abhyankar–Sathaye Conjecture on the Embedding Problem. 
	For instance, we show that when the characteristic of $k$ is zero, $F \in k[Z,T]$ and $H$ defines a hyperplane, then $H$ 
	is a coordinate in 
	$D$ along with $X_1, X_2, \dots, X_m$.
	Our results in arbitrary characteristic yield counterexamples to the Zariski Cancellation Problem in positive characteristic.
	
	\smallskip
	\noindent
	{\small {{\bf Keywords}. Polynomial ring, Coordinate, Embedding Problem, Abhyankar-Sathaye Conjecture, Affine Fibration, Exponential Map, Derksen invariant, Makar-Limanov invariant, Zariski Cancellation Problem. }}
	\smallskip
	
	\noindent
	{\small {{\bf 2020 MSC}. Primary: 14R10; 
			Secondary: 14R20, 13A02, 14R25, 13D15.		
			%		Secondary: 13B25, 13A50, 13A02.}}
	
	\section{Introduction}
	Throughout this paper, $k$ will denote a field of arbitrary characteristic (unless mentioned specifically) and $\overline{k}$ will denote an algebraic closure of $k$. All rings considered in this paper are commutative with unity. Capital letters like $X,Y,Z,T, U,V, X_1,\ldots,X_n$ etc., will denote indeterminates
	over the respective ground rings or fields. For a ring $R$,  we write $R^{[n]}$  to denote a polynomial ring in $n$ indeterminates over $R$.
	
	In \cite{kr}, H. Kraft listed eight fundamental problems on affine spaces, including the {\it Embedding Problem}, the {\it Zariski Cancellation Problem} (ZCP) and  
	the {\it Linearization Problem}.
	%	While describing the challenging problems in Affine Algebraic Geometry, Freudenberg writes,
	%	{\it ``A solution to the Embedding Problem and Cancellation Problem for affine spaces would reverberate across the whole of Algebra"}. (\cite[pg-235]{GFB}).
	The  Embedding Problem asks the following:
	
	\smallskip
	\noindent
	{\bf Question 1}: Let $m , n$ be two positive integers with $n>m$. Is every closed embedding $\A_k^m \hookrightarrow \A_k^n$ rectifiable?  
	
	\medskip
	The ring theoretic formulation of Question~$1$ is the following question sometimes known as the {\it Epimorphism Problem}:
	
	\smallskip
	\noindent
	{\bf Question $\pmb{1^{\prime}}$}: Let $m,n$ be two positive integers  with $n>m$ and $\phi: k[X_1,\ldots,X_n] \twoheadrightarrow k[Y_1,\ldots,Y_m]$, a $k$-algebra epimorphism.
	Does it follow that there exists a system of coordinates $\{F_1,\ldots,F_n\}$ of $k[X_1,\ldots,X_n]$ such that $\ker \phi=(F_1,\ldots,F_{n-m})$?
	
	\medskip
	In particular, when $n-m=1$, i.e., when ker$(\phi)$ is a principal ideal $(H)$, we have the following version of the Embedding (or Epimorphism) Problem:
	
	\smallskip
	\noindent
	{\bf Question 2:} 	Let $k$ be a field. For some integer $n\geqslant 2$, let $H \in k[X_1,\ldots,X_n]$ be such that 
	$\frac{k[X_1,\ldots,X_n]}{(H)}=k^{[n-1]}$. Does it follow that $k[X_1,\ldots,X_n]=k[H]^{[n-1]}$? 	
	
	\medskip
	When $k$ is of positive characteristic, there are counterexamples to Question~$2$ given by B. Segre and M. Nagata (\cite{Se}, \cite{Na}).

	When $k$ is a field of characteristic zero and $n=2$,
	Abhyankar-Moh (\cite{AM}) and Suzuki (\cite{Suz}) gave an affirmative answer to Question~$2$ ---  this is popularly known as the celebrated ``Epimorphism Theorem''.
%	However, when $k$ is a field of characteristic zero and $n\geqslant 3$,
%	a complete solution to Question~$2$ remains open.
%	The celebrated Epimorphism Theorem has inspired several researchers to make partial contributions to Question~$2$.
	The famous {\it Abhyankar-Sathaye Conjecture} asserts an affirmative answer to Question~$2$ when the characteristic of $k$ is zero.
	When $n>2$, partial affirmative results for Question~2 have been proved for specific forms of $H$, even when $k$ is of arbitrary characteristic (\cite{DG} gives a general survey).
	
	%	Note that when the characteristic of $k$ is positive, though Question~$2$ has a negative answer in general, the question may still be asked for specific forms of $H$. Indeed, many partial affirmative results for Question~$2$ have been proved even when $k$ is of arbitrary characteristic (see \cite{sp}, \cite{rp}, \cite{Wright1}, \cite{rs}, \cite{DaDu1}, \cite{com}, \cite{adv2} etc.).
	%	For a general survey, one can see \cite{DG}.	
	%	
	%	For $n=3$, an affirmative solution to Question~2 was obtained when  $H\in k[X_1,X_2,X_3]$ is a linear plane 
	%	(i.e., linear in one of the three coordinates and $k[X_1,X_2,X_3]/(H)=k^{[2]}$) first by A. Sathaye (\cite{sp}) in characteristic zero and  later by P. Russell (\cite{rp}) in arbitrary characteristic.
	%	They also proved that if $A=k^{[2]}$ and the linear plane $H~\in~A[Y](=~k^{[3]})$ is of the form $aY+b$, where $a,b \in A$ and $a \neq 0$, then the coordinates $X, Z$ of
	%	$A$ can be chosen such that $A=k[X,Z]$ with $a \in k[X]$ and $k[X_1,X_2,X_3]=k[X,H]^{[1]}$; that is,  linear planes in $k^{[3]}$ were shown to be of the form $a(X)Y+b(X,Z)$ and \textcolor{blue}{these are coordinates along with $X$}.
	
	For $n=3$, the first result was obtained for a ``linear plane" $H\in k[X_1,X_2,X_3]$, 
%	($H$ is linear in one of the three coordinates and $k[X_1,X_2,X_3]/(H)=k^{[2]}$) 
first by A. Sathaye (\cite{sp}) in characteristic zero and  later by P. Russell (\cite{rp}) in arbitrary characteristic.
	They showed that a coordinate system $\{X,Y,Z\}$ in $k^{[3]}$ can be chosen such that a linear plane in $k^{[3]}$
	takes the form $a(X)Y+b(X,Z)$; and $H$ becomes a coordinate along with $X$.	
	Other partial results for $n=3$ are given in \cite{Wright1}, \cite{rs} and \cite{DaDu1}.
	
	During the last few decades, some of the central problems in affine spaces crucially involved questions on certain linear polynomials:
	(i)	whether a specified linear polynomial $H\in k[X_1,\dots,X_n]$ is a hyperplane (i.e., whether $\frac{k[X_1,\dots,X_n]}{(H)}$ is a polynomial ring over $k$)
	and 
	(ii) whether linear hyperplanes of a certain form are coordinates -- a special case of Question~$2$ which seeks a possible generalisation of the  Sathaye-Russell Theorem.
	
	For example, a crucial step in settling the Linearization Problem for $\bC^{*}$-actions on $\bC^3$ involved deciding whether certain linear polynomials in $\bC^{[4]}$ defined by M. Koras and P. Russell,  like the Russell cubic 
	$$H(X,Y,Z,T)=X^2Y+X+Z^2+T^3,$$
	were hyperplanes. 
	The nontriviality of the Russell cubic $H$ above was first 
	shown by L. Makar-Limanov (\cite{ML1}) over a field of characteristic zero and later by A. Crachiola (\cite{cra}) over arbitrary fields.
	The full class of Koras-Russell threefolds were shown to be non-trivial by S. Kaliman and L. Makar-Limanov (\cite{KM}).
	
	Another powerful example of a linear affine variety is the ``Asanuma threefold" over a field $k$ of positive characteristic $p$, defined below: 
	$$
	R=\dfrac{k[X,Y,Z,T]}{(X^rY +Z^{p^e}+T+T^{sp})};\text{ where }r,e,s\geqslant 1, \, p^e\nmid sp \text{ and }sp\nmid p^e.
	$$
	It was first constructed by T. Asanuma as an example of a non-trivial $\mathbb{A}^2$-fibration (\thref{An_f}) over a PID not containing $\mathbb{Q}$ (cf. \cite{asa}).
	The example showed that the theorem of Sathaye (\cite{sp2}), establishing that any $\mathbb{A}^2$-fibration over a PID $S$, containing $\mathbb{Q}$, must be isomorphic to $S^{[2]}$, does not extend to the case $\mathbb{Q}\not\subseteq S$.
	Next,  Asanuma (\cite{asa3}) used the above example to construct a non-linearizable torus action on $\mathbb{A}^n_k$ over an infinite field $k$ of positive characteristic when $n\geqslant 4$. He also showed that $R^{[1]}=k^{[4]}$.
	
	The second author proved (\cite{inv}) that $R\not\cong_k k^{[3]}$, for $r\geqslant 2$, thus providing a negative solution to the ZCP (which asks whether $\mathbb{A}_k^n$ is cancellative) for the affine space $\mathbb{A}^3_k$ in positive characteristic.
	Next, following a question of Russell, she studied (\cite{com}) the more general family of linear threefolds
	$$
	R_1=\dfrac{k[X,Y,Z,T]}{(X^rY+F(X,Z,T))}, ~~r \geqslant 2
%	; \text{ where }\dfrac{k[Z,T]}{(F(0,Z,T))}=k^{[1]}\text{ and }
%	k[Z,T]\ne k[F(0,Z,T)]^{[1]}
	$$
	over an arbitrary field $k$.
	Later in \cite{adv} and \cite{adv2}, the first two authors studied affine domains over an arbitrary field $k$ of the form
	$$
	R_m=\dfrac{k[\X,Y,Z,T]}{(X_1^{r_1}\dots X_m^{r_m}Y+f(Z,T)+X_1\dots X_mg(\X,Z,T))},\text{ for }r_i>1,\, \mi
	$$
	and established results connecting the Embedding Problem, the ZCP and the Dolgachev-Weisfeiler Affine Fibration Problem.
	
	In view of the wide applications of linear polynomials to central problems in Affine Algebraic Geometry, A.K. Dutta had asked the authors to consider a general  family of linear polynomials in $k[\X,Y,Z,T]$, over an arbitrary field $k$, of the following form:
	\begin{equation}{\label{H}}
		H:=	\alpha(\X)Y-f(Z,T)-h(\X,Z,T)
	\end{equation}
	such that $\alpha\notin k$ and $f\neq 0$ and investigate the following questions.
	
	\medskip
	\noindent
	{\bf Question 3}:
	(i) Under what condition $H$ defines a closed embedding $\mathbb{A}^{m+2}\hookrightarrow \mathbb{A}^{m+3}$ i.e., $A=k^{[m+2]}$?
	\begin{enumerate}
		\item[\rm(ii)] When $H$ defines a closed embedding, is it a part of a coordinate system of $k[\X,Y,Z,T]?$
		\item[\rm(iii)]	If so, is $H$ necessarily a coordinate along with $\X$ in $k[\X,Y,Z,T]$?
	\end{enumerate}
	
	\medskip 
	\noindent
	An affirmative answer to Question~3(ii) or Question~3(iii) would yield a higher-dimensional generalisation of the Sathaye-Russell Theorem on linear planes.
	
	%	
	%	 we consider a general family of linear polynomials in $k[\X,Y,Z,T]$, over an arbitrary field $k$, of the following form:
	%	\begin{equation}{\label{H}}
		%H:=	\alpha(\X)Y-f(Z,T)-h(\X,Z,T)
		%\end{equation}
		%such that $\alpha\notin k$ and $f\neq 0$.
		%In most of our results, we shall also assume that 
		%\begin{equation}\label{Cond}
		%\text{every prime factor of } \alpha \text{ divides } h \text{ in } k[\X,Z,T],
		%\end{equation}
		%a generalisation of the earlier conditions $\alpha=X_1^{r_1}\dots X_m^{r_m}$ and $X_1\dots X_m\mid h$.
		Let $A$ be an affine $k$-domain
		defined as follows:
		\begin{equation}\label{AH}
			A := \dfrac{k[\X,Y,Z,T]}{(H)}=\dfrac{k[\X,Y,Z,T]}{(\alpha(X_1,\dots,X_m)Y - f(Z,T) - h(X_1,\dots, X_m,Z,T))}. 
		\end{equation} 
		In most of our results, we shall also assume the following condition on $H$ (as in \eqref{H})
		\begin{equation}\label{Cond}
			\text{every prime factor of } \alpha \text{ divides } h \text{ in } k[\X,Z,T],
		\end{equation}
		a generalisation of the earlier conditions $\alpha=X_1^{r_1}\dots X_m^{r_m}$ and $X_1\dots X_m\mid h$.
		%
		%
		%\newpage
		%A.K. Dutta had asked the authors to investigate the following questions:
		%
		%\medskip
		%\noindent
		%{\bf Question 3}:
		%(i) Under what condition $H$ defines a closed embedding $\mathbb{A}^{m+2}\hookrightarrow \mathbb{A}^{m+3}$ i.e., $A=k^{[m+2]}$?
		%\begin{enumerate}
		%\item[\rm(ii)] When $H$ defines a closed embedding, is it a part of a coordinate system of $k[\X,Y,Z,T]?$
		%\item[\rm(iii)]	If so, is $H$ necessarily a coordinate along with $\X$ in $k[\X,Y,Z,T]$?
		%\end{enumerate}
		%
		%\medskip 
		%\noindent
		%An affirmative answer to Question~3(ii) or Question~3(iii) would yield a higher-dimensional generalisation of the Sathaye-Russell Theorem on linear planes.
		
		When $\alpha(\X) = X_1^{r_1}\dots X_m^{r_m}$, $r_i>1$ for all $ i\in \{1,\dots,m\}$ and $X_1\dots X_m\mid~h$, the first two authors have provided fourteen equivalent conditions for $A= k^{[m+2]}$ including the condition ``$f$ is a coordinate in $k[Z,T]$" and the condition ``$H$ is a coordinate in $k[\X,Y,Z,T]$" (cf. \cite[Theorem~3.10]{adv2} and \cite[Theorem~4.5]{GDA2}). 
		When $m=1$,
		S. Kaliman, S. V\'en\'ereau, M. Zaidenberg (\cite{kvz}) and S. Maubach (\cite{CDer})  have answered Question~3 over $\bC$ and M. El Kahoui, N. Essamaoui and M. Ouali  over a field of characteristic zero (\cite{interpolation}).

		In this paper we first study the ring ${A}$ defined in (\ref{AH}) and prove the following result (Theorems~\ref{stablyp}~and~\ref{thmA}).
		
		\medskip
		\noindent
		{\bf Theorem A.}
		Let $k$ be an algebraically closed field, $H$ be a polynomial as in \eqref{H} satisfying condition \eqref{Cond} and $A$ be as in \eqref{AH}.
		Suppose that $A^{[l]}=k^{[l+m+2]}$ for some $l\geqslant 0$ and ${k}[Z,T]/(f)$ is a regular domain.
		Then the following statements hold:
		\begin{enumerate}[\rm(i)]
			\item $\frac{{k}[Z,T]}{(f)}={k}^{[1]}$.
			\item If ch.$k=0,$ then $k[\X,Y,Z,T]=k[\X,H]^{[2]}$.
		\end{enumerate} 
		\medskip
		
		Using the above result we have proved the following theorem which establishes the Abhyankar-Sathaye Conjecture for a family of hyperplanes in $\A^{m+3}_k$ over a field $k$ of characteristic zero (\thref{mainch0}).
		Note that for a polynomial $p\in k[\X],$ $p_{X_i}$ denotes $\frac{\partial p}{\partial X_i}$.
		
		\medskip
		\noindent
		{\bf Theorem B.}
		Let $k$ be a field of characteristic zero and $A$ be an affine $k$-domain as in \eqref{AH} such that $H$ is a polynomial as in \eqref{H} satisfying condition \eqref{Cond}. 
		Let $\alpha= \prod_{i=1}^{n} p_i^{s_i}$ be a prime factorization of $\alpha$ in $k[X_1,\ldots,X_m]$. Suppose that one of the following conditions is satisfied:
		\begin{enumerate}[\rm (I)]
			\item $s_i=1$ for some $i$.
			
			\item $s_i>1$ for every $i$ and at least one of the following holds.
			\begin{itemize}
				\item [\rm (a)] $p_j^2 \mid h$ for some $j$.
				
				\item [\rm (b)] $(p_j, (p_j)_{X_1}, \ldots, (p_j)_{X_m}) k[X_1,\ldots,X_m]$ is a proper ideal for some $j$.
				
				\item [\rm (c)] $n \geqslant 2$ and $(p_l, p_j)k[X_1,\ldots,X_m]$ is a proper ideal for some $l\neq j$.
			\end{itemize}
			
		\end{enumerate} 
		Let $x_1,\ldots,x_m$ be the images in $A$ of $X_1,\ldots,X_m$ respectively.
		Then the following statements are equivalent:
		\begin{enumerate}[\rm(i)]
			
			\item  $k[\X,Y,Z,T]=k[\X,H]^{[2]}$.
			
			\item  $k[\X,Y,Z,T]=k[H]^{[m+2]}$.
			
			\item $A=k[\x]^{[2]}$.
			
			\item $A=k^{[m+2]}$.
			
			\item $k[Z,T]=k[f(Z,T)]^{[1]}$.
			
			\item  $A$ is an $\A^{2}$-fibration over $k[x_1,\ldots,x_m]$.

			\item  $A^{[l]}=k^{[m+l+2]}$ for some $l \geqslant 0$.

		\end{enumerate}
		
		\medskip
		Note that the hypersurfaces defined by 
		$$
		H= \alpha(\X)Y-f(Z,T) \in k[\X,Y,Z,T] \text{~with~} \alpha\notin k,\, f\neq 0
		$$
		are contained in the family of hypersurfaces mentioned in Theorem~B. 
		
		Next, we investigate Question~3 for hypersurfaces $H$ over a field $k$ of arbitrary characteristic, where $H$ is as in \eqref{H} and the $\alpha$ (in $H$) satisfies a certain property, defined in \thref{type A}, which we call ``${\bf r}$-divisible with respect to a certain coordinate system of $k^{[m]}$" for some ${\bf r}=(r_1,\dots,r_m)\in \mathbb{Z}^m_{>0}$.
		We first establish the following (Theorems~\ref{lin}~and~\ref{lin2}):
		
		\medskip
		\noindent
		{\bf Theorem C.}
		Let $k$ be an infinite field.
		Let $H$ be as in \eqref{H} such that $X_1\mid h$, $\alpha$ is a ${\bf r}$-divisible polynomial with respect to $\{\X\}$ in $k^{[m]}$, where ${\bf r}=(r_1,\dots,r_m)\in \mathbb{Z}_{>1}^{m}$ and $A$ be as in \eqref{AH}.
		Suppose  $\ml(A)= k$ or $\dk(A)=A$. Then there exist a system of coordinates $\{Z_1,T_1\}$ of $k[Z,T]$ and $a_0,a_1 \in k^{[1]}$, such that $f(Z,T)=a_0(Z_1)+a_1(Z_1)T_1$. 
		Furthermore, when $k[Z,T]/(f)=k^{[1]}$, then $k[Z,T]=k[f]^{[1]}$.
		\medskip
		
		Theorem~C will enable one to readily recognise the nontriviality of a large family of affine varieties (cf. \thref{notpoly}); for instance, the nontriviality of the varieties defined by the ``${\bf r}$-divisible polynomials" (cf. Examples \ref{ex1} and \ref{ex2})
		$$H=X^2(X+1)^2Y-(Z^2+T^3)-Xh_1(X,Z,T)\in k[X,Y,Z,T], \text{ for any } h_1\in k^{[3]}$$
		or 
		$$
		H=X_1X_2^2(X_1+X_2^2)Y -(Z^2+T^3)-X_2h_2(X_1,X_2,Z,T)\in k[X_1,X_2,Y,Z,T],\text{ for any }h_2\in k^{[4]}.
		$$
		
		Using Theorem~C, we have proved the following theorem (\thref{main}) which establishes equivalent conditions for $H$ to be a hyperplane and thus addresses Question~$3$ for the ${\bf r}$-divisible type hypersurfaces.
		
		\medskip
		\noindent
		{\bf Theorem D.}
		Let $A$ be an affine $k$-domain as in \eqref{AH} and $H$ be a polynomial as in \eqref{H} satisfying \eqref{Cond}.
		For ${\bf r}=(r_1,\dots,r_m)\in \Z_{>1}^m$, 
		let $\alpha$ be ${\bf r}$-divisible
		in the system of coordinates $\{X_1-\lambda_1,\dots,X_m-\lambda_m\}$, for some $\lambda_i\in \overline{k},\, \mi$  such that each $\lambda_i$ is separable over $k$.
		Let $x_1,\ldots,x_m$ be the images of $X_1,\ldots,X_m$ in $A$ respectively.
		Then the following statements are equivalent:
		\begin{enumerate}[\rm(i)]
			
			\item  $k[\X,Y,Z,T]=k[\X,H]^{[2]}$.
			
			\item  $k[\X,Y,Z,T]=k[H]^{[m+2]}$.
			
			\item $A=k[\x]^{[2]}$.
			
			\item $A=k^{[m+2]}$.
			
			\item $k[Z,T]=k[f(Z,T)]^{[1]}$.
			
		\end{enumerate}
		
		\medskip
		
		In fact, we have obtained equivalence of nine more statements involving stable isomorphisms, affine fibrations and two invariants -- the Makar-Limanov invariant and the Derksen invariant. 
		The family of hypersurfaces given by
		$$
		(X_1^{r_1+1} + X_1^{r_1}X_2^{r_2+1}+\dots +X_1^{r_1}\dots X_{m-1}^{r_{m-1}}X_m^{r_m+1}) Y -f(Z,T), \text{ for }r_i\geqslant 2,\, \mi
		$$
		and
		$$
		a_1(X_1) \cdots a_m(X_m) Y -f(Z,T)-  h(X_1,\ldots, X_m,Z,T),
		$$
		where every prime divisor of $a_1(X_1) \cdots a_m(X_m)$ in $k[X_1,\ldots,X_m]$ divides $h$, and every $a_i(X_i)$ has a separable multiple root $\lambda_i$ over $k$ are included in the family of hypersurfaces mentioned in Theorem~D.
		
		Question~3(i) is addressed by the
		equivalence $\rm(iv)\Leftrightarrow\rm(v)$ in 
		Theorem~B (in ch.$k=0$) and Theorem~D; 
		Question~3(ii) by the
		equivalence of $\rm(iv)\Leftrightarrow \rm(ii)$ and Question~3(iii) by $\rm(i)\Leftrightarrow\rm(ii)$.
		In particular, the  Abhyankar–Sathaye Conjecture holds affirmatively for the hypersurfaces $H$, considered in Theorems~B~and~D. 
		
		Theorem~D also yields a family of counterexamples to the Zariski Cancellation Problem in positive characteristic (\thref{czcp}).
		%			 -- the coordinate rings of a subfamily of the family of hypersurfaces $H$ in Theorem C provide the counterexamples.
		In a forthcoming paper \cite{zcp}, further discussions on their isomorphic classes have been undertaken.

		%	We now give a layout of this paper.
		Section~\ref{preli} is on preliminaries; in Section~\ref{PropB} we study a few properties, like factoriality, of the affine domain $A$. Theorems~A~and~B will be proved in Section~\ref{THAB}; Theorems~C~and~D in Section~\ref{THC}.
		
		%	we recall some important results which will be used later in this paper  \eqref{AH}, like flatness and affine fibration over a subring and the property of being a UFD. In section \ref{THAB}, we first recall a few aspects of $K$-theory then prove Theorem A (Theorems \ref{stablyp} and \ref{thmA}) and Theorem B (\thref{mainch0}).
		%	Next in section \ref{THC} 

		\section{Preliminaries}\label{preli}

		%	Mention this symbois and notation:
		Throughout the paper, $k$ will denote a field %of arbitrary characteristic (unless mentioned specifically) 
		and for a ring $R$, $R^{[n]}$ denotes a polynomial algebra in $n$ indeterminates over $R$.  
		The notation $ \bZ,\bN$ stand for their usual meanings of all integers and all positive integers respectively. Throughout $m,\, n\in \bN$.
		
		For any field $L$, $\overline{L}$ will denote its algebraic closure and for any finite number of elements $\nu_1,\dots,\nu_n\in \overline{L}$, $L(\nu_1,\dots,\nu_n)$ denotes the smallest subfield of $\overline{L}$ containing $L\cup \{\nu_1,\dots,\nu_n\}$. 
		For a domain $R$, Frac($R$) denotes the field of fractions of $R$. 
		If $R\subseteq B$ are domains then $\td_R(B)$ denotes the transcendence degree of Frac($B$) over Frac($R$).
		For a ring $R$, a prime ideal $\p$ of $R$ and an $R$-algebra $B$, 
		$B_{\p}$ denotes the ring $S^{-1}B$, where $S:=R \setminus \p$,
		$k(\p)$ denotes the field $\frac{R_{\p}}{\p R_{\p}}\cong$ Frac$(\frac{R}{\p})$ 
		and $R^*$ denotes the group of all units of $R$.
		
		We now recall some known results.
		We first state the cancellative property of $k^{[1]}$ (\cite[(2.8)]{AEH}).
		\begin{thm}\thlabel{aeh}
			Let $B$ be $k$-domain such that $B^{[n]}= k^{[n+1]}$. Then $B = k^{[1]}$.
		\end{thm}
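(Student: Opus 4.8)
The plan is to show that $X:=\operatorname{Spec}(B)$ is a smooth, geometrically integral, \emph{rational} affine curve over $k$ with $\operatorname{Pic}(X)=0$ and $\mathcal{O}(X)^{*}=k^{*}$, and then to identify $X$ with $\mathbb{A}^{1}_{k}$ via the localization sequence for open subschemes of $\mathbb{P}^{1}_{k}$. First I would extract the formal consequences of the hypothesis. Writing $B^{[n]}=B[Y_{1},\dots,Y_{n}]$, the evaluation $Y_{i}\mapsto 0$ defines a $k$-algebra retraction $k^{[n+1]}=B[Y_{1},\dots,Y_{n}]\twoheadrightarrow B$ splitting the inclusion $B\hookrightarrow k^{[n+1]}$. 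Hence $B$ is a finitely generated $k$-algebra; comparing Krull dimensions ($\dim B[Y_{1},\dots,Y_{n}]=\dim B+n$) gives $\dim B=1$; the inclusion $B^{*}\subseteq (k^{[n+1]})^{*}=k^{*}$ forces $B^{*}=k^{*}$; and $\operatorname{Pic}(B)$, being a direct summand of $\operatorname{Pic}(k^{[n+1]})=0$, vanishes. Moreover each local homomorphism $B_{\mathfrak{p}}\to B[Y_{1},\dots,Y_{n}]_{\mathfrak{p}B[Y_{1},\dots,Y_{n}]}$ is faithfully flat with regular target, so $B$ is regular, hence normal. Finally, tensoring $B\hookrightarrow k^{[n+1]}$ with the flat extension $k\hookrightarrow\overline{k}$ yields an embedding $\overline{k}\otimes_{k}B\hookrightarrow\overline{k}^{[n+1]}$, so $\overline{k}\otimes_{k}B$ is a domain; thus $B$ is geometrically integral over $k$, $k$ is algebraically closed in $B$, and $K:=\operatorname{Frac}(B)$ is a regular (hence separably generated) field extension of $k$.

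The crucial step is to prove that $K$ is purely transcendental over $k$. I would use two inputs. First, $X$ has a $k$-rational point: restrict any $k$-algebra homomorphism $k^{[n+1]}\to k$ to $B$. Let $\overline{X}$ be the smooth projective model of $K/k$, which exists since $K/k$ is separably generated; as $X$ is a normal affine curve it is a dense open subscheme of $\overline{X}$, so $\overline{X}$ too carries a $k$-point. Second, after base change to $\overline{k}$, $\operatorname{Frac}(\overline{k}\otimes_{k}B)$ is a transcendence-degree-one subfield of $\overline{k}(X_{1},\dots,X_{n+1})$, hence purely transcendental over $\overline{k}$ (Lüroth's theorem, in the form: a transcendence-degree-one subfield of a purely transcendental extension of an algebraically closed field is rational). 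Therefore $\overline{X}_{\overline{k}}\cong\mathbb{P}^{1}_{\overline{k}}$, i.e. $\overline{X}$ is a smooth projective geometrically integral curve of genus $0$; a genus-$0$ curve with a $k$-rational point is isomorphic to $\mathbb{P}^{1}_{k}$, so $K\cong k^{[1]}$ and $X=\mathbb{P}^{1}_{k}\setminus S$ for a finite nonempty set $S$ of closed points.

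To conclude, apply the localization exact sequence
\[
1\to k^{*}\to \mathcal{O}(X)^{*}\to \bigoplus_{Q\in S}\mathbb{Z}\;\xrightarrow{\,Q\mapsto\deg Q\,}\;\operatorname{Pic}(\mathbb{P}^{1}_{k})=\mathbb{Z}\to\operatorname{Pic}(X)\to 0 .
\]
Since $\mathcal{O}(X)^{*}=B^{*}=k^{*}$, the map $Q\mapsto\deg Q$ is injective; since $\operatorname{Pic}(X)=\operatorname{Pic}(B)=0$, it is surjective; hence it is an isomorphism $\bigoplus_{Q\in S}\mathbb{Z}\xrightarrow{\ \sim\ }\mathbb{Z}$. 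Thus $S=\{Q\}$ is a single closed point of degree $1$, so $X=\mathbb{P}^{1}_{k}\setminus\{Q\}\cong\mathbb{A}^{1}_{k}$, and therefore $B=k^{[1]}$.

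The step carrying real content is the rationality of $K$: every other property above is a formal consequence of $B$ being a retract of a polynomial ring, and these properties alone do not force $X\cong\mathbb{A}^{1}_{k}$ --- for instance, over $\mathbb{R}$, deleting a degree-$2$ closed point from a conic with no real point produces a smooth affine curve with trivial Picard group and units $\mathbb{R}^{*}$ which is a form of $\mathbb{G}_{m}$, not of $\mathbb{A}^{1}_{\mathbb{R}}$. What excludes such examples is exactly that a $k$-point together with Lüroth over $\overline{k}$ pin down $\overline{X}=\mathbb{P}^{1}_{k}$. A minor caveat is to apply the auxiliary facts used (``$C[\underline{Y}]$ regular $\Rightarrow C$ regular'', the splitting of $\operatorname{Pic}$, the displayed sequence) in forms valid over an arbitrary, possibly imperfect, base field; none of this is an obstacle.
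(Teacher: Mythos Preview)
The paper does not give its own proof of this result; it merely records the statement and cites \cite[(2.8)]{AEH}. So your proposal is not competing with an argument in the paper but is an independent proof.

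Your overall strategy is sound and nearly complete, but one step does not hold as stated. You write ``let $\overline{X}$ be the smooth projective model of $K/k$, which exists since $K/k$ is separably generated''. Over an imperfect field this inference fails: separable generation of $K/k$ guarantees only that the \emph{regular} projective model exists, not that it is smooth. For instance, for odd $p$ the affine curve $y^{2}=x^{p}-t$ over $\mathbb{F}_{p}(t)$ has separably generated function field and is regular, yet its base change to $\overline{\mathbb{F}_{p}(t)}$ acquires a cusp at $(t^{1/p},0)$. Consequently your step ``therefore $\overline{X}_{\overline{k}}\cong\mathbb{P}^{1}_{\overline{k}}$'' is unjustified: if $\overline{X}$ is only regular, $\overline{X}_{\overline{k}}$ need not be normal, and L\"uroth only identifies its \emph{normalization} with $\mathbb{P}^{1}_{\overline{k}}$. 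Your closing remark that ``none of this is an obstacle'' over imperfect fields is therefore too optimistic at exactly this point.

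The fix is a reordering, not a new idea. First run your whole argument over $\overline{k}$ (where regular $=$ smooth) to obtain $X_{\overline{k}}\cong\mathbb{A}^{1}_{\overline{k}}$ directly. Then take the \emph{regular} projective model $\overline{X}$ of $X$ over $k$. Since normalization $\mathbb{P}^{1}_{\overline{k}}\to\overline{X}_{\overline{k}}$ is surjective and is the identity on $X_{\overline{k}}=\mathbb{A}^{1}_{\overline{k}}$, the boundary $\overline{X}\setminus X$ is a single closed point $Q$. Your localization sequence (valid for the regular model) now reads $\mathbb{Z}\cong\operatorname{Pic}(\overline{X})$, generated by $[Q]$; writing $[P]=n[Q]$ for your $k$-point $P\in X$ and comparing degrees gives $\deg Q=1$. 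At this stage you can finish in either of two ways: observe that a regular $k$-scheme is automatically smooth at its $k$-rational points (so $\overline{X}$ is smooth at $Q$, hence smooth everywhere, hence $\mathbb{P}^{1}_{k}$), or note that $P\sim Q$ with $P\neq Q$ produces a rational function with a single simple pole, i.e.\ a degree-$1$ morphism $\overline{X}\to\mathbb{P}^{1}_{k}$, necessarily an isomorphism. Either way $X\cong\mathbb{A}^{1}_{k}$.

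With this adjustment your argument is correct. It is more geometric than the original treatment in \cite{AEH}, which proves a stronger ``uniqueness of coefficient ring'' statement by purely ring-theoretic means; your route trades that generality for a transparent picture via L\"uroth and the divisor sequence on $\mathbb{P}^{1}_{k}$.
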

		
		Next we give a characterisation of $k^{[1]}$ for an algebraically closed field $k$ (cf. \cite[Lemma 2.9]{GFB}).
		\begin{lem}{\thlabel{alg}}
			Let $k$ be an algebraically closed field and $B$ a finitely generated $k$-algebra. Suppose that $B$ is a PID and $B^{*}= k^{*}$. Then $B=k^{[1]}$.
		\end{lem}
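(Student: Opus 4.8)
The plan is to reinterpret the hypotheses geometrically on the affine curve $X:=\Spec B$ and its smooth projective completion. First I would note that $B$, being a finitely generated $k$-algebra that is a PID, is a Dedekind domain with $\dim B\le 1$, and that it is regular (its localizations at nonzero primes are DVRs). If $\dim B=0$ then $B$ is a field finite over $k$, so $B\cong k$ by Zariski's Lemma; discarding that case, we may assume $\dim B=1$. Then $X$ is a smooth affine curve over the algebraically closed field $k$, sitting as a dense open subset of a unique smooth projective curve $\bar X$ over $k$, with $\bar X\setminus X=\{P_1,\dots,P_s\}$ a nonempty finite set of closed (hence $k$-rational) points; let $g$ denote the genus of $\bar X$.

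Next I would translate the two hypotheses into statements about $\operatorname{Pic}(\bar X)$. Since $B$ is a PID, $\operatorname{Pic}(X)=0$; by the excision sequence $\bigoplus_{i=1}^{s}\bZ P_i\to \operatorname{Pic}(\bar X)\to \operatorname{Pic}(X)\to 0$ for divisor class groups, $\operatorname{Pic}(\bar X)$ is generated by $[P_1],\dots,[P_s]$. The kernel of $\bigoplus_{i=1}^{s}\bZ P_i\to \operatorname{Pic}(\bar X)$ consists of the divisors supported on $\{P_1,\dots,P_s\}$ that are principal on $\bar X$; a rational function with such a divisor has no zeros or poles on $X$, hence is a unit of $\mathcal{O}(X)=B$, so this kernel is isomorphic to $B^{*}/k^{*}$, which is trivial by hypothesis. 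Therefore $\operatorname{Pic}(\bar X)\cong\bigoplus_{i=1}^{s}\bZ P_i$ is a free abelian group of finite rank $s$.

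Now I would use the structure of $\operatorname{Pic}^0(\bar X)$. Since $\bar X$ has a $k$-rational point, $\operatorname{Pic}^0(\bar X)\cong J(k)$, where $J$ is the Jacobian of $\bar X$, an abelian variety of dimension $g$ over $k$. As $k$ is algebraically closed, multiplication by any positive integer is surjective on $J(k)$, so $J(k)$ is a divisible group; being also a subgroup of the free abelian group $\operatorname{Pic}(\bar X)$, it must be zero. If $g\ge 1$ this is impossible (a positive-dimensional abelian variety over an algebraically closed field has infinitely many rational points, e.g.\ nontrivial torsion points), so $g=0$. Hence $\bar X\cong\mathbb{P}^1_k$, so $\operatorname{Pic}(\bar X)\cong\bZ$, and comparing ranks forces $s=1$. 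Thus $X\cong\mathbb{P}^1_k\setminus\{\text{pt}\}\cong\A^1_k$, and $B=\mathcal{O}(X)=k^{[1]}$.

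The verifications of the excision sequence and the identification of the kernel with $B^{*}/k^{*}$ are routine; the conceptual heart, and the step I would be most careful with, is the passage from ``$\operatorname{Pic}(\bar X)$ is free of finite rank'' to ``$g=0$'', which is precisely where the divisibility of $\operatorname{Pic}^0$ over an algebraically closed field enters. An entirely elementary alternative is available once one seeks to avoid the Jacobian: combining triviality of units with a Riemann--Roch and gap-sequence analysis at the punctures directly yields $s=1$, $g=0$, and a degree-one morphism $\bar X\to\mathbb{P}^1_k$. I would present the Picard-group argument above as the main proof.
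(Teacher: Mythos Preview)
Your argument is correct, but note that the paper does not actually prove this lemma: it is quoted from Freudenburg's book as \cite[Lemma~2.9]{GFB} and used as a black box in \thref{stablyp}. So there is no proof in the paper to compare yours against.

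The geometric route you take---pass to the smooth projective completion $\bar X$, use excision for divisor class groups together with $B^{*}=k^{*}$ to see that $\operatorname{Pic}(\bar X)$ is free abelian of rank $s$, then invoke divisibility of $\operatorname{Pic}^{0}(\bar X)=J(k)$ over an algebraically closed field to force $g=0$ and hence $s=1$---is the standard proof of this characterisation of the affine line, and each step is sound. One small remark on your handling of the $\dim B=0$ case: you write ``discarding that case'', but $B=k$ genuinely satisfies the stated hypotheses (a field is a PID, and trivially $k^{*}=k^{*}$) while failing the conclusion $B=k^{[1]}$. The lemma is tacitly intended for $B$ of Krull dimension one, which is exactly how it is applied in the paper (to $C=k[Z,T]/(f)$ with $f$ irreducible of positive degree, so $\dim C=1$). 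You should flag this explicitly rather than silently pass over it.
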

		%	We first state an analogue of L\"{u}roth Theorem for polynomial rings (\cite[(2.6)]{AEH}).
		%	\begin{thm}\thlabel{aeh}
			%		Let $B$ be a normal domain such that $k \subseteq B \subseteq k^{[n]}$. If $\td_{k}B=1$, then $B = k^{[1]}$.
			%	\end{thm}
		%	
		%
		%	Now we give another well-known characterisation of $k^{[1]}$. The proof can be found in \cite[Lemma 5]{dutta}.
		%	\begin{lem}
			%		Let $k$ be a field and $L$ be a finite separable field extension of $k$. Suppose that $B$ is an overdomain of $k$ such that $B\otimes_k L=L^{[1]}$. Then $B=k^{[1]}$.
			%	\end{lem}
		
		We now state the well known Epimorphism Theorem due to Abhyankar-Moh and Suzuki (\cite{AM}, \cite{Suz}).
		
		\begin{thm}\thlabel{ams}
			Let $k$ be field of characteristic zero and $f \in k[Z,T]$. If  $\frac{k[Z,T]}{(f)}=k^{[1]}$, then $k[Z,T]=k[f]^{[1]}$.
		\end{thm}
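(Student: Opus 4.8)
The statement is the Abhyankar--Moh--Suzuki Epimorphism Theorem, and I would prove it along the classical lines: reduce to an algebraically closed base field, rephrase the hypothesis as a good polynomial parametrisation of the curve $\{f=0\}$, and then control degrees via the behaviour of the curve at infinity. First I would reduce to $k=\overline{k}$: since base change preserves $\frac{k[Z,T]}{(f)}=k^{[1]}$, the theorem over $\overline{k}$ would give that $f$ is a coordinate of $\overline{k}[Z,T]$, and the coordinate property descends back to $k$ by a standard faithfully flat descent argument (which works here since $k$ is perfect in characteristic zero). So assume $k=\overline{k}$. Then $\frac{k[Z,T]}{(f)}=k^{[1]}$ says that the affine plane curve $C:=\{f=0\}\subseteq\A^{2}_{k}$ is isomorphic to $\A^{1}_{k}$; fixing such an isomorphism produces $p,q\in k[U]$ with $k[p(U),q(U)]=k[U]$, a \emph{good} parametrisation of $C$. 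Set $n:=\deg p$, $m:=\deg q$. If $\min(n,m)\leqslant 1$, then up to an affine automorphism of $\A^{2}_{k}$ the curve $C$ is a coordinate line and we are finished; so suppose $n,m>1$.

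The core step is the \emph{Abhyankar--Moh inequality}: for a good parametrisation $(p,q)$ with $n,m>1$, either $n\mid m$ or $m\mid n$. To prove this I would pass to the projective closure $\overline{C}\subseteq\mathbb{P}^{2}_{k}$, note that it meets the line at infinity in a single point $P$, and analyse the branch of $\overline{C}$ at $P$: concretely, expand one coordinate as a Puiseux series in a fractional power of the other along this branch and read off the characteristic exponents, or equivalently compute the semigroup of pole orders via the approximate roots of $f$ regarded as a monic polynomial over $k((1/T))$ (resp.\ $k((1/Z))$). Characteristic zero is indispensable precisely here --- it is what makes the Puiseux expansion available and forces the semigroup computation to yield $\gcd(n,m)\in\{n,m\}$, and it is exactly the hypothesis that fails for the Segre--Nagata counterexamples in positive characteristic.

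With the inequality in hand, a descent on degrees concludes the argument. Assuming $n\leqslant m$, so $n\mid m$, matching leading terms forces the leading coefficient of $q$ to equal $c\,(\mathrm{lc}\,p)^{m/n}$ for some $c\in k^{*}$; replacing $q$ by $q-c\,p^{m/n}$ strictly lowers $\deg q$ while preserving $k[p,q]=k[U]$, and this replacement is induced by the elementary automorphism $(Z,T)\mapsto(Z,T-cZ^{m/n})$ of $k[Z,T]$. Iterating, alternately reducing the two degrees, after finitely many steps one of the two polynomials becomes a constant, i.e.\ the parametrisation is $(U,0)$ up to a further affine automorphism. Composing all the automorphisms used yields a $k$-automorphism $\Phi$ of $k[Z,T]$ carrying $C$ to a coordinate line, so that $\Phi(f)=uZ+v$ with $u\in k^{*}$, $v\in k$; hence $f$ is a coordinate and $k[Z,T]=k[f]^{[1]}$.

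The main obstacle is the Abhyankar--Moh inequality itself. The reduction to $k=\overline{k}$ and the degree descent are essentially formal, but establishing $n\mid m$ or $m\mid n$ demands a genuine local analysis of $\overline{C}$ at its point at infinity (via semigroups, approximate roots, or Puiseux expansions), and this is the single place where the characteristic-zero hypothesis cannot be removed.
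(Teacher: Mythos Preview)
The paper does not supply its own proof of this statement: Theorem~2.3 is quoted as the well-known Epimorphism Theorem and attributed to Abhyankar--Moh and Suzuki via citations, with no argument given. So there is no ``paper's proof'' to compare your proposal against.

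That said, your outline is a faithful sketch of the classical Abhyankar--Moh approach and is essentially correct in its architecture: the reduction to $\overline{k}$ (which, incidentally, the paper itself carries out elsewhere via \thref{sepco}), the translation of the hypothesis into a good parametrisation $k[p(U),q(U)]=k[U]$, the Abhyankar--Moh divisibility inequality as the crux, and the elementary-automorphism descent on degrees. You correctly identify the inequality as the only genuinely hard step and the unique place where characteristic zero is essential. One small quibble: in your descent step you write that the leading coefficient of $q$ equals $c\,(\mathrm{lc}\,p)^{m/n}$; over an algebraically closed field this is fine, but you should note that taking the $(m/n)$-th root of $\mathrm{lc}\,p$ is where algebraic closedness (or at least the existence of such a root) is used, not just in the Puiseux analysis. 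Since in the context of this paper the theorem is simply being invoked as a black box, your level of detail is already more than the paper provides.
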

		Segre and Nagata have constructed examples, ``non-trivial lines", to show that the above theorem does not hold over a field of positive characteristic (\cite{Se}, \cite{Na}).
		Let us recall the definitions of {\it line} and {\it non-trivial line} in this context.
		
		\begin{Defn}\thlabel{NonL}
			{\rm
				A polynomial $h \in k[X,Y]$ is said to be a {\it line} in $k[X,Y]$ if $\frac{k[X,Y]}{(h)} = k^{[1]}$. Furthermore, if $k[X,Y] \neq k[h]^{[1]}$, then $h$ is said to be a {\it non-trivial line} in $k[X,Y]$.}
		\end{Defn}
		
		Next we state a version of the Russell-Sathaye criterion for a ring to be a polynomial ring in one indeterminate over a given subring \cite[Theorem~2.3.1]{rs}, as presented in \cite[Theorem~2.6]{BD}.
		
		\begin{thm}\thlabel{rs}
			Let $C \subseteq D$ be integral domains such that $D$ is a finitely generated $C$-algebra. 
			Let $S$ be a multiplicatively closed subset of $C\setminus\{0\}$ generated by some prime elements of $C$ 
			which remain prime in $D$. 
			
			Suppose $S^{-1}D=(S^{-1}C)^{[1]}$ and, for every prime element $p \in S$, we have $pC=pD \cap C$ and $\frac{C}{(p)}$ is algebraically closed in 
			$\frac{D}{(p)}$. 
			
			Then $D=C^{[1]}$.
		\end{thm}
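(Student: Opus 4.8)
The plan is to reduce to the case where $S$ is generated by a single prime element and then prove the statement by an iterative ``modification'' argument on a pre-image in $D$ of a generator of $S^{-1}D$ over $S^{-1}C$.

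\emph{Reduction to one prime.} Since $S^{-1}D=(S^{-1}C)^{[1]}$, after clearing denominators I would choose $u\in D$ with $S^{-1}D=(S^{-1}C)[u]$. As $D$ is a finitely generated $C$-algebra, a fixed finite set of $C$-algebra generators of $D$ is expressible in terms of $u$ over $S^{-1}C$, and only finitely many of the prime generators of $S$, say $p_{1},\dots,p_{t}$ (which I may take pairwise non-associate in $C$, hence pairwise non-associate in $D$ because $p_{i}C=p_{i}D\cap C$), occur in the needed denominators; letting $S_{0}$ be the multiplicative set they generate one gets $S_{0}^{-1}D=(S_{0}^{-1}C)[u]=(S_{0}^{-1}C)^{[1]}$, and all the $p_{i}$-hypotheses persist. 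Replacing $S$ by $S_{0}$ and inducting on $t$ (localizing away the other primes at each step), I may assume $S=\{1,p,p^{2},\dots\}$ for a single prime $p$ of $C$ which stays prime in $D$; then $u$ is transcendental over $C$, so $C[u]=C^{[1]}$, the element $p$ stays prime in $C[u]$ since $C[u]/(p)=(C/(p))[Y]$ is a domain, and $D[1/p]=C[u][1/p]$.

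\emph{Core dichotomy.} Let $\bar u$ be the image of $u$ in the domain $D/(p)$, noting $C/(p)\hookrightarrow D/(p)$ because $pC=pD\cap C$. I would first show that if $\bar u$ is transcendental over $C/(p)$ then $D=C[u]$: otherwise pick $d\in D\setminus C[u]$ and $n\geqslant 1$ least with $p^{n}d=g(u)\in C[u]$, $g\in C[Y]$; minimality forces $g\notin pC[Y]$ (else cancelling one $p$ in the domain $D$ contradicts minimality), yet reducing $p^{n}d=g(u)$ modulo $pD$ gives $\bar g(\bar u)=0$ with $\bar g\neq 0$, which is absurd. If instead $\bar u$ is algebraic over $C/(p)$, the algebraic-closedness hypothesis forces $\bar u$ into the image of $C$, say $u=c+p\,u_{1}$ with $c\in C$, $u_{1}:=(u-c)/p\in D$. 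Then $C[u]\subseteq C[u_{1}]\subseteq D$, strictly since $p$ is not a unit of $C$, and still $D[1/p]=C[u_{1}][1/p]$, so the configuration repeats with $u_{1}$ in place of $u$, giving an ascending chain $C[u]=C[u_{0}]\subseteq C[u_{1}]\subseteq C[u_{2}]\subseteq\cdots\subseteq D$.

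\emph{Termination and the main obstacle.} It remains to see this chain exhausts $D$. The key computation: if $p^{n}d=g(u)\in C[u]$ with $n\geqslant 1$, then substituting $u=c+p\,u_{1}$ and expanding gives $g(c+p\,u_{1})=\sum_{m}\delta_{m}p^{m}u_{1}^{m}$ with every $\delta_{m}\in C$; hence $\delta_{0}=p^{n}d-\sum_{m\geqslant 1}\delta_{m}p^{m}u_{1}^{m}\in pD\cap C=pC$, and dividing the relation through by $p$ yields $p^{n-1}d\in C[u_{1}]$. Iterating this ``$p$-content descent'' shows every $d\in D$ lies in some $C[u_{j}]$, so $D=\bigcup_{j}C[u_{j}]$; since $D$ is finitely generated over $C$ this union stabilizes, whence $D=C[u_{J}]=C^{[1]}$ for some $J$ (the case where some $\bar u_{j}$ is transcendental having already been dealt with). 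The step I expect to be the main obstacle is precisely this one: extracting the descent $p^{n}d\in C[u]\Rightarrow p^{n-1}d\in C[u_{1}]$, in which the hypothesis $pC=pD\cap C$ is exactly what forces the stray constant term to be divisible by $p$ \emph{inside} $C$, and then using finite generation of $D$ over $C$ to guarantee that the modification process terminates.
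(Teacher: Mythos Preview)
The paper does not actually prove this theorem: it is quoted as a known result, the Russell--Sathaye criterion \cite[Theorem~2.3.1]{rs} in the formulation of \cite[Theorem~2.6]{BD}, and is used as a black box (for instance in \thref{RS}). So there is no ``paper's own proof'' to compare against.

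That said, your argument is correct and is essentially the classical Russell--Sathaye proof. The reduction to finitely many prime generators via finite generation of $D$ over $C$, followed by induction on their number (peeling off one prime at a time by localizing the rest), is standard; one only needs to observe that the hypotheses $p_iC=p_iD\cap C$ and ``$C/(p_i)$ algebraically closed in $D/(p_i)$'' are stable under inverting the other $p_j$, which holds because the $p_j$ are pairwise non-associate primes in both $C$ and $D$. In the single-prime step, your dichotomy on whether $\bar u$ is transcendental or algebraic over $C/(p)$, the Taylor-type expansion $g(c+pu_1)=\sum_m\delta_m p^m u_1^m$ with $\delta_m\in C$, and the use of $pD\cap C=pC$ to force $p\mid\delta_0$ in $C$ and thereby drop the exponent of $p$ by one, are exactly the moves in the original proof. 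Termination via $D=\bigcup_j C[u_j]$ together with finite generation is likewise the standard endgame. There are no gaps.
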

		
		Using this theorem, we now prove the following result.
		
		\begin{lem}\thlabel{RS}
			Let $R$ be an integral domain and $H:=\alpha Y -f(Z,T)-h(Z,T) \in R[Y,Z,T]$ be an irreducible polynomial,
			such that $\alpha\in R$ and it can be expressed as product of prime elements of $R$ and each of the prime factors of $\alpha$ divides $h$ in $R[Y,Z,T]$.
			Suppose $R[Z,T]=R[f]^{[1]}$. Then $R[Y,Z,T]=R[H]^{[2]}$.
		\end{lem}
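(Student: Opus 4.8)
\noindent
The plan is to deduce the statement from the Russell--Sathaye criterion \thref{rs}. That criterion, however, only produces a polynomial ring in \emph{one} indeterminate over the prescribed subring, whereas we want two; so the first move is to complete $f$ to a coordinate system of $R[Z,T]$ and to apply \thref{rs} to a slightly enlarged subring. Concretely: since $R[Z,T]=R[f]^{[1]}$, fix $W\in R[Z,T]$ with $R[Z,T]=R[f,W]$; then $D:=R[Y,Z,T]=R[f,W,Y]$ is a polynomial ring in the three variables $f,W,Y$ over $R$. As $h\in R[Z,T]=R[f][W]$, we may write $h=h(f,W)$, so $H=\alpha Y-f-h(f,W)$. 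Put $C:=R[H]$ and $C_1:=R[H,W]$. Since $\alpha\ne 0$, $H$ is linear in $Y$ over $R[Z,T]$, hence $H\notin R$ and $H$ is transcendental over $R$; comparing $Y$-degrees in $D$ shows $H$ and $W$ are algebraically independent over $R$, so $C=R^{[1]}$, $C_1=C^{[1]}=R^{[2]}$, and $D=C_1[f,Y]$ is finitely generated over $C_1$.

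\noindent
Next, let $S\subseteq R\setminus\{0\}$ be the multiplicative set generated by the prime factors $q_1,\dots,q_t$ of $\alpha$ in $R$. Each $q_j$ is prime in $R$, hence in $D=R[Y,Z,T]$ and in $C_1=R^{[2]}$. The key observation is that, modulo any prime factor $q$ of $\alpha$, the polynomial $H$ degenerates to $-f$: indeed $q\mid\alpha$, and $q\mid h$ in $R[Y,Z,T]$ forces $q\mid h$ in $R[Z,T]$ (as $q$ remains prime there and $h\in R[Z,T]$), so $\overline{H}=\overline{\alpha}\,\overline{Y}-\overline{f}-\overline{h}=-\overline{f}$ in $D/qD$.

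\noindent
With this in hand I would verify the three hypotheses of \thref{rs} for the pair $C_1\subseteq D$ relative to $S$. First, localising at $S$ turns $\alpha$ into a unit, and the $R_\alpha$-algebra endomorphism of $R_\alpha[f,W,Y]=S^{-1}D$ fixing $f$ and $W$ and sending $Y\mapsto H=\alpha Y-f-h(f,W)$ is a triangular automorphism; hence $S^{-1}D=R_\alpha[f,W,H]=\bigl(R_\alpha[H,W]\bigr)[f]=(S^{-1}C_1)^{[1]}$. Second, for each prime factor $q$ of $\alpha$ the reduction $C_1/qC_1=(R/q)[H,W]\to D/qD=(R/q)[Z,T][Y]$ sends $H\mapsto-\overline{f}$ and $W\mapsto\overline{W}$; since $(R/q)[Z,T]=(R/q)[\overline{f},\overline{W}]$ is a polynomial ring, this map is injective with image the polynomial subring $(R/q)[\overline{f},\overline{W}]$. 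Injectivity gives $qC_1=qD\cap C_1$, and the identification $C_1/(q)\cong(R/q)[\overline{f},\overline{W}]\subseteq(R/q)[\overline{f},\overline{W}][Y]=D/(q)$ shows $C_1/(q)$ is algebraically closed in $D/(q)$, because a polynomial ring over a domain is algebraically closed in any polynomial extension of itself.

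\noindent
Applying \thref{rs} then yields $D=C_1^{[1]}=\bigl(R[H]^{[1]}\bigr)^{[1]}=R[H]^{[2]}$, i.e. $R[Y,Z,T]=R[H]^{[2]}$, as claimed. The only genuine subtlety is the one flagged at the outset: \thref{rs} must be applied to $R[H,W]$ rather than to $R[H]$ directly, since the naive assertion $R[Y,Z,T]=R[H]^{[1]}$ is false; once $W$ has been adjoined, the verification of the criterion's hypotheses is routine, being governed entirely by the degeneration $H\equiv-f$ modulo the prime factors of $\alpha$ together with the hypothesis $R[Z,T]=R[f]^{[1]}$.
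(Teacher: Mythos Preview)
Your proof is correct and follows essentially the same approach as the paper: choose a complementary coordinate $W$ (the paper writes $g$) with $R[Z,T]=R[f,W]$, set $C_1=R[H,W]$, and apply the Russell--Sathaye criterion \thref{rs} to the extension $C_1\subseteq D=R[Y,Z,T]$ with respect to the multiplicative set generated by the prime factors of $\alpha$. The paper records the quotient computation more tersely as $D/qD=(C_1/qC_1)^{[1]}$, which simultaneously gives the contraction and algebraic-closedness conditions you verify separately, but the argument is the same.
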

		\begin{proof}
			Since $R[Z,T]=R[f]^{[1]}$, there exists $g \in R[Z,T]$ such that $R[Z,T]=R[f,g]$. Let $C= R[ H,g]$ and $D=R[Y,Z,T]$.
			Note that every prime divisor of $\alpha$ in $R$ remains prime in $C$ and $D$.
			Let	$S$ be the multiplicative closed subset of $C$ generated by the prime divisors of $\alpha$ in $R$. Then $S^{-1}D = (S^{-1}C)^{[1]}$ and $\frac{D}{qD} = \left(\frac{C}{qC}\right)^{[1]}$, for every prime divisor $q$ of $\alpha$ in $R$.	
			Therefore, by \thref{rs}, we have $D = C^{[1]}$.
		\end{proof}
		
		The next result on triviality of separable $\A^{1}$-forms over $k^{[1]}$ is a special case of a theorem of Dutta \cite[Theorem~7]{dutta}.
		
		\begin{lem}\thlabel{sepco}
			Let $f \in k[Z,T]$ be such that $L[Z,T]=L[f]^{[1]}$, for some separable field extension $L$ of $k$. Then $k[Z,T]=k[f]^{[1]}$. 
		\end{lem}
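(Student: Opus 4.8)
The plan is to read $k[Z,T]$ as a form of the affine line over the subring $R:=k[f]\cong k^{[1]}$ and to kill the resulting twist by Galois cohomology.

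First reduce to the case where $L/k$ is a \emph{finite Galois} extension. The hypothesis $L[Z,T]=L[f]^{[1]}$ means there is $g\in L[Z,T]$ with $L[f,g]=L[Z,T]$; the finitely many elements of $L$ occurring as coefficients of $g$ and in the expressions of $Z,T$ as polynomials in $f,g$ generate over $k$ a finitely generated subextension $L_0$ of $L$, and $L_0[f,g]=L_0[Z,T]$. When $L/k$ is algebraic (the only case needed in our applications), $L_0/k$ is finite separable, so we may replace $L_0$ by its Galois closure over $k$; the general case reduces to this by specialising a separating transcendence basis. Thus assume $L/k$ is finite Galois with group $\Gamma$.

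Now set $R:=k[f]\cong k^{[1]}$, $D:=k[Z,T]$, and $S:=R\otimes_k L=L[f]$. Then $S/R$ is a Galois extension of rings with group $\Gamma$, faithfully flat, with $S^{\Gamma}=R$ and $S^{*}=L^{*}$. By hypothesis $D\otimes_R S=D\otimes_k L=L[Z,T]=L[f]^{[1]}=S^{[1]}$, so $D$ is an $S/R$-form of $\A^1_R=\Spec R[t]$. By faithfully flat descent (effective for affine schemes), isomorphism classes of such forms are classified by the pointed set $H^1(\Gamma,\operatorname{Aut}_S(S^{[1]}))$. Since $S=L[f]$ is an integral domain, every $S$-algebra automorphism of $S[t]$ is affine, so $\operatorname{Aut}_S(S^{[1]})=S\rtimes S^{*}=L[f]\rtimes L^{*}$ as a $\Gamma$-group, with $\Gamma$ acting through $L$ and fixing $f$.

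It remains to check $H^1(\Gamma,L[f]\rtimes L^{*})$ is trivial. The $\Gamma$-equivariant exact sequence $1\to L[f]\to L[f]\rtimes L^{*}\to L^{*}\to 1$ yields an exact sequence of pointed sets $H^1(\Gamma,L[f])\to H^1(\Gamma,L[f]\rtimes L^{*})\to H^1(\Gamma,L^{*})$. Here $H^1(\Gamma,L^{*})=0$ by Hilbert's Theorem~90, and $H^1(\Gamma,L[f])=0$: as a $\Gamma$-module $L[f]=\bigoplus_{i\geqslant 0}L\,f^{i}$ is a direct sum of copies of $L$ (as $f$ is $\Gamma$-fixed), $L$ is a (co)induced $\bZ[\Gamma]$-module by the normal basis theorem, and higher cohomology of (co)induced modules vanishes (this passes to the direct sum since $\Gamma$ is finite). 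Hence $H^1(\Gamma,L[f]\rtimes L^{*})$ is trivial, so the form $D$ is trivial, i.e. $k[Z,T]=k[f]^{[1]}$.

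\textbf{Main obstacle.} The computational heart, namely the two instances of Hilbert~$90$, is routine; the real work lies in the descent set-up: the reduction to a finite Galois $L$, the identification of the classifying cohomology set, and the fact that $\operatorname{Aut}_{L[f]}(L[f]^{[1]})$ is the affine group (which uses that $L[f]$ is reduced). One may instead bypass all of this: the statement is a special case of \cite[Theorem~7]{dutta}, and it can also be obtained by combining Kambayashi's triviality theorem for separable forms of $\A^1$ over the field $k(f)$ (applied to the generic fibre of $f$) with the Russell--Sathaye criterion \thref{rs}.
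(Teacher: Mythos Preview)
Your Galois-descent argument is correct and self-contained, whereas the paper does not prove the lemma at all: it simply records it as a special case of \cite[Theorem~7]{dutta}, as your closing paragraph already notes. So there is no ``paper's proof'' to compare against; you are supplying a proof where the paper supplies a citation. The descent set-up is sound: the reduction to a finite Galois extension via specialisation is terse but works (choose a smooth finitely generated $k$-algebra $B\subset L_0$ over which the identity $L_0[Z,T]=L_0[f,g]$ is already defined, and pass to the residue field at a closed point of $\Spec B$, which is finite separable over $k$ by smoothness); the identification $\operatorname{Aut}_{S}(S^{[1]})=S\rtimes S^{*}=L[f]\rtimes L^{*}$ is correct since $S=L[f]$ is a domain with $S^{*}=L^{*}$; and the vanishing of $H^{1}(\Gamma,L[f]\rtimes L^{*})$ follows exactly as you say from the exact sequence of pointed sets together with Hilbert~90 in its multiplicative and additive forms.

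One small correction, not to the mathematics but to a parenthetical remark: you write that the algebraic case is ``the only case needed in our applications''. That is not accurate for this paper. In the proof of \thref{main}, implication $\mathrm{(xiii)}\Rightarrow\mathrm{(v)}$ case~(b), the lemma is applied with $L=k_1(X_l)$, a transcendental extension of $k$; so the general separable case is genuinely used. Since you do sketch the transcendental reduction, this does not affect correctness, but the aside should be dropped.
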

		We now recall a result proved by the second author from \cite[Proposition 3.6]{adv}. 
		
		\begin{prop}\thlabel{p1}
			Let $R$ be an integral domain, $\pi_1, \pi_2,\ldots, \pi_n \in R$ and $\pi=\pi_1 \pi_2\cdots \pi_n$. Let $G(Z,T) \in R[Z,T]$ be such that $R[Z,T]/(\pi, G(Z,T)) \cong_R (R/\pi)^{[1]}$. Let $r_1,\ldots, r_n$ be a set of positive integers and
			$D := R[Z, T,Y ]/(\pi_1^{r_1}\cdots \pi_n^{r_n}Y-G(Z,T)).$
			Then $D^{[1]}=R^{[3]}$.
		\end{prop}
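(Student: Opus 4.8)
The plan is to realise $D$ explicitly as a subalgebra of a polynomial ring over $R$ and then carry out an Asanuma-type stable-triviality argument. Put $\pi=\pi_1\cdots\pi_n$ and $\beta=\pi_1^{r_1}\cdots\pi_n^{r_n}$; since each $r_i\geqslant 1$ we have $\pi\mid\beta$, say $\beta=\pi\gamma$ with $\gamma\in R$. If some $\pi_i=0$ then $\beta=0$ and $D=R[Z,T,Y]/(G)=\big(R[Z,T]/(G)\big)[Y]\cong R^{[1]}[Y]=R^{[2]}$ by the hypothesis applied with $\pi=0$, so $D^{[1]}=R^{[3]}$; hence we may assume every $\pi_i\neq 0$, so that $\beta$ is a nonzero element of the domain $R$ and in particular a non-zerodivisor in $R[Z,T,Y]$. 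The first step is to check, using the hypothesis, that $\beta$ is a non-zerodivisor in $D$ as well (equivalently that $\overline{G}$ is a non-zerodivisor modulo $\beta$, which follows because $R[Z,T]/(\pi,G)\cong_{R}(R/\pi)^{[1]}$ forces $\overline{G}$ to be a non-zerodivisor modulo $\pi$); with this in hand the standard isomorphism $R[Z,T][Y]/(\beta Y-G)\cong R[Z,T]\big[\tfrac{G}{\beta}\big]$ realises $D$ as a subring of $R_{\pi}[Z,T]$, so that $\td_{R}D=2$. Two ``faces'' of $D$ are then visible: inverting $\pi$ turns $\beta=\pi\gamma$ into a unit, so $D_{\pi}=R_{\pi}[Z,T]=R_{\pi}^{[2]}$ with coordinates $z,t$; while reducing modulo $\pi$ (using $\pi\mid\beta$) gives $D/\pi D\cong\big((R/\pi)[Z,T]/(\overline{G})\big)[Y]\cong(R/\pi)^{[1]}[Y]=(R/\pi)^{[2]}$.

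Next I would extract the coordinate data hidden in the hypothesis. Fix $H\in R[Z,T]$ whose image generates $(R/\pi)[Z,T]/(\overline{G})$ as a polynomial ring over $R/\pi$. From $R[Z,T]=R[H]+(\pi,G)\,R[Z,T]$ one obtains $\phi,\psi\in R^{[1]}$ with $Z-\phi(H),\,T-\psi(H)\in\pi R[Z,T]+G\,R[Z,T]$; inside $D$, where $G=\beta Y\in\pi D$, this reads $Z\equiv\phi(H)$ and $T\equiv\psi(H)\pmod{\pi D}$. Moreover, the isomorphism $(R/\pi)[Z,T]/(\overline{G})\cong(R/\pi)^{[1]}$ exhibits this quotient as smooth over $R/\pi$ of relative dimension one, so by the conormal sequence the row $(G_{Z},G_{T})$ becomes unimodular modulo $(\pi,G)$; substituting $Z\mapsto\phi(H),\,T\mapsto\psi(H)$, the pair $\big(G_{Z}(\phi(H),\psi(H)),\,G_{T}(\phi(H),\psi(H))\big)$ generates the unit ideal of $R[H]/\pi R[H]$.

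Finally comes the crux: pass to $D^{[1]}=D[U]$ and construct an explicit $R$-algebra isomorphism $D[U]\cong R^{[3]}$. Writing $Z=\phi(H)+\pi\zeta$ and $T=\psi(H)+\pi\tau$ in $D$ and expanding the two identities $G(Z,T)=\beta Y=\pi\gamma Y$ and $H=H(Z,T)$ by Taylor's formula, cancellation of the non-zerodivisor $\pi$ yields relations of the form $\zeta\,a(H)+\tau\,b(H)\equiv\gamma Y-g_{1}(H)$ and $\zeta\,c(H)+\tau\,d(H)\equiv-h_{1}(H)$ modulo $\pi$, where $\left(\begin{smallmatrix}a&b\\ c&d\end{smallmatrix}\right)$ is the Jacobian of $(G,H)$ with respect to $(Z,T)$ evaluated at $(\phi(H),\psi(H))$ and $g_{1},h_{1}\in R^{[1]}$; by the unimodularity of $(a,b)$ recorded above, this data can be completed, so the module of admissible pairs $(\zeta,\tau)$ over $R[H]$ is stably free of rank one and hence free after adjoining one variable. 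Concretely one checks that $D[U]$ is generated over $R$ by three explicitly constructed elements --- $H$ together with two further elements built from $Y$, $\zeta$ (or $\tau$) and $U$ by a $\pi$-twisted change of coordinates dictated by the completion --- that $Z$ and $T$ are recovered as polynomials in these three, and that they are algebraically independent over $R$ (as $\td_{R}D=2$ and $\overline{H}$ is transcendental over $R/\pi$), whence $D[U]=R^{[3]}$. The hard part is precisely this final step: making the unimodular completion and the ensuing substitution explicit and verifying that it gives an isomorphism, for an arbitrary (possibly non-Noetherian) domain $R$ and all $n$ prime factors simultaneously. An alternative is to organise the argument as an induction on $n$, stripping off one $\pi_{i}$ at a time, or to deduce the stabilisation from the structure theory of locally trivial $\mathbb{A}^{1}$-fibrations together with the triviality of the relevant stably free rank-one module.
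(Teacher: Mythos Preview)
The paper does not prove this proposition: it is quoted from \cite[Proposition~3.6]{adv} and invoked as a known result, so there is no argument here against which to compare your attempt.

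Your plan follows the Asanuma stable-triviality route and is the correct strategy. One technical point: to get that $\beta$ is a non-zerodivisor in $D$ you need $\overline G$ to be a non-zerodivisor in $(R/\beta)[Z,T]$, not merely in $(R/\pi)[Z,T]$; this does follow, since $\pi$ is nilpotent in $R/\beta$ (because $\beta\mid\pi^{N}$ once $N\geqslant\max_i r_i$), so the content ideal of $G$ --- which your fibre-dimension argument shows is the unit ideal modulo $\pi$ --- remains the unit ideal modulo $\beta$, and McCoy's theorem then applies. The more serious issue is that the step you yourself flag as the ``crux'' is where the entire content of the proposition lies, and you stop short of carrying it out: exhibiting the explicit unimodular completion and verifying that the three constructed elements freely generate $D[U]$ over an arbitrary domain $R$ is precisely what must be done, and neither of your suggested alternatives (induction on $n$, or an appeal to structure theory of $\mathbb{A}^{1}$-fibrations) is made precise enough to count. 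As written, your proposal is a sound road map rather than a proof.
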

		%	
		%	Now we discuss a criterion for flatness of local rings \cite[20.G]{mat}.
		%	
		%	\begin{lem}\thlabel{flatness 1}
			%		Let $B\rightarrow C \rightarrow D$ be local homomorphisms of Noetherian local rings and $M$ be a finite $D$-module. 
			%		Suppose that $C$ is $B$ flat. Let $\kappa$ denote the residue field of $B$. Then the following two statements are equivalent:
			%		\begin{enumerate}[\rm(i)]
				%			\item $M$ is $C$ flat.
				%			
				%			\item $M$ is $B$ flat and $M\otimes_B\kappa $ is $C\otimes_B\kappa $ flat. 
				%		\end{enumerate} 
			%		Moreover, if $B$ is a discrete valuation ring with uniformizing parameter $\pi$,   with $C$ and $D$ integral domains, then $D$ is a flat $C$-module if $D/\pi D$ is a flat $C/\pi C$-module. 
			%	\end{lem}
		%	
		\section{Some properties of the ring $A$}\label{PropB}
		
		{\bf Throughout this section, $A$ will denote the following ring:}
		\begin{equation}\label{AA}
			A : = \dfrac{{k}[\X,Y,Z,T]}{(\alpha(X_1,\dots ,X_m)Y -f(Z,T)- h(X_1,\dots ,X_m,Z,T))}
		\end{equation}
		%	$a,f+h\notin k$ 
		such that $\mathbf{f(Z,T)\neq 0}$ and {\bf every prime factor of \pmb{$\alpha$} in \pmb{$k[\X]$} divides \pmb{$h$} in \pmb{$k[\X,Z,T]$}}. Note that if $\alpha= 0$, then $h=0$ and hence $A$ is a polynomial ring if and only if $f(Z,T)$ is a line (cf. \thref{aeh}) and if ${\alpha} \in k^*$, 
		then $A=k^{[m+2]}$.	
		%Let $x_1,\dots,x_m, y,z,t$ denote the images  of $X_1,\dots, X_m, Y,Z,T$ in $B$ respectively.
		%\subsection{Geometrically integral}
		%An affine $k$-domain $A$ is said to be geometrically integral if $A\otimes_k\overline{k}$ is an integral domain.	
		%If $\alpha\in k^{*}$ then $A=k^{[m+2]}$.
		Henceforth, we assume that $\pmb{\alpha\notin k}$. 
		Note that $A$ and $A\otimes_k\overline{k}$ both are integral domains. 
		Let $x_1,\dots,x_m, y,z,t$ denote the images  of $X_1,\dots, X_m, Y,Z,T$ in $A$ respectively.
		{\bf {$\mathbf{E}$} will denote the subring of $\pmb A$ generated by $\mathbf{\x}$ over $\mathbf{k}$.}
		Note that $E~=~k[\x]=k^{[m]}$.
		%	We first prove that $A$ is a flat $E$-algebra.
		
		\begin{lem}\thlabel{flatness 2}
			The ring $A$ is a flat $E$-algebra.
		\end{lem}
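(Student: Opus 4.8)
Proof plan for Lemma~\ref{flatness 2} ($A$ is a flat $E$-algebra).

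The plan is to exhibit $A$ as a localization-free "relative complete intersection" over $E = k[\x]$ and invoke the standard fact that a quotient of a polynomial ring by a single nonzerodivisor that stays a nonzerodivisor on every fiber is flat. Concretely, write $B := E[Y,Z,T] = E^{[3]}$, which is obviously a free, hence flat, $E$-module, and observe that
\[
A \;=\; \dfrac{B}{(\alpha(\x)Y - f(Z,T) - h(\x,Z,T))}.
\]
So it suffices to show that the principal ideal $(\alpha(\x)Y - f - h)$ is generated by an element whose image in $B \otimes_E \kappa(\p) = \kappa(\p)[Y,Z,T]$ is nonzero for every prime $\p$ of $E$; equivalently, that $H$ does not lie in $\p B$ for any $\p \in \Spec E$. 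Granting this, the local criterion for flatness for a hypersurface (see e.g. the standard statement: if $B$ is $E$-flat and $g \in B$ is such that $g \bmod \p B$ is a nonzerodivisor in $B \otimes_E \kappa(\p)$ for all $\p$, then $B/(g)$ is $E$-flat) immediately gives that $A$ is $E$-flat.

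First I would reduce the fiberwise-nonzero statement to a statement purely about $\alpha$. The coefficient of $Y$ in $H$, viewed as a polynomial in $B = E[Y,Z,T]$ over $E$, is exactly $\alpha(\x) \in E = k[\x]$. Since $\alpha \notin k$ by the standing assumption of this section, $\alpha$ is a nonzero nonunit of the domain $k[\x]$; in particular $\alpha \ne 0$ in $\kappa(\p) = \operatorname{Frac}(E/\p)$ for every prime $\p$ of $E$ that does \emph{not} contain $\alpha$, and even when $\alpha \in \p$ the constant-in-$Y$ term $-f(Z,T)-h(\x,Z,T)$ need not vanish — but more simply, if $\alpha \notin \p$ then the image of $H$ in $\kappa(\p)[Y,Z,T]$ has a genuine degree-one term $\bar\alpha Y$ and is nonzero. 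For primes $\p$ with $\alpha \in \p$, I need that $-f(Z,T) - h(\x,Z,T)$ does not map to $0$ in $\kappa(\p)[Y,Z,T]$. Here the hypothesis that every prime factor of $\alpha$ divides $h$ enters: if $\alpha \in \p$ then some prime factor $p_i$ of $\alpha$ lies in $\p$, hence $p_i \mid h$ forces $h \in \p B$, so the fiber value of $H$ is $-\bar f(Z,T) = -f(Z,T)$ (note $f \in k[Z,T]$, so its coefficients are in $k$ and survive), and this is nonzero because $f \ne 0$ by assumption. Thus in all cases $\bar H \ne 0$ in the fiber.

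The main obstacle, and the point needing care, is that "$\bar H \ne 0$ in $B \otimes_E \kappa(\p)$" must be upgraded to "$\bar H$ is a nonzerodivisor in $B \otimes_E \kappa(\p)$" — but this is automatic: $B \otimes_E \kappa(\p) = \kappa(\p)[Y,Z,T]$ is an integral domain, so every nonzero element is a nonzerodivisor. A second small point: I should confirm $H$ itself is a nonzerodivisor in $B$ (so that the "hypersurface" picture is the right one and $A$ is the honest quotient without extra torsion), which holds because $B$ is a domain and $H \ne 0$ (its $Y$-coefficient $\alpha$ is nonzero). With these two trivialities dispatched, the local flatness criterion for a relative hypersurface finishes the proof. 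If one prefers to avoid quoting the fiberwise criterion, an alternative is to show directly that $A$ is torsion-free over the PID-localizations $E_\q$ for height-one primes $\q$ and then combine with the fact that $E$ is a regular (hence Cohen–Macaulay) ring and $A$ is Cohen–Macaulay equidimensional over $E$, invoking "miracle flatness" (a finite-type map between regular rings, or a CM-source over a regular base with constant fiber dimension, is flat); but the fiberwise-hypersurface argument above is the cleanest and is the route I would write up.
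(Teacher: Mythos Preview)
Your proposal is correct and is essentially the same approach as the paper's: the paper's proof consists solely of the citation ``Follows from \cite[Corollary of Theorem~22.6]{matr}'', which is precisely the fiberwise-nonzerodivisor criterion for a relative hypersurface that you spell out. Your write-up simply supplies the details (the case split on whether $\alpha \in \p$, and the use of the hypotheses $f \ne 0$ and ``every prime factor of $\alpha$ divides $h$'') that the paper leaves implicit in its one-line citation.
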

		%	\begin{proof}
			%		Let $\q\in \Spec A$ and $\p := \q\cap E$. It is enough to show that $A_\q$ is a flat $E_\p$-algebra. 
			%		
			%		\smallskip
			%		\noindent
			%		{\it Case} 1: $\alpha \in \q$. 
			%		\smallskip
			%		
			%		Let $\beta$ be a prime factor of $\alpha$ in $E$ such that $\beta \in \p$. Then 
			%		$$
			%		k[\beta]_{(\beta)}\hookrightarrow E_{\p}\hookrightarrow A_{\q}.
			%		$$
			%		As $\beta \mid h$ and $\beta \mid \alpha$, we have $f(Z,T) \notin k$ (cf. \eqref{AA}) as $\q$ is a prime ideal of $A$ containing $\beta$.
			%		Therefore, $\frac{k[Y,Z,T]}{(f(Z,T))}$ is a flat $k$-algebra and 
			%		hence, $\frac{k[Y,Z,T]}{(f(Z,T))} \otimes_k E/\beta E$ is a flat $E/\beta E$-algebra.
			%		Thus $\frac{A}{\beta A}= \frac{(E/\beta E)[Y,Z,T]}{(f(Z,T))}$ 
			%		is a flat $E/\beta E$-algebra.
			%		Therefore, by \thref{flatness 1}, $A_\q$ is flat over $E_\p$.
			%		
			%		\smallskip
			%		
			%		\noindent
			%		{\it Case} 2: $\alpha \notin \q$.
			%		
			%		\smallskip
			%		
			%		Let $S= E\setminus \p$ and $A_\p=S^{-1}A$. 
			%		Note that $\alpha\in S$ and hence $A_\p=E_\p[Z,T]$, which is a flat $E_\p$-algebra. 
			%		Therefore, $A_\q$, a localisation of $A_\p$, is a flat $E_\p$-algebra. 
			%	\end{proof}
		\begin{proof}
			Follows from \cite[Corollary of Theorem~22.6]{matr}.
		\end{proof}
		
		We recall A. Sathaye's definition of an {\it $\A^n$-fibration} over a ring $R$ (\cite{sp2}).
		\begin{Defn}\thlabel{An_f}
			{\rm
				A finitely generated flat $R$-algebra $B$ over a ring $R$ is said to be an {\it $\A^n$-fibration} over $R$ if $B \otimes_R k(\p) = k(\p)^{[n]}$ for every prime ideal $\p$ of $R$.}
		\end{Defn}
		
		%			\smallskip
		%	Let $E= k[\x]$ be a subring of $B$.
		We prove an equivalent condition for $A$ to be an $\mathbb{A}^2$-fibration over $E$.
		%	 We first prove a small lemma in this regard.	
		\begin{lem}\thlabel{Fib2}
			The following statements are equivalent:
			\begin{enumerate}[\rm(i)]
				\item $A$ is an $\mathbb{A}^2$-fibration over $E$.
				\item $\dfrac{\frac{E_{\p}}{\p E_{\p}}[Z,T]}{(f(Z,T))}=\left(\dfrac{E_{\p}}{\p E_{\p}}\right)^{[1]}$, for all $\p\in Spec (E)$ with $\alpha(\x)\in \p$. 
			\end{enumerate}
		\end{lem}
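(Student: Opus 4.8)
The plan is to recast the equivalence as a statement about the fibers of $A$ over $\Spec(E)$. Since $E=k[\x]=k^{[m]}$, the $E$-algebra $A$ is finitely generated (by $y,z,t$) and, by \thref{flatness 2}, flat; hence, by \thref{An_f}, $A$ is an $\mathbb{A}^2$-fibration over $E$ if and only if $A\otimes_E k(\p)=k(\p)^{[2]}$ for \emph{every} $\p\in\Spec(E)$. Writing $\kappa=k(\p)$ and denoting by $\bar\alpha\in\kappa$, $\bar h\in\kappa[Z,T]$ the images of $\alpha(\x)$, $h(\x,Z,T)$, we have $A\otimes_E\kappa=\kappa[Y,Z,T]/(\bar\alpha Y-f(Z,T)-\bar h)$. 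So the whole task is to compute this quotient and compare the two regimes $\alpha(\x)\notin\p$ and $\alpha(\x)\in\p$.

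First I would dispose of the primes with $\alpha(\x)\notin\p$. There $\bar\alpha\in\kappa^{*}$, so eliminating $Y$ (via $Y\mapsto\bar\alpha^{-1}(f+\bar h)$) gives an isomorphism $A\otimes_E\kappa\cong\kappa[Z,T]=\kappa^{[2]}$. Thus the fibers over $\Spec(E)\setminus V(\alpha(\x))$ are automatically affine planes and carry no information; only the fibers over $V(\alpha(\x))$ matter, which already explains why condition (ii) quantifies only over such $\p$.

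Next I would analyse the fibers over $\p$ with $\alpha(\x)\in\p$. The key observation --- and the only place hypothesis \eqref{Cond} is needed --- is that $\bar h=0$ in $\kappa[Z,T]$: writing $\alpha=\prod_i p_i^{s_i}$ in $k[\X]\cong E$, primality of $\p$ forces some $p_j(\x)\in\p$; since $p_j\mid h$ in $k[\X,Z,T]$ by \eqref{Cond}, we get $h(\x,Z,T)\in\p E[Z,T]$, hence $\bar h=0$. Consequently $A\otimes_E\kappa=\kappa[Y,Z,T]/(f(Z,T))=\bigl(\kappa[Z,T]/(f)\bigr)^{[1]}$. Now $A\otimes_E\kappa=\kappa^{[2]}$ if and only if $\kappa[Z,T]/(f)=\kappa^{[1]}$: the ``if'' direction is immediate; for ``only if'', note that $\kappa[Z,T]/(f)$ embeds in $\bigl(\kappa[Z,T]/(f)\bigr)^{[1]}=\kappa^{[2]}$, so it is a domain, and then $\bigl(\kappa[Z,T]/(f)\bigr)^{[1]}=\kappa^{[1+1]}$ together with \thref{aeh} yields $\kappa[Z,T]/(f)=\kappa^{[1]}$.

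Putting the two cases together gives both implications: if $A$ is an $\mathbb{A}^2$-fibration, the fiber computation at any $\p\supseteq(\alpha(\x))$ forces condition (ii); conversely, if (ii) holds, then the fibers over $V(\alpha(\x))$ are affine planes by the displayed computation and those over the complement are affine planes by the first step, so all fibers are affine planes and $A$ is an $\mathbb{A}^2$-fibration. I do not expect a genuine obstacle; the only points requiring care are the vanishing $\bar h=0$ over $V(\alpha(\x))$ (which is exactly where \eqref{Cond} is used) and the small argument that $\kappa[Z,T]/(f)$ is a domain before invoking the cancellation theorem \thref{aeh}.
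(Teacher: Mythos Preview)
Your proof is correct and follows essentially the same approach as the paper: split the fibers into the two regimes $\alpha(\x)\notin\p$ and $\alpha(\x)\in\p$, use flatness from \thref{flatness 2}, and invoke \thref{aeh} for the cancellation step. If anything, you are slightly more careful than the paper in spelling out why $\bar h=0$ via condition \eqref{Cond} and in verifying that $\kappa[Z,T]/(f)$ is a domain before applying \thref{aeh}.
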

		\begin{proof}
			$\rm(i)\Rightarrow\rm(ii):$ Since $A$ is an $\mathbb{A}^2$-fibration over $E$, we have 
			$$\frac{A_{\p}}{\p A_{\p}}=A\otimes_{E}\left(\dfrac{E_\p}{\p E_\p}\right) = \left(\dfrac{E_\p}{\p E_\p}\right)^{[2]}, \text{ for every } \p\in Spec(E).$$
			Let $\p\in Spec (E)$ be such that $\alpha\in \p$.
			Then 
			$$
			\left(\dfrac{E_\p}{\p E_\p}\right)^{[2]}= \frac{A_{\p}}{\p A_{\p}} = \left(\dfrac{\frac{E_{\p}}{\p E_{\p}}[Z,T]}{(f(Z,T))}\right)^{[1]}.
			$$
			Therefore, by 
			%cancellative property of $\left(\frac{E_{\p}}{\p E_{\p}}\right)^{[1]}$ (cf.
			\thref{aeh}, we have $\dfrac{\frac{E_{\p}}{\p E_{\p}}[Z,T]}{(f(Z,T))}=\left(\dfrac{E_{\p}}{\p E_{\p}}\right)^{[1]}$.
			
			\noindent
			$\rm(ii)\Rightarrow\rm(i):$ By \thref{flatness 2}, $A$ is a flat $E$-algebra and
			therefore, it remains to show that 
			\begin{equation*}\label{2}
				A\otimes_{E}\left(\dfrac{E_\p}{\p E_\p}\right) = \frac{A_\p}{\p A_\p}=\left(\dfrac{E_\p}{\p E_\p}\right)^{[2]}, \text{ for every } \p\in Spec(E).
			\end{equation*}
			Let $\p\in Spec(E)$.  
			We now consider two cases:
			
			\noindent
			{\it Case 1}: $\alpha\notin \p$. Then $A_{\p}= E_{\p}[z,t] = E_{\p}^{[2]}$ and therefore, $\frac{A_{\p}}{\p A_{\p}}=\left(\dfrac{E_\p}{\p E_\p}\right)^{[2]}$.
			
			\noindent
			{\it Case 2}: $\alpha\in \p$.
			Then 
			$$
			\frac{A_{\p}}{\p A_{\p}}=\dfrac{\frac{E_{\p}}{\p E_{\p}}[Y,Z,T]}{(f(Z,T))}= \left(\dfrac{\frac{E_{\p}}{\p E_{\p}}[Z,T]}{(f(Z,T))}\right)^{[1]}= \left(\dfrac{E_\p}{\p E_\p}\right)^{[2]}.
			$$
		\end{proof}
		
		\begin{rem}\thlabel{fib1}
			{\rm 
				Note that if there exists a $\p\in Spec(E)$ such that $\alpha\in \p$ and $\frac{E_{\p}}{\p E_{\p}}$ is a separable field extension over $k$, then, by \thref{Fib2} and the fact that separable $\mathbb{A}^1$-forms over any field is trivial (cf. \cite[Lemma 5]{dutta}), it will follow that $A$ is an $\mathbb{A}^2$-fibration over $E$ if and only if $f(Z,T)$ is a line in $k[Z,T]$.}
		\end{rem}
		
		%	
		%	The next result states over a PID $R$ containing a field $k$ of arbitrary characteristic, an $\mathbb{A}^2$-form which is also an $\mathbb{A}^2$-fibration is trivial (cf. \cite[Theorem 2.8]{dvr}).	
		%	
		%	\begin{thm}\thlabel{kx2}
			%		Let $R$ be a PID containing a field $k$. If $D$ is an $\A^2$-fibration over $R$ such that $D\otimes_k \overline{k} = (R\otimes_k \overline{k})^{[2]}$, then $D=R^{[2]}$.
			%	\end{thm}
		%	Note that the above result was proved earlier by A. K. Dutta (\cite[Remark 8]{dutta}) when $k$ is a field of characteristic zero without the assumption of $B$ being an $\mathbb{A}^2$-fibration.
		%	
		%	\begin{cor}\thlabel{corG}
			%		Suppose that for every root $\lambda$ of $a(X)$ in $\overline{k}$, $k(\lambda)[Z,T] = k(\lambda)[F(\lambda,Z,T)]^{[1]}$ then  $A=k[x]^{[2]}$.	
			%	\end{cor}
		%	\begin{proof}
			%		Since $F(\lambda,Z,T)$ is a coordinate in $k(\lambda)[Z,T]$, for all root $\lambda$ of $a(X)$ in $\overline{k}$, by \thref{fib}, ${A}$ is an $\mathbb{A}^2$-fibration over ${k}[x]$.
			%		Since $\overline{k}[Z,T]=\overline{k}[F(\lambda, Z,T)]^{[1]}$ for every root $\lambda$ of $a(X)$ in $\overline{k}$, by \thref{G}, we have $\overline{k}[X,Y,Z,T]=\overline{k}[X,G]^{[2]}$, where $G:= a(X)Y -F(X,Z,T)$.
			%		Thus $A\otimes_k \overline{k}=\overline{k}[x]^{[2]}$.
			%		Hence by \thref{kx2}, $A=k[x]^{[2]}$.	
			%	\end{proof}
		
		We now prove a criterion for a simple birational extension of a UFD to be a UFD.
		
		\begin{prop}\thlabel{ufdg}
			Let $R$ be a UFD, $u,v\in R\setminus\{0\}$ and $C=\frac{R[Y]}{(uY-v)}$ be an integral domain.
			We consider $R$ as a subring of $C$.
			Let $u:= \prod_{i=1}^{n}u_i^{r_i}$ be a prime factorization of $u$ in $R$. Suppose that for every $i \in \{ 1, \dots, n\}$ for which $(u_i,v)R$ is a proper ideal, we have $\prod_{j\neq i}^{}u_j^{s_j}\notin (u_i,v)R,$ for arbitrary integers $s_j\geqslant 0$. Then the following statements are equivalent:  
			\begin{enumerate}[\rm(i)]
				\item $C$ is a UFD.
				
				\item For each $i$, $1 \leqslant i \leqslant n$, either $u_i$ is prime in $C$ or $u_i \in C^*$.
				
				\item For each $i$, $1 \leqslant i \leqslant n$, either $(u_i,v)R\in Spec(R)$ or $(u_i,v)R = R$, i.e., the image of $v$ in $\frac{R}{u_iR}$ is either a prime in $\frac{R}{u_iR}$ or a unit in $\frac{R}{u_iR}$. 
			\end{enumerate}  
		\end{prop}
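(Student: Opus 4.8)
The whole proposition rests on one elementary computation. Since every $u_i$ divides $u$, reducing $uY-v$ modulo $u_i$ annihilates the $uY$ term, so $(uY-v,\,u_i)R[Y]=(v,\,u_i)R[Y]$ and therefore there is an $R$-algebra isomorphism
\[
\frac{C}{u_iC}\;\cong\;\Bigl(\frac{R}{(u_i,v)R}\Bigr)[Y];
\]
reading off the kernel of $R\to C/u_iC$ from this gives, as a byproduct, $R\cap u_iC=(u_i,v)R$. The trichotomy for each generator in $C$ is then immediate: $u_i$ is a prime element of $C$ $\iff$ $C/u_iC$ is a domain $\iff$ $(u_i,v)R\in\Spec(R)$, while $u_i\in C^{*}$ $\iff$ $C/u_iC=0$ $\iff$ $(u_i,v)R=R$. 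This already gives the equivalence of $\mathrm{(ii)}$ and $\mathrm{(iii)}$, and uses neither the standing hypothesis on the $u_j$'s nor anything beyond $R$ being a domain. (I would note at the outset that, $R$ being a UFD, $uY-v$ can generate a prime ideal of $R[Y]$ only if $\gcd_R(u,v)$ is a unit, so the very hypothesis that $C$ is a domain forces $u_i\nmid v$ for every $i$ --- a fact convenient below.)

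For $\mathrm{(ii)}\Rightarrow\mathrm{(i)}$: assuming each $u_i$ is prime or a unit in $C$, let $S\subseteq C$ be the multiplicative set generated by those $u_i$ that are prime in $C$, a set of prime elements of $C$. Because the remaining $u_i$ are already units, $u=\prod u_i^{r_i}$ becomes invertible in $S^{-1}C$; combined with $C\subseteq R[1/u]$ and $S\subseteq(R[1/u])^{*}$ this forces $S^{-1}C=R[1/u]$, a localization of the UFD $R$ and hence a UFD. I would then invoke Nagata's factoriality criterion --- a Krull domain $D$ with a multiplicative set $S$ generated by prime elements is a UFD iff $S^{-1}D$ is --- the one point needing care being that $C$ is a Krull domain. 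This holds because $C=R[v/u]$ is a simple birational extension of the Krull domain $R$; concretely, one may present $C$ as the locally finite intersection of the DVRs $R_{\mathfrak p}$, for $\mathfrak p$ a height-one prime of $R$ different from $(u_1),\dots,(u_n)$, together with the localizations of $C$ at those $(u_i)$ which are prime in $C$.

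The direction genuinely exploiting the standing hypothesis, and the one I expect to be the main obstacle, is $\mathrm{(i)}\Rightarrow\mathrm{(ii)}$. Assume $C$ is a UFD and suppose, for contradiction, that some $u_{i_0}$ is neither prime nor a unit in $C$; then $(u_{i_0},v)R$ is a proper, non-prime ideal, so the hypothesis --- rewritten using $(u_{i_0},v)R=R\cap u_{i_0}C$ --- says that $u_{i_0}$ divides no product $\prod_{j\ne i_0}u_j^{s_j}$ in $C$. Factoring $u_{i_0}=\varepsilon\,\pi_1^{a_1}\cdots\pi_s^{a_s}$ in the UFD $C$, with $\sum_l a_l\ge 2$ because $u_{i_0}$ is not prime, this property is equivalent to: some prime factor, say $\pi_1$, divides none of the $u_j$ with $j\ne i_0$. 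On the other hand $\pi_1\mid u_{i_0}\mid u$ and $v\in uC$, so $v\in\pi_1 C$ and hence $(u_{i_0},v)R\subseteq\pi_1 C\cap R$; were this an equality the right side would be a prime ideal of $R$ and $u_{i_0}$ would after all be prime in $C$, so the containment is strict. The plan is then to use the remaining prime factor(s) of $u_{i_0}$ to contradict the hypothesis: if a second prime factor of $u_{i_0}$ were associate in $C$ to some $u_j$ with $j\ne i_0$ and $u_j$ prime in $C$, then $u_j\mid u_{i_0}$, i.e.\ $u_{i_0}\in u_jC\cap R=(u_j,v)R$, contradicting the hypothesis applied to the index $j$ (for which $(u_j,v)R$ is proper, since $u_j$ is prime, hence a non-unit, in $C$). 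The delicate part is disposing of the remaining possibilities --- that the extra prime factors of $u_{i_0}$ are ``new'' primes of $C$, or are shared only with generators $u_j$ that are themselves neither prime nor units --- by tracking, for each prime $\pi$ of $C$ dividing $u$, which $u_j$ it divides (using repeatedly that $\pi\mid u_j$ with $u_j\in C^{*}$ is impossible, and that $\pi C\cap R$ is prime). The condition ``$\prod_{j\ne i}u_j^{s_j}\notin(u_i,v)R$'' is exactly what guarantees that each such bad generator retains a prime factor in $C$ private to it, and this decoupling of the generators is what makes the argument close.
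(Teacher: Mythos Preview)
Your treatment of $\mathrm{(ii)}\Leftrightarrow\mathrm{(iii)}$ via the isomorphism $C/u_iC\cong(R/(u_i,v)R)[Y]$ is exactly what the paper does, and your $\mathrm{(ii)}\Rightarrow\mathrm{(i)}$ via Nagata's criterion is likewise the paper's argument (the paper simply invokes Nagata without pausing over the Krull/atomic hypothesis, so your extra care there is a refinement, not a deviation).

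The implication $\mathrm{(i)}\Rightarrow\mathrm{(ii)}$, however, is where your proposal has a genuine gap. You set up a contradiction by factoring $u_{i_0}=\varepsilon\,\pi_1^{a_1}\cdots\pi_s^{a_s}$ in the UFD $C$ and isolating a ``private'' prime factor $\pi_1$, but you explicitly leave the ``delicate part'' --- the cases where the remaining prime factors are new primes of $C$ or shared only with other non-prime, non-unit $u_j$ --- unhandled, asserting only that the hypothesis ``is exactly what guarantees'' closure. It does not close as written: already when $n=1$ (a case the proposition certainly covers) there are no indices $j\ne i_0$ at all, the standing hypothesis is vacuous, your ``private prime'' observation is vacuous, and the argument produces no contradiction whatsoever. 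The strict containment $(u_{i_0},v)R\subsetneq\pi_1 C\cap R$ you derive is correct but does not by itself contradict anything.

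The paper's route for $\mathrm{(i)}\Rightarrow\mathrm{(ii)}$ is different and avoids this difficulty entirely. Rather than factor $u_j$ into primes of $C$, the paper shows directly that each non-unit $u_j$ is \emph{irreducible} in $C$ by exploiting the inclusion $R\hookrightarrow C\hookrightarrow R[u_1^{-1},\dots,u_n^{-1}]$: any factorization $u_j=c_1c_2$ in $C$ can be cleared of denominators to give an equation $h_1h_2=u_j\cdot(\text{monomial in the }u_i)$ in the UFD $R$, forcing each $c_l$ to be (up to a unit of $C$) a Laurent monomial $\prod_{i\le s}u_i^{p_i}/\prod_{i>s}u_i^{p_i}$. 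The standing hypothesis is then applied --- not to a prime factor in $C$, but to the relation $\prod_{i\le s}u_i^{p_i}\in u_iC\cap R=(u_i,v)R$ for each $i>s$ --- to force the denominator factors into $C^{*}$, and a symmetric argument handles $c_2$. This is what you should replace your sketch with.
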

		\begin{proof}
			
			$\rm (ii) \Leftrightarrow (iii):$
			For every $j, 1 \leqslant j \leqslant m$, we have
			\begin{equation}\label{xj}
				{\frac{C}{u_{j}C} \cong \left(\frac{R}{(u_j,v)}\right)^{[1]}.}
			\end{equation}
			Note that $u_j$ is either a prime element or a unit in $C$ according as $\frac{C}{u_{j}C}$ is either an integral domain or a zero ring. Hence the equivalence follows from \eqref{xj}.
			
			\smallskip
			\noindent
			$\rm (i) \Rightarrow (ii):$
			Note that $R \hookrightarrow C \hookrightarrow R[u_1^{-1},\ldots, u_n^{-1}]$. Suppose $u_j \notin C^{*}$ for some $j, 1 \leqslant j \leqslant m$. Since $C$ is a UFD, it is enough to show that $u_j$ is irreducible. Suppose $u_j= c_1c_2$ for some $c_1,c_2 \in C$. If $c_1,c_2 \in R$, then either $c_1 \in R^{*}$ or $c_2 \in R^{*}$, as $u_j$ is irreducible in $R$.
			Therefore, we can assume that at least one of them is not in $R$. Suppose $c_1 \notin R$. Let $c_1= \frac{h_1}{u_1^{i_1}\cdots u_n^{i_n}}$ and $c_2= \frac{h_2}{u_1^{l_1}\cdots u_n^{l_n}}$, for some $h_1,h_2 \in R$ and $i_s,l_s \geqslant 0$, $1 \leqslant s \leqslant n$. Therefore, we have 
			\begin{equation}\label{h}
				h_1h_2= u_j(u_1^{i_1+l_1} \cdots u_n^{i_n+l_n}).
			\end{equation}
			As $c_1 \notin R$, using \eqref{h}, without loss of generality, we can assume that  
			\begin{equation}\label{a11}
				c_1= \lambda \frac{\prod_{i\leqslant s}u_i^{p_i}}{\prod_{i= s+1}^{n}u_{i}^{p_{i}}},\text{for some } \lambda \in C^{*}\text{ and }  s < n,
			\end{equation}	
			where $ p_i \geqslant 0 \text{ for } 1\leqslant i\leqslant n, \text{~and~} p_i > 0 \text{~for~} i \geqslant s+1.$

			Now when $n = 1$ or $p_i=0$ for every $i \leqslant s$, then $c_1 \in C^{*}$, and we are done. If not, then  $n> 1$ and without loss of generality, we assume that $p_1>0$.
			
			Therefore, from \eqref{a11}, we have 
			$u_1^{p_1} \ldots u_s^{p_s} \in  u_iC \cap R= \left( u_i, v \right)R$ for every $i \geqslant s+1$.
			Hence by the given hypothesis, for every $i \geqslant s+1$, we get that $(u_i, v)R=R$, i.e., $u_i \in C^*$. 
			Thus we get 
			\begin{equation}\label{c}
				c_1=\mu \prod_{i\leqslant s} u_i^{p_i},
			\end{equation}
			for some $\mu \in C^*$ and hence
			$$
			\mu c_2= \frac{u_j}{\prod_{i\leqslant s} u_i^{p_i}}.
			$$
			If $\prod_{i\leqslant s} u_i^{p_i} \in u_jR$, then $c_2 \in C^*$ and we are done. If not, then
			$u_j \in u_iC \cap R=(u_i, v)R$ for every $i \leqslant s$ with $p_i>0$. 
			If for such an $u_i$ with $i \leqslant s$ and $p_i>0$, $(u_i, v)R$  is a proper ideal then we get a contradiction by the given hypothesis. Therefore, for all such $u_i$, $(u_i,v)R=R$ i.e., $u_i\in C^{*}$ and hence by \eqref{c}, $c_1 \in C^*$ and we are done.

			Therefore, we obtain that $u_j$ must be an irreducible element in $C$  and hence prime in $C$.  
			
			\smallskip
			\noindent
			$\rm (ii) \Rightarrow (i):$ 
			Without loss of generality we assume that $u_1,\ldots,u_{i-1} \in C^{*}$ and $u_{i},\ldots,u_n$ are primes in $C$
			for some $i$, $1\leqslant i \leqslant n$. 
			Since $C[u_{1}^{-1}, \ldots, u_{n}^{-1}]= C[u_{i}^{-1},\ldots,u_{n}^{-1}]=R[ u_{1}^{-1}, \ldots , u_{n}^{-1}]$ is a UFD, by Nagata's criterion for UFD (\cite[Theorem~20.2]{matr}), we obtain that $C$ is a UFD. 
		\end{proof}
		%		\begin{rem}\thlabel{Remark 3}
			%			{\rm Observe that $\prod_{j \neq i}{a_j}^{s_j} \notin (a_i,b)R$, for arbitrary integer $s_j\geqslant 0$ not all zero, is needed only to prove statement (ii) assuming statement (i) in \thref{ufdg}.}
			%		\end{rem}

		Next we deduce a necessary condition for the affine domain $A$ to be a UFD. 
		\begin{lem}\thlabel{UFDline}
			Suppose that $A$ is a UFD. Then either $f(Z,T)$ is irreducible in $k[Z,T]$ or $f(Z,T)\in k^{*}$.
		\end{lem}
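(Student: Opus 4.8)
The plan is to exhibit $A$ as a simple birational extension of a polynomial ring and then invoke Proposition~\ref{ufdg}. If $f(Z,T)\in k^{*}$ there is nothing to prove, so I would assume $f\notin k^{*}$; since $f\neq 0$ by hypothesis, $f$ is then a non-constant polynomial, and the goal becomes to show that $f$ is irreducible in $k[Z,T]$. Suppose, toward a contradiction, that $A$ is a UFD while $f=f_{1}f_{2}$ with $f_{1},f_{2}\in k[Z,T]\setminus k$.

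The key observation is that, writing $B:=k[\X,Z,T]=k^{[m+2]}$ and $v:=f(Z,T)+h(\X,Z,T)\in B$, one has $A\cong \dfrac{B[Y]}{(\alpha(\X)Y-v)}$, a simple birational extension of the UFD $B$. Here $\alpha\neq 0$ (as $\alpha\notin k$) and $v\neq 0$: every prime factor $p_{i}$ of $\alpha$ in $k[\X]$ divides $h$ but has positive $\X$-degree, so $f+h=0$ would force $f=0$. I would then fix the prime factorization $\alpha=\prod_{i=1}^{n}p_{i}^{s_{i}}$ in $k[\X]$, each $p_{i}$ remaining prime in $B$, and verify the hypothesis of Proposition~\ref{ufdg} (with $R=B$, $u=\alpha$, $v$ as above, $u_{i}=p_{i}$): namely, that for every $i$ with $(p_{i},v)B$ proper, $\prod_{j\neq i}p_{j}^{s_{j}}\notin (p_{i},v)B$ for all $s_{j}\geqslant 0$. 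This is exactly where condition~\eqref{Cond} enters: since $p_{i}\mid h$ we get $v\equiv f\pmod{p_{i}}$, hence $(p_{i},v)B=(p_{i},f)B$ and $B/(p_{i},f)B\cong D_{i}/(f)$, where $D_{i}:=\bigl(k[\X]/(p_{i})\bigr)[Z,T]$ is a domain. As $f$ is a nonzero non-constant element of $k[Z,T]$, it is a non-unit of $D_{i}$, and a nonzero element of the subring $k[\X]/(p_{i})$ cannot lie in the principal ideal $fD_{i}$ (degree reasons in $Z,T$); therefore $\prod_{j\neq i}p_{j}^{s_{j}}\in(p_{i},f)B$ would force $p_{i}\mid\prod_{j\neq i}p_{j}^{s_{j}}$ in $k[\X]$, which never happens. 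The same computation shows $(p_{i},v)B=(p_{i},f)B$ is always a proper ideal.

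Proposition~\ref{ufdg} now applies: since $A$ is a UFD, condition~(iii) there yields that $(p_{i},f)B=(p_{i},v)B$ is a prime ideal of $B$ for every $i$. Choosing any $i$ (one exists, since $\alpha\notin k$), this means $D_{i}/(f)$ is an integral domain, i.e., $f$ is irreducible in $D_{i}=\bigl(k[\X]/(p_{i})\bigr)[Z,T]$. But $f=f_{1}f_{2}$ with $f_{1},f_{2}$ non-constant, hence non-units in $D_{i}$ as well — a contradiction. Hence $f$ is irreducible in $k[Z,T]$, as required.

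The only step requiring genuine care is the verification of the hypothesis of Proposition~\ref{ufdg}: the reduction $(p_{i},v)B=(p_{i},f)B$, which is precisely what condition~\eqref{Cond} buys, together with the elementary fact that a nonzero element of a domain that is ``constant in $Z,T$'' cannot be divisible by a non-constant polynomial. Everything else is formal bookkeeping.
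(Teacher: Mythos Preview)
Your proposal is correct and follows essentially the same route as the paper: both set $R=k[\X,Z,T]$, recognize $A$ as $R[Y]/(\alpha Y - v)$ with $v=f+h$, use condition~\eqref{Cond} to reduce $(p_i,v)R$ to $(p_i,f)R$, verify the hypothesis of Proposition~\ref{ufdg} via the degree argument in $Z,T$, and then read off from clause~(iii) that $f$ is irreducible (or a unit) in $(k[\X]/(p_i))[Z,T]$, hence in $k[Z,T]$. The only difference is cosmetic: you phrase the endgame as a contradiction from a putative factorization $f=f_1f_2$, whereas the paper concludes directly.
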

		\begin{proof}			
			Let $\alpha(\X) = \prod_{i=1}^{n}p_i(\X)^{l_i}$ be a prime factorization of $\alpha$ in $k[\X]$. Putting $R=k[\X,Z,T]$, $u = \alpha(\X)$, $u_i=p_i,\, 1\leqslant i \leqslant n$ and $v=f(Z,T)+h(\X,Z,T)$ in \thref{ufdg}, we have $A=\frac{R[Y]}{(uY-v)}$. 
			Let 
			$$R_i:= \frac{R}{u_iR} = \frac{k[\X]}{(p_i)}[Z,T]$$
			and $x_{i1},\dots, x_{im},z_i,t_i$ denote the images of $\X,Z,T$ respectively in $R_i$, for $1~\leqslant~i~\leqslant~n$.
			Note that $vR_i = f(z_i,t_i)R_i$, for all $i,\,1\leqslant i\leqslant n$.  Therefore, for any $s_j\geqslant0$, $\prod_{j \neq i} p_j(x_{i1},\dots,x_{im})^{s_j} \notin vR_i$ whenever $vR_i$ is a proper ideal.
			%That is whenever $(u_i, v)R :=(p_i(\X), F(\X,Z,T))k[\X,Z,T]$ is a proper ideal, $\prod_{j \neq i} p_j(X)^{s_j} \notin (p_i(X), F(X,Z,T))k[X,Z,T]$.
			Therefore, the assumption in \thref{ufdg} is satisfied and hence either $(p_1,f(Z,T))R$ is a prime ideal in $R$ or $(p_1,f(Z,T))R=R$. 
			Thus, either $f(z_1,t_1)$ is irreducible in $R_1$ or it is a unit in $R_1$. Hence it follows that either $f(Z,T)$ is irreducible in $k[Z,T]$ or $f(Z,T)\in k^{*}$.
		\end{proof}
		The next example shows that the converse of the above lemma is not true in general. 
		\begin{ex}
			{\rm
				Let 
				$$
				A=\dfrac{\mathbb{R}[X,Y,Z,T]}{((1+X^2)Y-(1+Z^2))}.
				$$
				Note that here $f(Z,T)=1+Z^2$ is irreducible in $\mathbb{R}[Z,T]$.
				Let $x$ be the image of $X$ in $A$.
				Further as
				$\frac{A}{(1+x^2)A} \cong \frac{\mathbb{C}[Y,Z,T]}{(1+Z^2)}$, it follows that $(1+x^2)$ is neither a unit nor a prime element in $A$. Thus, by \thref{ufdg}, it follows that $A$ is not a UFD. 
				%			Let $k$ be a non-perfect field of positive characteristic $p$. Therefore, there exists $\lambda\in k$ such that $\lambda=\beta^p$ and $\beta \in \overline{k}\setminus k$. Let
				%			$$ B=\dfrac{k[X_1,X_2,Y,Z,T]}{((X_1^p+\lambda)(X_2^p+\lambda)Y - (Z^p+\lambda T^p))}$$
				%			with
				%			$a(X_1,X_2)=(X_1^p+\lambda)(X_2^p+\lambda)$ and $f(Z,T)= Z^p+\lambda T^p\in k[Z,T]$. Let $x_1,x_2$ be the image of $X_1,X_2$ in $B$ respectively.
			}
		\end{ex}
		%		Then $B$ has the following properties.
		%		\begin{enumerate}[\rm(i)]
			%			\item $f(Z,T)$ is irreducible in $k[Z,T]$ and $f(Z,T)= (Z+\beta T)^p$ in $k(\beta)[Z,T]$.
			%			\item $\frac{B}{(X_1^p+\lambda)}\cong \frac{k[X_1,X_2,Y,Z,T]}{(X_1^p+\lambda,Z^p+\lambda T^p)}\cong \frac{k(\beta)[X_2,Y,Z,T]}{(Z+\beta T)^p}$, therefore, $x_1^p+\lambda$ is not a prime element of $B$. Similarly, $x_2^p+\lambda$ is not a prime element of $B$.
			%			\item $x_1^p+\lambda, x_2^p+\lambda\notin B^{*}$. Hence by \thref{irr}, $x_1^p+\lambda$ and $ x_2^p+\lambda$ both are irreducible in $B$.
			%			\item By $\rm(ii)$ and $\rm(iii)$ $B$ is not a UFD.
			%		\end{enumerate}
		However, the converse of \thref{UFDline} is true if $f(Z,T)$ is irreducible in $\overline{k}[Z,T]$ or $f(Z,T)\in k^{*}$.
		\begin{lem}\thlabel{UFDC}
			Suppose that either $f(Z,T)$ is irreducible in $\overline{k}[Z,T]$ or $f(Z,T)\in k^{*}$. Then $A\otimes_k L$ is a UFD, for every algebraic extension $L$ of $k$.
		\end{lem}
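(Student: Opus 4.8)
The plan is to apply \thref{ufdg} over the UFD $R := L[\X, Z, T]$, with $u := \alpha(\X)$ and $v := f(Z,T) + h(\X, Z, T)$, so that $C := R[Y]/(uY - v)$ is precisely $A \otimes_k L$; this $C$ is an integral domain (being a $k$-subalgebra of the domain $A \otimes_k \overline{k}$, as $A$ is $k$-free). Let $\alpha = \prod_{i=1}^{N} q_i^{m_i}$ be the prime factorization of $\alpha$ in $L[\X]$ (note $\alpha \notin L$, since $\alpha \notin k$). First I would record that each $q_i$ divides $h$ in $R$: since $q_i$ divides $\alpha$, it divides some prime factor $p$ of $\alpha$ in $k[\X]$, and $p \mid h$ in $k[\X,Z,T]$ by hypothesis, so $q_i \mid h$ in $R$. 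In particular $v \neq 0$, for otherwise $q_1$ would divide $f \in L[Z,T]$, which is impossible for a non-unit of $L[\X]$. Writing $D_i := L[\X]/(q_i)$, a domain, the image of $v$ in $R/q_iR = D_i[Z,T]$ equals $f(Z,T)$, whence $(q_i, v)R = (q_i, f)R$ and
\[
R/(q_i,v)R \;\cong\; D_i[Z,T]/(f) \;=\; D_i \otimes_L \bigl(L[Z,T]/(f)\bigr).
\]

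Next I would check the hypothesis demanded by \thref{ufdg}. Fix $i$ with $(q_i, f)R$ a proper ideal; then $f$ is not a unit, hence is a nonconstant polynomial, so $D_i \hookrightarrow D_i[Z,T]/(f)$ is injective (degrees add in $D_i[Z,T]$ as $D_i$ is a domain). For arbitrary exponents $s_j \geqslant 0$, the image of $\prod_{j \neq i} q_j^{s_j}$ in $D_i$ is nonzero because $q_i \nmid q_j$ in the UFD $L[\X]$ for $j \neq i$; hence its image in $R/(q_i,f)R \cong D_i[Z,T]/(f)$ is nonzero, i.e.\ $\prod_{j \neq i} q_j^{s_j} \notin (q_i, f)R$, which is exactly the hypothesis of \thref{ufdg}.

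It then remains to verify condition (iii) of \thref{ufdg}, namely that $R/(q_i,v)R \cong D_i[Z,T]/(f)$ is an integral domain or the zero ring for each $i$. If $f \in k^*$ it is the zero ring, so assume $f$ is irreducible in $\overline{k}[Z,T]$. Since $L/k$ is algebraic, $\overline{L} = \overline{k}$, and because $\overline{k}$ is algebraically closed, $f$ remains irreducible over every field extension of $\overline{k}$; applying this to $\overline{K_i} \supseteq \overline{L} = \overline{k}$, where $K_i := \operatorname{Frac}(D_i)$, we get that $f$ is irreducible in $K_i[Z,T]$, so $K_i[Z,T]/(f)$ is a domain. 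As $L[Z,T]/(f)$ is $L$-flat, tensoring the inclusion $D_i \hookrightarrow K_i$ with it over $L$ gives an embedding
\[
D_i[Z,T]/(f) \;=\; D_i \otimes_L \bigl(L[Z,T]/(f)\bigr) \;\hookrightarrow\; K_i \otimes_L \bigl(L[Z,T]/(f)\bigr) \;=\; K_i[Z,T]/(f),
\]
so $D_i[Z,T]/(f)$ is a domain; condition (iii) holds, and \thref{ufdg} yields that $C = A \otimes_k L$ is a UFD. The step needing genuine care is the passage from ``$f$ irreducible over $\overline{k}$'' to ``$L[Z,T]/(f)$ geometrically integral over $L$'', which is what licenses the base change to the possibly transcendental field $K_i$; the rest is bookkeeping with \thref{ufdg}, and the case $f \in k^*$ is harmless since then every $(q_i, v)R = R$.
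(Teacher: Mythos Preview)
Your proof is correct and follows essentially the same route as the paper's: both reduce to showing that each prime factor $q_i$ of $\alpha$ in $L[\X]$ is prime (or a unit) in $A_L$, which amounts to checking that $D_i[Z,T]/(f)$ is a domain (or zero) via passage to $K_i = \operatorname{Frac}(D_i)$ and irreducibility of $f$ over any field containing $\overline{k}$. The only cosmetic difference is that you package the argument through the paper's own \thref{ufdg}, whereas the paper argues directly with Nagata's criterion; since the proof of \thref{ufdg}(ii)$\Rightarrow$(i) is itself Nagata's criterion, the two arguments are the same in substance.
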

		\begin{proof}
			Let $L$ be an algebraic extension of $k$, $A_L:= A\otimes_k L$ and $\alpha=\prod_{i=1}^{n}p_i^{l_i}$ be a prime factorization of $\alpha$ in $L[\X]$. 
			If $f(Z,T)\in k^*$, then 
			$$
			A_L=A_L[p_1^{-1}, \dots, p_n^{-1}]=L[\X,Z,T,p_1^{-1}, \dots, p_n^{-1}]$$ is a UFD.  
			Suppose that $f(Z,T)\notin k^*$  and that $f(Z,T)$ is irreducible in $\overline{k}[Z,T]$.  
			Since 
			$$
			A_L[p_1^{-1}, \dots, p_n^{-1}]=L[\X,Z,T,p_1^{-1}, \dots, p_n^{-1}]
			$$ 
			is a UFD, by Nagata's criterion for UFD (\cite[Theorem~20.2]{matr}), it is enough to show that 
			$p_1, \dots, p_n$ are primes in $A_L$. We show this below.
			
			Fix $i\in \{1,\dots,n\}$. Let $E_i= \frac{L[\X]}{(p_i)}$ and $F_i$ be the field of fractions of $E_i$. Since $f(Z,T)$ is irreducible in $\overline{k}[Z,T]$, it follows that $f$ is irreducible in $F_i[Z,T]$. Therefore, $\frac{F_i[Z,T]}{(f(Z,T))}$ is an integral domain and hence $\frac{E_i[Z,T]}{(f(Z,T))}$ is an integral domain.
			Thus
			$$
			\dfrac{A_L}{p_iA_L}=\dfrac{E_i[Y,Z,T]}{(f(Z,T))}=\left(\dfrac{E_i[Z,T]}{(f(Z,T))}\right)^{[1]}
			$$
			is an integral domain and hence $p_i$ is a prime in $A_L$. 
		\end{proof}

		We now determine some conditions on $\alpha$ and $h$  such that the regularity of $A$ implies the regularity of the ring $k[Z,T]/(f(Z,T))$.		
		
		\begin{lem}\thlabel{smooth2}
			Let $k$ be a perfect field and $R= k[X_1,\ldots,X_m,Y,Z,T]/(G)$ be an affine domain 
			where $G:= vY+h-f(Z,T)$, for some $v \in k[\X]$, $h \in k[\X,Z,T]$ and $f(Z,T)\in k[Z,T]$.				
			Let $v= \prod_{i=1}^{n} p_i^{s_i}$ be a prime factorization of $v$ in $k[X_1,\ldots,X_m]$. Suppose that one of the following conditions is satisfied:
			\begin{enumerate}[\rm (I)]
				\item $s_i=1$ for some $i$ and $p_i\mid h$.
				
				\item $s_i>1$ for every $i$ and at least one of the following holds.
				\begin{itemize}
					\item [\rm (a)] $p_j^2 \mid h$ for some $j$.
					
					%			\item [\rm (b)] $(p_j, \tilde{h}, f, f_Z, f_T)k[X_1,\ldots,X_m,Z,T]$ is a proper ideal for some $j$, where $\tilde{h}=h/(p_1\cdots p_n)$ and $p_l \nmid \tilde{h}$ for every $l$, $1 \leqslant l \leqslant n$.
					
					\item [\rm (b)] $(p_j, (p_j)_{X_1}, \ldots, (p_j)_{X_m}) k[X_1,\ldots,X_m]$ is a proper ideal for some $j$ and $p_j\mid h$.
					
					\item [\rm (c)] 
					%						If $n \geqslant 2$, then
					$(p_l, p_j)k[X_1,\ldots,X_m]$ is a proper ideal for some $l\neq j$ and $p_lp_j\mid h$.
				\end{itemize}
			\end{enumerate}
			Then $k[Z,T]/(f(Z,T))$ is a regular ring whenever $R$ is a regular ring. 
		\end{lem}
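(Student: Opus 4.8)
The approach is to combine the Jacobian criterion with the perfectness of $k$ and argue by contraposition. Since $k$ is perfect, a finitely generated $k$-algebra is regular iff it is smooth over $k$, iff its base change to $\overline{k}$ is regular; and for a nonzero $G\in\overline{k}[u_1,\dots,u_N]$ the ring $\overline{k}[u_1,\dots,u_N]/(G)$ is regular iff the system $G=G_{u_1}=\dots=G_{u_N}=0$ has no solution in $\overline{k}^N$. Accordingly I will show: if $k[Z,T]/(f)$ is not regular, then there is a point $P=(a_1,\dots,a_m,b,c,d)\in\overline{k}^{m+3}$ at which $G$ and all of its partial derivatives vanish, whence $R$ is not regular. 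The partials of $G=vY+h-f(Z,T)$ are $G_Y=v$, $G_Z=-f_Z$, $G_T=-f_T$ and $G_{X_l}=v_{X_l}Y+h_{X_l}$; and, since a non-regular $k[Z,T]/(f)$ forces $f$ to be a nonzero non-unit, the non-regularity of $k[Z,T]/(f)$ furnishes a point $(c,d)\in\overline{k}^2$ with $f(c,d)=f_Z(c,d)=f_T(c,d)=0$.

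Fix such a $(c,d)$. For any $a\in\overline{k}^m$ and $b\in\overline{k}$, the point $P=(a,b,c,d)$ automatically has $G_Y(P)=v(a)$, $G_Z(P)=G_T(P)=0$ and $G(P)=v(a)b+h(a,c,d)$, so it suffices to choose $a$, and then $b$, with $v(a)=0$, $h(a,c,d)=0$ and $v_{X_l}(a)b+h_{X_l}(a,c,d)=0$ for all $l$. In Case~(I), write $v=p_iw$ with $w=\prod_{j\neq i}p_j^{s_j}$ (coprime to $p_i$ since $s_i=1$) and $h=p_ih_1$; this coprimality, which persists over $\overline{k}$, yields a point $a\in V(p_i)\setminus V(w)$, and with $b:=-h_1(a,c,d)/w(a)$ one computes, using $v_{X_l}\equiv (p_i)_{X_l}w$ and $h_{X_l}\equiv (p_i)_{X_l}h_1 \pmod{p_i}$, that $v_{X_l}(a)b+h_{X_l}(a,c,d)=(p_i)_{X_l}(a)\bigl(w(a)b+h_1(a,c,d)\bigr)=0$. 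In Case~(II) the relevant primes occur in $v$ with exponent $\geqslant 2$, which forces each of them to divide every $v_{X_l}$; one then takes $a$ to be a zero of $p_j$ in subcase~(a) (using $p_j^2\mid h$, so also $p_j\mid h_{X_l}$ for all $l$), a common zero of $p_j$ and of all $(p_j)_{X_l}$ in subcase~(b) (such $a$ exists since that ideal is proper, and $p_j\mid h$), and a common zero of $p_l$ and $p_j$ in subcase~(c) (such $a$ exists since $(p_l,p_j)k[\X]$ is proper, and $p_lp_j\mid h$). In each subcase a brief check gives $v(a)=h(a,c,d)=0$ and $v_{X_l}(a)=h_{X_l}(a,c,d)=0$ for all $l$, so $P=(a,b,c,d)$ is singular on $\Spec R$ for any $b$. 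In all cases $R$ is not regular, the desired contradiction.

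I expect the subcase computations in Case~(II) to be mechanical; the one genuinely delicate point is Case~(I), where $Y$ cannot be chosen freely but must solve $w(a)Y+h_1(a,c,d)=0$, which in turn requires a zero of $p_i$ at which $w$ does not vanish --- exactly what the hypothesis $s_i=1$ guarantees. A secondary, minor point is that the reduction to $\overline{k}$ does no harm: divisibility by the $p_i$ and properness of the listed ideals are preserved under the extension $k\subseteq\overline{k}$, since coprimality of polynomials and (non)emptiness of zero sets are insensitive to it.
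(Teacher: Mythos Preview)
Your argument is correct and follows essentially the same route as the paper's: both apply the Jacobian criterion (valid since $k$ is perfect) and run the same case analysis; the paper phrases it ideal-theoretically, showing the Jacobian ideal $I=(G,G_Y,G_{X_1},\dots,G_{X_m},G_Z,G_T)$ is contained in a proper ideal of $D=k[\X,Y,Z,T]$, while you phrase it geometrically, exhibiting an explicit common zero over $\overline{k}$ --- these are equivalent via the Nullstellensatz. In particular, your handling of Case~(I), where $b$ must be chosen as $-h_1(a,c,d)/w(a)$, matches the paper's observation that $(\m,\beta Y+\tilde h,p_1)$ is proper precisely because $p_1\nmid\beta$.

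One small slip to fix: you wrote $G_Z=-f_Z$ and $G_T=-f_T$, but since $h\in k[\X,Z,T]$ may depend on $Z,T$, in fact $G_Z=h_Z-f_Z$ and $G_T=h_T-f_T$. This does not damage your proof: in every case you choose $a$ with $p(a)=0$ for some prime $p\in k[\X]$ dividing $h$, and since $p$ involves only the $X_i$'s, also $p\mid h_Z$ and $p\mid h_T$, so $h_Z(a,c,d)=h_T(a,c,d)=0$ and the conclusion $G_Z(P)=G_T(P)=0$ survives.
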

		
		\begin{proof}
			Let $D=k[\X,Y,Z,T]$ and	$I=(G, G_Y, G_{X_1}, \ldots, G_{X_m},  G_Z, G_T)D$.
			Since $k$ is a perfect field, $R$ is a regular ring if and only if $I=D$. Suppose that $R$ is a regular ring. 
			
			\smallskip
			\noindent
			(I) 	Without loss of generality, we assume that $s_1=1$ and hence $p_1\mid h$. 
			Let $\beta= \prod_{i \geqslant 2} p_i^{s_i}$ and $\tilde{h}={h}/{p_1}$. 
			Therefore, $G= vY+h-f=p_1(\beta Y+\tilde{h})-f$.
			Then it can be observed that $I \subseteq (f, f_Z, f_T, \beta Y+  \tilde h, p_1)D$. Since $R$ is regular, we have 
			\begin{equation}\label{f}
				(f, f_Z, f_T, \beta Y+  \tilde h, p_1)= D.
			\end{equation}
			Suppose, if possible, that $(f, f_Z, f_T) \subseteq \m k[Z,T]$ for some maximal ideal $\m$ of $k[Z,T]$. 
			Then as $p_1\nmid \beta$,
			$(\m, \beta Y+ \tilde h, p_1)$ is a proper ideal of $D$ containing $(f, f_Z, f_T, \beta Y+  \tilde h, p_1)D$, a contradiction. Thus $k[Z,T]/(f)$ must be a regular ring.

			\smallskip
			\noindent
			(II) Suppose condition (a) is satisfied. Then $I \subseteq (p_j, f, f_Z,f_T)D$
			and as $I=D$, we have $(f,f_Z,f_T)k[Z,T]=k[Z,T]$. 
			Now suppose conditions (b) or (c) is satisfied. When $p_j\mid h$, then $I \subseteq ( p_j,(p_j)_{X_1},\dots,(p_j)_{X_m}, f, f_Z, f_T)D$ and when $p_jp_l\mid h$ then $I\subseteq (p_l, p_{j}, f, f_Z, f_T)D$.  Since $v= \prod_{i=1}^{n} p_i^{s_i}\in k[\X]$ and as $I=D$, we have $(f,f_Z,f_T)k[Z,T]=k[Z,T]$ in either case. 
			
			Thus it follows that $k[Z,T]/(f)$ is a regular ring if any one of the conditions ($a$), ($b$) or ($c$) is satisfied.  
		\end{proof}
		However, the following result shows that when $k[Z,T]/(f)$ is a regular ring then $A$ is always regular.
		\begin{lem}\thlabel{smooth1}
			Let $k$ be a perfect field and $R= k[X_1,\ldots,X_m,Y,Z,T]/(G)$ an affine domain where $G= vY+h-f(Z,T)$, for some $v \in k[\X]$, $h \in k[\X,Z,T]$ and $f(Z,T)\in k[Z,T]$. Suppose that every prime factor of $v$ divides $h$ in $k[\X,Z,T]$. 
			Then $R$ is a regular ring if $k[Z,T]/(f(Z,T))$ is a regular ring.
		\end{lem}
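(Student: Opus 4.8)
The proof will run parallel to that of \thref{smooth2}, using the Jacobian criterion over the perfect field $k$. Writing $D=k[\X,Y,Z,T]$ and
\[
I=(G,\,G_Y,\,G_{X_1},\dots,G_{X_m},\,G_Z,\,G_T)\,D,
\]
regularity of $R=D/(G)$ is equivalent to $I=D$. Since we now assume $k[Z,T]/(f(Z,T))$ is regular, the goal is to prove $I=D$. I would argue by contradiction: suppose $I\subseteq\mathfrak M$ for some maximal ideal $\mathfrak M$ of $D$, and derive that $f,f_Z,f_T$ all lie in a proper ideal of $k[Z,T]$, which contradicts regularity of $k[Z,T]/(f)$ (via the Jacobian criterion applied to the plane curve defined by $f$).

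The crux is to feed the divisibility hypothesis into $\mathfrak M$. From $G_Y=v\in\mathfrak M$ one obtains a prime factor $p_j$ of $v$ with $p_j\in\mathfrak M$ (the case where $v$ is a unit is disposed of separately, since then $R\cong k^{[m+2]}$ is already regular). By hypothesis $p_j\mid h$ in $k[\X,Z,T]$, say $h=p_j\tilde h$; then $h\in\mathfrak M$, whence $G=vY+h-f\in\mathfrak M$ forces $f\in\mathfrak M$. Differentiating and using $v,p_j\in k[\X]$ gives $G_Z=p_j\tilde h_Z-f_Z$ and $G_T=p_j\tilde h_T-f_T$; since $p_j,G_Z,G_T\in\mathfrak M$, we conclude $f_Z,f_T\in\mathfrak M$. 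Thus $f,f_Z,f_T\in\mathfrak M\cap k[Z,T]$, which is a proper ideal of $k[Z,T]$ (it misses $1$), so $(f,f_Z,f_T)k[Z,T]\neq k[Z,T]$. This contradicts the fact that, $k$ being perfect, regularity of $k[Z,T]/(f)$ forces $(f,f_Z,f_T)k[Z,T]=k[Z,T]$. Hence $I=D$ and $R$ is regular.

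I do not expect a genuine obstacle: the core is a one-line Jacobian computation inside $\mathfrak M$. The part requiring attention is the bookkeeping around degenerate $v$ and $f$. If $v$ is a unit then $R\cong k^{[m+2]}$ is regular and there is nothing to prove; otherwise $v$ has a prime factor, and then $f\neq 0$ automatically (if $f=0$ that prime factor would divide $v$, $h$ and hence $G$, making $G$ an associate of an element of $k[\X]$, which is absurd since $\deg_Y G=1$). Since moreover $f\in\mathfrak M$ rules out $f$ being a unit, the final step legitimately invokes the Jacobian criterion for the honest plane curve defined by $f$ over the perfect field $k$. Beyond these remarks the proof is routine.
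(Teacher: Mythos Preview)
Your proof is correct and follows essentially the same route as the paper's: argue by contradiction via the Jacobian criterion, pick a maximal ideal $\mathfrak M$ containing $(G,G_Y,G_Z,G_T)$, extract a prime factor $q$ of $v=G_Y$ lying in $\mathfrak M$, and use $q\mid h$ (hence $q\mid h_Z$, $q\mid h_T$) to force $f,f_Z,f_T\in\mathfrak M\cap k[Z,T]$, contradicting regularity of $k[Z,T]/(f)$. The paper's version is slightly terser (it does not spell out the degenerate cases $v\in k^*$ or $f=0$ that you handle), but the argument is the same.
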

		\begin{proof}
			Suppose, if possible, that $R$ is not a regular ring.
			Since $k$ is a perfect field, there exists a maximal ideal $M$ of $k[\X,Y, Z,T]$ such that $(G, G_Y, G_Z, G_T) \subseteq M$. Hence, there exists a prime factor $q$ of $v=G_Y$ such that $q \in M$.
			Now as $q\mid h,\, q\mid h_Z$ and $q\mid h_T$, we have   
			$$
			(f, f_Z, f_T) \subseteq M \cap k[Z,T]
			$$
			and hence $k[Z,T]/(f(Z,T))$ is not a regular ring. Therefore, our assumption is not possible and hence $R$ is  a regular ring.
		\end{proof}
		
		\begin{rem}
			{\em Note that the hypersurfaces defined by the polynomials
				$H= \alpha Y-~f(Z,T)$, where $\alpha \in k[\X]$,
				are contained in the family considered in \thref{smooth2}.}
		\end{rem}

		\section{On Theorems A and B}\label{THAB}

		We shall prove Theorems~A~and~B in Subsection~\ref{MAB}. 
		We first recall some relevant results from $K$-theory in Subsection~\ref{k-theory}. The reader can skip this subsection, if he/she is familiar with these results.
		
		\subsection{Some preliminary results on $K$-theory}\label{k-theory}
		
		\smallskip
		
		In this subsection we consider a Noetherian ring $R$ and 
		note some $K$-theoretic aspects of $R$ (cf. \cite{bass}, \cite{bgl}).
		Let $\mathscr{M}
		(R)$ denote the category of all finitely generated $R$-modules and
		$\mathscr{P}(R)$ denote the category of all finitely generated projective $R$-modules. Let $G_{0}(R)$ 
		and $G_{1}(R)$ respectively denote the {\it Grothendieck group} and the {\it Whitehead group} 
		of the category  $\mathscr{M}(R)$. Let $K_{0}(R)$ and $K_{1}(R)$ respectively denote the {\it Grothendieck group} 
		and the {\it Whitehead group} of the category  $\mathscr{P}(R)$.
		
		$G_0(R)$ is an Abelian group presented by generators $[M]$, where $[M]$ denotes the isomorphism class of objects of $\mathscr{M}(R)$, subject to the relations $[M] = [M^{\prime}] + [M^{\prime\prime}]$ for an exact sequence $0\rightarrow M^{\prime}\rightarrow M \rightarrow M^{\prime\prime}\rightarrow 0$ in $\mathscr{M}(R)$. 
		Let $\mathscr{M}^{\Z}(R)$ denote the category whose objects are pairs $(M,\sigma)$, where $M\in \mathscr{M}(R)$ and $\sigma$ is an $R$-linear automorphism of $M$. A morphism $\phi: (M,\sigma)\rightarrow (M^{\prime}, \sigma^{\prime})$ in $\mathscr{M}^{\Z}(R)$ is defined to be an $R$-linear map $\phi:M\rightarrow M^{\prime}$ such that $\sigma^{\prime}\phi = \phi \sigma$. $G_1(R)$ is generated by $[M,\sigma]$, where $[M,\sigma]$ denotes the isomorphism class of objects of $\mathscr{M}^{\Z}(R)$, subject to the relations $[M,\sigma] = [M^{\prime},\sigma^{\prime}] + [M^{\prime\prime},\sigma^{\prime\prime}]$ for an exact sequence $0\rightarrow (M^{\prime},\sigma^{\prime})\rightarrow (M,\sigma) \rightarrow (M^{\prime\prime},\sigma^{\prime\prime})\rightarrow~0$ in $\mathscr{M}^{\Z}(R)$ and $[M,\sigma\eta]= [M,\sigma]+[M,\eta]$ whenever $M\in \mathscr{M}(R)$ and $\sigma$ and $\eta$ are automorphisms of $M$. 
		%		Hence $G_1(R)$ is also an Abelian group.
		%	
		%	The next two results from \cite[APPENDIX (4.10)]{bass} describe when the images of two elements from $\mathscr{M}(R)$ (respectively $\mathscr{M}^{\Z}(R)$) are equal in $G_0(R)$  (respectively $G_1(R)$). 
		
		The next result from \cite[Appendix (4.10)]{bass} describe when the images of two elements from $\mathscr{M}(R)$ are equal in $G_0(R)$. 
		
		\begin{lem}\thlabel{K0}
			For $M,N\in \mathscr{M}(R)$, the following statements are equivalent.
			\begin{itemize}
				\item [\rm (i)] $[M]=[N]$ in $G_0(R)$.
				
				\item[\rm (ii)] There exist exact sequences $$0\rightarrow W^{\prime} \rightarrow X \rightarrow W^{\prime \prime} \rightarrow 0 \text{ and } 0\rightarrow W^{\prime} \rightarrow Y \rightarrow W^{\prime \prime} \rightarrow 0$$ in $\mathscr{M}(R)$ such that
				$M \oplus X \cong N \oplus Y$.
			\end{itemize}
		\end{lem}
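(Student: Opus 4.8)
The plan is to work directly from the presentation of $G_0(R)$ as the free abelian group $F$ on the set of isomorphism classes of objects of $\mathscr{M}(R)$, modulo the subgroup $\mathcal{R}$ generated by the elements $[B]-[A]-[C]$ coming from short exact sequences $0\to A\to B\to C\to 0$ in $\mathscr{M}(R)$; here one uses that over the Noetherian ring $R$ these isomorphism classes form a set and that finite direct sums of short exact sequences of finitely generated modules are again short exact.

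The implication $\mathrm{(ii)}\Rightarrow\mathrm{(i)}$ is formal: the two exact sequences in $\mathrm{(ii)}$ give $[X]=[W']+[W'']=[Y]$ in $G_0(R)$, and the isomorphism $M\oplus X\cong N\oplus Y$ gives $[M]+[X]=[N]+[Y]$, whence $[M]=[N]$. For $\mathrm{(i)}\Rightarrow\mathrm{(ii)}$ I would argue as follows. If $[M]=[N]$ in $G_0(R)$, then $[M]-[N]\in\mathcal{R}$, so in $F$ one may write
$$[M]-[N]=\sum_{i=1}^{r}\bigl([B_i]-[A_i]-[C_i]\bigr)-\sum_{j=1}^{s}\bigl([B_j']-[A_j']-[C_j']\bigr),$$
where each $0\to A_i\to B_i\to C_i\to 0$ and each $0\to A_j'\to B_j'\to C_j'\to 0$ is a short exact sequence in $\mathscr{M}(R)$ (an integer coefficient is split into that many copies of the same relation, so the coefficients may be taken to be $\pm1$). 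Moving every negatively-signed isomorphism class to the other side produces an equality in $F$ between two formal sums with non-negative coefficients; since $F$ is free on isomorphism classes, the two sides are literally the same multiset of classes, and therefore the associated finite direct sums are isomorphic:
$$M\oplus\Bigl(\textstyle\bigoplus_i A_i\oplus\bigoplus_i C_i\oplus\bigoplus_j B_j'\Bigr)\ \cong\ N\oplus\Bigl(\textstyle\bigoplus_i B_i\oplus\bigoplus_j A_j'\oplus\bigoplus_j C_j'\Bigr).$$

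It then remains to upgrade this to the shape demanded by $\mathrm{(ii)}$. Writing $P=\bigoplus_i A_i$, $Q=\bigoplus_i B_i$, $S=\bigoplus_i C_i$ and $P'=\bigoplus_j A_j'$, $Q'=\bigoplus_j B_j'$, $S'=\bigoplus_j C_j'$, the displayed isomorphism reads $M\oplus(P\oplus S\oplus Q')\cong N\oplus(Q\oplus P'\oplus S')$, so I set $X=P\oplus S\oplus Q'$ and $Y=Q\oplus P'\oplus S'$. Summing the given exact sequences over $i$ and over $j$ yields $0\to P\to Q\to S\to 0$ and $0\to P'\to Q'\to S'\to 0$. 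Taking the direct sum of the latter with the split sequences $0\to P\to P\to 0\to 0$ and $0\to 0\to S\to S\to 0$ gives $0\to P\oplus P'\to X\to S\oplus S'\to 0$, while taking the direct sum of the former with $0\to P'\to P'\to 0\to 0$ and $0\to 0\to S'\to S'\to 0$ gives $0\to P\oplus P'\to Y\to S\oplus S'\to 0$. Thus $W'=P\oplus P'$ and $W''=S\oplus S'$ do the job.

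The argument is entirely formal once the presentation of $G_0(R)$ is available, so I do not expect a genuine obstacle; the only thing requiring care is the bookkeeping — tracking signs when passing from the relation in $F$ to the multiset identity, and then arranging the padding by split exact sequences so that the sequences built on $X$ and on $Y$ acquire the same kernel $W'$ and cokernel $W''$.
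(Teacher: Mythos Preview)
Your argument is correct and is essentially the standard proof of this fact. The paper itself does not give a proof at all: the lemma is simply quoted from \cite[Appendix~(4.10)]{bass}, so there is no in-paper argument to compare against. Your write-up supplies exactly the classical reasoning (free-abelian-group presentation of $G_0$, cancellation of multisets in the free group, then padding with split sequences to force common end-terms $W'$ and $W''$), which is the content of the cited reference.
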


		Adapting the proof of \thref{K0} and with the help of Whitehead's Lemma one can sketch a similar condition describing when two elements from $\mathscr{M}^{\bZ}(R)$ are equal in $G_1(R)$.
		
		\begin{lem}\thlabel{K1}
			For $(M,\sigma_M),(N,\sigma_N)\in \mathscr{M}^{\Z}(R)$, the following statements are equivalent.
			\begin{itemize}
				\item [\rm (i)] $[M, \sigma_M]=[N, \sigma_N]$ in $G_1(R)$.
				
				\item[\rm (ii)] There exist exact sequences 
				$$0\rightarrow (W^{\prime}, \sigma_{W^{\prime}}) \rightarrow (X,\sigma_X) \rightarrow (W^{\prime \prime}, \sigma_{W^{\prime \prime}}) \rightarrow 0$$ and 
				$$0\rightarrow (W^{\prime}, \epsilon\sigma_{W^{\prime}}) \rightarrow (Y,\sigma_Y) \rightarrow (W^{\prime \prime}, \sigma_{W^{\prime \prime}}) \rightarrow 0$$
				in $\mathscr{M}^{\Z}(R)$, where $\epsilon$ is composition of unipotent automorphisms on $W^{\prime}$, such that
				$(M \oplus X, \sigma_{M} \oplus \sigma_{X}) \cong (N \oplus Y, \sigma_N \oplus \sigma_Y)$.
			\end{itemize}
		\end{lem}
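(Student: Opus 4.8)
The plan is to follow the proof of \thref{K0} (Bass's Appendix~(4.10)), carrying the automorphisms along throughout and invoking Whitehead's Lemma to absorb the one extra relation present in $G_1(R)$.

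For the implication $\rm(ii)\Rightarrow\rm(i)$ I would argue directly with the defining relations of $G_1(R)$. The two given short exact sequences give $[X,\sigma_X]=[W',\sigma_{W'}]+[W'',\sigma_{W''}]$ and $[Y,\sigma_Y]=[W',\epsilon\sigma_{W'}]+[W'',\sigma_{W''}]=[W',\epsilon]+[W',\sigma_{W'}]+[W'',\sigma_{W''}]$. Now $[L,\mathrm{id}_L]=[L,\mathrm{id}_L\,\mathrm{id}_L]=2[L,\mathrm{id}_L]$ forces $[L,\mathrm{id}_L]=0$ for every $L\in\mathscr{M}(R)$; and a unipotent automorphism $u$ of a finitely generated module $L$ admits a finite $u$-stable filtration (for instance by the submodules $(u-\mathrm{id}_L)^iL$) on whose successive quotients $u$ acts as the identity, so the exact-sequence relation gives $[L,u]=0$. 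Writing $\epsilon$ as a product of unipotent automorphisms and applying the multiplicativity relation repeatedly, $[W',\epsilon]=0$, hence $[X,\sigma_X]=[Y,\sigma_Y]$. Finally the isomorphism $(M\oplus X,\sigma_M\oplus\sigma_X)\cong(N\oplus Y,\sigma_N\oplus\sigma_Y)$ yields $[M,\sigma_M]+[X,\sigma_X]=[N,\sigma_N]+[Y,\sigma_Y]$, whence $[M,\sigma_M]=[N,\sigma_N]$.

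For $\rm(i)\Rightarrow\rm(ii)$ I would present $G_1(R)=F/H$, where $F$ is the free abelian group on isomorphism classes of objects of $\mathscr{M}^{\Z}(R)$ and $H$ is generated by the exact-sequence relators $[A,\alpha]-[A',\alpha']-[A'',\alpha'']$ (attached to short exact sequences) and the multiplicativity relators $[B,\beta\gamma]-[B,\beta]-[B,\gamma]$. The hypothesis says $(M,\sigma_M)-(N,\sigma_N)$ is a finite $\pm1$-combination of these relators; transposing all negative terms produces a genuine equality of formal sums of isomorphism classes in $F$, which, being a bijection of the summands, becomes an actual isomorphism of objects of $\mathscr{M}^{\Z}(R)$ after forming the total direct sums. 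To bring it to the form of $\rm(ii)$ I would imitate \thref{K0}: collect all subobjects $(A_i',\alpha_i')$ into a single $(W',\sigma_{W'})$, all quotients $(A_i'',\alpha_i'')$ into a single $(W'',\sigma_{W''})$, and distribute the middle terms, using the split sequences $0\to(A_i',\alpha_i')\to(A_i',\alpha_i')\oplus(A_i'',\alpha_i'')\to(A_i'',\alpha_i'')\to0$ to balance the two sides, so that exactly one short exact sequence results on each side with matching outer terms. The only new point is the multiplicativity relators: given $[B,\beta\gamma]-[B,\beta]-[B,\gamma]$, Whitehead's Lemma, applied in $\operatorname{GL}_2(\End_R(B))$, expresses $\operatorname{diag}(\beta\gamma,\mathrm{id}_B)$ as $\operatorname{diag}(\beta,\gamma)$ composed with a product $\epsilon$ of elementary (hence unipotent) automorphisms of $B\oplus B$; consequently $(B,\beta)\oplus(B,\gamma)\cong(B\oplus B,\operatorname{diag}(\beta,\gamma))$ and $(B,\beta\gamma)\oplus(B,\mathrm{id}_B)\cong(B\oplus B,\epsilon\operatorname{diag}(\beta,\gamma))$ are related by a pair of split sequences with common quotient $0$, exactly of the shape of $\rm(ii)$ — this is the source of the twist $\epsilon\sigma_{W'}$. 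Absorbing these auxiliary blocks (each supplying a harmless $(B,\mathrm{id}_B)$ that appears on both sides of the final isomorphism) into the collection step, with the twists oriented onto the subobject of the second sequence (legitimate since $\epsilon^{-1}$ is again a product of unipotents), produces the required pair of sequences and isomorphism.

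The step I expect to be the main obstacle is precisely this bookkeeping in $\rm(i)\Rightarrow\rm(ii)$: merging the finitely many short exact sequences contributed by the various relators into a \emph{single} one on each side with the outer terms $(W',\sigma_{W'})$ and $(W'',\sigma_{W''})$ agreeing across the two sequences, managing the $(B,\mathrm{id}_B)$ summands forced by the Whitehead reduction, and checking that every unipotent correction can indeed be pushed onto the common subobject so that only the second sequence acquires the twist. Each individual move is routine; as the lead-in sentence anticipates, it is keeping all the automorphisms consistent while adapting the combinatorics of \thref{K0} that requires care.
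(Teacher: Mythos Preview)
Your proposal is correct and follows precisely the route the paper indicates: the paper gives no detailed proof but merely states that the result is obtained by adapting the proof of \thref{K0} and invoking Whitehead's Lemma, which is exactly what you do. Your sketch in fact supplies more detail than the paper does, including the verification that $[L,u]=0$ for unipotent $u$ and the explicit use of Whitehead's Lemma in $\operatorname{GL}_2(\End_R(B))$ to handle the multiplicativity relators.
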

		
		For $i \geqslant 2$, the definitions of $G_{i}(R)$ and $K_{i}(R)$ 
		can be found in (\cite{sr}, Chapters 4 and 5).
		Let $C$ be a Noetherian ring and $\phi: R\rightarrow C$ be a flat ring homomorphism. Then for any $i\geqslant 0$, $G_i(\phi) : G_i(R)\rightarrow G_i(C)$ is
		induced by the functor $\otimes_R S: \mathscr{M}(R)\rightarrow \mathscr{M}(S)$, defined by $M\rightarrow M\otimes_R S$ (cf. \cite[5.8]{sr}).
		
		Next we recall a few theorems regarding  $G_i$'s. The following results can be found in \cite[Proposition~5.16 and Theorem~5.2]{sr}.
		%	
		%	\begin{thm}\thlabel{lexact}
			%		Let $t$ be a regular element of $R$. Then the inclusion map $j:R \hookrightarrow R[t^{-1}]$ and the natural surjection map $\pi: R \rightarrow \frac{R}{tR}$ induce the following long exact sequence of groups:
			%		\begin{equation*}
				%			\begin{tikzcd}
					%				\cdots\arrow[r] &G_{i}(\frac{R}{tR}) \arrow[r, "\pi_{*}"] &G_{i}(R) \arrow[r, "j^{*}"] &G_{i}(R[t^{-1}]) \arrow[r] &\cdots \arrow[r] &G_{0}(R[t^{-1}]) \arrow[r] &0
					%			\end{tikzcd}
				%		\end{equation*}
			%	\end{thm}
		
		\begin{thm}\thlabel{fcom}
			Let $C$ be a Noetherian ring and $\phi: R \rightarrow C$ be a flat ring homomorphism. Let $t$ be a regular element of $R$ and $u:=\phi(t)$. Then the natural maps $\bar{\phi}: \frac{R}{tR}\rightarrow \frac{C}{uC}$ and $t^{-1}\phi : R[t^{-1}]\rightarrow C[u^{-1}]$ induce the following commutative diagram of long exact sequences of groups for $i\geqslant 1:$
			\begin{equation*}
				\begin{tikzcd}
					\cdots\arrow[r] &G_{i}(\frac{R}{tR}) \arrow[r]\arrow[d,"G_i(\overline{\phi})"] & G_{i}(R)\arrow[r]\arrow[d,"G_i(\phi)"] & G_{i}(R[t^{-1}])\arrow{r}\arrow[d,"G_i(t^{-1}\phi)"] & G_{i-1}(\frac{R}{tR}) \arrow{r}\arrow[d, "G_{i-1}(\overline{\phi})"] &\cdots  \\
					\cdots\arrow[r] &G_{i}(\frac{C}{uC}) \arrow[r] & G_{i}(C)\arrow[r] &G_{i}(C[u^{-1}]) \arrow{r}  & G_{i-1}(\frac{C}{uC}) \arrow{r} &\cdots.
				\end{tikzcd}
			\end{equation*}
			%			where $\phi $ induces the vertical maps.
		\end{thm}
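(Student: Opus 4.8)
The plan is to deduce the theorem from Quillen's localization and d\'evissage theorems together with the naturality of these constructions under flat base change: both rows will be realized as the standard localization long exact sequence in $G$-theory, the vertical arrows as the base-change homomorphisms, and the only real content is that the resulting ladder commutes.

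First I would set up each row. For a Noetherian ring $R$ and an element $t\in R$, write $\mathscr{M}_t(R)\subseteq \mathscr{M}(R)$ for the Serre subcategory of finitely generated $R$-modules annihilated by some power of $t$. Quillen's localization theorem applied to $\mathscr{M}_t(R)\hookrightarrow \mathscr{M}(R)$, together with the exact equivalence $\mathscr{M}(R)/\mathscr{M}_t(R)\simeq \mathscr{M}(R[t^{-1}])$ induced by $R[t^{-1}]\otimes_R(-)$, gives a long exact sequence
\[
\cdots\to G_i(\mathscr{M}_t(R))\to G_i(R)\to G_i(R[t^{-1}])\to G_{i-1}(\mathscr{M}_t(R))\to\cdots .
\]
Quillen's d\'evissage theorem, applied to the finite filtration $M\supseteq tM\supseteq t^2M\supseteq\cdots$ of an arbitrary object $M$ of $\mathscr{M}_t(R)$ (whose successive quotients are killed by $t$), then identifies $G_i(R/tR)\cong G_i(\mathscr{M}_t(R))$. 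Splicing yields the top row; the same construction with $u=\phi(t)$ gives the bottom row, where one notes that $u$ is again a regular element of $C$ because $t$ is regular and $\phi$ is flat.

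Next I would produce the vertical maps and check their compatibilities. Since $\phi$ is flat, $-\otimes_R C$ is exact on $\mathscr{M}(R)$ and induces $G_i(\phi)$. As $t^nM=0$ forces $u^n(M\otimes_R C)=0$, this functor restricts to an exact functor $\mathscr{M}_t(R)\to \mathscr{M}_u(C)$, which on passing to the quotient categories corresponds to $-\otimes_{R[t^{-1}]}C[u^{-1}]$ on $\mathscr{M}(R[t^{-1}])\to \mathscr{M}(C[u^{-1}])$ (using $R[t^{-1}]\otimes_R C=C[u^{-1}]$), a flat map that induces $G_i(t^{-1}\phi)$. Moreover $C/uC\cong C\otimes_R R/tR$, so $\overline{\phi}\colon R/tR\to C/uC$ is flat (base change of a flat map) and induces $G_i(\overline{\phi})$; and for an $R/tR$-module $M$ one has $M\otimes_R C\cong M\otimes_{R/tR}C/uC$, so the restriction of $-\otimes_R C$ to $t$-torsion modules is precisely the functor $-\otimes_{R/tR}C/uC\colon \mathscr{M}(R/tR)\to \mathscr{M}(C/uC)$, and it is compatible with the inclusions $\mathscr{M}(R/tR)\hookrightarrow \mathscr{M}_t(R)$ and $\mathscr{M}(C/uC)\hookrightarrow \mathscr{M}_u(C)$. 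I would then invoke the functoriality of the localization fibration sequence in the exact functor of Serre pairs $\bigl(\mathscr{M}_t(R),\mathscr{M}(R)\bigr)\to\bigl(\mathscr{M}_u(C),\mathscr{M}(C)\bigr)$ to obtain a morphism of long exact sequences in which every square not touching the d\'evissage isomorphisms commutes, and the naturality of d\'evissage with respect to the commuting square of exact functors with vertices $\mathscr{M}(R/tR),\mathscr{M}_t(R),\mathscr{M}(C/uC),\mathscr{M}_u(C)$ to replace $G_i(\mathscr{M}_t(R))$ by $G_i(R/tR)$ and $G_i(\mathscr{M}_u(C))$ by $G_i(C/uC)$ throughout. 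Combining the two compatibilities gives exactly the asserted commutative diagram.

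The step I expect to be the main obstacle is this last one: verifying that the base-change functor on the torsion subcategories is intertwined with $G_i(\overline{\phi})$ under the d\'evissage isomorphisms (i.e.\ the naturality of Quillen's d\'evissage with respect to the exact functor $-\otimes_R C$). Once that is in hand, everything else is formal bookkeeping with flat base change and the functoriality of Quillen's exact-sequence machinery --- which is essentially the content of \cite[Proposition~5.16 and Theorem~5.2]{sr}.
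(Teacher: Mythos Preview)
Your proposal is correct and is essentially the standard argument: the paper does not give its own proof of this statement but simply cites it from \cite[Proposition~5.16 and Theorem~5.2]{sr}, and what you have outlined---Quillen's localization sequence for the Serre subcategory of $t$-power-torsion modules, d\'evissage to identify its $G$-theory with $G_*(R/tR)$, and naturality of both under the exact functor $-\otimes_R C$---is exactly the content of those references. Your identification of the d\'evissage compatibility as the one non-formal step is accurate, and your observation that $u=\phi(t)$ is again regular (so that the bottom row is available) and that $C/uC\cong C\otimes_R R/tR$ (so that $\overline{\phi}$ is flat) are the right ingredients.
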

		
		\begin{thm}\thlabel{split}
			For  an indeterminate $T$ over $R$, 
			%		the following statements hold:
			%		
			%		\begin{enumerate}[\rm(i)]
				%			\item 
				the map $G_{i}(R) \rightarrow G_{i}(R[T])$, induced by the inclusion $R \hookrightarrow R[T]$, is an isomorphism, for all $i \geqslant 0$.
				Hence $G_i(k[\X])~= ~G_i(k)$, for all $i\geqslant 0$.
				
				%			\item    
				%			Let $j : R \rightarrow R[T, T^{-1} ]$ be the inclusion map. Then for every $i \geqslant 1$, the following sequence  is split exact. 
				%			$$
				%			\begin{tikzcd}
					%				0 \arrow[r] & G_{i}(R[T]) \arrow[r, "j^{*}"] &G_{i}(R[T,T^{-1}]) \arrow[r] & G_{i-1}(R) \arrow[r] &0
					%			\end{tikzcd}.
				%			$$
				%			i.e. for $i \geqslant 1$, $G_{i}(R[T,T^{-1}]) \cong G_{i}(R[T]) \oplus G_{i-1}(R) \cong G_{i}(R) \oplus G_{i-1}(R)$.
				%			In particular, for $i=0$, $j^{*}$ is an isomorphism.
				%		\end{enumerate}
		\end{thm}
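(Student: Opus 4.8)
To prove the homotopy‑invariance isomorphism $G_i(R)\cong G_i(R[T])$, I would separate the two halves: a formal split‑injectivity argument and the genuine surjectivity statement (Quillen's homotopy theorem); the assertion $G_i(k[\X])\cong G_i(k)$ then follows by iterating the one‑variable case.

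\textbf{Split injectivity.} Since $\iota\colon R\hookrightarrow R[T]$ is flat, $G_i(\iota)$ is the map induced by extension of scalars $M\mapsto M\otimes_R R[T]$. The augmentation $\pi\colon R[T]\twoheadrightarrow R$, $T\mapsto 0$, has $\operatorname{pd}_{R[T]}R=1$ (Koszul resolution $0\to R[T]\xrightarrow{\,T\,}R[T]\to R\to 0$), hence is of finite Tor‑dimension and induces a base‑change map $\pi^{*}\colon G_i(R[T])\to G_i(R)$, $[M]\mapsto\sum_{j}(-1)^{j}\bigl[\operatorname{Tor}^{R[T]}_{j}(M,R)\bigr]$. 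For an extended module, $\operatorname{Tor}^{R[T]}_{j}(M\otimes_R R[T],R)$ vanishes for $j\geqslant 1$ and equals $M$ for $j=0$, so $\pi^{*}\circ G_i(\iota)=\mathrm{id}_{G_i(R)}$. Hence $G_i(\iota)$ is split injective for every $i\geqslant 0$.

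\textbf{Surjectivity (the homotopy‑invariance theorem).} This is the substantive step; I would argue geometrically on $\mathbb{P}^{1}_R$. Let $p\colon\mathbb{P}^{1}_R\to\Spec R$ be the structure morphism, $j\colon\Spec R\hookrightarrow\mathbb{P}^{1}_R$ the section at infinity (a Cartier divisor) and $u\colon\A^{1}_R=\Spec R[T]\hookrightarrow\mathbb{P}^{1}_R$ the complementary open immersion. The projective bundle formula for $G$‑theory gives $G_i(\mathbb{P}^{1}_R)=p^{*}G_i(R)\oplus j_{*}G_i(R)$, each summand carrying $G_i(R)$ isomorphically (indeed $p_{*}p^{*}=\mathrm{id}$ and $p_{*}j_{*}=\mathrm{id}$, using $p\circ j=\mathrm{id}$ and properness of $p$). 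Quillen's localisation theorem for $G$‑theory --- the scheme‑theoretic counterpart of the mechanism behind \thref{fcom} --- yields the exact sequence
\[
\cdots\to G_i(R)\xrightarrow{\,j_{*}\,}G_i(\mathbb{P}^{1}_R)\xrightarrow{\,u^{*}\,}G_i(R[T])\xrightarrow{\,\partial\,}G_{i-1}(R)\to\cdots.
\]
Because $j_{*}$ is split injective at each level, every $\partial$ vanishes and $u^{*}$ induces $G_i(R[T])\cong G_i(\mathbb{P}^{1}_R)/j_{*}G_i(R)\cong p^{*}G_i(R)\cong G_i(R)$; unwinding the identifications, this isomorphism is $u^{*}\circ p^{*}=G_i(\iota)$. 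As we are only recalling a standard result, in the paper one may instead simply cite \cite[Theorem~5.2]{sr}.

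\textbf{The ring $k[\X]$ and the main obstacle.} Applying the one‑variable isomorphism along the tower $k\subseteq k[X_1]\subseteq\cdots\subseteq k[\X]$ gives $G_i(k[\X])\cong G_i(k)$ for all $i\geqslant 0$. Everything apart from surjectivity is formal; the genuine difficulty is the surjectivity of $G_i(\iota)$ --- the $\mathbb{A}^{1}$‑invariance of $G$‑theory --- which relies on the nontrivial inputs used above (the projective bundle formula and the $G$‑theory localisation sequence; in the classical treatment, Quillen's dévissage and resolution theorems). This invariance is precisely the feature distinguishing $G$‑theory from $K$‑theory, where no analogous statement holds in general.
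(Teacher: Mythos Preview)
Your sketch is correct and follows the standard Quillen route to $\mathbb{A}^1$-invariance of $G$-theory (split injectivity via the augmentation of finite Tor-dimension, surjectivity via the projective bundle formula for $\mathbb{P}^1_R$ combined with the localisation sequence, then iteration). You have even anticipated the paper's actual treatment: the paper does \emph{not} prove this statement at all. It is recorded in the preliminaries as a known result, with the sentence ``The following results can be found in \cite[Proposition~5.16 and Theorem~5.2]{sr}'' preceding it, and no argument is given. So there is nothing to compare against beyond the citation you yourself suggest.

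One small remark on your write-up: the projective bundle formula you invoke is the $G$-theoretic one (valid over an arbitrary Noetherian base, not only a regular one), and it is worth saying so explicitly, since the $K$-theoretic version would not suffice here. Otherwise the argument is clean and self-contained, and strictly more informative than what the paper provides.
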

		Now we recall some well-known facts about $G_0(R)$ and $G_1(R).$
		
		\begin{rem}\thlabel{rmk1}
			\rm{ 
				\begin{enumerate}[\rm(i)]
					\item 
					For a regular ring $R$, $G_{i}(R)=K_{i}(R)$, for every $i \geqslant 0$. 
					In particular, 
					\begin{enumerate}[\rm(a)]
						\item $G_0(k[\X])= G_0(k )= K_0(k)= \Z$
						\item $G_{1}(k[\X])=G_1(k)= K_1(k)=k^{*}$
					\end{enumerate}
					
					\item
					There is a canonical group homomorphism $\theta: R^{*}\hookrightarrow G_1(R)$ defined by $\theta(u)=[R,\sigma_{u}]$, for $u\in R^{*}$, where $\sigma_{u}:R\rightarrow R$ is the $R$-linear automorphism defined by $\sigma_{u}(r)=ru$ for $r\in R$.
					%				
					%	\item  Let $\phi: R\rightarrow C$ be an injective flat homomorphism of Noetherian rings. Then the map $G_1(\phi):G_1(R)\rightarrow G_1(C)$ induced by $\phi$, maps the subgroup $R^{*}$ of $G_1(R)$ injectively into the subgroup $C^{*}$ of $G_1(C)$.				 
				\end{enumerate}
			}
		\end{rem}
		\subsection{Main Theorems}\label{MAB}
		
		\smallskip
		
		We first prove a main step towards Theorem~A. The authors thank Professor S. M. Bhatwadekar for fruitful discussions which led to the proof of the following lemma.

		\begin{lem}\thlabel{lem4}
			Let $k$ be an algebraically closed field. 
			Suppose that $C$ is a regular affine $k$-domain, $R$ is a reduced affine $k$-algebra and the map $R \hookrightarrow R\otimes_k C$ induces surjective maps $G_i(R) \rightarrow G_i(R\otimes_k C)$ for $i=0,1$. Then the canonical inclusion $\tau: k \hookrightarrow C$ induces isomorphisms of $K_i$-groups for $i=0,1$ and hence $K_0(C)=\bZ$ and $K_1(C)=k^*$.	
		\end{lem}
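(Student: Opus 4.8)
\emph{Proof plan.} The plan is to argue entirely in $G$-theory and use that, for the regular rings $k$ and $C$, one has $G_i=K_i$ for $i=0,1$ (\thref{rmk1}(i)); so it is enough to prove that the flat pullback $\tau^{*}\colon G_i(k)\to G_i(C)$ is an isomorphism for $i=0,1$. The mechanism is to realise $\tau^{*}$, at the level of $G_i$, as a retract of the map $c^{*}\colon G_i(R)\to G_i(R\otimes_k C)$ induced by $c\colon R\hookrightarrow R\otimes_k C$ --- which is surjective by hypothesis --- and for this I need $k$-rational points of $\Spec R$ and $\Spec C$ together with the functoriality of $G_i$ under base change along maps of finite Tor-dimension.

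First I would fix the points. Since $k$ is algebraically closed and $C$ is an affine $k$-domain, pick a maximal ideal $\mathfrak{n}\subset C$ with $C/\mathfrak{n}=k$ and let $\delta\colon C\to k$ be the corresponding retraction of $\tau$. Since $R$ is reduced and $k$ is perfect, the regular locus of $\Spec R$ is a non-empty open set; as $k=\overline{k}$ it has a $k$-point, so pick a maximal ideal $\mathfrak{m}\subset R$ with $R_{\mathfrak{m}}$ regular and $R/\mathfrak{m}=k$, and let $\epsilon\colon R\to k$ be the resulting retraction of the structure map $j\colon k\hookrightarrow R$. Now $k=R/\mathfrak{m}$, being finitely generated over $R$ and supported only at the regular point $\mathfrak{m}$, has $\operatorname{pd}_R(k)=\dim R_{\mathfrak{m}}<\infty$ (Serre), and similarly $\operatorname{pd}_C(k)<\infty$. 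Hence $\epsilon$, $\delta$ and their flat base changes $p:=\epsilon\otimes_k\mathrm{id}_C\colon R\otimes_k C\to C$ and $q:=\mathrm{id}_R\otimes_k\delta\colon R\otimes_k C\to R$ are all of finite Tor-dimension, and for $i=0,1$ they induce pullbacks $\epsilon^{*},\delta^{*},p^{*},q^{*}$ on $G_i$ compatible with composition with the flat pullbacks $j^{*},\tau^{*},c^{*},d^{*}$, where $d\colon C\hookrightarrow R\otimes_k C$.

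Next I would record the relations forced by the commutative square on $\{j,\tau,c,d\}$ and the transverse retractions $\{\epsilon,\delta,p,q\}$: from $\epsilon j=\mathrm{id}$, $\delta\tau=\mathrm{id}$, $qc=\mathrm{id}$, $pd=\mathrm{id}$, $cj=d\tau$, $qd=j\delta$ and $pc=\tau\epsilon$ one gets, on $G_i$, the identities $\epsilon^{*}j^{*}=\mathrm{id}$, $\delta^{*}\tau^{*}=\mathrm{id}$, $q^{*}c^{*}=\mathrm{id}$, $p^{*}d^{*}=\mathrm{id}$, $c^{*}j^{*}=d^{*}\tau^{*}$, $q^{*}d^{*}=j^{*}\delta^{*}$ and $p^{*}c^{*}=\tau^{*}\epsilon^{*}$ (the Tor-independence needed for the last two is supplied by the freeness of $R\otimes_k C$ over $C$, resp.\ over $R$). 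In particular $\delta^{*}\tau^{*}=\mathrm{id}$ makes $\tau^{*}$ injective. By hypothesis $c^{*}$ is surjective, and $q^{*}c^{*}=\mathrm{id}$ then forces $c^{*}$ to be an isomorphism with inverse $q^{*}$; hence $d^{*}=c^{*}q^{*}d^{*}=c^{*}j^{*}\delta^{*}$, and so for every $\xi\in G_i(C)$,
\[
\xi=p^{*}d^{*}(\xi)=p^{*}c^{*}j^{*}\delta^{*}(\xi)=\tau^{*}\bigl(\epsilon^{*}j^{*}\delta^{*}(\xi)\bigr)\in\operatorname{im}\tau^{*}.
\]
Thus $\tau^{*}$ is surjective, so it is an isomorphism for $i=0,1$, giving $K_0(C)=K_0(k)=\bZ$ and $K_1(C)=K_1(k)=k^{*}$.

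The point requiring care --- and the reason the chosen $k$-point of $\Spec R$ must lie in the regular locus --- is the very existence of the pullback $\epsilon^{*}\colon G_i(R)\to G_i(k)$ (and likewise $p^{*}$): one needs $k$ to admit a \emph{finite} resolution by finitely generated projective $R$-modules so that $M\mapsto M\otimes^{\mathbf{L}}_R k$ lands in bounded complexes with finitely generated homology. For $i=0$ this $\epsilon^{*}$ is the classical map $[M]\mapsto\sum_{j}(-1)^{j}[\operatorname{Tor}^{R}_{j}(M,k)]$, well defined by the long exact $\operatorname{Tor}$-sequence; for $i=1$ one attaches to a module-with-automorphism the alternating product of the determinants of the induced automorphisms on the finitely many finite-dimensional $\operatorname{Tor}$-modules. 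Granting this standard functoriality of $G_0$ and $G_1$, the rest is the formal computation above.
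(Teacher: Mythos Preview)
Your argument is correct and is a genuinely different route from the paper's.

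The paper proceeds by explicit element-chasing: given $[P_C]\in K_0(C)$ (resp.\ $[Q_C,\sigma_C]\in K_1(C)$), it pushes to $G_i(R\otimes_kC)$, uses the surjectivity hypothesis to write it as a difference of classes coming from $R$, then invokes the concrete presentations of $G_0$ and $G_1$ (\thref{K0}, \thref{K1}) to produce explicit isomorphisms. It then localises $R$ at its non-zero-divisors to reach a product of fields, projects to one component $L_1$, uses that modules over a field are free, and finally performs a spreading-out/specialisation step: the resulting identity descends to some finitely generated $k$-subalgebra $\widehat L\subseteq L_1$, and evaluating at a $k$-point of $\Spec\widehat L$ yields the desired identity over $C$. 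You instead choose a smooth $k$-point of $\Spec R$ at the outset and use the resulting retractions to exhibit $\tau^{*}$ directly as a retract of the isomorphism $c^{*}$ via the formal identities among $c^{*},d^{*},p^{*},q^{*}$.

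What each approach buys: the paper's argument is more self-contained---it uses only the elementary descriptions of $G_0$ and $G_1$ via generators and relations, never invoking pullbacks along non-flat maps---at the cost of a longer, more delicate manipulation. Your argument is shorter and more conceptual, and in fact shows $G_i(k)\to G_i(C)$ is an isomorphism for \emph{all} $i\ge 0$ simultaneously; the price is that you import the functoriality of $G$-theory pullback along maps of finite Tor-dimension (and its compatibility with composition) as a black box. Your remark that the smooth-point hypothesis on $\mathfrak m$ is what makes $\epsilon^{*}$ and $p^{*}$ exist is exactly right, and the Tor-independence you flag reduces to the freeness of $R\otimes_kC$ over each factor, so the needed base-change identities $q^{*}d^{*}=j^{*}\delta^{*}$ and $p^{*}c^{*}=\tau^{*}\epsilon^{*}$ hold.
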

		\begin{proof}
			%		We show that the inclusion $\tau: k \hookrightarrow \frac{k[Z,T]}{(f)}$ induces isomorphisms of $K_0$ and $K_1$ groups.  
			Observe that the inclusion $\tau : k\hookrightarrow C$ induces inclusions $\iota: K_0(k)=\bZ \hookrightarrow K_0\left( C \right)$ and $\eta: K_1(k)=k^* \hookrightarrow ~K_1(C)$. 
			We first show that $\iota$ is a surjective map.
			
			Let $P_C$ be a finitely generated projective $C$-module. 
			Since $G_0(R) \rightarrow G_0(R \otimes_k C)$ is surjective, there exist finitely generated $R$-modules $M_R$ and $N_R$ such that 
			$$[R \otimes_k P_C]=[M_R \otimes_k C]-[N_R \otimes_k C].$$ Therefore, by \thref{K0}, there exist exact sequences of $R \otimes_k C$-modules 
			$$0\rightarrow W^{\prime} \rightarrow T_j \rightarrow W^{\prime \prime} \rightarrow 0,\text{ for }j=1,2,$$ such that
			\begin{equation}\label{c2}
				(R \otimes_k P_C) \oplus (N_R \otimes_k C)\oplus T_1 \cong (M_R \otimes_k C) \oplus T_2.
			\end{equation} 
			Let $S$ be the set of all non-zero divisors of $ R$. Since $R$ is a reduced affine $k$-algebra, $S^{-1}R= \prod_{i=1}^{n} L_i$, where $L_i$ is a finitely generated field extension of $k$ for each $i,\, 1\leqslant i\leqslant n$.
			Therefore, from \eqref{c2}, we have
			\begin{equation}\label{c3}
				( S^{-1}R \otimes_k P_C)\oplus(S^{-1}N_R \otimes_k C)  \oplus S^{-1}T_1 \cong (S^{-1}M_R \otimes_k C) \oplus S^{-1}T_2.
				%			&\text{i.e.,}\, ( L_i \otimes_k P_C) \oplus (L_i \otimes_R T_1) = (L_i \otimes_R M_R \otimes_k C) \oplus ( L_i \otimes_R T_2), \text{ for all }i,1\leqslant i\leqslant n.
			\end{equation}
			Then from \eqref{c3}, we have,	
			\begin{equation*}\label{j6}
				(L_1\otimes_k P_C) \oplus (L_1 \otimes_R N_R \otimes_k C) \oplus (L_1 \otimes_R T_1) \cong (L_1 \otimes_R M_R \otimes_k C) \oplus ( L_1 \otimes_R T_2).
			\end{equation*}
			Further it follows that for $j=1,2$,
			$$
			0\rightarrow L_1 \otimes_R W^{\prime} \rightarrow L_1 \otimes_R T_j \rightarrow L_1 \otimes_R W^{\prime \prime} \rightarrow 0
			$$ 
			%	and 
			%	$$
			%	0\rightarrow L_i \otimes_R W^{\prime} \rightarrow L_i \otimes_R T_2 \rightarrow L_i \otimes_R W^{\prime \prime} \rightarrow 0
			%	$$
			are exact sequences of $\widetilde C:= L_1\otimes_k C$ modules.
			%		Since $L_1\otimes_{R} M_R$ and $L_1\otimes_{R} N_R$ are finite dimensional vector spaces of rank $r_1$ and $r_2$, say. Therefore, $L_1\otimes_{R} M_R \cong L_1^{r_1}$ and $L_1\otimes_{R} N_R \cong L_1^{r_2}$.
			%		 Now note that $P_C$ is a projective $C$-module of rank 1.
			Note that $L_1\otimes_{R} M_R \cong L_1^{r_1}$ and $L_1\otimes_{R} N_R \cong L_1^{r_2}$ as $L_1$-vector spaces, for some $r_1,r_2>0$.
			Now, by \thref{K0}, it follows that 
			$$[L_1 \otimes_k P_C]=[L_1\otimes_{R} M_R\otimes_k C]-[L_1\otimes_{R} N_R\otimes_k C] = [\widetilde{C}^{r_1}]-[\widetilde{C}^{r_2}]\text{ in }G_0(\widetilde{C}).$$
			Note that $\widetilde{C}$ is a regular ring as $k$ is algebraically closed. Therefore, $ G_0(\widetilde{C})=K_0(\widetilde{C})$.
			Hence, there exists integer $s\geqslant0$ such that 
			%		$$\widetilde{C}^{r_2+s}\oplus (L_1 \otimes_k P_C)\cong \widetilde{C}^{r_1+s}.$$ 
			%		Now since $L_1\otimes_k P_C$ is a rank one projective $\widetilde{C}$ module it follows that 
			\begin{equation}\label{c6}
				(L_1 \otimes_k P_C)\oplus\widetilde{C}^{r_2+s}\cong \widetilde{C}^{r_1+s}.
			\end{equation}
			Since $P_C$ is a finitely generated projective $C$-module, there exists a finitely generated $k$-algebra $\widehat{L}$ such that 
			\begin{equation}\label{c7}
				(\widehat{L} \otimes_k P_C)\oplus(\widehat{L} \otimes_k C)^{r_2+s}  \cong ( \widehat{L} \otimes_k C)^{r_1 +s}.
			\end{equation}
			Now let $\m$ be a maximal ideal of $\widehat{L}$. 
			Then $\frac{\widehat{L}}{\m}=k$, as $k$ is an algebraically closed field. 
			Therefore, as ${(\widehat{L}\otimes_k C)}/{\m(\widehat{L}\otimes_k C)}\cong C$ and $({\widehat{L}\otimes_k P_C})/{\m(\widehat{L}\otimes_k P_C)}\cong P_C$, from \eqref{c7}, we have $$[P_C]=[C^{r_1}]-[C^{r_2}] \text{ in } K_0(C).$$ 
			Therefore, the map $\iota$ is surjective, and thus $K_0(C) =\mathbb{Z}$.
			%  
			%
			%We now show the map $i$ will induce isomorphism of $K_1(k)$ and $K_1(C)$. We first note that the inclusion $i$ induces an inclusion 
			%$
			%\eta : k^* \hookrightarrow K_1(C).
			%$
			%We now show that $\eta$ is surjective. 
			
			Similarly, we show that $\eta: K_1(k)=k^* \hookrightarrow K_1(C)$ is a surjective map.
			We consider an element $[Q_C, \sigma_C] \in K_1(C)$.
			%		where  $\alpha_C \in \Aut(Q_C)$. 
			Since $G_1(R) \rightarrow G_1(R \otimes_k C)$ is surjective, we have finitely generated $R$-modules $\widetilde{M}_R$ and $\widetilde{N}_R$ with $\gamma_R \in \Aut(\widetilde{M}_R)$ and $\delta_R\in \Aut(\widetilde{N}_R)$ such that
			$$
			[R \otimes_k Q_C,  1_R \otimes \sigma_C]=[\widetilde M_R \otimes_k C, \gamma_R \otimes_k 1_C]-[\widetilde N_R \otimes_k C, \delta_R \otimes_k 1_C]
			$$  
			in $G_1(R \otimes_k C)$. Therefore, by \thref{K1}, there exist
			exact sequences in the category $\mathscr{M}^{\Z}(R \otimes_k C)$ (defined as in Subsection~\ref{k-theory})
			$$
			0\rightarrow (U^{\prime},\sigma_{U^{\prime}} )\rightarrow (T^{\prime}_1,\sigma_{T^{\prime}_1}) \rightarrow (U^{\prime \prime},\sigma_{U^{\prime\prime}}) \rightarrow 0 
			$$
			and 
			$$
			0\rightarrow (U^{\prime},\epsilon\sigma_{U^{\prime}} )\rightarrow (T^{\prime}_2,\sigma_{T^{\prime}_2}) \rightarrow (U^{\prime \prime},\sigma_{U^{\prime\prime}}) \rightarrow 0,
			$$
			%	and 
			%	$$0\rightarrow (W^{\prime},\alpha_{W^{\prime}}) \rightarrow( T_2,\alpha_{T_2}) \rightarrow (W^{\prime \prime},\alpha_{W^{\prime\prime}} )\rightarrow 0$$
			%	 with $\alpha_{T_1^{\prime}} \in \Aut(T_1^{\prime})$, $\alpha_{T_2^{\prime}} \in \Aut(T_2^{\prime})$ 
			where $\epsilon$ is composition of unipotent automorphisms on $U^{\prime}$ such that
			\begin{equation}\label{c4}
				%			&\nonumber \,\,\,\,\,\,\,\,\,\,\,\,\,
				(R \otimes_k Q_C, 1_R \otimes \sigma_C)\oplus(\widetilde N_R \otimes_k C, \delta_R \otimes 1_C)  \oplus (T_1^{\prime}, \sigma_{T_1^{\prime}}) \cong (\widetilde M_R \otimes_k C, \gamma_R \otimes 1_C) \oplus (T_2^{\prime}, \sigma_{T_2^{\prime}}).\\
				%			& \text{i.e.,} 	\left((\widetilde{N}_R \otimes_k C)\oplus( R \otimes_k Q_C) \oplus T_1^{\prime},  (\delta_R\otimes 1_C)\oplus(1_R \otimes \alpha_C )\oplus \alpha_{T_1^{\prime}}\right) = \left((\widetilde M_R \otimes_k C) \oplus T_2^{\prime}, (\gamma_R \otimes 1_C) \oplus \alpha_{T_2^{\prime}}\right) 
			\end{equation}
			Now, localising equation \eqref{c4} by $S$, and considering the first component, we have 
			%	$$
			%	\left((S^{-1}R \otimes_k P_C) \oplus S^{-1}T_1^{\prime}, 1_{S^{-1}R} \otimes \alpha_C  \oplus \alpha_{S^{-1}T_1^{\prime}}\right) = \left((S^{-1}M_R \otimes_k C) \oplus S^{-1}T_2^{\prime}, ( 1_{S^{-1}R} \otimes \beta_{R} ) \otimes 1_C \oplus \alpha_{S^{-1}T_2^{\prime}}\right)
			%	$$
			%	and thus using the fact that $S^{-1}R= \prod_{i=1}^{n} L_i$ we have 
			the following
			\begin{equation*}\label{c5}
				%			\begin{split}
					(L_1 \otimes_k Q_C ,1_{L_1} \otimes \sigma_C)\oplus
					(\widetilde{C}^{s_2}, \delta_{L_1} \otimes 1_C) \oplus (L_1 \otimes_R T_1^{\prime},\sigma_{(L_1 \otimes_R T_1^{\prime})}) 
					\cong
				\end{equation*}
				\begin{equation*}
					(\widetilde{C}^{s_1}, \gamma_{L_1} \otimes 1_C) \oplus (L_1 \otimes_R T_2^{\prime},\sigma_{(L_1 \otimes_R T_2^{\prime})}),
					%			\end{split}
			\end{equation*}
			where $L_1\otimes_R \widetilde M_R\cong L_1^{s_1}$ and $L_1\otimes_R \widetilde N_R\cong L_1^{s_2}$ as $L_1$-vector spaces, for some integers $s_1, s_2>0$ with $\gamma_{L_1}\in ~\Aut(L_1^{s_1})$ and $\delta_{L_1} \in \Aut(L_1^{s_2})$
			along with the following exact sequences in the category $\mathscr{M}^{\Z}(\widetilde{C})$
			$$
			0\rightarrow (L_1 \otimes_R U^{\prime},\sigma_{L_1\otimes_R U^{\prime}}) \rightarrow( L_1 \otimes_R T^{\prime}_1,\sigma_{L_1\otimes_R T^{\prime}_1}) \rightarrow (L_1 \otimes_R U^{\prime \prime},\sigma_{L_1\otimes_R U^{\prime\prime}}) \rightarrow 0,
			$$ 
			and
			$$
			0\rightarrow (L_1 \otimes_R U^{\prime},(\epsilon \sigma)_{L_1\otimes_R U^{\prime}}) \rightarrow( L_1 \otimes_R T^{\prime}_2,\sigma_{L_1\otimes_R T^{\prime}_2}) \rightarrow (L_1 \otimes_R U^{\prime \prime},\sigma_{L_1\otimes_R U^{\prime\prime}}) \rightarrow 0.
			$$
			Now using \thref{K1} we have, 
			$$[ L_1 \otimes_k Q_C ,  1_{L_1} \otimes \sigma_C ]+[\widetilde{C}^{s_2},\delta_{L_1} \otimes 1_C]=[\widetilde{C}^{s_1},   \gamma_{L_1} \otimes 1_C   ] \text{ in } G_1(\widetilde{C}).$$ 
			As before $G_1(\widetilde{C})=K_1(\widetilde{C})$, and
			thus there exists an integer $r>0$ and an automorphism $\tilde{\sigma}_{\widetilde{C}^r}  \in \Aut (\widetilde{C}^r)$ such that 
			\begin{equation}\label{e6}
				(L_1 \otimes_k Q_C,  1_{L_1}  \otimes \sigma_C) \oplus (\widetilde{C}^{s_2}, \delta_{ L_1} \otimes 1_C) \oplus(\widetilde{C}^r, \tilde{\sigma}_{\widetilde{C}^r}) \cong (\widetilde{C}^{s_1}, \gamma_{ L_1} \otimes 1_C) \oplus (\widetilde{C}^r, \tilde{\sigma}_{\widetilde{C}^r}).
			\end{equation}
			Since $Q_C$ is a finitely generated projective $C$-module, there exists a finitely generated $k$-algebra $\widetilde{L}$ such that  
			\begin{equation*}\label{c18}
				(\widetilde{L} \otimes_k Q_C,  1_{\widetilde{L}}  \otimes \sigma_C) \oplus((\widetilde{L} \otimes_k C)^{s_2}, \delta_{\widetilde{L}  } \otimes  1_C) \oplus 	 ((\widetilde{L} \otimes_k C)^r, \tilde{\sigma}_{( \widetilde{L}\otimes_k C )^r}) 
				\cong 
			\end{equation*}
			\begin{equation}\label{c8}((\widetilde{L} \otimes_k C)^{s_1}, \gamma_{\widetilde{L}} \otimes  1_C) \oplus ((\widetilde{L}\otimes_k C )^r, \tilde{\sigma}_{(\widetilde{L} \otimes_k C)^r}),
			\end{equation}
			for some $\tilde{\sigma}_{(\widetilde{L}\otimes_k C)^r}\in \Aut ((\widetilde{L} \otimes_k C)^r)$.
			Since $k$ is an algebraically closed, for any maximal ideal $\m$ of $\widetilde{L}$, we have $\frac{\widetilde{L}}{\m}=k$. 
			Then from \eqref{c8}, we have
			$$
			(Q_C , \sigma_C) \oplus(C^{s_2} ,  \delta_{k}  \otimes 1_C) \oplus (C ^r, \tilde{\sigma}_{C^r}) \cong 
			(C^{s_1} ,  \gamma_{k}  \otimes 1_C) \oplus (C ^r, \tilde{\sigma}_{C^r}),
			$$
			for some $\tilde{\sigma}_{{C}^r}  \in \Aut ({C}^r)$.
			Hence, we have 
			$$[Q_C , \sigma_C]=[C^{s_1}, \gamma_{k} \otimes 1_C]-[C^{s_2}, \delta_{k} \otimes 1_C] \text{ in }K_1(C).$$
			Thus, the map $\eta$ is surjective, and hence  $K_1(C)=k^*$.
		\end{proof}
		
		We now prove the first part of Theorem~A.
		
		\begin{thm}\thlabel{stablyp}
			Let $k$ be an algebraically closed field and $A$ be an affine domain as in \eqref{AA}. 
			Suppose that $C:=\frac{k[Z,T]}{(f)}$ is a regular domain and $A^{[l]}=k^{[m+l+2]}$ for some $l \geqslant 0$. Then $C=k^{[1]}$.
		\end{thm}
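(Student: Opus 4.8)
The plan is to use the $K$-theoretic machinery just developed — in particular \thref{lem4} — together with the characterization of $k^{[1]}$ in \thref{alg}, so the goal reduces to showing that $C := k[Z,T]/(f)$ is a PID with $C^* = k^*$. Since $C$ is already assumed to be a regular affine $k$-domain of dimension one, being a PID is equivalent to $\mathrm{Pic}(C) = 0$, i.e.\ $K_0(C) = \mathbb{Z}$; and we want $K_1(C) = k^*$ too (which gives $C^* = k^*$ via the homomorphism $\theta$ of \thref{rmk1}(ii), noting $C^* \hookrightarrow K_1(C) = k^* \subseteq C^*$). So it suffices to apply \thref{lem4} with this $C$ and a suitable reduced affine $k$-algebra $R$, once we verify its hypothesis: that $R \hookrightarrow R \otimes_k C$ induces surjections on $G_0$ and $G_1$.

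The natural choice is $R = E = k[\x] = k^{[m]}$. The key point is to relate $A$ (and its stable polynomial property) to $E \otimes_k C$. First I would set $\pi = \alpha(\x) \in E$; by the hypothesis that every prime factor of $\alpha$ divides $h$, we have $A/\pi A \cong (E/\pi E)[Y,Z,T]/(f) \cong (E/\pi E \otimes_k C)^{[1]}$, while on the localization $A[\pi^{-1}] = E[\pi^{-1}][z,t] = E[\pi^{-1}]^{[2]}$. The plan is to feed this into the localization long exact sequence for $G_i$ (\thref{fcom}), applied to the flat map $E \to A$ with the regular element $\pi$, and compare it with the analogous sequence for the map $E \to E \otimes_k C$ with the same element $\pi$. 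Using homotopy invariance (\thref{split}) to strip off the polynomial variables — $G_i(A[\pi^{-1}]) = G_i(E[\pi^{-1}])$, $G_i(A/\pi A) = G_i((E/\pi E)\otimes_k C)$ — and using that $A^{[l]} = k^{[m+l+2]}$ is regular with $G_i = K_i$ trivial in the relevant sense (so $G_i(A) = G_i(A^{[l]}) = G_i(k[\X])= G_i(k)$ by \thref{split} again), one should obtain that the composite $G_i(E) \to G_i(A) $ is an isomorphism, hence a five-lemma / diagram chase in the commutative ladder forces $G_i(E) \to G_i(E \otimes_k C)$ to be surjective for $i = 0, 1$.

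Once the hypothesis of \thref{lem4} is checked with $R = E$ and this $C$, the lemma yields $K_0(C) = \mathbb{Z}$ and $K_1(C) = k^*$. Then $C$ is a one-dimensional regular affine $k$-domain with trivial Picard group, hence a PID, and $C^* = k^*$; since $k$ is algebraically closed, \thref{alg} gives $C = k^{[1]}$, which is the claim.

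The main obstacle I anticipate is the bookkeeping in the comparison of the two long exact sequences: one must be careful that the maps $G_i(A) \to G_i(A[\pi^{-1}])$ and $G_i(E) \to G_i(E[\pi^{-1}])$ are compatible under the (iso) identification $G_i(A) \cong G_i(E)$ coming from $A^{[l]} = k^{[m+l+2]}$ — in particular that this last isomorphism is induced by a ring map refining $E \hookrightarrow A$, so that the ladder genuinely commutes — and that the boundary maps into $G_{i-1}(E/\pi E)$ versus $G_{i-1}((E/\pi E)\otimes_k C)$ line up. One should also take care that reducedness of $E/\pi E$ (needed so that \thref{lem4}'s argument via $S^{-1}R = \prod L_i$ applies with $R$ replaced, inside the proof, by the relevant quotient) is available, or rather arrange the application of \thref{lem4} so that only $R = E$ itself — which is a domain — enters its statement.
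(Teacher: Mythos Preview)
Your overall strategy is exactly the paper's: show $G_i(E)\to G_i(A)$ is an isomorphism via homotopy invariance and $A^{[l]}=k^{[m+l+2]}$, feed this into the localization ladder of \thref{fcom}, extract a surjection suitable for \thref{lem4}, and finish with \thref{alg}. Two concrete choices in your write-up, however, do not work as stated.

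First, you take $\pi=\alpha$ and assert $A/\pi A\cong (E/\pi E\otimes_k C)^{[1]}$. This is false in general: the hypothesis is only that each \emph{prime factor} of $\alpha$ divides $h$, not that $\alpha$ itself does. Modulo $\alpha$ the term $h$ need not vanish, so $A/\alpha A$ is $(E/\alpha E)[Y,Z,T]/(f+\bar h)$, not $((E/\alpha E)\otimes_k C)^{[1]}$. The paper instead localizes at $u:=\prod_i p_i$, the \emph{radical} of $\alpha$; since $u$ is squarefree and each $p_i\mid h$, one has $u\mid h$ and $u\mid\alpha$, whence $A/uA=((E/uE)\otimes_k C)^{[1]}$ on the nose. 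This choice simultaneously resolves your own worry in the last paragraph: $E/uE$ is automatically reduced.

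Second, you propose to apply \thref{lem4} with $R=E$ and claim the ladder forces $G_i(E)\to G_i(E\otimes_k C)$ to be surjective. The five-lemma in the ladder for the flat map $E\hookrightarrow A$ with regular element $u$ does \emph{not} produce that map: the middle column is $G_i(E/uE)\to G_i(A/uA)$, and what you obtain is that \emph{this} map is an isomorphism. After identifying $G_i(A/uA)\cong G_i((E/uE)\otimes_k C)$ via \thref{split}, the correct conclusion is that $G_i(E/uE)\to G_i((E/uE)\otimes_k C)$ is surjective, so \thref{lem4} must be invoked with $R=E/uE$, not with $R=E$. There is no natural ring map relating $A$ to $E\otimes_k C$ that would let you run a second ladder and compare; the passage to the quotient by $u$ is essential.

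With these two fixes --- replace $\alpha$ by its radical $u$, and apply \thref{lem4} with $R=E/uE$ --- your argument becomes precisely the paper's proof.
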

		\begin{proof}
			Note that $E:= k[x_1,\ldots,x_m] \hookrightarrow A$ is flat (\thref{flatness 2}). Consider the maps 
			\begin{tikzcd}
				k \arrow[r, hook, "\sigma"] & E \arrow[r,hook, "\gamma"] &A \arrow[r, hook, "\delta"] &A^{[l]}.
			\end{tikzcd}
			By \thref{split}, $ G_i(k) \xrightarrow{G_i(\sigma)} G_i(E)$, $ G_i(A) \xrightarrow{G_i(\delta)}~ G_i(A^{[l]})$ are isomorphisms and since $A^{[l]}=k^{[l+m+2]}$, $ G_i(k) \xrightarrow{G_i(\delta \gamma \sigma)} G_i(A^{[l]})$ are isomorphisms for every $i \geqslant 0$. Therefore, it follows that $G_i(\gamma): G_i(E) \rightarrow G_i(A)$ are isomorphisms for every $i \geqslant 0$.  
			Let $p_1,\ldots,p_n$ be distinct prime factors of $\alpha$ in $E$ and let 
			$u=\prod_{i=1}^{n} p_i$.
			Now by \thref{fcom}, we get the following commutative diagram.
			$$
			\begin{tikzcd}
				G_{j}(E) \arrow{r} \arrow[d, "\cong"] & G_{j}(E[u^{-1}])\arrow{r}\arrow[d,"\cong"] & G_{j-1}(\frac{E}{uE}) \arrow{r}\arrow[d] & G_{j-1}(E)\arrow{r}\arrow[d,"\cong"] & G_{j-1}(E[u^{-1}]) \arrow[d,"\cong"] \\
				G_{j}(A) \arrow{r} &
				G_{j}(A[u^{-1}])\arrow{r} & G_{j-1}(\frac{A}{uA}) \arrow{r} & 
				G_{j-1}(A) \arrow{r} &
				G_{j-1}(A[u^{-1}]).
			\end{tikzcd}
			$$
			By the Five lemma the canonical maps 
			\begin{equation}\label{r}
				G_i\left(\dfrac{E}{uE}\right) \rightarrow G_i\left(\frac{A}{uA}\right)
			\end{equation}
			are isomorphisms for every $i\geqslant 0$. 	Let $R=\frac{E}{uE}$. 
			Then $\dfrac{A}{uA}=\dfrac{R[Y,Z,T]}{(f)}=(R \otimes_k C)^{[1]}$. Note that $R$ is a reduced affine $k$-algebra.
			Thus from \eqref{r} and \thref{split}, it follows that the inclusion $R \hookrightarrow \frac{R[Z,T]}{(f)}$ induces canonical isomorphisms
			\begin{equation*}\label{c1}
				G_i(R) \rightarrow G_i\left( \frac{R[Z,T]}{(f)} \right)=G_i(R\otimes _k C), \text{ for every } i\geqslant 0.
			\end{equation*} 
			Therefore, by \thref{lem4}, we get that the canonical inclusion $k \hookrightarrow \dfrac{k[Z,T]}{(f)}$ induces isomorphisms 
			$\iota:K_0(k) \rightarrow ~K_0(C)$ and $\eta:K_1(k) \rightarrow K_1(C)$.
			Since $\eta$ maps ${k}^*$ onto $C^*$, we have $K_1(C)=C^*={k}^*$. 
			Now since $K_0(C) =K_0(k) =\mathbb{Z}$, we have  ${\rm Cl} (C)=0$ and hence $C$ is a PID. 
			Therefore, by \thref{alg}, we have $C=k^{[1]}$.
		\end{proof}	
		\begin{rem}
			{\rm The above result helps us to recognise the nontriviality of $A$ when $f$ is a regular domain in $\overline{k}[Z,T]$ and $\overline{k}[Z,T]/(f)\neq \overline{k}^{[1]}$.
				For instance, 
				$$A:=\dfrac{k[X_1,X_2,Y,Z,T]}{(X_1^2(1+X_2^2)^2Y -(ZT+Z+T)-X_1(1+X_2^2)h(X_1,X_2,Z,T))},\text{ for some }h\in k^{[4]},$$
				is not stably isomorphic to a polynomial ring over $k$.}
		\end{rem}
		
		\begin{cor}\thlabel{stably}
			Let $A$ and $H$ be as in \eqref{AA}.
			Suppose that $f(Z,T)= a_0(Z)+a_1(Z)T$, for some $a_0,a_1\in k^{[1]}$ and $A^{[l]}=k^{[m+l+2]}$ for some $l \geqslant 0$. Then $k[Z,T]=k[f]^{[1]}$ and $k[\X,Y,Z,T]=k[\X,H]^{[2]}$.	
		\end{cor}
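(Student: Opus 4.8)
The plan is to reduce to the algebraically closed case and invoke \thref{stablyp}, and then to exploit the fact that $f$ is linear in $T$ to upgrade the conclusion $k[Z,T]/(f)=k^{[1]}$ to $k[Z,T]=k[f]^{[1]}$; the second assertion of the corollary will then follow formally from \thref{RS}. To set up the reduction, note that $A^{[l]}=k^{[m+l+2]}$ and $\overline{A}^{[l]}=\overline{k}^{[m+l+2]}$ are UFDs, hence so are $A$ and $\overline{A}:=A\otimes_k\overline{k}$ (a ring is a UFD whenever a polynomial extension of it is), and $A^{*}=k^{*}$ because $A$ embeds in $A^{[l]}=k^{[m+l+2]}$, whose group of units is $k^{*}$. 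I would first exclude $f\in k^{*}$: if $f$ were a nonzero constant, then for any (necessarily non-constant) prime factor $p$ of $\alpha$ in $k[\X]$ the defining relation $\alpha Y-f-h$ reduces modulo $p$ to the unit $-f$ (using that $p\mid h$ by \eqref{Cond}), so $A/pA=0$ and hence $p\in A^{*}=k^{*}$, a contradiction. Thus $f\notin k^{*}$, and \thref{UFDline} applied to the UFD $\overline{A}$ shows that $f$ is irreducible in $\overline{k}[Z,T]$. Writing $f=a_0(Z)+a_1(Z)T$: if $a_1=0$, then $f=a_0(Z)$ is irreducible in $\overline{k}[Z]$, so $\deg a_0=1$, whence $Z$ and $f$ generate $k[Z,T]$ and $k[Z,T]=k[f]^{[1]}$ directly; so from now on I may assume $a_1\neq0$, in which case irreducibility of $f$ in $k[Z,T]$ forces $\gcd(a_0,a_1)=1$ in $k[Z]$ (a common non-unit factor $d$ would give the proper factorization $f=d\cdot\big((a_0/d)+(a_1/d)T\big)$).

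Next I would check that $\overline{A}$ is an affine domain of the form \eqref{AA} over the algebraically closed field $\overline{k}$: it is a domain since it embeds in $\overline{k}^{[m+l+2]}$, one has $\alpha\notin\overline{k}$ and $f\neq0$, and every prime factor of $\alpha$ in $\overline{k}[\X]$ divides some prime factor of $\alpha$ in $k[\X]$, hence divides $h$. Moreover $\overline{C}:=\overline{k}[Z,T]/(f)$ is an integral domain (as $f$ is irreducible over $\overline{k}$) and is regular: its singular locus, cut out by $f$, $f_Z=a_0'+a_1'T$ and $f_T=a_1$, is empty, since $a_1(Z)=0$ together with $a_0(Z)+a_1(Z)T=0$ would force a common zero of $a_0$ and $a_1$, contradicting $\gcd(a_0,a_1)=1$. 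Therefore \thref{stablyp}, applied to $\overline{A}$ over $\overline{k}$ with $\overline{C}$ a regular domain and $\overline{A}^{[l]}=\overline{k}^{[m+l+2]}$, yields $\overline{C}=\overline{k}^{[1]}$.

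To finish, a B\'ezout identity $p a_0+q a_1=1$ in $k[Z]$ shows that the image of $a_1(Z)$ is a unit in $\overline{C}$ (explicitly $a_1\cdot(q-pT)=1$ there), so $\overline{C}=\overline{C}[a_1^{-1}]=\overline{k}[Z][a_1(Z)^{-1}]$, the variable $T$ being eliminated through $a_1T=-a_0$. If $a_1$ had a zero $\lambda\in\overline{k}$, then $Z-\lambda$ would be a unit of $\overline{C}=\overline{k}[Z][a_1^{-1}]$ outside $\overline{k}^{*}$, contradicting $\overline{C}=\overline{k}^{[1]}$; hence $a_1\in\overline{k}^{*}$, and being a polynomial in $k[Z]$, $a_1\in k^{*}$. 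Consequently $T\mapsto a_1T+a_0(Z)$ is an elementary automorphism of $k[Z][T]$, so $k[Z,T]=k[Z,f]=k[f]^{[1]}$. Having obtained $k[Z,T]=k[f]^{[1]}$ in all cases, adjoining $\X$ gives $k[\X,Z,T]=k[\X,f]^{[1]}$, and \thref{RS} applied with $R=k[\X]$ (using that $H$ is irreducible in $k[\X,Y,Z,T]$ and that each prime factor of $\alpha$ divides $h$) yields $k[\X,Y,Z,T]=k[\X,H]^{[2]}$.

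The core $K$-theoretic input is already packaged in \thref{stablyp}, so I expect the delicate part to be the handling of the degenerate configurations: excluding $f\in k^{*}$, treating the case $a_1=0$ separately, and --- most importantly --- verifying that $\overline{C}=\overline{k}[Z,T]/(f)$ is a regular \emph{domain}, so that \thref{stablyp} genuinely applies over $\overline{k}$. This last point is exactly where the hypothesis that $f$ is linear in $T$ is used; for a general line $f$ the quotient is still regular, but it need not be identifiable with $\overline{k}^{[1]}$ from $K$-theory alone, and the ``coordinate'' conclusion would indeed fail in positive characteristic.
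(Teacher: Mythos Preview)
Your proof is correct and follows essentially the same route as the paper's: pass to $\overline{k}$, use \thref{UFDline} to get $f$ irreducible over $\overline{k}$, dispose of the case $a_1=0$ by a degree count, and in the case $a_1\neq 0$ show that $\overline{C}=\overline{k}[Z,T]/(f)$ is a regular domain so that \thref{stablyp} applies, then read off $a_1\in k^{*}$ from the unit group and conclude via \thref{RS}. The only differences are cosmetic: you explicitly rule out $f\in k^{*}$ (the paper skips this, tacitly using $\overline{A}^{*}=\overline{k}^{*}$), and you verify regularity of $\overline{C}$ by the Jacobian criterion, whereas the paper simply observes $\overline{C}\cong\overline{k}[Z,1/a_1]$ directly and reads off both regularity and the unit group from that description.
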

		\begin{proof}
			Since $A^{[l]}=k^{[l+m+2]}$, $\overline{A}:=A\otimes_k
			\overline{k}$ is a UFD and $\overline{A}^{*} = \overline k^{*}$. Therefore by \thref{UFDline}, $f$ is irreducible in $\overline k[Z,T]$. 
			We now consider two cases:
			
			\smallskip
			
			\noindent
			{\it Case 1:}
			$a_1(Z)= 0$.\\
			Then $f(Z,T) = a_0(Z)$.
			Thus $a_0(Z)$ is irreducible in $\overline k[Z,T]$ and hence a linear polynomial in $Z$ over $k$. Therefore, $f$ is a coordinate in $k[Z,T]$. 
			
			\smallskip
			
			\noindent
			{\it Case 2:}
			$a_1(Z)\neq 0.$ \\
			We show that $a_1(Z)\in \overline{k}^{*}$, and hence $a_1(Z)\in k^{*}$. Since $f$ is irreducible in $\overline{k}[Z,T]$ we have $\gcd_{\overline{k}[Z]}(a_0(Z),a_1(Z))=1$.
			Hence $\frac{\overline{k}[Z,T]}{(f)}=\overline{k}\left[Z,\frac{1}{a_1(Z)}\right]$
			which is a regular domain.	
			Therefore, by \thref{stablyp},  $\left(\frac{\overline{k}[Z,T]}{(f)}\right)^{*}=\overline{k}^{*}$.
			Hence $a_1(Z)\in \overline{k}^{*}.$
			
			Now by \thref{RS}, we have $k[\X,Y,Z,T]=k[\X,H]^{[2]}$.
		\end{proof}
		
		We now prove the remaining part of Theorem~A. 
		
		\begin{thm}\thlabel{thmA}
			Let $k$ be a field of characteristic zero with $A$ and $H$ as in \eqref{AA}. Suppose that $A^{[l]}=k^{[l+m+2]}$ for some $l\geqslant 0$ and $k[Z,T]/(f)$ is a regular ring.
			Then $k[Z,T]=k[f]^{[1]}$ and $k[\X,Y,Z,T]=k[\X,H]^{[2]}$.
		\end{thm}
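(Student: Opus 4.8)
The plan is to descend from the algebraically closed case, where \thref{stablyp} is available. Set $\overline A := A\otimes_k\overline k$. Then $\overline A$ is again an affine domain over $\overline k$ of the form \eqref{AA}: condition \eqref{Cond} persists, since every prime factor of $\alpha$ in $\overline k[\X]$ divides some prime factor of $\alpha$ in $k[\X]$ and hence divides $h$. Also $\overline A^{[l]}=A^{[l]}\otimes_k\overline k=\overline k^{[l+m+2]}$, so $\overline A$ is a UFD with $\overline A^{*}=\overline k^{*}$ (as in the proof of \thref{stably}). To apply \thref{stablyp} over $\overline k$ I must check that $C:=\overline k[Z,T]/(f)$ is a regular domain. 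It is a domain: regularity of $k[Z,T]/(f)$, which in particular is a nonzero ring, forces $f\notin k^{*}$, hence $f\notin\overline k^{*}$, and then \thref{UFDline} applied to the UFD $\overline A$ shows that $f$ is irreducible in $\overline k[Z,T]$. It is regular: $k$ has characteristic zero, hence is perfect, so the finite-type regular $k$-algebra $k[Z,T]/(f)$ is geometrically regular over $k$, and regularity survives the base change $\cdot\otimes_k\overline k$.

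Granting this, \thref{stablyp} gives $\overline k[Z,T]/(f)=\overline k^{[1]}$. As $\overline k$ has characteristic zero, the Epimorphism Theorem \thref{ams} upgrades this to $\overline k[Z,T]=\overline k[f]^{[1]}$, and since $\overline k/k$ is algebraic and hence separable, \thref{sepco} descends it to $k[Z,T]=k[f]^{[1]}$.

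It remains to pass from this to $k[\X,Y,Z,T]=k[\X,H]^{[2]}$. Extending scalars along $k\hookrightarrow k[\X]=:R$ gives $R[Z,T]=R[f]^{[1]}$; since $R$ is a UFD, $\alpha\in R$ is a product of prime elements of $R$ each dividing $h$ in $R[Y,Z,T]$ by \eqref{Cond}, and $H$ is irreducible in $R[Y,Z,T]$ (because $A$ is a domain), \thref{RS} yields $k[\X,Y,Z,T]=R[Y,Z,T]=R[H]^{[2]}=k[\X,H]^{[2]}$. The chain \thref{stablyp}$\,{\to}\,$\thref{ams}$\,{\to}\,$\thref{sepco}$\,{\to}\,$\thref{RS} is formal; the main thing that needs care is verifying the hypotheses of \thref{stablyp} after base change to $\overline k$ --- keeping $f$ irreducible (where \thref{UFDline}, hence the full hypothesis $A^{[l]}=k^{[l+m+2]}$ rather than mere regularity of $A$, is essential) and keeping $\overline k[Z,T]/(f)$ regular (where characteristic zero is used a second time, via perfectness; its third use is the separability needed for \thref{sepco}).
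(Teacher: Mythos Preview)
Your proof is correct and follows essentially the same route as the paper's own argument: base change to $\overline k$, invoke \thref{UFDline} to get irreducibility of $f$ over $\overline k$, use characteristic zero to ensure $\overline k[Z,T]/(f)$ stays regular, apply \thref{stablyp}, then descend via \thref{ams} and \thref{sepco}, and finish with \thref{RS}. Your write-up is in fact slightly more careful than the paper's in two places --- you explicitly verify that condition \eqref{Cond} persists over $\overline k$ (needed to invoke \thref{UFDline} for $\overline A$) and you rule out the case $f\in k^{*}$ before concluding irreducibility --- but the overall strategy is identical.
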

		\begin{proof}
			Let $\overline{A}=A \otimes_k \overline{k}$. Since $\overline{A}^{[l]}=\overline{k}^{[l+m+2]}$, we have $\overline{A}$ is a UFD and $\overline{A}^*=\overline{k}^*$. Therefore, by \thref{UFDline}, it follows that $f$ is irreducible in $\overline{k}[Z,T]$. 
			As $k$ is a field of characteristic zero, we have $\overline{k}[Z,T]/(f)$ is a regular domain. Thus, by \thref{stablyp}, we have $\overline{k}[Z,T]/(f)=\overline{k}^{[1]}$. Since $k$ is a field of characteristic zero, by \thref{ams} and \thref{sepco}, we have $k[Z,T]=k[f]^{[1]}$. Now, by \thref{RS}, we have $k[\X,Y,Z,T]=k[\X,H]^{[2]}$.
		\end{proof}

		Next we prove Theorem~B. 
		
		%	assuming the conditions (a) and (b) mentioned in the introduction.
		\begin{thm}\thlabel{mainch0}
			Let $k$ be a field of characteristic zero and $A$ be an affine domain as in \eqref{AA}. Let $\alpha= \prod_{i=1}^{n} p_i^{s_i}$ be the prime factorization of $\alpha$ in $k[X_1,\ldots,X_m]$. Suppose that one of the following conditions is satisfied:
			\begin{enumerate}[\rm (I)]
				\item $s_i=1$ for some $i$.
				
				\item $s_i>1$ for every $i$ and at least one of the following holds.
				\begin{itemize}
					\item [\rm (a)] $p_j^2 \mid h$ for some $j$.
					
					%			\item [\rm (b)] $(p_j, \tilde{h}, f, f_Z, f_T)k[X_1,\ldots,X_m,Z,T]$ is a proper ideal for some $j$, where $\tilde{h}=h/(p_1\cdots p_n)$ and $p_l \nmid \tilde{h}$ for every $l$, $1 \leqslant l \leqslant n$.
					
					\item [\rm (b)] $(p_j, (p_j)_{X_1}, \ldots, (p_j)_{X_m}) k[X_1,\ldots,X_m]$ is a proper ideal for some $j$.
					
					\item [\rm (c)] If $n \geqslant 2$, then $(p_l,p_j)k[X_1,\ldots,X_m]$ is a proper ideal for some $l\neq j$.
				\end{itemize}
				
			\end{enumerate} 
			Then the following statements are equivalent:
			\begin{enumerate}[\rm(i)]
				
				\item  $k[\X,Y,Z,T]=k[\X,H]^{[2]}$.
				
				\item  $k[\X,Y,Z,T]=k[H]^{[m+2]}$.
				
				\item $A=k[\x]^{[2]}$.
				
				\item $A=k^{[m+2]}$.
				
				\item $k[Z,T]=k[f(Z,T)]^{[1]}$.
				
				\item  $A$ is an $\A^{2}$-fibration over $k[x_1,\ldots,x_m]$.
				
				\item  $A^{[l]}=k^{[m+l+2]}$ for some $l \geqslant 0$.
			\end{enumerate}
		\end{thm}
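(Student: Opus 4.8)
The plan is to prove all seven statements equivalent by running a cycle through the ``elementary'' implications and closing it with Theorem~A (specifically \thref{thmA}) together with \thref{smooth2}. First I would dispose of the routine implications. The implication (i)\,$\Rightarrow$\,(ii) is immediate, since a system of coordinates of $k[\X,Y,Z,T]$ containing $X_1,\dots,X_m,H$ is in particular one containing $H$. Passing to the quotient by $(H)$, and recalling that $E=k[\x]=k^{[m]}$, we obtain (i)\,$\Rightarrow$\,(iii) (from $k[\X,Y,Z,T]=k[\X,H]^{[2]}$ one reads off $A\cong k[\x]^{[2]}$) and (ii)\,$\Rightarrow$\,(iv) (from $k[\X,Y,Z,T]=k[H]^{[m+2]}$ one reads off $A\cong k^{[m+2]}$). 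Also (iii)\,$\Rightarrow$\,(iv) is clear as $k[\x]^{[2]}=k^{[m+2]}$, and (iv)\,$\Rightarrow$\,(vii) is trivial, taking $l=0$.

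Next I would treat the ``lower'' implications (v)\,$\Rightarrow$\,(i) and (v)\,$\Leftrightarrow$\,(vi). For (v)\,$\Rightarrow$\,(i): from $k[Z,T]=k[f]^{[1]}$ one gets $k[\X,Z,T]=k[\X][f]^{[1]}$; since $A$ is a domain, $H$ is irreducible in $k[\X][Y,Z,T]$, and $\alpha\in k[\X]$ is a product of primes each dividing $h$ by \eqref{Cond}, so \thref{RS} (with $R=k[\X]$) yields $k[\X,Y,Z,T]=k[\X,H]^{[2]}$. For (v)\,$\Rightarrow$\,(vi): if $k[Z,T]=k[f]^{[1]}$ then, by base change, $k(\p)[Z,T]/(f)=k(\p)^{[1]}$ for every $\p\in\Spec(E)$, so \thref{Fib2} shows $A$ is an $\A^2$-fibration over $E$. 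For (vi)\,$\Rightarrow$\,(v): since $\alpha\notin k$, the non-unit $\alpha(\x)$ lies in some maximal ideal $\m$ of $E=k^{[m]}$, and $k(\m)=E/\m$ is finite over $k$, hence separable as $\operatorname{char}k=0$; by \thref{Fib2}, $k(\m)[Z,T]/(f)=k(\m)^{[1]}$, so $f$ is a line in $k(\m)[Z,T]$; now \thref{ams} gives $k(\m)[Z,T]=k(\m)[f]^{[1]}$, and \thref{sepco} descends this to $k[Z,T]=k[f]^{[1]}$.

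The crux is the remaining implication (vii)\,$\Rightarrow$\,(v). Assume $A^{[l]}=k^{[m+l+2]}$ for some $l\geqslant 0$; then $A^{[l]}$ is regular, hence so is $A$. Here the hypotheses (I)/(II) enter: since $\operatorname{char}k=0$ the field $k$ is perfect, and together with \eqref{Cond}---which ensures $p_i\mid h$ in cases (I) and (II)(b), while (II)(a) already assumes $p_j^2\mid h$, and in (II)(c) the two distinct primes $p_l,p_j$ both divide $h$ and hence $p_lp_j\mid h$ in the UFD $k[\X,Z,T]$---the hypotheses of \thref{smooth2} are satisfied. Thus the regularity of $A$ forces $k[Z,T]/(f)$ to be a regular ring. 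Finally, with $A^{[l]}=k^{[m+l+2]}$, $k[Z,T]/(f)$ regular and $\operatorname{char}k=0$, \thref{thmA} applies and gives $k[Z,T]=k[f]^{[1]}$, which is (v) (indeed it simultaneously recovers (i)).

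Assembling the pieces: (v)\,$\Rightarrow$\,(i)\,$\Rightarrow$\,(ii)\,$\Rightarrow$\,(iv)\,$\Rightarrow$\,(vii)\,$\Rightarrow$\,(v) shows that (i), (ii), (iv), (v), (vii) are equivalent; (i)\,$\Rightarrow$\,(iii)\,$\Rightarrow$\,(iv) brings in (iii); and (v)\,$\Leftrightarrow$\,(vi) brings in (vi). I expect the only genuine obstacle to be the deduction ``$A$ regular $\Rightarrow$ $k[Z,T]/(f)$ regular'': this is the sole place where the structural conditions (I)/(II) on $\alpha$ and $h$ are used, and it is exactly what makes Theorem~A applicable; everything else is bookkeeping with \thref{RS}, \thref{Fib2}, \thref{ams} and \thref{sepco}.
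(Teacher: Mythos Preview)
Your proposal is correct and follows essentially the same approach as the paper: the trivial implications, $(v)\Rightarrow(i)$ via \thref{RS}, $(vi)\Rightarrow(v)$ via \thref{Fib2}, \thref{ams} and \thref{sepco}, and the key step $(vii)\Rightarrow(v)$ via \thref{smooth2} and \thref{thmA} are all exactly as in the paper. The only cosmetic difference is that you close the $(vi)$-loop by proving $(v)\Rightarrow(vi)$ directly from \thref{Fib2}, whereas the paper routes it as $(v)\Rightarrow(i)\Rightarrow(iii)\Rightarrow(vi)$; both are immediate.
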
 
		\begin{proof}
			We prove the equivalence of the statements as follows.
			$$
			\begin{tikzcd}[column sep=small]
				{\rm(i)}\arrow[rd, Rightarrow] \arrow[r, Rightarrow] & {\rm(ii)} \arrow[r, Rightarrow]& {\rm(iv)}\arrow[r, Rightarrow] & {\rm (vii)}\arrow[r, Rightarrow]& {\rm(v)}\arrow[r, Rightarrow]& {\rm(i)} \\
				& {\rm(iii)}\arrow[r, Rightarrow] &  {\rm(vi)}\arrow[urr, Rightarrow]
			\end{tikzcd}
			$$
			Note that $\rm (i) \Rightarrow (ii) \Rightarrow (iv) \Rightarrow (vii)$ and $\rm (i) \Rightarrow (iii) \Rightarrow (vi)$ follow trivially. $\rm (v) \Rightarrow (i)$ follows from \thref{RS}. Therefore, it is enough to prove $\rm (vi) \Rightarrow (v)$ and $\rm (vii) \Rightarrow (v)$.
			
			\smallskip
			\noindent
			$\rm (vi) \Rightarrow (v):$ Let $\m$ be a maximal ideal of $E:=k[X_1,\ldots,X_m]$ containing $\alpha$. Note that $L= \frac{E}{\m}$ is a finite field extension of $k$. By \thref{Fib2}, we have 
			$\frac{L[Z,T]}{(f)}=L^{[1]}$. Since $ch.\,k=0$, by \thref{ams}, we have $L[Z,T]=L[f]^{[1]}$. Now the assertion follows by \thref{sepco}.
			
			\smallskip
			\noindent
			$\rm (vii) \Rightarrow (v):$ Since $A^{[l]}=k^{[m+l+2]}$, $A$ is a regular domain. 
			Hence by \thref{smooth2}, $\frac{k[Z,T]}{(f)}$ is a regular domain. Therefore, the result follows from \thref{thmA}.
			%Now by \thref{stablyp}, we have $\frac{\overline{k}[Z,T]}{(f)}=\overline{k}^{[1]}$. Next since $ch.\, k=0$, by \thref{ams} and \thref{sepco}, we have $k[Z,T]=k[f]^{[1]}$. 
		\end{proof}
		
		\begin{rem}
			{\rm	Let $k$ be a field of characteristic zero and $A$ be an affine domain as in \eqref{AA}, with $h=0$, i.e., $H:=\alpha(\X)Y-~f(Z,T)$.
				Then $H$ satisfies either condition (I) or condition (II)(a) of \thref{mainch0}.
				Thus we know that for the above mentioned form of $H$, the Abhyankar-Sathaye Conjecture is satisfied and moreover, $A$ is a hyperplane if and only if $f$ is a coordinate in $k[Z,T]$.}
		\end{rem}
		
		\section{On Theorems C and D}	\label{THC}	
		
		In Subsection~\ref{Exp}, we recall some basic results concerning exponential maps. In Subsection~\ref{w}, we introduce a degree function on an affine algebra defined by a linear equation (cf. \eqref{A_R}) and show its admissibility with respect to a generating set (\thref{Admissible1}). In Subsection~\ref{MTHC}, we first construct an associated graded ring (\thref{Admissibilty 2}) and use it to prove Theorem~C in two parts (Theorems~\ref{lin}~and~\ref{lin2}).
		Finally, we prove Theorem~D (\thref{main}) in Subsection~\ref{THD}.
		%	We first recall a few concepts on exponential maps.
		
		\subsection{Exponential maps and related concepts}\label{Exp}
		
		\smallskip	
		
		%	We begin with some basic concepts on exponential maps on $k$-algebras.
		%	First we recall the concept of an {\it exponential map} on a $k$-algebra $B$ and then define two related invariants, namely, the {\it Makar-Limanov} and the {\it Derksen invariant}.

		\medskip
		\noindent
		{\bf Definition.}
		Let $B$ be a $k$-algebra and $\phi_U: B \rightarrow B[U]$ be a $k$-algebra homomorphism. We say that $\phi = \phi_U$ is an {\it exponential map} on $B$, if $\phi$ satisfies the following two properties:
		\begin{enumerate}[\rm(i)]
			\item $\varepsilon_0\phi_U$ is identity on $B$, where $\varepsilon_0: B[U]\rightarrow B$ is the evaluation map at $U = 0$.
			
			\item $\phi_V\phi_U = \phi_{V+U}$; where $\phi_V: B\rightarrow B[V]$ is extended to a $k$-algebra homomorphism $\phi_V:B[U]\rightarrow B[U,V]$ by defining $\phi_V(U) = U$.	
		\end{enumerate}		
		
		%The above definition can also be rewritten in terms of the following five properties.
		%
		%\medskip
		%
		%\noindent 
		%For each $a\in A$, we write $\phi(a) = \sum_{i=0}^{\infty}\phi^{(i)}(a)U^i$ in $A[U]$.
		%\begin{enumerate}[\rm(i)]
		%	\item The sequence of maps $\{ \phi^{(i)}\}_{i=0}^{\infty}$ is a sequence of linear maps on $A$.
		%	
		%	\item For each $a\in A$, the sequence $\{ \phi^{(i)}(a)\}_{i=0}^{\infty}$ has only finitely many non-zero terms.
		%	
		%	\item $\phi^{(0)}$ is the identity map on $A$.
		%	
		%	\item (Leibniz Rule) For all integers $n\geqslant 0$ and for $a,b\in A$,
		%	%		\begin{center}
			%		$\phi^{(n)}(ab) =\sum\limits_{i+j=n}\phi^{(i)}(a)\phi^{(j)}(b)$.
			%		%		\end{center}
		%	
		%	\item For all non negative integers $i$ and $j$,
		%	%		\begin{center}
			%		$\phi^{(i)}\phi^{(j)}= \binom{i+j}{i}\phi^{(i+j)}$.
			%		%		\end{center}
		%\end{enumerate}
		%
		%\noindent	
		Given an exponential map $\phi$ on a $k$-algebra $B$, the ring of invariants of $\phi$ is a subring of $B$ given by 
		\begin{center}
			$B^{\phi} = \left\{ a\in B\,|\, \phi(a) = a\right\}$.
		\end{center}
		An exponential map $\phi$ on $B$ is said to be non-trivial if $B^{\phi} \ne B$. We denote the set of all exponential maps on $B$ by EXP$(B)$.
		The 
		$Derksen$ $invariant$ of $B$ is a subring of $B$ defined by
		\begin{center}
			$\dk(B) = k[B^{\phi}\mid \phi$ is a  non-trivial exponential map on $B]$
		\end{center}
		and the 
		{\it Makar-Limanov invariant} of $B$ is a subring of $B$ defined by
		\begin{center}
			$\ml(B) = \bigcap\limits_{\phi \in \text{EXP}(B)}B^{\phi}$.
		\end{center} 
		
		We list below some useful properties of exponential maps (cf. \cite[Chapter I]{miya}, \cite{cra} and \cite{inv}) on a $k$-domain.
		
		\begin{lem}\thlabel{lemma : properties}
			Let $\phi$ be a non-trivial exponential map on a $k$-domain $B$. Then the following holds: 	\begin{enumerate}[\rm(i)]			
				\item $B^{\phi}$ is factorially closed in $B$ i.e., for any $a,b\in B\setminus\{0\}$, if $ab\in B^{\phi}$ then $a,b\in B^{\phi}$. Hence, $B^{\phi}$ is algebraically closed in $B$.
				
				\item $\td_k(B^{\phi})$ = $\td_k(B)-1$.
				
				\item Let $S$ be a multiplicative closed subset of $B^{\phi}\setminus \{0\}$. Then $\phi$ extends to a non-trivial exponential map $S^{-1}\phi$ on $S^{-1}B$ defined by $S^{-1}\phi(a/s) = \phi(a)/s$, for all $a\in B$ and $s\in S$. Moreover, the ring of invariants of $S^{-1}\phi$ is $S^{-1}(B^{\phi})$.
				
				\item 
				%	Let $\phi$ be a non-trivial exponential map on a $k$-algebra $A$. Then 
				$\phi$ extends to a non-trivial exponential map ${\phi}\otimes id$ on $B\otimes_k\overline{k}$ such that $(B\otimes_k \overline{k})^{{\phi}\otimes id} = B^{\phi} \otimes_k \overline{k}$.
			\end{enumerate}	
		\end{lem}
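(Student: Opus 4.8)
The plan is to derive all four assertions from a single device, the \emph{$\phi$-degree function}. First I would set, for $b\in B$ with $\phi(b)=\sum_{i\geqslant 0}b_iU^i\in B[U]$ (note $b_0=b$ since $\varepsilon_0\phi=\mathrm{id}_B$), the quantity $\deg_\phi(b):=\deg_U\phi(b)$, with $\deg_\phi(0)=-\infty$. Since $B$ is a domain, $B[U]$ is a domain, so $\phi(bc)=\phi(b)\phi(c)$ gives $\deg_\phi(bc)=\deg_\phi(b)+\deg_\phi(c)$; also $\deg_\phi(b+c)\leqslant\max\{\deg_\phi(b),\deg_\phi(c)\}$; and for $b\neq 0$ the constant term of $\phi(b)$ being $b\neq 0$ forces $\deg_\phi(b)\geqslant 0$, with equality precisely when $\phi(b)=b$. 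Hence $B^\phi=\{b\in B\mid\deg_\phi(b)\leqslant 0\}$. I would also record the standard fact that the leading coefficient of $\phi(b)$ always lies in $B^\phi$: comparing the coefficients of $U^{\deg_\phi(b)}$ on the two sides of $\phi_V\phi_U=\phi_{U+V}$ yields $\phi_V(b_{\deg_\phi(b)})=b_{\deg_\phi(b)}$ for all $V$.

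With this in hand, (i) is immediate: if $a,b\in B\setminus\{0\}$ and $ab\in B^\phi$ then $0\geqslant\deg_\phi(a)+\deg_\phi(b)\geqslant 0$, so $a,b\in B^\phi$, which is factorial closedness. For algebraic closedness I would take $a\in B$ satisfying a nonzero polynomial over $B^\phi$ of least degree, observe that $a$ may be assumed nonzero and (by minimality, using that $B$ is a domain) that this relation has nonzero constant term $c_0\in B^\phi$, and then write $c_0=-a\,(c_1+c_2a+\cdots)$; both factors on the right are nonzero elements of $B$ whose product lies in $B^\phi$, so factorial closedness gives $a\in B^\phi$.

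For (iii): since every $s\in S$ satisfies $\phi(s)=s$, the assignment $a/s\mapsto\phi(a)/s$ is a well-defined $k$-algebra map $S^{-1}B\to S^{-1}(B[U])\cong(S^{-1}B)[U]$, and the two exponential axioms pass to the localization. As $S$ consists of nonzerodivisors, $B[U]\hookrightarrow(S^{-1}B)[U]$, so the extension is still non-trivial, and for an invariant $a/s$, clearing denominators forces $\phi(a)=a$ in the domain $B[U]$; hence the invariant ring is exactly $S^{-1}(B^\phi)$. For (iv) I would form $\phi\otimes\mathrm{id}_{\overline k}\colon B\otimes_k\overline k\to(B\otimes_k\overline k)[U]$ using $(B[U])\otimes_k\overline k\cong(B\otimes_k\overline k)[U]$; the axioms follow by tensoring the defining identities of $\phi$ with $\overline k$, and flatness of $\overline k$ over $k$ applied to the exact $k$-linear sequence $0\to B^\phi\to B\xrightarrow{\,b\mapsto\phi(b)-b\,}UB[U]$ gives simultaneously that $B^\phi\otimes_k\overline k$ is proper in $B\otimes_k\overline k$ (so $\phi\otimes\mathrm{id}$ is non-trivial) and the identification $(B\otimes_k\overline k)^{\phi\otimes\mathrm{id}}=B^\phi\otimes_k\overline k$.

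Finally (ii). One inequality is formal: by (i), $B^\phi$ is algebraically closed in $B$ and $B^\phi\neq B$, so every element of $B\setminus B^\phi$ is transcendental over $B^\phi$, whence $\td_k(B^\phi)\leqslant\td_k(B)-1$. For the reverse inequality I would first invoke (iii) with $S=B^\phi\setminus\{0\}$ to reduce to the case $B^\phi=:K$ a field (localization changes no transcendence degree), pick $b\in B$ with $\deg_\phi(b)\geqslant 1$ (possible as $\phi$ is non-trivial), and use that the leading coefficients produced by $\phi$ now lie in $K$ and are therefore units to conclude that $B$ is algebraic over $B^\phi[b]$, giving $\td_K(B)\leqslant 1$. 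I expect this last step to be the main obstacle: in positive characteristic one cannot in general normalise $b$ to a genuine slice $\phi(b)=b+U$, so in place of the clean Dixmier-type retraction $x\mapsto\phi(x)|_{U=-b}$ onto $B^\phi$ one must argue through the invariance of top coefficients together with a degree count, exactly as carried out in \cite[Chapter~I]{miya} and \cite{cra}.
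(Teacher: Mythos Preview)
The paper does not supply a proof of this lemma; it merely records the statement with citations to \cite[Chapter~I]{miya}, \cite{cra} and \cite{inv}. Your proposal therefore goes well beyond what the paper does, and what you have written is a correct outline of the standard arguments one finds in those references: the $\phi$-degree device immediately yields (i), the localisation and base-change statements (iii) and (iv) are handled cleanly (the flat-base-change argument via the exact sequence $0\to B^\phi\to B\xrightarrow{b\mapsto\phi(b)-b}UB[U]$ is exactly the right way to identify the invariant ring after tensoring), and for (ii) you correctly isolate the only non-formal step, namely showing $\td_K(S^{-1}B)\leqslant 1$ once $K=\operatorname{Frac}(B^\phi)$, and you point to the same sources the paper cites for it.

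One small remark on that last step, in case you want to make the sketch self-contained rather than defer to the literature: over the field $K$ the filtration $V_n=\{c\in S^{-1}B\mid\deg_\phi(c)\leqslant n\}$ has $\dim_K V_n\leqslant n+1$ because the leading-coefficient map embeds $V_n/V_{n-1}$ into $K$; a simple count then shows that for any $c\in S^{-1}B$ the monomials $b^ic^j$ with $i\deg_\phi(b)+j\deg_\phi(c)\leqslant N$ eventually outnumber $\dim_K V_N$, forcing an algebraic relation between $b$ and $c$ over $K$. This avoids the slice issue you flagged and works uniformly in all characteristics.
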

		
		%	\begin{rem}\thlabel{non-ext}
			%		{\rm
				%		%	\begin{enumerate}[\rm(i)]
					%			%	\item 
					%			%	Let $\phi$ be a non-trivial exponential map on a $k$-algebra $A$. Then 
					%				%$\phi$ extends to a non-trivial exponential map ${\phi}\otimes id$ on $A\otimes_k\overline{k}$ such that $(A\otimes_k \overline{k})^{{\phi}\otimes id} = \Ap \otimes_k \overline{k}$.
					%				%\item 
					%				Let $B$ be a $k$-algebra such that  $B\otimes_k\overline{k}$ is an integral domain and $\td_kB =1$. Then $k$ is algebraically closed in $B$ and hence $B$ admits a non-trivial exponential map if and only if $B = k^{[1]}$.
					%		%	\end{enumerate} 
				%		}	
			%	\end{rem}
		
		\begin{lem}\thlabel{MLDK}
			Let $B = k^{[n]}$ then $\ml(B) = k$	and $\dk(B) = B$ for $n\geqslant 2$.
		\end{lem}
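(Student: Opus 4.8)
The plan is to exhibit on $B=k^{[n]}$ a small family of explicit exponential maps — the ``coordinate translations'' — compute their rings of invariants by hand, and then read off both invariants from this computation.

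Write $B=k[X_1,\dots,X_n]$. For each $i$ with $1\leqslant i\leqslant n$ I would define $\phi^{(i)}\colon B\to B[U]$ to be the $k$-algebra homomorphism with $\phi^{(i)}(X_i)=X_i+U$ and $\phi^{(i)}(X_j)=X_j$ for $j\neq i$. The two axioms are immediate: evaluation at $U=0$ is the identity, and $\phi^{(i)}_V\phi^{(i)}_U=\phi^{(i)}_{V+U}$ because $X_i$ is carried to $(X_i+U)+V=X_i+(U+V)$; clearly $\phi^{(i)}$ is non-trivial. The key step is to show $B^{\phi^{(i)}}=k[X_1,\dots,X_{i-1},X_{i+1},\dots,X_n]$. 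One inclusion is obvious. For the other, write an invariant $a$ as a polynomial $\sum_{l=0}^{d}a_l X_i^l$ of degree $d$ in $X_i$ with coefficients $a_l\in k[X_1,\dots,X_{i-1},X_{i+1},\dots,X_n]$ and $a_d\neq 0$; expanding $\phi^{(i)}(a)=\sum_l a_l(X_i+U)^l$ in powers of $U$, the coefficient of $U^{d}$ is exactly $a_d$, which must vanish because $\phi^{(i)}(a)=a$ lies in $B$ and so carries no $U$; hence $d=0$. I would emphasize that this argument is deliberately phrased via the \emph{top} $U$-degree coefficient, so that it is valid over a field of arbitrary characteristic — this is the only point in the whole proof that requires any care.

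Granting the identification of $B^{\phi^{(i)}}$, the two assertions follow formally. For the Makar-Limanov invariant, $\ml(B)\subseteq\bigcap_{i=1}^{n}B^{\phi^{(i)}}=\bigcap_{i=1}^{n}k[X_1,\dots,X_{i-1},X_{i+1},\dots,X_n]=k$, the last equality because a polynomial that omits each variable in turn is a constant; and $k\subseteq\ml(B)$ since every exponential map is a $k$-algebra homomorphism, so $\ml(B)=k$. For the Derksen invariant, since $n\geqslant 2$ every variable $X_j$ lies in $B^{\phi^{(i)}}$ for any choice of $i\neq j$, hence in $\dk(B)$; therefore $\dk(B)$ contains $k[X_1,\dots,X_n]=B$, giving $\dk(B)=B$. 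There is no genuine obstacle here: once the translation maps are written down the argument is entirely elementary, the sole subtlety being the characteristic-free computation of $B^{\phi^{(i)}}$ just described.
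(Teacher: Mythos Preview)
Your proof is correct. The paper itself states this lemma without proof, treating it as a well-known fact about polynomial rings; your argument via the coordinate translations $X_i\mapsto X_i+U$ is precisely the standard elementary verification, and your care in reading off the top $U$-coefficient makes the computation of $B^{\phi^{(i)}}$ valid in arbitrary characteristic.
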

		
		%\subsection{Proper and Admissible $\bZ$-filtration}\label{II}
		
		Next we define below a {\it proper} and {\it admissible $\bZ$-filtration} on a $k$-domain $B$.
		
		\medskip
		\noindent
		{\bf Definition.} 
		A collection $\{ B_n\mid n\in \bZ \}$ of $k$-linear subspaces of $B$ is said to be a {\it proper $\bZ$-filtration} if the following properties hold:
		\begin{enumerate}[\rm(i)]
			\item $B_n\subseteq B_{n+1}$ for every $n\in \bZ$.
			
			\item $B = \cup_{n}B_n$.
			
			\item $\cap_n B_n = \{ 0 \}$.
			
			\item $(B_n \setminus B_{n-1})(B_m \setminus B_{m-1})\subseteq B_{m+n}\setminus B _{m+n -1}$ for all $m,n\in \bZ$.
		\end{enumerate}	
		
		Any proper $\Z$-filtration $\{B_n\}_{n\in \Z}$ on $B$ determines an  {\it associated $\Z$-graded integral domain}
		\begin{center}
			$\gr(B):= \bigoplus\limits_{n\in \Z} \frac{B_n}{B_{n-1}}$
		\end{center}
		and there exists a natural map $\rho: B \rightarrow \gr(B)$ defined by $\rho(a) = a + B_{n-1}$, if $a\in B_n\setminus B_{n-1}$.
		
		\begin{rem}
			\rm Note that a degree function $\deg$ on $B$ gives rise to a proper $\bZ$-filtration on $\{B_n\}_{n \in \bZ}$ on $B$ given by $B_{n}=\{b \mid \deg(b) \leqslant n\}$. 
		\end{rem}
		
		\medskip
		\noindent
		{\bf Definition.}	
		A proper $\Z$-filtration $\{B_n\}_{n\in \Z}$ on an affine $k$-domain $B$ is said to be {\it admissible} if there exists a finite generating set $\Gamma$ of $B$ such that, for any $n\in\Z$ and $a\in B_n$, $a$ can be written as a finite sum of monomials in $k[\Gamma] \cap B_n$. 
		
		\smallskip
		
		We now record a theorem on homogenization of exponential map by H. Derksen, O. Hadas and L. Makar-Limanov  \cite{dom}. The following version can be found in \cite[Theorem~2.6]{cra}.
		
		\begin{thm}\thlabel{dhm}
			Let $B$ be an affine $k$-domain with an admissible proper $\mathbb{Z}$-filtration and $gr(B)$ be the induced associated $\mathbb{Z}$-graded domain. Let $\phi$ be a non-trivial exponential map on $B$, then $\phi$
			induces a non-trivial, homogeneous exponential map $\overline{\phi}$ on $gr(B)$ such that $\rho(B^{\phi}) \subseteq gr(B)^{\overline{\phi}}$, where $\rho: B \rightarrow \gr(B)$ denotes the natural map.
		\end{thm}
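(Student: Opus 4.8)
The plan is to lift the filtration from $B$ to the polynomial ring $B[U]$ by assigning $U$ a suitable integer weight $d$, chosen so that $\phi$ together with the auxiliary maps $\varepsilon_0$ and $\phi_V$ occurring in the exponential--map axioms all become degree non-increasing for this extended filtration, and then to obtain $\overline\phi$ simply by applying the $\gr$ functor. Properness (i.e.\ $\gr(B)$ a domain) makes $\deg$ multiplicative so that degree estimates survive on products, while admissibility is what pushes such an estimate from a finite generating set out to all of $B$.

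First I would fix the degree function $\deg$ on $B$ determined by the proper filtration $\{B_n\}$ --- so $\deg$ is multiplicative, $\rho(ab)=\rho(a)\rho(b)$, and $\gr(B)$ is a domain --- together with a finite set $\Gamma$ witnessing admissibility. Writing $\phi(a)=\sum_{i\geqslant 0}\phi^{(i)}(a)U^i$ with $\phi^{(0)}=\mathrm{id}$ and each $\phi^{(i)}\colon B\to B$ a $k$-linear map, non-triviality of $\phi$ forces $\phi^{(i)}(g)\ne 0$ for some $g\in\Gamma$ and some $i\geqslant 1$; hence
\[
d:=\min\Bigl\{\frac{\deg g-\deg\phi^{(i)}(g)}{i}\ :\ g\in\Gamma,\ i\geqslant 1,\ \phi^{(i)}(g)\ne 0\Bigr\}
\]
is a well-defined rational number, which after rescaling the filtration we may take to be an integer. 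I would then extend $\deg$ to $B[U]$ by $\deg\bigl(\sum b_iU^i\bigr):=\max_i(\deg b_i+id)$; one checks this is again a proper $\mathbb{Z}$-filtration, with $\gr(B[U])\cong\gr(B)[U]$ where $U$ is homogeneous of weight $d$, since $1,U,U^2,\dots$ is $B$-free. The analogous extension is made on $B[U,V]$ with $\deg U=\deg V=d$.

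The key step --- and this is where admissibility is essential --- will be the verification that $\phi\colon B\to B[U]$ is degree non-increasing: $\deg\phi(a)\leqslant\deg a$ for all $a\in B$. By the choice of $d$ this holds for every $g\in\Gamma$; multiplicativity of $\deg$ on $B[U]$ then gives it for every monomial in $\Gamma$, and admissibility --- which lets one write any $a\in B_n$ as a sum of monomials already lying in $k[\Gamma]\cap B_n$, so that no leading-term cancellation can occur --- propagates it to all of $B$. The same reasoning shows $\phi_V\colon B[U]\to B[U,V]$ is degree non-increasing, and $\varepsilon_0\colon B[U]\to B$ trivially is. Since $\gr$ is functorial on degree-non-increasing homomorphisms (here all target graded rings are polynomial rings over the domain $\gr(B)$, hence domains), setting $\overline\phi:=\gr(\phi)\colon\gr(B)\to\gr(B[U])=\gr(B)[U]$ gives a graded $k$-algebra homomorphism, so $\overline\phi=\sum_i\overline\phi^{(i)}U^i$ with each $\overline\phi^{(i)}$ homogeneous of degree $-id$. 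Applying $\gr$ to the identities $\varepsilon_0\circ\phi=\mathrm{id}_B$ and $\phi_V\circ\phi_U=\phi_{U+V}$, and checking that $\gr(\varepsilon_0)$ is evaluation at $U=0$ on $\gr(B)[U]$, yields the two exponential-map axioms for $\overline\phi$. For non-triviality: if $g\in\Gamma$ and $i_0\geqslant 1$ realize the minimum defining $d$, then $\deg_{B[U]}\phi(g)=\deg g$ and the leading form $\overline\phi(\rho(g))$ retains the nonzero term $\rho(\phi^{(i_0)}(g))U^{i_0}$, so $\overline\phi(\rho(g))\ne\rho(g)$. Finally, if $a\in B^{\phi}$ then $\phi(a)=a$ exactly in $B[U]$, whence $\deg_{B[U]}\phi(a)=\deg a$ and $\overline\phi(\rho(a))=\gr(\phi)(\rho(a))=\rho(a)$; thus $\rho(B^{\phi})\subseteq\gr(B)^{\overline\phi}$.

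The main obstacle I expect is the degree-non-increasing verification together with the attendant bookkeeping: ensuring the extended filtrations on $B[U]$ and $B[U,V]$ are proper (in particular $\bigcap_n B[U]_n=0$), identifying $\gr(B[U])$ with $\gr(B)[U]$ carrying the correct grading, handling the rescaling that turns $d$ into an integer, and justifying that $\gr$ applied to a composite of degree-non-increasing maps is the composite of the $\gr$'s. Once the weight $d$ is correctly pinned down, the remainder is formal.
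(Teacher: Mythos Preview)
The paper does not supply its own proof of this theorem: it is quoted from Derksen--Hadas--Makar-Limanov and cited in the form given by Crachiola (\cite[Theorem~2.6]{cra}), so there is no in-paper argument to compare against. That said, your outline is precisely the standard proof found in those references: one assigns $U$ an integer weight so that $\phi$ becomes filtration-respecting, uses admissibility to push the estimate $\deg\phi(g)\leqslant\deg g$ from the generators $\Gamma$ to all of $B$ via monomial decompositions, and then reads off $\overline{\phi}$, its axioms, its non-triviality, and the containment $\rho(B^{\phi})\subseteq\gr(B)^{\overline{\phi}}$ by applying $\gr$ functorially.

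Two small points worth tightening. First, your $d$ is a priori only rational, and ``rescaling the filtration'' is not quite innocent: replacing $\deg$ by $N\deg$ changes the index set to $N\mathbb{Z}$, and padding it out to a $\mathbb{Z}$-filtration by setting $B'_n=B_{\lfloor n/N\rfloor}$ destroys condition~(iv) of properness at the non-multiples of $N$. The clean fix (as in the cited sources) is simply to allow the auxiliary filtration on $B[U]$ to be indexed by $\frac{1}{N}\mathbb{Z}$ for a suitable $N$; nothing in the argument actually needs integrality. Second, the functoriality claim $\gr(g\circ f)=\gr(g)\circ\gr(f)$ for degree-non-increasing maps is straightforward but does require the targets to carry proper filtrations (so that leading forms are well-defined); you implicitly use that $\gr(B)[U]$ and $\gr(B)[U,V]$ are domains, which follows from properness on $B$, so this is fine. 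With those caveats, the proposal is correct and is the same argument the paper is invoking.
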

		
		\subsection{On the admissibility of a $\Z$-filtration }\label{w}
		
		\smallskip
		
		Let $R$ be a UFD and 
		\begin{equation}\label{A_R}
			A_R:=\frac{R[X,Y,Z,T]}{(X^d\alpha_1(X)Y-F(X,Z,T))}\text{~, for some ~} d\geqslant 1 \text{~with~}\alpha_1(0)\neq 0,\, F(0,Z,T)\neq 0
		\end{equation}
		be a domain. Let $x,y,z,t$ denote the images of $X,Y,Z,T$ in $A_R$ respectively. 
		Then $A_R\subseteq R\left[x,\frac{1}{x},\frac{1}{\alpha_1(x)}, z,t\right]$.
		Note that every  element $r\in R\left[x,\frac{1}{x},\frac{1}{\alpha_1(x)}, z,t\right]$ is of the form 
		$$
		r  = x^{i_1}p, \text{~for some~} i_1\in \Z,\, p\in R\left[x,\frac{1}{\alpha_1(x)},z,t\right]\text{~with~} x\nmid p.
		$$
		Therefore, the function $w_R$ on $R\left[x,\frac{1}{x},\frac{1}{\alpha_1(x)}, z,t\right]$ defined by 
		$$
		w_R(r) = -i_1, \text{ for an } r \text{ as above, }
		$$ 
		is a degree function.
		Thus $w_R$ restricts to a degree function on $A_R$ and induces a proper $\Z$-filtration $\{A_R^n\}_{n\in \Z}$ on $A_R$ such that 
		$$
		A_R^n:=\{g\in A_R: w_R(g)\leqslant n\}, n\in \Z
		.$$
		Note that $y=\frac{F(x,z,t)}{x^d\alpha_1(x)}\in A_R$ and $F(0,z,t)\neq 0$, therefore, it follows that $w_R(y)=d$.
		
		In this subsection, we shall prove the following theorem.
		
		\begin{thm}\thlabel{Admissible1}
			Let $R$ be a finitely generated $k$-algebra and a UFD.
			Let $R$, $A_R$ and $w_R$ be as above with $\gcd_{R[Z,T]}(\alpha_1(0), F(0,Z,T))=1$. Let $\{c_1,\dots,c_n\}$ be a generating set of the affine $k$-algebra $R$. 
			Then $w_R$ induces an admissible $\Z$-filtration on $A_R$ with respect to the generating set $\{c_1,\dots,c_n,x,y,z,t\}$ such that
			$$gr(A_R)\cong \dfrac{R[X,Y,Z,T]}{(\alpha_1(0)X^dY-F(0,Z,T))}.
			$$ 
		\end{thm}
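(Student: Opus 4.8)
The plan is to realise $\gr(A_R)$ as an explicit quotient of a polynomial ring by a homogenisation argument, and to pin it down by a Krull-dimension comparison with $R[X,Y,Z,T]/(\alpha_1(0)X^dY-F(0,Z,T))$; admissibility will fall out of the same picture. Write $R=k[C_1,\dots,C_n]/\mathfrak a$, set $P:=k[C_1,\dots,C_n,X,Y,Z,T]$, and grade $P$ over $\Z$ by $\deg C_i=0$, $\deg X=-1$, $\deg Y=d$, $\deg Z=\deg T=0$, with induced filtration $\{P_{\leqslant N}\}$. Choose lifts $\tilde\alpha_1,\tilde F$ of $\alpha_1,F$ over $k[C_1,\dots,C_n]$ and let $\pi:P\twoheadrightarrow A_R$ be the surjection with $C_i\mapsto c_i,\,X\mapsto x,\,Y\mapsto y,\,Z\mapsto z,\,T\mapsto t$, so that $\ker\pi=\mathfrak aP+(\tilde G)$ with $\tilde G:=X^d\tilde\alpha_1(X)Y-\tilde F(X,Z,T)$.

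First I would check that $\pi$ is a \emph{filtered} map and extract admissibility. Since $A_R$ is a domain, $w_R$ is additive on products; together with $w_R(c_i)=0$, $w_R(x)=-1$, $w_R(y)=d$ and $w_R(z)=w_R(t)=0$ this gives $w_R(\pi(m))=\deg m$ for each monomial $m$ of $P$, hence $w_R(\pi(f))\leqslant\deg f$ and $\pi(P_{\leqslant N})\subseteq A_R^N$. Consequently every $\Phi\in\pi^{-1}(g)$ satisfies $\deg\Phi\geqslant w_R(g)$, so for $g\in A_R^N$ we may choose $\Phi\in\pi^{-1}(g)$ of minimal degree $M$. If $M>w_R(g)$, the top-degree part $\Phi_M$ must lie in the kernel of $P=\gr(P)\to\gr(A_R)$ (else $w_R(\pi(\Phi))=M$, impossible); using the standard identification $\gr(A_R)\cong P/\mathrm{in}(\ker\pi)$ with $\mathrm{in}(\ker\pi)$ the homogeneous ideal spanned by the leading forms of the elements of $\ker\pi$, one finds an element of $\ker\pi$ with the same leading form as $\Phi$, and subtracting it lowers $\deg\Phi$ --- contradicting minimality. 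Hence $M\leqslant N$, and then $g=\sum_m\pi(m)$ over the monomials $m$ of $\Phi$, each $\pi(m)$ being a monomial in $\{c_1,\dots,c_n,x,y,z,t\}$ lying in $A_R^N$; this is admissibility with respect to that generating set.

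Next I would compute $\mathrm{in}(\ker\pi)$. Writing $\tilde\alpha_1(X)=\tilde\alpha_1(0)+X\beta(X)$ and $\tilde F(X,Z,T)=\tilde F(0,Z,T)+X\gamma(X,Z,T)$, the terms $\tilde\alpha_1(0)X^dY$ and $\tilde F(0,Z,T)$ have degree $0$ while $\beta(X)X^{d+1}Y$ and $X\gamma$ have degree $\leqslant -1$, so $\mathrm{in}(\tilde G)=\tilde\alpha_1(0)X^dY-\tilde F(0,Z,T)$. Since $\mathfrak a$ is homogeneous and contained in $\ker\pi$, the homogeneous ideal $\mathfrak J:=\mathfrak aP+\big(\tilde\alpha_1(0)X^dY-\tilde F(0,Z,T)\big)$ lies in $\mathrm{in}(\ker\pi)$, giving a surjection $\theta:B':=P/\mathfrak J\cong R[X,Y,Z,T]/(\alpha_1(0)X^dY-F(0,Z,T))\twoheadrightarrow P/\mathrm{in}(\ker\pi)\cong\gr(A_R)$. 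To see $\theta$ is injective I would argue by dimension: $R[X,Y,Z,T]$ is a UFD (as $R$ is), the polynomial $\alpha_1(0)X^dY-F(0,Z,T)$ is linear in $Y$ with coprime coefficients --- here the hypothesis $\gcd_{R[Z,T]}(\alpha_1(0),F(0,Z,T))=1$ combined with $X\nmid F(0,Z,T)$ gives $\gcd_{R[X,Z,T]}(\alpha_1(0)X^d,F(0,Z,T))=1$ --- hence it is irreducible, so $B'$ is a domain with $\dim B'=\dim R+3$. Also $\dim A_R=\dim R+3$ ($A_R$ is a domain and $X^d\alpha_1(X)Y-F(X,Z,T)$ is a nonzerodivisor in $R[X,Y,Z,T]$), and $\dim\gr(A_R)=\dim A_R$ by the standard equality of dimensions for a filtered affine domain and its associated graded. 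A surjection of affine $k$-domains of equal finite Krull dimension is an isomorphism (its kernel is prime, of positive height if nonzero, which would strictly lower the dimension); hence $\theta$ is an isomorphism and $\gr(A_R)\cong R[X,Y,Z,T]/(\alpha_1(0)X^dY-F(0,Z,T))$.

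The main obstacle is the last step: justifying $\dim\gr(A_R)=\dim A_R$ for a genuinely $\Z$-indexed filtration, and correctly invoking the ``surjection of equidimensional affine domains is injective'' principle --- it is precisely here that the hypothesis $\gcd_{R[Z,T]}(\alpha_1(0),F(0,Z,T))=1$ is indispensable, since it is what makes $B'$ a domain of the correct dimension. The remaining ingredients (filtered-ness of $\pi$, the exact shape of $\mathrm{in}(\tilde G)$, and the minimal-degree-preimage argument yielding admissibility) are routine homogenisation bookkeeping.
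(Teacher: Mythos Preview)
Your computation of $\mathrm{in}(\tilde G)$ and the dimension comparison at the end are fine, and they match the paper's Lemma~5.9 almost verbatim.  The problem is the admissibility step: it is circular.

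The ``standard identification'' $\gr(A_R)\cong P/\mathrm{in}(\ker\pi)$ is \emph{not} automatic here.  That identification holds when $A_R$ carries the \emph{quotient} filtration $\pi(P_{\leqslant N})$ induced from $P$; but $A_R$ is filtered by $w_R$, defined externally through the $x$-adic valuation on the ambient ring $R[x,\tfrac{1}{x},\tfrac{1}{\alpha_1(x)},z,t]$.  You have only shown $\pi(P_{\leqslant N})\subseteq A_R^N$, and the equality $\pi(P_{\leqslant N})=A_R^N$ is \emph{exactly} the admissibility statement you are trying to prove.  Concretely, when you say ``one finds an element of $\ker\pi$ with the same leading form as $\Phi$'', you are invoking $\ker(\gr\pi)\subseteq\mathrm{in}(\ker\pi)$; unwinding this, for homogeneous $\Phi_M$ with $\pi(\Phi_M)\in A_R^{M-1}$ you need some $\Psi\in P_{\leqslant M-1}$ with $\pi(\Psi)=\pi(\Phi_M)$, i.e.\ $A_R^{M-1}=\pi(P_{\leqslant M-1})$ --- the very conclusion at stake.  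The same circularity reappears in the surjectivity of $\theta$: the image of $B'\to\gr(A_R)$ is the subring generated by $\rho(c_i),\rho(x),\rho(y),\rho(z),\rho(t)$, and this equals $\gr(A_R)$ if and only if the filtration is admissible.

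The paper does not try to short-circuit this.  It first proves admissibility by direct rewriting inside $A_R$ (Lemmas~5.4--5.6 and Corollary~5.8): the crucial point is that $\alpha_1(0)x^dy-F(0,z,t)\in xR[x,x^dy,z,t]$, together with the hypothesis $\gcd(\alpha_1(0),F(0,Z,T))=1$, makes $\{x,\alpha_1(0)\}$ a regular sequence in $R[x,x^dy,z,t]$; this lets one peel off factors of $x$ and inductively lower the maximal degree in any monomial representation.  Only after admissibility is in hand does the paper run your dimension argument to identify $\gr(A_R)$.  If you want to stay in your framework, you would have to prove directly that $\mathrm{in}(\ker\pi)=\mathfrak aP+(\mathrm{in}(\tilde G))$ (a Gr\"obner-basis-type statement, which can be done using that $\alpha_1(0)X^dY-F(0,Z,T)$ is a non-zerodivisor modulo $\mathfrak aP$) \emph{and} then separately argue that the image of $\gr(\pi)$ is all of $\gr(A_R)$; neither step is the ``routine homogenisation bookkeeping'' you describe, and the second one essentially forces you back to the paper's explicit computation.
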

		\begin{proof}
			Follows from \thref{ad} and \thref{gr} proved below.
		\end{proof} 
		\begin{rem}
			{\rm Note that when $R$ is a field $k$, then the condition $\gcd_{k[Z,T]}(\alpha_1(0), F(0,Z,T))=~1$ becomes redundant as $\alpha_1(0)\in k^{*}$.}
		\end{rem}
		
		We first prove three technical lemmas (Lemmas \ref{lem1}, \ref{lem2} and \ref{lem3}).
		
		\begin{lem}\thlabel{lem1}
			Let $b \in A_R$ be an element such that $w_R(b)<0$. Suppose that	$b=\sum_{i=1}^{s}m_i$,
			where $m_i$ is a monomial in $R[x,x^dy,z,t]$ with $w_R(m_i)=0$, for all $i,1\leqslant i\leqslant s$. 
			Then $b=xb_1$, for some $b_1 \in  R[x,x^dy,z,t]$.
		\end{lem}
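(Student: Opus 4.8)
The plan is to use the weight condition to pin down the shape of the summands $m_i$, thereby placing $b$ in $R[x^dy,z,t]$, then to reduce the resulting identity modulo $x$ inside the subring $R[x,x^dy,z,t]$ and finish using that $A_R$ is a domain. Concretely, recall that $w_R=-v_x$, where $v_x$ is the $x$-adic valuation on $\widetilde A:=R[x,\tfrac1x,\tfrac1{\alpha_1(x)},z,t]$, and that $w_R(x)=-1$, $w_R(x^dy)=0$ (because $w_R(y)=d$), $w_R(z)=w_R(t)=0$, and $w_R(c)=0$ for every $c\in R\setminus\{0\}$. Since $w_R$ is additive on products, $w_R(m_i)=0$ forces $m_i$ to carry no positive power of $x$; hence $m_i=r_i(x^dy)^{c_i}z^{a_i}t^{b_i}$ with $r_i\in R$, so $b\in R[x^dy,z,t]$. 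Put $u:=x^dy$, $B:=R[x,x^dy,z,t]\subseteq A_R$, and $e:=\max_i c_i$.

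\emph{Reduction modulo $x$.} Write $\alpha_1(X)=\alpha_1(0)+X\beta(X)$ and $F(X,Z,T)=F(0,Z,T)+X\gamma(X,Z,T)$; then $\alpha_1(x)u=F(x,z,t)$ becomes $\alpha_1(0)u=F(0,z,t)+xw$ with $w:=\gamma(x,z,t)-\beta(x)u\in B$. Multiplying each $m_i$ by $\alpha_1(0)^e$, replacing $(\alpha_1(0)u)^{c_i}$ by $(F(0,z,t)+xw)^{c_i}$, expanding and summing, I obtain
\[
\alpha_1(0)^e\,b=\Phi(z,t)+x\psi,
\]
where $\Phi(z,t):=\sum_i r_i\,\alpha_1(0)^{e-c_i}F(0,z,t)^{c_i}z^{a_i}t^{b_i}\in R[z,t]$ and $\psi\in B$. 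Since $w_R(b)<0$ gives $v_x(b)\geqslant1$, while $v_x(\alpha_1(0))=0$ and $v_x(x\psi)\geqslant1$, the element $\Phi(z,t)$ has $v_x\geqslant1$; but a nonzero element of $R[z,t]$ has $v_x=0$, so $\Phi=0$, i.e.\ $\alpha_1(0)^e\,b=x\psi$ in $B$.

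\emph{Conclusion.} Because $A_R$ is a domain, $X^d\alpha_1(X)Y-F(X,Z,T)$ is irreducible, which forces $\gcd_{R[X,Z,T]}(\alpha_1(X),F(X,Z,T))=1$; hence $\alpha_1(X)U-F(X,Z,T)$ is irreducible in $R[X,U,Z,T]$, and the natural surjection $R[X,U,Z,T]/(\alpha_1(X)U-F(X,Z,T))\twoheadrightarrow B$ is an isomorphism (both sides are domains with fraction field $\mathrm{Frac}(R)(x,z,t)$). Reducing modulo $x$, $B/xB\cong R[U,Z,T]/(\alpha_1(0)U-F(0,Z,T))$, which is a domain because $\gcd_{R[Z,T]}(\alpha_1(0),F(0,Z,T))=1$ (automatic when $R=k$, since then $\alpha_1(0)\in k^*$; it is part of the hypotheses in \thref{Admissible1}). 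So $x$ is prime in $B$; and $\alpha_1(0)$, being nonzero in $R$, maps to a nonzero element of the domain $B/xB$, whence $x\nmid\alpha_1(0)$. Therefore $\alpha_1(0)^e\,b=x\psi$ forces $x\mid b$ in $B$, i.e.\ $b=xb_1$ with $b_1\in B=R[x,x^dy,z,t]$, as wanted.

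\emph{Main obstacle.} The only delicate point is the last paragraph: one must know that $x$ stays prime after adjoining the relation. The cleanest equivalent formulation is that $x,\alpha_1(x),F(x,z,t)$ is a regular sequence in the polynomial ring $R[x,z,t]$ — so that $x$ is a nonzerodivisor on $R[x,z,t]/(\alpha_1(x),F(x,z,t))^e$, and then $\alpha_1(x)^e b\in(\alpha_1(x),F(x,z,t))^e$ together with $x\mid\alpha_1(x)^e b$ yields $b/x\in R[x,z,t][F(x,z,t)/\alpha_1(x)]=B$ directly. This is precisely where ``$A_R$ is a domain'' and $\gcd_{R[Z,T]}(\alpha_1(0),F(0,Z,T))=1$ get used; the remainder is routine bookkeeping with $w_R$.
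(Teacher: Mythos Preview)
Your argument is correct and follows essentially the same route as the paper's own proof: both first observe that the $w_R$-weight-zero monomials lie in $R[x^dy,z,t]$, both use the identity $\alpha_1(0)\,x^dy-F(0,z,t)\in xR[x,x^dy,z,t]$ to obtain $\alpha_1(0)^e b\in xR[x,x^dy,z,t]$, and both finish by noting that $R[x,x^dy,z,t]\cong R[X,U,Z,T]/(\alpha_1(X)U-F(X,Z,T))$ and that the $\gcd$ hypothesis makes $\{x,\alpha_1(0)\}$ a regular sequence there (equivalently, $x$ is prime and $\alpha_1(0)\notin xB$). Your write-up is simply more explicit in two places---you spell out the remainder $\Phi(z,t)$ and kill it via the valuation, and you justify the isomorphism $B\cong R[X,U,Z,T]/(\alpha_1(X)U-F(X,Z,T))$ rather than asserting it---and you rightly flag that the needed hypothesis $\gcd_{R[Z,T]}(\alpha_1(0),F(0,Z,T))=1$ comes from \thref{Admissible1} rather than from the lemma statement itself.
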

		\begin{proof}
			Since  $b=\sum_{i=1}^{s}m_i$, with $w_R(m_i)=0$, it follows that $m_i\in R[x^dy,z,t]$. As $w_R(b)<0$, it follows that $w_R(\alpha_1(0)^ib)<0$ for every integer $i>0$ and 
			further it follows that $\alpha_1(0)^rb\in (\alpha_1(0)x^dy - F(0,z,t))R[x^dy,z,t]$ for some $r>0$.
			Now \begin{equation}\label{F}
				\alpha_1(0)x^dy - F(0,z,t)= -(\alpha_1(x)-\alpha_1(0))x^dy + (F(x,z,t)-F(0,z,t))\in xR[x,x^dy,z,t].
			\end{equation}
			Thus, $\alpha_1(0)^rb\in  xR[x,x^dy,z,t]$.
			Note that 
			$$R[x,x^dy,z,t] \cong \dfrac{R[X,U,Z,T]}{(\alpha_1(X)U-F(X,Z,T))}.$$
			Hence 
			$R[x,x^dy,z,t]/(x) \cong R[U,Z,T]/(\alpha_1(0)U-F(0,Z,T))$. 
			Since \\$\gcd_{R[Z,T]}(\alpha_1(0),F(0,Z,T))=1$, it follows that $\{x, \alpha_1(0)\}$ is a regular sequence in $R[x,x^dy,z,t]$ and hence we get that $b \in xR[x,x^dy,z,t]$. Thus the result follows.	
		\end{proof}
		
		\begin{lem}\thlabel{lem2}
			Let $b \in R[x,x^dy,z,t] \subseteq A_R$ be an element such that $w_R(b)=-n$, for some $n\geqslant 0$. Then $b$ can be represented as a sum of monomials $m_i$ in $\{x,x^dy,z,t\}$ with $w(m_i) \leqslant -n$, for every $i$.  
		\end{lem}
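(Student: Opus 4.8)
The plan is to bootstrap from \thref{lem1} by an iterative procedure that raises the minimal $x$-exponent occurring in a monomial representation of $b$. Before starting, I would record the elementary computation underlying the whole section: for any $r\in R\setminus\{0\}$ and non-negative integers $a,b,c,e$, the monomial $rx^a(x^dy)^bz^ct^e$ has $w_R$-value exactly $-a$. Indeed, substituting $x^dy = F(x,z,t)/\alpha_1(x)$ inside $R[x,x^{-1},\alpha_1(x)^{-1},z,t]$ rewrites this monomial as $x^a$ times the product of $r$, $F(x,z,t)^b$, $z^ct^e$ and the unit $\alpha_1(x)^{-b}$; since $F(0,z,t)\neq 0$ and $x$ remains a prime of the UFD $R[x,z,t]$ after inverting $\alpha_1(x)$ (using $\alpha_1(0)\neq 0$), that product is not divisible by $x$, so $w_R$ of the monomial is $-a$. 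In particular a sum of monomials all of $x$-exponent $\geqslant q$ has $w_R$-value $\leqslant -q$.

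For the main step, fix any expression $b=\sum_i m_i$ as a finite sum of monomials $m_i$ in $R[x,x^dy,z,t]$, and let $p\geqslant 0$ be the least $x$-exponent occurring among the $m_i$. If $p\geqslant n$ there is nothing to prove, as then each $w_R(m_i)\leqslant -p\leqslant -n$. Suppose $p<n$. I would split $b = c_p + b_{>p}$, where $c_p$ collects the $m_i$ of $x$-exponent exactly $p$ and $b_{>p}$ the rest; then $w_R(b_{>p})\leqslant -(p+1)$ and $w_R(b)=-n\leqslant -(p+1)$, hence $w_R(c_p)=w_R(b-b_{>p})\leqslant -(p+1)<-p$ (the case $c_p=0$ being trivial). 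Factoring $c_p = x^p c_p'$ with $c_p'\in R[x^dy,z,t]$ a sum of monomials of $w_R$-value $0$, we get $w_R(c_p') = w_R(c_p)+p \leqslant -1 <0$, so \thref{lem1} applies to $c_p'$ and gives $c_p' = x\,c_p''$ for some $c_p''\in R[x,x^dy,z,t]$, whence $c_p = x^{p+1}c_p''$. Expanding $c_p''$ as a sum of monomials in $R[x,x^dy,z,t]$ then produces a new representation $b = x^{p+1}c_p'' + b_{>p}$ of $b$ in which every monomial has $x$-exponent $\geqslant p+1$.

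Finally I would iterate this construction: the least $x$-exponent strictly increases at each pass and is bounded by $n$, so after finitely many steps we reach a representation $b=\sum_i m_i$ in which every $m_i$ has $x$-exponent $\geqslant n$, i.e.\ $w_R(m_i)\leqslant -n$ for all $i$, which is the assertion. The genuine content is entirely carried by \thref{lem1}; the rest is bookkeeping with the degree function $w_R$. The one point requiring care is the observation that the ``top-degree piece'' $c_p$ can in fact drop in $w_R$-degree, owing to cancellation of the $x$-free parts after clearing the denominator $\alpha_1(x)$ — it is precisely this phenomenon that makes the strict inequality $w_R(c_p)<-p$ available and lets \thref{lem1} be invoked.
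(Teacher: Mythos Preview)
Your argument is correct and follows essentially the same strategy as the paper's proof: both repeatedly invoke \thref{lem1} to extract one more factor of $x$ at a time, the paper via induction on $n$ (showing $b=xb_1$ with $w_R(b_1)=-(n-1)$ and recursing) and you via iteration on the minimal $x$-exponent $p$ appearing in a monomial representation. These are minor reorganisations of the same idea; your explicit computation that $w_R(rx^a(x^dy)^bz^ct^e)=-a$ is a helpful addition that the paper leaves implicit.
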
 
		\begin{proof}
			We show that $b=x^nb_n$, for some $b_n \in R[x,x^dy,z,t]$ with $w_R(b_n)=0$. 
			The lemma will follow as for any monomial $m=\lambda x^{\iota}(x^dy)^{\beta}z^{l}t^j$ with $\lambda\in R\setminus\{0\}$, $\iota\geqslant 0$, $\beta\geqslant0$, $l\geqslant 0$ and $j\geqslant 0$,  $w_R(m)=-\iota\leqslant 0$.
			
			We prove that $b=x^nb_n$ by induction on $n \geqslant 0$.
			Now if $n=(-w_R(b))=0$, then we are already done.
			Now consider $n>0$. We have 
			$$
			b= \sum_{i=1}^{s} m_i^{\prime} + \sum_{i=s+1}^{r} m_i^{\prime} \in R[x, x^dy,z,t],
			$$ 
			where $m_i^{\prime}$'s are monomials in $\{x, x^dy,z,t\}$ with $w_R(m_i^{\prime})<0$ for $1 \leqslant i \leqslant s$ and $w_R(m_i^{\prime})=0$ for $s+1\leqslant i
			\leqslant r$. 
			Since $w_R(b)<0$ and $w_R(\sum_{i=1}^{s} m_i^{\prime})<0$, we have 
			$w_R(\sum_{i=s+1}^{r} m_i^{\prime})<0$ and hence by \thref{lem1}, 
			$\sum_{i=s+1}^{r} m_i^{\prime}=x\widetilde{b_1}$ for some $\widetilde{b_1} \in R[x,x^dy,z,t]$.
			Also for every $i, 1 \leqslant i \leqslant s$, $x \mid m_i^{\prime}$ in $R[x,x^dy,z,t]$.
			Hence $b=x b_1$, for some $b_1 \in R[x,x^dy,z,t]$ with $w_R(b_1)=-(n-1)$. 
			Hence by induction hypothesis, we get the desired result. 
		\end{proof}

		\begin{lem}\thlabel{lem3}
			Let $b \in A_R$ be an element such that $w_R(b)=n$. If $b$ is a sum of monomials in $\{x,y,z,t\}$ each of degree $n+m$ for some $m>0$, then $x \mid b$ and $b$ has another representation as a sum of monomials in $\{x,y,z,t\}$ each of degree less than or equal to $n$. 
		\end{lem}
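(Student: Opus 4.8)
The plan is to absorb the obstruction into a power of $y$: since every monomial appearing in $b$ has $w_R$-degree $n+m$, which is large, each such monomial must carry a large power of $y$; dividing out the common power of $y$ lands us inside $R[x,x^dy,z,t]$, where \thref{lem2} already does the work, and multiplying back by that power of $y$ then restores genuine monomials in $\{x,y,z,t\}$ and, because the power of $y$ is large, also pushes all $w_R$-degrees down to $\leqslant n$.

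First I would write $b=\sum_{i=1}^{s} m_i$ with $m_i=\mu_i x^{a_i}y^{b_i}z^{c_i}t^{e_i}$ ($\mu_i\in R\setminus\{0\}$), so that $w_R(m_i)=b_i d-a_i=n+m$ for each $i$; since $a_i\geqslant 0$ this forces $b_i d\geqslant n+m$. Let $B_0$ be the least non-negative integer with $B_0 d\geqslant n+m$. Because $d\geqslant 1$, the set of non-negative integers $\beta$ with $\beta d\geqslant n+m$ is $\{B_0,B_0+1,\dots\}$, so $b_i\geqslant B_0$ for all $i$, and hence $b=y^{B_0}b^{*}$ with $b^{*}=\sum_i\mu_i x^{a_i}y^{b_i-B_0}z^{c_i}t^{e_i}\in A_R$ (note $b\neq 0$, so $b^{*}\neq 0$). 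Each monomial of $b^{*}$ has $w_R$-degree $(b_i-B_0)d-a_i=(n+m)-B_0 d=:N^{*}\leqslant 0$, and since $a_i-(b_i-B_0)d=-N^{*}\geqslant 0$ it equals $\mu_i x^{-N^{*}}(x^dy)^{b_i-B_0}z^{c_i}t^{e_i}\in R[x,x^dy,z,t]$; hence $b^{*}\in R[x,x^dy,z,t]$. Moreover, by additivity of $w_R$ and $w_R(y)=d$ we get $w_R(b^{*})=w_R(b)-B_0 d=n-B_0 d=-n^{*}$, where $n^{*}:=B_0 d-n=(B_0 d-(n+m))+m\geqslant m\geqslant 1$. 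Now \thref{lem2} applies to $b^{*}$: it can be written as a finite sum of monomials $\lambda_l x^{j_l}(x^dy)^{\beta_l}z^{c_l}t^{e_l}$ in $\{x,x^dy,z,t\}$ of $w_R$-degree $-j_l\leqslant -n^{*}$, i.e.\ with $j_l\geqslant n^{*}\geqslant 1$. In particular each such monomial is divisible by $x$ in $R[x,x^dy,z,t]$, so $x\mid b^{*}$ there, and therefore $x\mid b=y^{B_0}b^{*}$ in $A_R$.

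Finally, multiplying this representation of $b^{*}$ by $y^{B_0}$ and using $(x^dy)^{\beta_l}=x^{\beta_l d}y^{\beta_l}$ gives
$$
b=y^{B_0}b^{*}=\sum_l \lambda_l\, x^{j_l+\beta_l d}\,y^{\beta_l+B_0}\,z^{c_l}t^{e_l},
$$
a sum of honest monomials in $\{x,y,z,t\}$ (all exponents are $\geqslant 0$), each of $w_R$-degree $(\beta_l+B_0)d-(j_l+\beta_l d)=B_0 d-j_l\leqslant B_0 d-n^{*}=n$, which is the required new representation. The one genuinely inventive step is choosing to factor out exactly the power $y^{B_0}$: this is what simultaneously lowers all the monomial degrees into the non-positive range (making \thref{lem2} applicable) and, via the inequality $n^{*}\geqslant m\geqslant 1$, forces both $x\mid b^{*}$ and the final degree bound $\leqslant n$ after multiplying back. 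Everything after that choice is routine bookkeeping with $w_R$ using $w_R(x)=-1$, $w_R(y)=d$, $w_R(z)=w_R(t)=0$ and additivity of $w_R$, and the hypothesis $m>0$ is used precisely to ensure $n^{*}\geqslant 1$.
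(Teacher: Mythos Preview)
Your proof is correct and follows essentially the same approach as the paper's: both factor out a suitable power of $y$ to land in $R[x,x^dy,z,t]$ and then invoke \thref{lem2}. The paper factors out $y^{\iota}x^{\beta}$ jointly (with $\iota=B_0$ and $\beta=B_0d-(n+m)=-N^{*}$), whereas you factor out only $y^{B_0}$ and keep the $x^{-N^{*}}$ inside $b^{*}$; this is a cosmetic difference, and you spell out the final monomial representation and the divisibility $x\mid b$ more explicitly than the paper does.
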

		\begin{proof}
			Let $b= \sum_{i=1}^{s} m_i$, where $m_i$ is a monomial in $\{x,y,z,t\}$ with $w_R(m_i)=n+m$, for some $m>0$. We first show that there exist $\iota\geqslant 0, \beta\geqslant 0$ such that $m_i=\lambda_i y^{\iota}x^{\beta} m_i^{\prime}$ for each $i$, $1\leqslant i\leqslant s$ such that $\lambda_i\in R\setminus\{0\},\,m_i^{\prime}$ is a monomial in $\{x^dy,z,t\}$, i.e.,  $w_R(m_i^{\prime})=0$ and $d\iota-\beta=n+m$.
			
			If $n+m\leqslant 0$, then we can take $\iota=0$ and $\beta=-(n+m)\geqslant 0$. 
			If $n+m>0$, then we can take $\iota$ to be the least integer greater than equal to 
			$(n+m)/d$ and $\beta=d\iota-(n+m)$. 
			Therefore, $b=y^{\iota}x^{\beta} \tilde{b}$, where $\tilde{b} \in R[x,x^dy,z,t]$ and $w_R(\tilde{b})=-m$. 
			Now the result follows by \thref{lem2}.  
		\end{proof}
		%		
		%Thus we have shown that every element $g\in A_R$ can be written as $g= \sum_{i=1}^{n}m_i$, where $m_i$ is a monomial in $x,y,z,t$ over $R$ and $w_R(m_i)\leqslant w_R(g)$ for every $i,\, 1\leqslant i \leqslant n.$ 
		We now assume that $R$ is also an affine $k$-algebra.
		
		\begin{cor}\thlabel{ad}
			Let $\{c_1,\dots,c_n\}$ be a generating set of the affine $k$-algebra $R$.
			Then the $\mathbb{Z}$-filtration $\{A_R^n\}_{n \in \mathbb{Z}}$ on $A$ is admissible with respect to the generating set $\{c_1,\dots,c_n,x,y,z,t\}$. 	
		\end{cor}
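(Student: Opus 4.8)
My plan is to use the single generating set $\Gamma:=\{c_1,\dots,c_n,x,y,z,t\}$ and to show that every $a\in A_R$ can be rewritten as a finite sum of monomials in $k[\Gamma]$, each of $w_R$-degree at most $w_R(a)$; this is exactly what admissibility of the (already proper) $\Z$-filtration $\{A_R^n\}$ requires. The technical engine will be \thref{lem3}; everything else is a termination argument on $w_R$-degrees together with a degree-neutral expansion of $R$-coefficients, so I expect no real new difficulty beyond what \thref{lem1}--\thref{lem3} already provide.

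First I would record that, by the very definition of $w_R$, every nonzero element of $R$ has $w_R$-degree $0$, while $w_R(x)=-1$, $w_R(y)=d$ and $w_R(z)=w_R(t)=0$; consequently an $R$-monomial $\lambda M$, with $\lambda\in R\setminus\{0\}$ and $M$ a monomial in $x,y,z,t$, has $w_R$-degree $d\deg_y(M)-\deg_x(M)$. Since $A_R$ is a quotient of $R[X,Y,Z,T]$, any $a\in A_R$ can be written as a finite sum $a=\sum_i\lambda_iM_i$ with $\lambda_i\in R\setminus\{0\}$ and $M_i$ a monomial in $x,y,z,t$; set $D:=\max_i w_R(\lambda_iM_i)$.

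Next I would drive $D$ down to $w_R(a)$ by induction on $D-w_R(a)\geqslant 0$. If $D=w_R(a)$ there is nothing to do. If $D>w_R(a)$, let $b'$ be the partial sum of those terms $\lambda_iM_i$ with $w_R(\lambda_iM_i)=D$ and write $a=b'+c$ with $w_R(c)\leqslant D-1$; since $w_R(a)<D$ and $w_R$ is a degree function, $w_R(b')\leqslant D-1<D$. Then \thref{lem3}, applied to $b'$ (whose terms all have the common $w_R$-degree $D$, so that $m:=D-w_R(b')>0$), re-expresses $b'$ as a finite sum of $R$-monomials in $x,y,z,t$, each of $w_R$-degree $\leqslant w_R(b')\leqslant D-1$. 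Substituting back, $a$ becomes a finite sum of $R$-monomials in $x,y,z,t$, all of $w_R$-degree $\leqslant D-1$; and since $a$ cannot be a sum of monomials all of $w_R$-degree strictly below $w_R(a)$ (again because $w_R$ is a degree function), the new maximum $w_R$-degree $D'$ satisfies $w_R(a)\leqslant D'\leqslant D-1$, so the inductive hypothesis finishes the step.

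Finally, once $a=\sum_j\mu_jN_j$ with $\mu_j\in R\setminus\{0\}$, $N_j$ a monomial in $x,y,z,t$ and $w_R(\mu_jN_j)\leqslant w_R(a)$ for all $j$, I would expand each $\mu_j$ as a $k$-linear combination of monomials in $c_1,\dots,c_n$, which is possible because $R=k[c_1,\dots,c_n]$. Each such monomial in the $c_i$'s has $w_R$-degree $0$, so this substitution preserves every $w_R$-degree, and $a$ is thereby displayed as a finite sum of monomials in $k[\Gamma]$, each of $w_R$-degree $\leqslant w_R(a)$, hence lying in $A_R^n$ whenever $a\in A_R^n$. This is precisely admissibility of $\{A_R^n\}$ with respect to $\Gamma$. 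The only genuine obstacle, namely reducing a block of monomials of equal $w_R$-degree to strictly lower degree, is already isolated in \thref{lem3} (which itself rests on \thref{lem1}, \thref{lem2} and the coprimality hypothesis $\gcd_{R[Z,T]}(\alpha_1(0),F(0,Z,T))=1$); for the corollary one needs only the degree-induction above and the observation that replacing $R$-coefficients by monomials in the $c_i$'s is $w_R$-degree-neutral.
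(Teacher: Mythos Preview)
Your proposal is correct and follows essentially the same approach as the paper: both arguments group the monomials by $w_R$-degree, apply \thref{lem3} to the top-degree block to strictly lower the maximum occurring degree, and iterate (you by induction on $D-w_R(a)$, the paper by descent on the highest degree $n_r$). Your final paragraph expanding the $R$-coefficients into monomials in $c_1,\dots,c_n$ makes explicit a step the paper leaves tacit, but otherwise the proofs are the same.
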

		\begin{proof}
			Let $b\in A_R$ be such that $w_R(b)=n$. Suppose 
			$$
			b=b_1+\cdots+b_r,
			$$
			such that for every $j, 1\leqslant j \leqslant r$, $b_j=\sum_{l=1}^{s_j}m_{jl}$, where $m_{jl}$ is a monomial in $\{x,y,z,t\}$ with $w_R(m_{jl})=n_j$, and $n_1<\cdots <n_r$. 
			
			If $n=n_r$, then we are done. If not, then $v_r:= w_R(b_r)<n_r$ as $n<n_r$. By \thref{lem3}, there exists a representation of $b_r$, $b_r=\sum_{l=1}^{s^{\prime}}m^{\prime}_{rl}$ with $w_R(m^{\prime}_{rl}) \leqslant v_r$. Hence
			$$
			b= \widetilde{b_1}+\cdots+\widetilde{b_{r^{\prime}}},
			$$ 
			such that for every $i, 1\leqslant i \leqslant r^{\prime}$, $\widetilde{b_i}$ is a sum of monomials of $w_R$-degree $n_i^{\prime}$, where $n_1^{\prime}< \cdots < n^{\prime}_{r^{\prime}}$ and $n^{\prime}_{r^{\prime}}<n_r$. Hence repeating the above process finitely many times we get a representation of $b$ as a sum of monomials, each of them having $w_R$-degree not more than $n$. Thus, the result follows. 
		\end{proof}
		
		The next lemma describes the structure of the associated graded ring of $A_R$ with respect to the filtration $\{A_R^n\}_{n \in \mathbb{Z}}$.
		
		\begin{lem}\thlabel{gr}
			Let $\tilde{A}:=\bigoplus_{n \in \mathbb{Z}} \frac{A_R^n}{A_R^{n-1}}$ be the associated graded ring of $A$ with respect to the filtration $\{A_R^n\}_{n \in \mathbb{Z}}$. Then 
			$$
			\tilde{A} \cong \frac{R[X,Y,Z,T]}{(\alpha_1(0)X^dY-F(0,Z,T))}.
			$$
		\end{lem}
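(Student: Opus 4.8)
The plan is to construct an explicit graded isomorphism from $B:=R[X,Y,Z,T]/(\alpha_1(0)X^dY-F(0,Z,T))$ onto $\tilde{A}=\gr(A_R)$. First grade $R[X,Y,Z,T]$ by $\deg X=-1$, $\deg Y=d$, $\deg Z=\deg T=0$ and $\deg r=0$ for $r\in R$; then $\alpha_1(0)X^dY-F(0,Z,T)$ is homogeneous of degree $0$, so $B$ inherits a $\Z$-grading. On the target side, since $w_R$ is a degree function the leading-form map $\rho\colon A_R\to\tilde{A}$ is multiplicative on nonzero elements, $\tilde{A}$ is a $\Z$-graded domain, $w_R(x)=-1$, $w_R(z)=w_R(t)=0$, $w_R(y)=d$, and $A_R^0$ is a subring of $A_R$ with $A_R^{-1}$ an ideal of it, so $A_R^0\twoheadrightarrow A_R^0/A_R^{-1}=\tilde{A}_0$ is a ring homomorphism restricting to the identity on $R$. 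I would then define the graded $k$-algebra homomorphism $\Psi\colon R[X,Y,Z,T]\to\tilde{A}$ by $X\mapsto\rho(x)$, $Y\mapsto\rho(y)$, $Z\mapsto\rho(z)$, $T\mapsto\rho(t)$, $r\mapsto r$ for $r\in R$.

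The first step is to check that $\Psi$ kills the relation, i.e. that $\alpha_1(0)\rho(x)^d\rho(y)=F(0,\rho(z),\rho(t))$. Writing $\alpha_1(X)-\alpha_1(0)=-X\gamma(X)$ and $F(X,Z,T)-F(0,Z,T)=X\eta(X,Z,T)$ and substituting the defining relation $x^d\alpha_1(x)y=F(x,z,t)$ gives
\[
\alpha_1(0)x^dy-F(0,z,t)=x^dy\bigl(\alpha_1(0)-\alpha_1(x)\bigr)+\bigl(F(x,z,t)-F(0,z,t)\bigr)=x\bigl(\eta(x,z,t)-x^dy\,\gamma(x)\bigr),
\]
which lies in $A_R^{-1}$ since each factor on the right has nonpositive $w_R$-degree. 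As $w_R(\alpha_1(0)x^dy)=w_R(F(0,z,t))=0$, passing to $\tilde{A}_0$ yields the required identity, so $\Psi$ descends to $\Psi\colon B\to\tilde{A}$.

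Surjectivity is immediate from admissibility: by \thref{ad} any homogeneous element of $\tilde{A}_n$ is a finite $k$-linear combination of $\rho$ applied to monomials in $\{c_1,\dots,c_n,x,y,z,t\}$ of $w_R$-degree $n$, and each such $\rho(\text{monomial})$ lies in the image of $\Psi$ by multiplicativity of $\rho$. For injectivity I would compare transcendence degrees. Since $R$ is a UFD, the $Y$-linear polynomial $\alpha_1(0)X^dY-F(0,Z,T)$ is primitive (its content is $\gcd_{R[Z,T]}(\alpha_1(0),F(0,Z,T))=1$) and hence irreducible, so $B$ is a domain with $\td_k B=\td_k R+3$. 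On the other hand $\rho(z),\rho(t)$ are algebraically independent over $R$ in $\tilde{A}$: if $G\in R[Z,T]$ satisfies $G(\rho(z),\rho(t))=0$, then the image of $G(z,t)\in R[z,t]\cong R^{[2]}$ in $\tilde{A}_0$ vanishes, i.e. $w_R(G(z,t))<0$; but $G(z,t)$ involves no $x$, so its $x$-adic order is $0$ unless $G(z,t)=0$, forcing $G=0$. Decomposing a polynomial in $R[X,Z,T]$ by powers of $X$ and noting that the summand $G_j(\rho(z),\rho(t))\rho(x)^j$ sits in $\tilde{A}_{-j}$, one then obtains $R[\rho(x),\rho(z),\rho(t)]\cong R^{[3]}\subseteq\tilde{A}$ (using that $\tilde{A}$ is a domain and $\rho(x)\neq0$), whence $\td_k\tilde{A}\geqslant\td_k R+3=\td_k B$. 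Combined with surjectivity this gives $\td_k\tilde{A}=\td_k B$; since $\Psi$ is a surjection of finitely generated $k$-domains of equal transcendence degree, its kernel is a height-$0$ prime of the domain $B$, hence zero, and $\Psi$ is an isomorphism.

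The main obstacle is the bookkeeping forced by the fact that $\rho$ is multiplicative but not additive: one must split elements into their $w_R$-homogeneous parts before passing to $\tilde{A}$, both when verifying that the relation is annihilated and when proving $\rho(x),\rho(z),\rho(t)$ algebraically independent over $R$ — the latter being exactly the transcendence-degree bound $\td_k\tilde{A}\geqslant\td_k A_R$ that powers the injectivity step. Everything else is formal once the degree-function formalism of Subsection~\ref{w} and the admissibility of $\{A_R^n\}$ (\thref{ad}) are in hand.
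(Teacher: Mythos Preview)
Your argument is correct and follows essentially the same route as the paper: construct the surjection $B\twoheadrightarrow\tilde{A}$ by checking the relation dies (via the same rewriting $\alpha_1(0)x^dy-F(0,z,t)\in xR[x,x^dy,z,t]$), then conclude injectivity by comparing transcendence degrees of two domains. The paper simply asserts $\td_k\gr(A_R)=\td_k R+3$ without further comment, whereas you supply the explicit verification via algebraic independence of $\rho(x),\rho(z),\rho(t)$ over $R$; this extra care is welcome but does not constitute a different method.
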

		\begin{proof}
			Let $\{c_1,\dots,c_n\}$ be a generating set of the affine $k$-algebra $R$.
			Since $\{A_R^n\}_{n\in \bZ}$ is a proper  admissible $\mathbb{Z}$-filtration with respect to the generating set $\{c_1,\dots,c_n, x,y,z,t\}$, $\tilde{A}=R[\overline{x}, \overline{y}, \overline{z}, \overline{t}]$, where $\overline{x}, \overline{y}, \overline{z}, \overline{t}$ denote the images of $x,y,z,t$ in $\gr(A_R)$ respectively, via the natural map $\rho:A_R \rightarrow \gr(A_R)$ (cf. \cite[Remark 2.2]{inv}). 
			
			Now $\alpha_1(0),x^dy, F(0,z,t)\in A_0^R\setminus A_{-1}^R$. Since $x^d\alpha_1(x)y - F(x,z,t) =0$ in $A_R$
			by \eqref{F}, $\alpha_1(0)x^dy- F(0,z,t)\in A_{-1}^R$. Therefore, 
			$ {\alpha_1(0)}\overline x^d\overline y - F(0,\overline z,\overline t) =0$ in $\gr(A_R)$.  
			Hence we have the following onto $k$-homomorphism 
			\begin{equation*}
				\pi:\dfrac{R[X,Y,Z,T]}{(\alpha_1(0)X^dY-F(0,Z,T))}\rightarrow \gr(A_R).
			\end{equation*}
			Now both L.H.S and R.H.S are integral domains with $\td_k \gr(A_R) =\td_k R + 3< \infty$, and thus their dimensions are equal. Therefore, $\pi$ is an isomorphism.
		\end{proof}
		
		Now we note down a few useful remarks for $A_R$ with $\gcd_{R[Z,T]}(\alpha_1(0), F(0,Z,T))=1$.
		\begin{rem}\thlabel{adm}
			{\em
				\begin{enumerate}[\rm(i)]
					\item Let $m$ be a monomial of $A_R=R[x,y,z,t]$.
					If $w_R(m)<0$ then $m\in xA_R$. If $w_R(m)>0$ then $y\mid m$.
					
					\item For any $g\in A_R$, if $w_R(g)<0$ then $g=\sum_{i=1}^{n} m_i$, where $m_i$ is a monomial in $\{x,y,z,t\}$ with $w_R(m_i) \leqslant w_R(g)$. Thus $g \in xA_R$.
					
					\item For any $g\in A_R$, if $w_R(g)>0$ then 
					$\rho(y)\mid \rho(g),$ where $\rho : A_R\rightarrow \gr(A_R)$ is the natural map.
					
					\item	Let $g\in A_R$ be such that $w_R(g)=0$. Then $g \in R[x, x^dy,z,t]$. Hence it follows that 
					$$
					\alpha_1(0)^r g= \sum_{i=0}^{r} g_i(z,t) (x^dy)^i \alpha_1(0)^r +xg^{\prime},
					$$
					for some $g^{\prime} \in R[x,x^dy,z,t]$.
					Now note that 
					$$
					\alpha_1(0)x^dy=(\alpha_1(0)-\alpha_1(x))x^dy+ \alpha_1(x)x^dy=\alpha_1^{\prime}(x) x^{d+1}y+F(x,z,t),
					$$
					for some $\alpha_1^{\prime} \in R^{[1]}$. 
					Thus it follows that $\alpha_1(0)^rg= g_1^{\prime}(z,t)+ xu(x,x^dy,z,t)$, for some $r\geqslant0$, $g_1^{\prime}\in R[z,t],\, u\in R[x,x^dy,z,t]$ with $w_R(u)\leqslant 0$.
			\end{enumerate}}
		\end{rem}
		\subsection{On Theorem C}\label{MTHC}
		
		\smallskip
		
		We now introduce a definition for convenience.
		\begin{Defn}\thlabel{type A}
			{\rm 
				For ${\bf r}=(r_1,\dots,r_m)\in \Z_{>0}^m$, we say a non-zero polynomial $\alpha\in k^{[m]}$ is {\it ${\bf r}$-divisible} if there exists a system of coordinates $\{\X\}$ of $k^{[m]}$ such that 
				$$\alpha(\X) = X_1^{r_1}\alpha_1(\X),\text{ for some }\alpha_1\in k^{[m]} \text{ with }X_1\nmid \alpha_1;$$
				for any $i\in \{2,\dots,m\}$,
				$$\alpha_i(X_{i},\dots,X_m):={\alpha_{i-1}(0,X_i,\dots,X_m)}/{X_i^{r_i}}\in k[X_i,\dots,X_m]\text{ with } X_i\nmid \alpha_i$$
				and $\alpha_{m+1}:=\alpha_m(0)\in k^{*},$
			}
		\end{Defn}
		i.e., for $i\in \{1,\dots,m\}$, there exists $\beta_i\in k^{[m-i+1]}$ such that
		$$\begin{array}{lll}
			\alpha&=& X_1^{r_1}\alpha_1(\X)\\
			&=&X_1^{r_1}(X_1\beta_1(\X)+ \alpha_1(0,X_2,\dots,X_m))\\
			&=&X_1^{r_1}(X_1\beta_1(\X)+X_2^{r_2} \alpha_2(X_2,\dots,X_m))\\
			&=&X_1^{r_1}(X_1\beta_1(\X)+X_2^{r_2}(X_2\beta_2(X_2,\dots,X_m)+X_3^{r_3}\alpha_3(X_3,\dots,X_m)))\\
			&=&\dots\\
			&=& X_1^{r_1}(X_1\beta_1(\X) +\dots +X_{m-1}^{r_{m-1}}(X_{m-1}\beta_{m-1}(X_{m-1},X_m)+ X_m^{r_m}\alpha_m(X_m))\dots)\\
			&=& X_1^{r_1}(X_1\beta_1(\X) +\dots +X_{m-1}^{r_{m-1}}(X_{m-1}\beta_{m-1}(X_{m-1},X_m)+ X_m^{r_m}( X_m\beta_m(X_m)+\alpha_{m+1}))\dots).
		\end{array}$$
		\begin{rem}
			{\em Note that for ${\bf r}=(r_1,\dots,r_m)\in \Z_{>0}^m$ and a system of coordinates $\{\X\}$ in $k^{[m]}$, $\alpha$ is   ${\bf r}$-divisible if and only if $r_1$ is the highest power of $X_1$ dividing $\alpha(\X)$ and $r_j$ is the highest power of $X_j$ dividing $\alpha_{j-1}(0,X_j,\dots,X_m)$, for all $j\in\{2,\dots,m\}$.}
		\end{rem}
		
		We illustrate the above concept with two examples which will be used later.
		
		\begin{ex}\thlabel{ex1}
			{\rm $X^2(1+X)^2$ is ($2$)-divisible in $\{X\}$ in  $k^{[1]}$.}
		\end{ex}
		\begin{ex}\thlabel{ex2}
			{\rm $X_1X_2^2(X_1 + X_2^2)$ is $(1,4)$-divisible in $\{X_1,X_2\}$ in $k^{[2]}$ and is $(2,2)$-divisible in $\{X_2,X_1\}$ in $k^{[2]}$.}
		\end{ex}
		Next we recall a definition from \cite{GDA2}.
		\begin{Defn}\thlabel{GAV}
			{\rm We call an affine domain of the following form 
				\begin{equation*}
					B:=\dfrac{k [\X, Y, Z, T ]}{(X_1^{r_1}\dots X_m^{r_m}Y-F(X_1\dots,X_m,Z,T))},\, \text{ for some } r_i>1, 1\leqslant i\leqslant m,
				\end{equation*}
				with $f(Z,T):=F(0,\dots, 0,Z,T) \neq 0$ to be a {\it Generalised Asanuma domain}.}
		\end{Defn}
		Let $x_1,\dots, x_m,y,z,t$ denote the images  of $X_1,\dots,X_m,$ $Y,Z,T$ in $B$ respectively.
		Note that the coefficient of $Y$ in the defining equation of a Generalised Asanuma domain is ${\bf r}$-divisible in $\{\X\}$, where ${\bf r}=(r_1,\dots,r_m)$.
		
		We recall below a few results  from \cite{com}, \cite{adv} and \cite{adv2} on Generalised Asanuma domains.
		We first state a lemma about its Derksen invariant (\cite[Lemma 3.3]{adv}).
		\begin{lem}\thlabel{DK}
			Let $B$ be a Generalised Asanuma domain. Then $k[\x,z,t]\subseteq~\dk(B)$.
		\end{lem}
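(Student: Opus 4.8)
The plan is to exhibit two explicit non-trivial exponential maps $\phi,\phi'$ on $B$ and to check that $k[\x,z]\subseteq B^{\phi}$ and $k[\x,t]\subseteq B^{\phi'}$; since $k[\x,z,t]$ is generated by $x_1,\dots,x_m,z,t$, this yields $k[\x,z,t]\subseteq\dk(B)$. I would first note that by \thref{lemma : properties}(ii) every non-trivial exponential map on $B$ has invariant ring of transcendence degree $\td_k(B)-1=m+1<m+2$, so no single map can contain $k[\x,z,t]$ among its invariants: two maps are genuinely needed, and the two above are the natural ones.

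Write $\alpha:=x_1^{r_1}\cdots x_m^{r_m}\in k[\x]$, so that $\alpha y=F(\x,z,t)$ in $B$; since $x_1,\dots,x_m,z,t$ are algebraically independent in $B$ and $y=F/\alpha$ in $\operatorname{Frac}(B)$, we have $B[\alpha^{-1}]=k[\x,z,t,\alpha^{-1}]$, a localization of $k^{[m+2]}$. On $B[\alpha^{-1}]$ I would define the $k$-algebra homomorphism $\Phi_U\colon B[\alpha^{-1}]\to B[\alpha^{-1}][U]$ by $\Phi_U(x_i)=x_i$, $\Phi_U(z)=z$, $\Phi_U(t)=t+\alpha U$ --- a ``twisted translation'' in $t$, hence manifestly an exponential map in arbitrary characteristic. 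The key point is that $\Phi_U$ restricts to $B$: the only generator to check is $y$, and $\Phi_U(y)=F(\x,z,t+\alpha U)/\alpha=y+H$ with $H\in k[\x,z,t,U]\subseteq B[U]$, because $F(\x,z,t+\alpha U)-F(\x,z,t)$ is divisible by $\alpha$ in $k[\x,z,t,U]$ (each monomial $t^{n}$ contributes $(t+\alpha U)^{n}-t^{n}\in\alpha\,k[\x,z,t,U]$). Thus $\Phi_U$ induces an exponential map $\phi$ on $B$ with $x_1,\dots,x_m,z\in B^{\phi}$, so $k[\x,z]\subseteq B^{\phi}$. For non-triviality: $\phi(t)=t+\alpha U\neq t$ since $\alpha\neq 0$ in $B$ --- indeed each $x_i\neq 0$ in $B$, for $X_i\mid (X_1^{r_1}\cdots X_m^{r_m}Y-F)$ would force $X_i\mid F$ (as $X_i\mid X_1^{r_1}\cdots X_m^{r_m}$) and hence $f(Z,T)=F(0,\dots,0,Z,T)=0$, contradicting the hypothesis $f\neq 0$.

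Interchanging $z$ and $t$, the same argument produces a non-trivial exponential map $\phi'$ on $B$ (the twisted translation $z\mapsto z+\alpha U$, with $t$ fixed) satisfying $k[\x,t]\subseteq B^{\phi'}$. Then $\dk(B)\supseteq k[\,B^{\phi}\cup B^{\phi'}\,]\supseteq k[\x,z,t]$, which is what we want. I do not expect a genuine obstacle here: the only thing one has to notice is that the naive translation $t\mapsto t+U$ does \emph{not} descend to $B$ and must be twisted by the monomial $\alpha$; after that, well-definedness, the exponential-map identity and non-triviality are routine checks, the last using only $f\neq 0$.
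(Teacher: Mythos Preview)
Your argument is correct and is exactly the standard construction: the paper does not prove this lemma but cites \cite[Lemma~3.3]{adv}, and the two twisted translations $t\mapsto t+\alpha U$ and $z\mapsto z+\alpha U$ (with $\alpha=x_1^{r_1}\cdots x_m^{r_m}$) are precisely the exponential maps used there. One small slip in your write-up: to argue $x_i\neq 0$ in $B$ you wrote ``$X_i\mid (X_1^{r_1}\cdots X_m^{r_m}Y-F)$'', but $x_i=0$ in $B$ would mean $X_i$ lies in the principal ideal generated by $G:=X_1^{r_1}\cdots X_m^{r_m}Y-F$, i.e.\ $G\mid X_i$, not the reverse; this is impossible by comparing $Y$-degrees, or more simply you already noted that $x_1,\dots,x_m,z,t$ are algebraically independent, which immediately gives $\alpha\neq 0$.
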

		Now we quote a technical result obtained
		from \cite{com}, \cite{adv} and \cite{adv2}.
		\begin{prop}\thlabel{com2}
			Let $B$ be a Generalised Asanuma domain. 
			Suppose $ k[\x,z,t]\subsetneqq \dk(B) $. Then the following statements hold:
			\begin{enumerate}[\rm(i)]
				\item If $m=1$ or if $k$ is an infinite field, then there exist $Z_1,T_1 \in k[Z,T]$ and $a_0,a_1\in k^{[1]}$ such that $k[Z,T]=k[Z_1,T_1]$ and $f(Z,T)=a_0(Z_1)+a_1(Z_1)T_1$.
				\item If $f$  is a line in $k[Z,T]$, i.e., $\frac{k[Z,T]}{(f)}= k^{[1]}$ then $k[Z,T]=k[f]^{[1]}$.
			\end{enumerate} 
			\begin{proof}
				(i) The case $m=1$ is done in \cite[Proposition 3.7]{com}.
				As observed in \cite[Proposition 2.10]{adv2}, the case when $k$ is an infinite field follows from \cite[Proposition 3.4(i)]{adv}.
				
				\smallskip
				\noindent
				(ii) The result follows from \cite[Proposition 3.4]{adv}, when $k$ is infinite and \cite[Proposition 2.12]{adv2}, when $k$ is a finite field.
			\end{proof}
		\end{prop}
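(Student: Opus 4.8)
Write $B=\dfrac{k[\X,Y,Z,T]}{(X_1^{r_1}\cdots X_m^{r_m}Y-F(\X,Z,T))}$, with $f(Z,T)=F(0,\dots,0,Z,T)\neq 0$, and let $x_1,\dots,x_m,y,z,t$ be the images of $X_1,\dots,X_m,Y,Z,T$. By \thref{DK} one always has $k[\x,z,t]\subseteq\dk(B)$, so the standing hypothesis $k[\x,z,t]\subsetneq\dk(B)$ supplies a non-trivial exponential map $\phi$ on $B$ and an element $b\in B^{\phi}\setminus k[\x,z,t]$. The plan is to push $\phi$ down, via the homogenisation theory of Subsection~\ref{w}, to the ``model'' domain in which the right-hand side depends only on $Z,T$, to classify exponential maps there, and to read off the shape of $f$.

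First I would homogenise $B$ once for each of $X_1,\dots,X_m$ in turn. With the degree function $w_R$ from the construction \eqref{A_R}, taking $R=k[X_2,\dots,X_m]$, $d=r_1$ and distinguished variable $X_1$ (so $X_1^{r_1}\cdots X_m^{r_m}=X_1^{d}\alpha_1(X_1)$ with $\alpha_1=X_2^{r_2}\cdots X_m^{r_m}$, $\alpha_1(0)\neq 0$ in $R$, and $\gcd_{R[Z,T]}(\alpha_1(0),F(0,X_2,\dots,X_m,Z,T))=1$ because $f\neq 0$ forbids any $X_i$ from dividing the partially specialised $F$), \thref{Admissible1} produces an admissible proper $\Z$-filtration whose associated graded ring is again a Generalised Asanuma domain, $\dfrac{k[\X,Y,Z,T]}{(X_1^{r_1}\cdots X_m^{r_m}Y-F(0,X_2,\dots,X_m,Z,T))}$, with the \emph{same} $f$, and \thref{dhm} transports $\phi$ to a non-trivial homogeneous exponential map on it. Iterating with $X_2,\dots,X_m$ brings me to the model domain
\[
B_0:=\dfrac{k[\X,Y,Z,T]}{(X_1^{r_1}\cdots X_m^{r_m}Y-f(Z,T))},
\]
carrying a non-trivial homogeneous exponential map. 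The delicate point is that the ``escaping'' invariant must survive the iteration, i.e.\ that the map obtained on $B_0$ still has its ring of invariants \emph{not} contained in $k[\x,z,t]$; since in the $w_R$-grading $y$ has positive degree while $x_1,\dots,x_m,z,t$ have non-positive degree, $k[\x,z,t]$ is exactly the ``$\overline y$-free'' part of $\gr$, and tracking leading forms of invariants along the iteration is the bookkeeping carried out in \cite{adv}.

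Next I would analyse exponential maps on $B_0$. By factorial closedness (\thref{lemma : properties}(i)) and homogeneity each $x_i$ is an invariant, so by \thref{lemma : properties}(iii) the map extends to an exponential map on $B_0[(x_1\cdots x_m)^{-1}]=k[x_1^{\pm1},\dots,x_m^{\pm1}]^{[2]}$ fixing every $x_i$. When $k$ is infinite and $m\geqslant2$, I would then specialise $x_2,\dots,x_m$ to generic nonzero scalars of $k$: because the $x_i$ are invariant this respects the exponential map, it turns $B_0$ into an Asanuma threefold $k[X_1,Y,Z,T]/(X_1^{r_1}Y-f(Z,T))$ with the same $f$ (the product $x_2^{r_2}\cdots x_m^{r_m}$ becoming a unit), and a non-trivial exponential map with invariants outside $k[x_1,z,t]$ still survives for generic choices since $y$ does not disappear under the specialisation. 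For this threefold --- which is the case $m=1$ of the statement, handled in \cite{com} --- the existence of such an exponential map forces $f(Z,T)=a_0(Z_1)+a_1(Z_1)T_1$ in a suitable coordinate system $\{Z_1,T_1\}$ of $k[Z,T]$; in arbitrary characteristic this is obtained by a Crachiola-type argument (\cite{cra}) on a further associated graded ring rather than via locally nilpotent derivations. \textbf{I expect this classification of the exponential maps of the model threefold in arbitrary characteristic, together with making the generic-specialisation reduction from $m\geqslant2$ to $m=1$ watertight, to be the main obstacle.} This establishes (i).

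For (ii), assume in addition $\dfrac{k[Z,T]}{(f)}=k^{[1]}$. Then $(f)$ is prime, so $f$ is irreducible and, in the representation $f=a_0(Z_1)+a_1(Z_1)T_1$ from (i), $\gcd_{k[Z_1]}(a_0,a_1)=1$. If $a_1\notin k^{*}$, choose a root $\lambda\in\overline k$ of $a_1$; then $a_0(\lambda)\neq0$, so $f$ has no zero with $Z_1=\lambda$, which means the function $Z_1$ --- non-constant on $V(f)$ because $f$ involves $T_1$ --- omits the value $\lambda$ on $V(f)\otimes_k\overline k\cong\A^1$, contradicting the surjectivity of every non-constant morphism $\A^1\to\A^1$. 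Hence $a_1\in k^{*}$ and $f=a_1T_1+a_0(Z_1)$ is a coordinate of $k[Z_1,T_1]=k[Z,T]$, so $k[Z,T]=k[f]^{[1]}$. For $k$ finite (with $m$ possibly $\geqslant2$) I would first pass to $\overline k$, separable over $k$ since finite fields are perfect: the hypothesis $k[\x,z,t]\subsetneq\dk(B)$ base-changes by \thref{lemma : properties}(iv), part (i) then applies over $\overline k$, the argument just given yields $\overline k[Z,T]=\overline k[f]^{[1]}$, and \thref{sepco} descends this to $k[Z,T]=k[f]^{[1]}$.
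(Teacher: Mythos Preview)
Your proposal is broadly correct, and in fact unpacks the content of the papers \cite{com}, \cite{adv}, \cite{adv2} that the present paper simply cites for this proposition; there is essentially nothing to compare with the paper's own argument, which consists only of references. Your overall strategy---iterated homogenisation via \thref{Admissible1} to pass from $B$ to the model domain $B_0$, followed by reduction to the threefold case by specialising $x_2,\dots,x_m$ to scalars---is the strategy of \cite[Proposition~3.4]{adv} and \cite[Propositions~2.10, 2.12]{adv2}; the $m=1$ analysis you invoke is precisely \cite[Proposition~3.7]{com}. Your treatment of (ii), deducing $a_1\in k^*$ from surjectivity of non-constant morphisms $\A^1_{\overline{k}}\to\A^1_{\overline{k}}$ and then descending via \thref{sepco} in the finite-field case, is a clean variant of the more algebraic arguments in those references (compare \thref{linear} later in the present paper).

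Two places genuinely require care, and you flag both. First, tracking the ``escaping'' invariant through the iterated homogenisation: you need that the induced exponential map on each successive graded ring still has an invariant outside $k[\x,z,t]$, not merely that it is non-trivial. This is where \thref{adm}(iii) (an element of positive $w$-degree has leading form divisible by $\overline{y}$) is combined with \thref{dhm} in \cite{adv}: the original invariant $b\notin k[\x,z,t]$ necessarily has positive $w_1$-degree, hence $\overline{y}$ divides $\rho(b)$ in $\gr(B)$, and factorial closedness of the ring of invariants (\thref{lemma : properties}(i)) then places $\overline{y}$ itself among the invariants, which persists through the subsequent steps. Second, the generic-specialisation step: specialising $x_2,\dots,x_m$ to scalars does descend the exponential map (the $x_i$ being invariant), but showing that for \emph{some} choice of scalars the resulting threefold map is still non-trivial with an invariant outside $k[x_1,z,t]$ is exactly the content of \cite[Proposition~2.10]{adv2}, and over a finite field such scalars need not exist at all---hence the separate base-change argument you give for that case, matching \cite[Proposition~2.12]{adv2}.
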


		Next we record another technical lemma from \cite[Lemma 3.13]{adv2}.
		
		\begin{lem}\thlabel{GA2}
			Let $B$ be a Generalised Asanuma domain. 
			%and $x_1,\dots, x_m,y,z,t$ denote the images of $X_1,\dots,X_m,$ $Y,Z,T$ in $B$ respectively.
			Suppose $m > 1$ and $k[\x, z, t]\subsetneqq \dk(B)$.
			Then there exist an
			integer $l \in \{1,\dots, m\}$ and an integral domain $B_l$ of the form
			$$B_l= \dfrac{k(X_l)[X_1 ,\dots, X_{l-1} , X_{l+1} ,\dots, X_m , Y, Z, T]}{
				(X_1^{r_1}\dots X_m^{r_m}Y - f (Z, T))}$$
			such that $\dk(B_l ) = B_l$.
		\end{lem}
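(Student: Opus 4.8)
\medskip
\noindent\textbf{Proof proposal.}
The plan is to run an induction on $m\geqslant2$, at each step passing to a base change that turns one of the high-power variables into a unit and, crucially, replaces the ground field by an \emph{infinite} field, so that \thref{com2} becomes available. Write $B=k[x_1,\dots,x_m,y,z,t]$ with $x_1^{r_1}\cdots x_m^{r_m}y=F(x_1,\dots,x_m,z,t)$ and $f(Z,T)=F(0,\dots,0,Z,T)\neq0$. Since $k[\x,z,t]\subseteq\dk(B)$ always (\thref{DK}) and $\dk(B)$ is, by definition, the $k$-subalgebra generated by the invariant rings of nontrivial exponential maps, the hypothesis $k[\x,z,t]\subsetneqq\dk(B)$ supplies a nontrivial exponential map $\phi$ on $B$ with $B^{\phi}\not\subseteq k[\x,z,t]$.

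First I would record that every exponential map on $B$ fixes each $x_i$ (equivalently $k[\x]\subseteq\ml(B)$); this is obtained by applying the homogenisation theorem \thref{dhm} to the $\Z$-filtration of \S\ref{w} attached to $x_i$, whose associated graded domain is the monomial Generalised Asanuma domain identified in \thref{Admissible1}, together with the known case $m=1$. Consequently $k[x_j]\subseteq B^{\psi}$ for every index $j$ and every nontrivial $\psi$, so by \thref{lemma : properties}(iii) each $\psi$ extends to $B^{(j)}:=(k[x_j]\smallsetminus\{0\})^{-1}B$ with invariant ring $(k[x_j]\smallsetminus\{0\})^{-1}B^{\psi}$; hence $\dk(B^{(j)})\supseteq(k[x_j]\smallsetminus\{0\})^{-1}\dk(B)$, and this inclusion remains strict over $k(x_j)[x_i:i\neq j,\,z,t]$ because localising a strict inclusion of domains at a multiplicative set lying in both is strict. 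After absorbing the unit $x_j^{r_j}$ into $y$, the ring $B^{(j)}$ is a Generalised Asanuma domain over the infinite field $k(x_j)$ in the $m-1$ variables $\{x_i:i\neq j\}$, with residual polynomial $F|_{x_i=0,\,i\neq j}\in k(x_j)[Z,T]$, which is nonzero since its specialisation at $x_j=0$ is $f$. Thus \thref{com2}(i) applies to $B^{(j)}$ and puts that residual polynomial in the form $a_0(Z_1)+a_1(Z_1)T_1$ for a coordinate system $\{Z_1,T_1\}$ of $k(x_j)[Z,T]$ and $a_0,a_1\in k(x_j)^{[1]}$.

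If $a_1\in k(x_j)^{\ast}$, then eliminating $T_1$ exhibits $B^{(j)}$ as a polynomial ring over $k(x_j)$, so $\dk(B^{(j)})=B^{(j)}$ by \thref{MLDK}, and, carried over to the ``monomial model'' $B_l$ of the statement, this gives $\dk(B_l)=B_l$ with $l=j$. If $a_1\notin k(x_j)^{\ast}$, then $B^{(j)}$ is again a Generalised Asanuma domain over an infinite field with one fewer variable and with Derksen invariant strictly larger than its obvious polynomial subring, and I would feed it into the inductive hypothesis to obtain $l$ and $B_l$; the base case $m-1=1$ is handled directly, using the explicit exponential maps $T_1\mapsto T_1+(\prod_{i\neq l}x_i^{r_i})U,\ y\mapsto y+a_1(Z_1)U$ and its mirror exchanging $Z_1,T_1$ — which already give $x_i,z,t\in\dk(B_l)$ — together with the homogenised exponential map $\overline{\phi}$ on $\gr(B)$ from \thref{dhm}, whose invariant ring (via \thref{Admissible1}) is more transparent, to force $y$ itself into $\dk(B_l)$. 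I expect this final point — placing $y$, rather than merely combinations of $z,t$, into the Derksen invariant of $B_l$ — to be the main obstacle: \thref{com2}(i) by itself does not yield $a_1\in k^{\ast}$, and when $a_1\notin k^{\ast}$ no exponential map of $B_l$ can fix both $y$ and all the $x_i$ (such a map would restrict to a nontrivial exponential map of $k(X_l)[Z_1,T_1]$ fixing $a_0(Z_1)+a_1(Z_1)T_1$, impossible for non-constant $a_1$). Overcoming it requires a judicious choice of the position $l$, guided by the homogenised maps on the associated graded rings of \thref{Admissible1}, so that over $k(X_l)$ the domain $B_l$ either degenerates to a polynomial ring or otherwise acquires enough exponential maps for $\dk(B_l)=B_l$; combining this with the easy containments above then closes the induction.
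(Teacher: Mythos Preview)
The paper does not prove this lemma; it merely records it as \cite[Lemma~3.13]{adv2}. So there is no in-paper argument to compare against, and your proposal must be judged on its own terms.

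Your overall architecture (show $k[\x]\subseteq\ml(B)$, localise at one $x_j$ to land over an infinite field, then induct on $m$) is sound and is indeed the natural route. However, there is a genuine gap in the way you identify the target ring. Throughout you work with $B^{(j)}:=(k[x_j]\smallsetminus\{0\})^{-1}B$, which over $k(x_j)$ is presented by the relation $\prod_{i\neq j}X_i^{r_i}\cdot Y'-F(X_1,\dots,X_m,Z,T)$; its residual polynomial at $X_i=0$ ($i\neq j$) is $F(0,\dots,0,X_j,0,\dots,0,Z,T)\in k(x_j)[Z,T]$, \emph{not} $f(Z,T)=F(0,\dots,0,Z,T)$. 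But the ring $B_l$ in the statement is defined with $f(Z,T)$, so $B_l$ is not the localisation of $B$ unless $F$ already lies in $k[Z,T]$. Your sentence ``carried over to the monomial model $B_l$'' hides exactly the nontrivial step: one must first pass from $B$ to its associated graded domain (as in \thref{Admissibilty 2}) to replace $F$ by $f$, check that the hypothesis $k[\x,z,t]\subsetneqq\dk(\cdot)$ survives this passage via \thref{dhm}, and only then localise. Without this, your induction hypothesis, applied to $B^{(j)}$, yields a conclusion about a ring presented with the wrong residual polynomial, and the chain of specialisations never reaches $f$.

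A second, related issue is that your case ``$a_1\in k(x_j)^{\ast}$'' does not immediately exhibit $B^{(j)}$ as a polynomial ring: \thref{RS} needs every prime factor of $\prod_{i\neq j}x_i^{r_i}$ to divide the ``$h$''-part, but for a general $F$ one only knows $F-f\in(X_1,\dots,X_m)$, not that each $X_i$ divides it. Again this evaporates once you have first passed to the graded model with $F$ replaced by $f$ (where $h=0$), so the fix is the same as above. Your own final paragraph correctly flags that getting $y$ into $\dk(B_l)$ is the crux; the missing ingredient is precisely this prior reduction to the monomial relation $X_1^{r_1}\cdots X_m^{r_m}Y=f(Z,T)$ via the admissible filtrations of \S\ref{w}, after which $B_l$ genuinely is the localisation and your localisation/induction argument goes through.
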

		
		The next proposition illustrates that using suitable degree functions, \thref{Admissible1} can be applied successively  
		to a ring as in \eqref{AA}, with $\alpha$ being a ${\bf r}$-divisible polynomial, to obtain  a Generalised Asanuma domain as an associated graded domain.
		%	We now prove the main technical ingredient  of this section.
		
		\begin{prop}\thlabel{Admissibilty 2}
			Let
			$$A= \dfrac{k[X_1,\dots,X_m,Y,Z,T]}{(\alpha(X_1,\dots,X_m)Y -f(Z,T)- h(X_1,\dots, X_m,Z,T))},$$
			be a domain such that
			\begin{enumerate}[\rm(a)]
				\item For ${\bf r}=(r_1,\dots,r_m)\in \Z_{>0}^m$, $\alpha(\X) $ is ${\bf r}$-divisible in the system of coordinates $\{\X\}$ in $k^{[m]}$.
				%	\item $f(Z,T)\neq 0$.
				\item $X_1\mid h(\X,Z,T)$ in $k[\X,Z,T]$. 
			\end{enumerate}
			Then we can define a degree function $w_j$ on $A_{j-1}$ such that $w_j$ induces an admissible $\Z$-filtration on $A_{j-1}$, where $A_0:= A$ and $A_j:= \gr(A_{j-1})$ for each $j,\, 1\leqslant j\leqslant m$, such that 
			$$A_m\cong \dfrac{k[X_1,\dots,X_m,Y,Z,T]}{(X_1^{r_1}X_2^{r_2}\dots X_m^{r_m}Y - f(Z,T))}.$$
		\end{prop}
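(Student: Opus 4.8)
The plan is to realise $A$ as a ring of the form $A_R$ in \eqref{A_R}, with $X_1$ playing the role of the special variable $X$ and $R=k[X_2,\dots,X_m]$, then to apply \thref{Admissible1}, and to iterate, peeling off one variable at each stage. Write $\alpha=X_1^{r_1}\alpha_1(\X)$ as in \thref{type A}. Since $X_1\nmid\alpha_1$, the constant term $\alpha_1(0,X_2,\dots,X_m)$ is a nonzero element of $R:=k[X_2,\dots,X_m]$; and since $X_1\mid h$, the polynomial $F(X_1,Z,T):=f(Z,T)+h(\X,Z,T)\in R[X_1,Z,T]$ satisfies $F(0,Z,T)=f(Z,T)$, which is nonzero (otherwise $X_1$ would divide both $\alpha$ and $h$, forcing $\alpha Y-f-h$ to be reducible, contradicting that $A$ is a domain). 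As the prime factors of $f$ lie in $k[Z,T]$ while those of $\alpha_1(0,X_2,\dots,X_m)$ lie in $k[X_2,\dots,X_m]$, we get $\gcd_{R[Z,T]}\big(\alpha_1(0,X_2,\dots,X_m),\,F(0,Z,T)\big)=1$. Thus $A$ is of the form $A_R$ of \eqref{A_R} with $d=r_1\geqslant 1$, over the ring $R$ (a finitely generated $k$-algebra and a UFD), so \thref{Admissible1} applies with a generating set $\{X_2,\dots,X_m,x_1,y,z,t\}$: $w_1:=w_R$ induces an admissible $\Z$-filtration on $A_0:=A$, and
$$A_1:=\gr(A_0)\cong\dfrac{k[\X,Y,Z,T]}{\big(\alpha_1(0,X_2,\dots,X_m)\,X_1^{r_1}Y-f(Z,T)\big)}=\dfrac{k[\X,Y,Z,T]}{\big(X_1^{r_1}X_2^{r_2}\alpha_2(X_2,\dots,X_m)Y-f(Z,T)\big)},$$
the last equality using $\alpha_1(0,X_2,\dots,X_m)=X_2^{r_2}\alpha_2(X_2,\dots,X_m)$ from \thref{type A} when $m\geqslant 2$ (when $m=1$, $\alpha_1(0)=\alpha_2\in k^{*}$ and the first step is already the last).

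Now I would iterate. Suppose $2\leqslant j\leqslant m$ and that $A_{j-1}$ has been identified with $k[\X,Y,Z,T]/(G_{j-1})$, where $G_{j-1}=X_1^{r_1}\cdots X_j^{r_j}\,\alpha_j(X_j,\dots,X_m)\,Y-f(Z,T)$. Factoring $X_j^{r_j}$ out of the coefficient of $Y$, this ring is of the form $A_{R_j}$ of \eqref{A_R} with special variable $X_j$, base ring $R_j:=k[X_1,\dots,X_{j-1},X_{j+1},\dots,X_m]$ (again a finitely generated UFD), exponent $d=r_j\geqslant 1$, with the role of ``$\alpha_1$'' played by $X_1^{r_1}\cdots X_{j-1}^{r_{j-1}}\alpha_j(X_j,\dots,X_m)$ --- whose value at $X_j=0$ is $X_1^{r_1}\cdots X_{j-1}^{r_{j-1}}\alpha_j(0,X_{j+1},\dots,X_m)$, nonzero because $X_j\nmid\alpha_j$ --- and with ``$F$'' $=f(Z,T)$, so that $F(0,Z,T)=f(Z,T)\neq 0$ and the relevant $\gcd$ in $R_j[Z,T]$ is again $1$. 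Moreover $A_{j-1}$ is a domain (being $\gr$ of a domain; equivalently $G_{j-1}$ is a primitive degree-one polynomial in $Y$ over the UFD $k[\X,Z,T]$, hence irreducible). So \thref{Admissible1} applies once more: $w_j:=w_{R_j}$ (carried over via the identification) induces an admissible $\Z$-filtration on $A_{j-1}$, and
$$A_j:=\gr(A_{j-1})\cong\dfrac{k[\X,Y,Z,T]}{\big(X_1^{r_1}\cdots X_j^{r_j}\,\alpha_j(0,X_{j+1},\dots,X_m)Y-f(Z,T)\big)}.$$
When $j<m$ we have $\alpha_j(0,X_{j+1},\dots,X_m)=X_{j+1}^{r_{j+1}}\alpha_{j+1}(X_{j+1},\dots,X_m)$ by \thref{type A}, which brings $A_j$ into the same shape as $A_{j-1}$ and advances the induction; when $j=m$ we instead have $\alpha_m(0)=\alpha_{m+1}\in k^{*}$, so
$$A_m\cong\dfrac{k[\X,Y,Z,T]}{\big(\alpha_{m+1}\,X_1^{r_1}\cdots X_m^{r_m}Y-f(Z,T)\big)}\cong\dfrac{k[\X,Y,Z,T]}{\big(X_1^{r_1}\cdots X_m^{r_m}Y-f(Z,T)\big)},$$
the final isomorphism obtained by replacing $Y$ with $\alpha_{m+1}^{-1}Y$; this is the asserted conclusion.

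The only delicate point is the \emph{propagation} of the hypotheses of \thref{Admissible1} through the iteration --- namely $\alpha_1(0)\neq 0$, $F(0,Z,T)\neq 0$, and their coprimality in the appropriate polynomial ring. The first holds at stage $j$ precisely because of the clause $X_j\nmid\alpha_j$ built into the definition of ${\bf r}$-divisibility, while the hypothesis $X_1\mid h$ forces the summand $h$ to be annihilated already at the first step, so that for $j\geqslant 2$ the ``$F$'' is simply $f(Z,T)\in k[Z,T]$, which is therefore automatically coprime to the coefficient of $Y$ (which always lies in $k[\X]$). Thus the ${\bf r}$-divisible shape of $\alpha$ is exactly what makes the associated graded ring reproduce a ring of the same type with one more exponent ``stripped off'', and the iteration runs to completion.
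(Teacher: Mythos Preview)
Your proof is correct and follows essentially the same route as the paper's: both arguments identify $A$ (and then each successive $A_{j-1}$) with a ring of the form $A_R$ in \eqref{A_R}, verify the hypotheses of \thref{Admissible1} (nonvanishing of $\alpha_1(0)$ from $X_j\nmid\alpha_j$, nonvanishing and coprimality of $F(0,Z,T)=f(Z,T)$ since it lies in $k[Z,T]$ while the other factor lies in $k[\X]$), and then iterate, using the ${\bf r}$-divisibility relations $\alpha_{j-1}(0,\dots)=X_j^{r_j}\alpha_j$ to advance the induction until $\alpha_{m+1}\in k^*$. Your explicit remark that $A_{j-1}$ is a domain (needed to invoke \thref{Admissible1}) is a point the paper handles only implicitly.
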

		\begin{proof}
			%			Let $x_{1i},\dots,x_{mi},y_i,z_i,t_i$ denote the images of $\X,Y,Z,T$ in $A_i$ respectively for $i\in \{0,1,\dots,m\}$ through the natural maps.
			%			Let $i\in \{1,\dots,m\}$, $j:=i-1$ and $l:=i+1$ whenever $i+1\leqslant m$. Let $R_i = k[x_{1j},\dots,x_{jj},x_{lj},\dots,x_{mj}]$. Now we can define a degree function on $R_i\left[ x_{ij},\frac{1}{x_{ij}}, \frac{1}{\tilde{a}(x_{ij})},z_{j},t_{j}\right]$ for any $\tilde{a}\in R_i^{[1]}$ with $\tilde{a}(0)\neq 0$ as follows
			%$$w_i : R_i\left[ x_{ii-1},\frac{1}{x_{ii-1}}, \frac{1}{\tilde{a}(x_{ii-1})},z_{i-1},t_{i-1}\right]\rightarrow \Z\cup \{-\infty\}$$ such that
			%			$$w_i(x_{ij})=-1,\, w_i(z_j)= w_i(t_j)= w_i(r)=0, \text{ for all } r\in R_i.$$
			Since $A_0=A$ is an affine domain, we have $f(Z,T)\neq 0$.
			Now $\alpha_0:=\alpha$ is ${\bf r}$-divisible in the system of coordinates $\{\X\}$ in $k^{[m]}$. Hence we have
			$$\alpha_0(\X) = X_1^{r_1}\alpha_1(\X),\,  X_1\nmid \alpha_1(X_1,\dots,X_m),$$ 
			$$
			\alpha_{i-1}(0,X_i,\dots,X_m)= X_i^{r_i}\alpha_i(X_i,\dots,X_m),\, X_i\nmid \alpha_i(X_i,\dots,X_m) \text{ for }2\leqslant i\leqslant m
			$$
			and
			$$\alpha_{m+1}:=\alpha_m(0)\in k^{*}.$$
			
			\noindent
			Therefore, $\alpha_1(0,X_2,\dots,X_m)= X_2^{r_2}\alpha_2(X_2,\dots,X_m)$ is  ${\bf \tilde{r}}$-divisible in the system of coordinates $\{X_2,\dots,X_m\}$, where ${\bf \tilde{r}}=(r_2,\dots,r_m)$.
			Let $x_{10},\dots,x_{m0},y_0,z_0,t_0$ denote the images of $\X,Y,Z,T$ in $A_0$ respectively. 
			We first consider the degree function $w_1$ on $A_0$ defined by 
			$$w_1(x_{10})=-1,\, w_1(y_0)= r_1,\, w_1(z_0)= w_1(t_0)= w_1(x_{i0})=0 \text{ for } 2\leqslant i\leqslant m.$$
			Since $X_1\mid h(\X,Z,T)$ in $k[\X,Z,T]$ and $\gcd(\alpha_1(0,X_2,\dots,X_m),f(Z,T))=\gcd(X_2^{r_2}\alpha_2(X_2,\dots,X_m), f(Z,T))=1$, by \thref{Admissible1}, $w_1$ induces an admissible $\Z$-filtration on $A_0$ with respect to the generating set $\{x_{10},\dots,x_{m0},y_0,z_0,t_0\}$ such that the associated graded domain is
			$$
			A_1=\gr(A_0) =  \dfrac{k[\X,Y,Z,T]}{(X_1^{r_1}X_2^{r_2}\alpha_2(X_2,\dots,X_m)Y - f(Z,T))}.
			$$
			Let $x_{11},\dots,x_{m1},y_1,z_1,t_1$ denote the images of $\X,Y,Z,T$ in $A_1$ respectively.
			We consider the function $w_2$ on $A_1$ defined as follows:
			$$w_2(x_{21})=-1,\, w_2(y_1)= r_2,\,  w_2(z_1) = w_2(t_1)=w_2(x_{i1}) =0,\,\text{ for } i\in\{1,3,\dots,m\}.$$ 
			Since $\gcd(X_1^{r_1}\alpha_2(0,X_3,\dots,X_m), f(Z,T))= \gcd(X_1^{r_1}X_3^{r_3}\alpha_3(X_3,\dots,X_m),f(Z,T))=1$, by \thref{Admissible1}, $w_2$ 
			induces an admissible $\Z$-filtration on $A_1$
			such that the associated graded domain is
			$$
			A_2=\gr(A_1)=
			\dfrac{k[X_1,\dots,X_m,Y,Z,T]}{(X_1^{r_1}X_2^{r_2}X_3^{r_3}\alpha_3(X_3,\dots,X_m)Y -f(Z,T))}.
			$$
			Thus proceeding successively for any $j\in \{2,\dots,m\}$, we get domains
			$$
			\begin{array}{lll} A_{j-1}
				&=& \dfrac{k[X_1,\dots,X_m,Y,Z,T]}{(X_1^{r_1}\dots X_{j}^{r_{j}}\alpha_{j}(X_j,\dots,X_m)Y -f(Z,T))}.
			\end{array}
			$$
			Let $l=j-1$ and $x_{1l},\dots,x_{ml},y_l,z_l,t_l$ denote the images of $\X,Y,Z,T$ in $A_{j-1}$ respectively.
			Now $w_j$ is a degree function on $A_{j-1}$ defined as follows:
			$$w_j(x_{jl})=-1,\, w_j(y_l)= r_j,\,  w_j(z_l) = w_j(t_l)=w_j(x_{il}) =0,\, i\in\{1,\dots,m\}\setminus\{j\}.$$ 
			Hence, by \thref{Admissible1}, $w_j$ induces an admissible $\bZ$-filtration on $A_{j-1}$ for which the associated graded domain is of the form
			$$
			A_j=\gr(A_{j-1})
			= \dfrac{k[X_1,\dots,X_m,Y,Z,T]}{(X_1^{r_1}\dots X_{j+1}^{r_{j+1}}\alpha_{j+1}(X_{j+1},\dots,X_m)Y -f(Z,T))}.
			$$
			Since $\alpha_{m+1} \in k^*$, at the $m$th step we get the domain 
			$$A_m\cong \dfrac{k[X_1,\dots,X_m,Y,Z,T]}{(X_1^{r_1}X_2^{r_2}\dots X_m^{r_m}Y - f(Z,T))}$$
			as an associated graded domain of $A_{m-1}$ with respect to the degree function $w_m$. 
		\end{proof}
		%	As an application of \thref{Admissibilty 2},
		We now establish a more general version of Theorem~C.
		%	 in its full generality, on the structure and property of $f(Z,T)$.
		For convenience we split the result in two parts.
		We now prove Theorem~C for the case $\ml(A)=A$.
		
		\begin{thm}\thlabel{lin}
			Let $A$ be as in \thref{Admissibilty 2} with ${\bf r}\in \mathbb{Z}_{>1}^{m}$.
			Suppose $\ml(A)= k$. Then the following statements hold:
			\begin{enumerate}[\rm(i)]
				\item There exists an integral domain $\tilde{A}$ of the form
				$$
				\frac{k[\X,Y,Z,T]}{(X_1^{r_1}X_2^{r_2}\dots X_m^{r_m}Y-f(Z,T))}
				$$
				such that $\dk(\tilde{A})=\tilde{A}$.
				\item When $m=1$ or $k$ is an infinite field then there exist a system of coordinates $\{Z_1,T_1\}$ of $k[Z,T]$ and $a_0,a_1 \in k^{[1]}$, such that $f(Z,T)=a_0(Z_1)+a_1(Z_1)T_1$. 
				
				\item When $f$ is a line in $k[Z,T]$, then $k[Z,T]=k[f]^{[1]}$.
			\end{enumerate}
		\end{thm}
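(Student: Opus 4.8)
The plan is to reduce, via an iterated homogenization of exponential maps, to a Generalised Asanuma domain, and then invoke the structure theory of such domains recorded in \thref{DK}, \thref{com2} and \thref{GA2}.

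First I would apply \thref{Admissibilty 2} to $A$: since $X_1\mid h$ in $k[\X,Z,T]$ and $\alpha$ is ${\bf r}$-divisible in $\{\X\}$, there are degree functions $w_1,\dots,w_m$ yielding a tower of affine $k$-domains $A=A_0$, $A_j:=\gr(A_{j-1})$ for $1\leqslant j\leqslant m$, each equipped with an admissible proper $\Z$-filtration, with
$$A_m\cong \tilde A:=\dfrac{k[\X,Y,Z,T]}{(X_1^{r_1}X_2^{r_2}\cdots X_m^{r_m}Y-f(Z,T))}.$$
Since ${\bf r}\in\Z_{>1}^m$ and $f\neq 0$, this $\tilde A$ is a Generalised Asanuma domain (\thref{GAV}).

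Next I would push the hypothesis $\ml(A_0)=\ml(A)=k$ up the tower. By \thref{dhm}, each non-trivial exponential map $\phi$ on $A_{j-1}$ induces a non-trivial \emph{homogeneous} exponential map $\overline{\phi}$ on $A_j=\gr(A_{j-1})$ with $\rho_j(A_{j-1}^{\phi})\subseteq A_j^{\overline{\phi}}$, where $\rho_j:A_{j-1}\to A_j$ is the natural multiplicative map. Using that each $A_j^{\overline{\phi}}$ is a graded subring of $A_j$ and that every homogeneous element of $A_j$ equals $\rho_j(b)$ for some $b\in A_{j-1}$, I would show inductively that non-rigidity is inherited, so that at the top of the tower $\dk(A_m)=A_m$; this establishes (i) with $\tilde A=A_m$. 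I expect this propagation to be the main obstacle: \thref{dhm} produces only the homogenized maps on each $A_j$ and not every exponential map on $A_j$, so rather than a formal transfer of $\ml$ or $\dk$ one needs a careful leading-form argument to rule out the survival of a non-scalar homogeneous common invariant at each stage (the cases $m=1$ and $m>1$ may need slightly different handling, with \thref{GA2} kept in reserve for $m>1$).

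Finally, $\tilde A=A_m$ is a Generalised Asanuma domain whose defining polynomial is the given $f$, and by (i) we have $\dk(A_m)=A_m$, in particular $k[\x,z,t]\subsetneqq\dk(A_m)$. Applying \thref{com2}(i) to $A_m$ then gives, when $m=1$ or $k$ is infinite, a system of coordinates $\{Z_1,T_1\}$ of $k[Z,T]$ and $a_0,a_1\in k^{[1]}$ with $f(Z,T)=a_0(Z_1)+a_1(Z_1)T_1$, which is (ii); and \thref{com2}(ii) gives, when $f$ is a line in $k[Z,T]$, that $k[Z,T]=k[f]^{[1]}$, which is (iii).
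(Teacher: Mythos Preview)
Your framework is correct and matches the paper's: build the tower $A=A_0,\dots,A_m=\tilde A$ via \thref{Admissibilty 2}, homogenize an exponential map through the tower via \thref{dhm}, land in a Generalised Asanuma domain, and finish with \thref{DK} and \thref{com2}. Parts (ii) and (iii) are then exactly as you say.

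The gap is in how you get $\dk(\tilde A)=\tilde A$. Propagating ``non-rigidity'' up the tower is automatic from \thref{dhm} and gives nothing beyond the existence of some non-trivial $\tilde\phi$ on $\tilde A$; this does not imply $\dk(\tilde A)=\tilde A$. Likewise, trying to propagate $\ml=k$ (your ``rule out survival of a non-scalar homogeneous common invariant'') is both harder than needed and still would not, by itself, put $\tilde y$ into some invariant ring, which is what is actually required (since \thref{DK} already gives $k[\tilde x_1,\dots,\tilde x_m,\tilde z,\tilde t]\subseteq\dk(\tilde A)$, the missing generator is $\tilde y$).

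What the paper does is more targeted. From $\ml(A)=k$ one only extracts a single non-trivial $\phi$ with $x_1\notin A^{\phi}$. Then one proves that \emph{this} $\phi$ must have an invariant of positive $w_1$-degree: first, $w_1(a)<0$ for some $a\in A^{\phi}$ is impossible, since then $x_1\mid a$ by \thref{adm}(ii) and factorial closedness would force $x_1\in A^{\phi}$; second, the possibility that $w_1(a)=0$ for all $a\in A^{\phi}$ is excluded by a transcendence-degree argument: pick $m+1$ algebraically independent $f_1,\dots,f_{m+1}\in A^{\phi}$, use \thref{adm}(iv) to write $\beta^{s_i}f_i=p_{i0}(x_2,\dots,x_m,z,t)+x_1q_i$ with $\beta=\alpha_1(0,x_2,\dots,x_m)$, pass to $A_1$ to see that the $\overline{f_i}$ lie in $k[\overline{x_2},\dots,\overline{x_m},\overline{z},\overline{t},\overline{\beta}^{-1}]$, and argue (comparing transcendence degrees and using that $\beta$ is a non-zero-divisor modulo $x_1$) that an algebraic relation among the $\overline{f_i}$ lifts to $x_1\mid Q(f_1,\dots,f_{m+1})\in A^{\phi}$, again contradicting $x_1\notin A^{\phi}$. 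Hence some $a\in A^{\phi}$ has $w_1(a)>0$, so $\overline{y}\mid\overline{a}$ by \thref{adm}(iii) and $\overline{y}\in A_1^{\phi_1}$. From this point the propagation is trivial: $\rho_j$ sends invariants to invariants, so the image of $y$ stays invariant all the way up, and at the top $\tilde y\in\tilde A^{\tilde\phi}$ together with \thref{DK} yields $\dk(\tilde A)=\tilde A$.

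In short, the hard work is entirely at the first step and consists of forcing an invariant of positive $w_1$-degree for a $\phi$ chosen with $x_1\notin A^{\phi}$; your proposal does not supply this argument.
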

		\begin{proof}
			Note that since $A$ is a domain, $f(Z,T)\neq 0$.	
			Let $x_1,\ldots,x_m,y,z,t$ denote the images of $X_1,\ldots,X_m,Y,Z,T$ in $A$ respectively.
			Since $\ml(A)= k$, there exists a non-trivial exponential map $\phi$ on $A$ such that $x_1 \notin A^{\phi}$.
			
			Let $w_1$ be the degree function on $A$ defined by
			$$
			w_1(x_1) = -1, w_1(y)=r_1, w_1(z)= w_1(t)=w_1(x_i)=0,\, i\in \{2,\dots,m\}.
			$$
			Since $\alpha$ is ${\bf r}$-divisible in the system of coordinates $\{\X\}$, we have  $\alpha=X_1^{r_1}\alpha_1$, for some $\alpha_1\in k^{[m]}$. 
			Since $X_1\mid h$ and $\gcd(\alpha_1(0,X_2,\dots,X_m), f(Z,T))=1$, by \thref{Admissible1}, the degree function $w_1$ on $A$ induces an admissible $\Z$-filtration on ${A}$ with respect to $\{\x,y,z,t\}$ such that  
			\begin{equation*}
				A_1 :=\gr(A)\cong \dfrac{{k}[\X,Y,Z,T]}{(X_1^{r_1}\alpha_1(0,X_2,\dots,X_m)Y -f(Z,T))}.
			\end{equation*}
			For $a \in A$, let $\overline{a}$ denotes its image in $A_1$. 
			By \thref{dhm}, $\phi$ induces a non-trivial exponential map $\phi_1$ on  ${A}_1$.  We shall show that $\overline{y}\in A_1^{\phi_{1}}$. 
			
			We first note that $w_1(a) \geqslant 0$ for all $a \in A^{\phi}$. 
			Suppose not. That means there exists $a \in A^{\phi}$ such that $w_1(a)<0$. 
			Then $x_1\mid a$ in $A$ (cf. \thref{adm}(ii)) and hence $x_1 \in A^{\phi}$ (cf. \thref{lemma : properties}(i)) which contradicts the choice of $\phi$. 
			
			%		  We first show that $w_1(a)>0$ for some $a \in A^{\phi}$. 
			
			Suppose, if possible, $w_1(a)=0$ for all $a\in A^{\phi}$.
			Since $\td_{k} A^{\phi} =m+1$ (cf. \thref{lemma : properties}(ii)), there exist $m+1$ algebraically independent elements $f_1,\dots,f_{m+1} \in A^{\phi}$ with $w_1(f_i)=0,$ for $ 1\leqslant i\leqslant m+1$.
			Set $\beta:=\alpha_1(0,x_2,\ldots,x_m)$.
			By \thref{adm}(iv), for each $i$, $1 \leqslant i \leqslant m+1$, there exists an integer $s_i \geqslant 0$ such that
			$$
			\beta^{s_i} f_i= p_{i0}(x_2,\ldots,x_m,z,t)+x_1 q_i(x_1,\ldots,x_m,y,z,t), \text{ where }w_1(q_i) \leqslant 0.
			$$
			Then for any $i\in \{1,\dots,m+1\}$, 
			\begin{equation}\label{b}	
				\overline{\beta}^{s_i}\overline{f_i}=  p_{i0}(\overline{x_2},\dots ,\overline{x_{m}},\overline{z},\overline{t}),\text{ i.e., } \overline{f_i}=\dfrac{{p_{i0}}(\overline{x_2},\ldots, \overline{x_m},\overline{z},\overline{t})}{\overline{\beta}^{s_i}}		
			\end{equation}
			and $\overline{f_i}\in A_1^{\phi_1}$ for $ 1\leqslant i \leqslant m+1$ (cf. \thref{dhm}).  
			Thus
			$$k[\overline{f_1},\dots,\overline{f_{m+1}}]\subseteq D:=k\left[\overline{x_2},\dots ,\overline{x_{m}},\overline{z},\overline{t},\frac{1}{\overline{\beta}} \right].$$
			
			\noindent
			Suppose, if possible, that $\{\overline{f_1},\dots,\overline{f_{m+1}}\}$ is an algebraically independent set. 
			Since 
			$$
			k^{[m+1]}=k[\overline{x_2},\dots,\overline{x_m},\overline{z},\overline{t}]\hookrightarrow A_1\hookrightarrow k\left[\overline{x_1},\overline{x_2}\dots,\overline{x_m},\overline{z},\overline{t},\frac{1}{\overline{x_1}^{r_1}\alpha_1(0,\overline{x_2},\dots,\overline{x_m})}\right]
			$$
			and
			$$\overline{x_1}^{r_1}\alpha_1(0,\overline{x_2},\dots,\overline{x_m})\overline{y} = {f(\overline{z},\overline{t})},$$
			we have,  $\td_k D = m+1.$ 
			Therefore, $D$ is algebraic over ${k}[\overline{f_1},\dots, \overline{f_{m+1}}]$ and hence ${k}[\overline{x_2},\dots ,\overline{x_{m}},\overline{z},\overline{t}] \subseteq A_1^{\phi_{1}}$ 
			(cf. \thref{lemma : properties}(i)).
			Thus $\overline{x_1}, \overline{y} \in A_1^{\phi_1}$, which contradicts the fact that $\phi_1$ is non-trivial. 
			Therefore, there exists a non-zero $Q \in k^{[m+1]}$ such that $Q(\overline{f_1},\dots, \overline{f_{m+1}})=0$.
			%			Let $\beta:=\alpha_1(0,x_2,\ldots,x_m)$.
			Hence it follows that 
			\begin{equation}\label{q}
				Q\left(\frac{{p_{10}}(\overline{x_2},\ldots, \overline{x_m},\overline{z},\overline{t})}{\overline{\beta}^{s_1}},\ldots, \dfrac{{p_{(m+1)0}}(\overline{x_2},\ldots, \overline{x_m},\overline{z},\overline{t})}{\overline{\beta}^{s_{m+1}}}\right)=0.
			\end{equation}
			Now since $f_i=\frac{{p_{i0}}(x_2,\ldots,x_m,z,t)}{\beta^{s_i}}+ \frac{x_1q_i}{\beta^{s_i}}$, using \eqref{q}, we have $Q(f_1,\ldots,f_{m+1})=\frac{x_1 q}{\beta^s}$ for some $q \in A$ and $s \geqslant 0$, i.e, $x_1q \in \beta^s A$. Now since
			$$
			\dfrac{A}{x_1A} =\dfrac{k[X_2,\ldots,X_m,Y,Z,T]}{(f(Z,T))},
			$$
			$\beta$ is a non-zero divisor in $\frac{A}{x_1A}$. Thus it follows that $Q(f_1,\ldots,f_{m+1})=x_1\widetilde{q}$ for some $\widetilde{q} \in A$. Since $Q(f_1,\ldots,f_{m+1})\in A^{\phi}$, it implies that $x_1 \in A^{\phi}$ (cf. \thref{lemma : properties}(i)),  a contradiction to the choice of $\phi$. 
			
			Therefore, there exists an $a \in A^{\phi}$ with $w_1(a)>0$. Hence $\overline{y}\mid \overline{a}$ (cf. \thref{adm}(iii)) and thus $\overline{y}\in A_1^{\phi_1}$.

			By \thref{Admissibilty 2}, there exists an admissible $\Z$-filtration induced by degree function $w_j$ on $A_{j-1}, 1\leqslant j\leqslant m$, where $A_0 := {A},\, A_i := \gr(A_{i-1}),\, 1 \leqslant i\leqslant m$, such that  
			\begin{equation*}
				\tilde{A} := A_m\cong \frac{k[\X,Y,Z,T]}{(X_1^{r_1}X_2^{r_2}\dots X_m^{r_m}Y-f(Z,T))}.
			\end{equation*} 
			For each $a \in A$, let $a^{\prime}$ denotes its image in $\tilde{A}$. 
			Since $\overline{y}\in A_1^{\phi_1}$, by repeated application of \thref{dhm}, we see that $\phi_1$ induces a non-trivial exponential map $\tilde{\phi}$ on  $\tilde{A}$ such that $y^{\prime}\in \tilde{A}^{\tilde{\phi}}$. 
			Hence by \thref{DK}, %$k[\tilde{x}_1,\dots,\tilde{x}_m,\tilde{z},\tilde{t}]\neq 
			$\dk(\tilde{A})=\tilde{A}$.
			Thus the first statement is proved.
			Hence by \thref{com2}, the last two  statements follow.
			%		\item 
			%	\end{enumerate}
	\end{proof}
	%			The following result describes a certain property of the ring $A$ when the invariant $\dk(A)$ is trivial. 
	We now prove Theorem C for the case $\dk(A)=A$.
	In fact we prove a more general statement where the hypothesis $\dk(A)=A$ is replaced by a weaker condition that $\dk(A)$ contains an element of positive degree with respect to a certain degree function.
	\begin{thm}\thlabel{lin2}
		Let $A$ be as in \thref{Admissibilty 2} with ${\bf r}\in \mathbb{Z}_{>1}^{m}$.
		Let $w_1$ be the degree function on $A$ defined by $w_1(x_1)=-1, w_1(y)= r_1,w_1(x_i)= w_1(z) = w_1(t) =0,\, 2\leqslant i\leqslant m$.
		%(defined as in \thref{Admissibilty 2}).
		Suppose $\dk(A)$ contains an element with positive $w_1$-degree (in particular, $\dk(A)=A$). Then the following statements hold:
		\begin{enumerate}[\rm(i)]
			\item There exists an integral domain $\tilde{A}$ of the form
			$$
			\frac{k[\X,Y,Z,T]}{(X_1^{r_1}X_2^{r_2}\dots X_m^{r_m}Y-f(Z,T))}
			$$
			such that $\dk(\tilde{A})=\tilde{A}$.
			\item When $m=1$ or $k$ is an infinite field then there exist a system of coordinates $\{Z_1,T_1\}$ of $k[Z,T]$ and $a_0,a_1 \in k^{[1]}$, such that $f(Z,T)=a_0(Z_1)+a_1(Z_1)T_1$. 
			\item When $f$ is a line in $k[Z,T]$, then $k[Z,T]=k[f]^{[1]}$.
		\end{enumerate}
	\end{thm}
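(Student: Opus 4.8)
The plan is to follow the proof of \thref{lin} almost verbatim, replacing only its opening by a shorter argument that exploits the (weaker) hypothesis directly. Since $A$ is a domain, $f(Z,T)\neq 0$; write $x_1,\dots,x_m,y,z,t$ for the images of $X_1,\dots,X_m,Y,Z,T$ in $A$.

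\textbf{Step 1 (produce an exponential map with an invariant of positive $w_1$-degree).} Pick $g\in\dk(A)$ with $w_1(g)>0$. Since $\dk(A)$ is the $k$-subalgebra of $A$ generated by the rings of invariants of the non-trivial exponential maps on $A$, $g$ is a $k$-linear combination of monomials in finitely many elements $b_1,\dots,b_N$, each lying in $A^{\psi_i}$ for some non-trivial exponential map $\psi_i$. As $w_1$ is a degree function (multiplicative on products, with $w_1$ of a sum at most the maximum of the summands' $w_1$-values), comparing $w_1$-degrees on the two sides of this expression forces $w_1(b_i)>0$ for at least one $i$. Hence there exist a non-trivial exponential map $\phi$ on $A$ and an element $a\in A^{\phi}$ with $w_1(a)>0$. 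This observation is what replaces the delicate case analysis on the $w_1$-degrees of elements of $A^{\phi}$ carried out in \thref{lin}, where one only knew that $x_1\notin A^{\phi}$.

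\textbf{Step 2 (homogenize once to make $\rho(y)$ invariant).} By \thref{Admissibilty 2} (the case $j=1$, i.e.\ \thref{Admissible1}), $w_1$ induces an admissible $\Z$-filtration on $A$ with
\[
A_1:=\gr(A)\cong \frac{k[\X,Y,Z,T]}{(X_1^{r_1}\alpha_1(0,X_2,\dots,X_m)Y-f(Z,T))},
\]
and by \thref{dhm} $\phi$ induces a non-trivial exponential map $\phi_1$ on $A_1$ with $\rho(A^{\phi})\subseteq A_1^{\phi_1}$, where $\rho\colon A\to A_1$ is the natural map. Since $a\neq 0$ and $w_1(a)>0$, the nonzero element $\rho(a)\in A_1^{\phi_1}$ is divisible by $\rho(y)$ in $A_1$ by \thref{adm}(iii); writing $\rho(a)=\rho(y)c$ with $c\neq 0$ and using that $A_1^{\phi_1}$ is factorially closed in $A_1$ (\thref{lemma : properties}(i)), we conclude $\rho(y)\in A_1^{\phi_1}$.

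\textbf{Step 3 (iterate and conclude).} From here the argument is exactly the tail of the proof of \thref{lin}: iterating the homogenizations of \thref{Admissibilty 2} via $w_2,\dots,w_m$ and applying \thref{dhm} at each stage (the image of $y$ having positive $w_j$-degree $r_j$ throughout), one obtains a Generalised Asanuma domain $\tilde{A}\cong k[\X,Y,Z,T]/(X_1^{r_1}\cdots X_m^{r_m}Y-f(Z,T))$ carrying a non-trivial exponential map $\tilde\phi$ with $y'\in\tilde{A}^{\tilde\phi}$, where $y'$ denotes the image of $y$ in $\tilde{A}$. Together with \thref{DK} this gives $\dk(\tilde{A})=\tilde{A}$, which is statement (i); and since $k[\x,z,t]\subsetneqq\dk(\tilde{A})=\tilde{A}$ (as $y'\notin k[\x,z,t]$), statements (ii) and (iii) follow from \thref{com2}. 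I do not expect any serious obstacle: the only genuinely new input beyond \thref{lin} is the elementary degree comparison in Step 1, and everything after Step 2 repeats the corresponding portion of the proof of \thref{lin}.
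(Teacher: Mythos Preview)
Your proof is correct and follows essentially the same route as the paper's: obtain a non-trivial exponential map $\phi$ with an invariant $a$ of positive $w_1$-degree, pass to $A_1=\gr(A)$ and use \thref{adm}(iii) together with factorial closedness to get $\overline{y}\in A_1^{\phi_1}$, then iterate via \thref{Admissibilty 2} and \thref{dhm} to reach the Generalised Asanuma domain $\tilde{A}$ with $\tilde{y}\in\tilde{A}^{\tilde\phi}$, and conclude by \thref{DK} and \thref{com2}. The only difference is that in Step~1 you actually justify the passage from ``$\dk(A)$ contains an element of positive $w_1$-degree'' to ``some $A^{\phi}$ contains such an element'' via a degree comparison, whereas the paper simply asserts this without argument; your justification is correct and a useful addition.
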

	\begin{proof} 
		%			Note that since $A$ is a domain $f(Z,T)\neq 0$.	
		Let $x_1,\ldots,x_m,y,z,t$ denote the images of $X_1,\ldots,X_m,Y,Z,T$ in $A$ respectively.
		As $\alpha$ is ${\bf r}$-divisible with respect to $\{\X\}$, we have $\alpha=X_1^{r_1}\alpha_1$ for some $\alpha_1\in k^{[m]}$. 
		Since $X_1\mid h$ and $\gcd(\alpha_1(0,X_2,\dots,X_m), f(Z,T))=1$,
		by \thref{Admissible1}, the degree function $w_1$ on $A$, defined by $$w_1(x_1) = -1, w_1(y)=r_1, w_1(z)= w_1(t)=w_1(x_i)=0,\, i\in \{2,\dots,m\},$$
		induces an admissible $\Z$-filtration on ${A}$ with respect to $\{\x,y,z,t\}$ such that  
		\begin{equation*}
			A_1 :=\gr(A)\cong \dfrac{{k}[\X,Y,Z,T]}{(X_1^{r_1}\alpha_1(0,X_2,\dots,X_m)Y -f(Z,T))}.
		\end{equation*}
		For $a \in A$, let $\overline{a}$ denote its image in $A_1$.
		Since $\dk(A)$ contains an element of positive $w_1$-degree, there exist a non-trivial exponential map $\phi$ on $A$ and $a\in A$ with $w_1(a)>0$ such that $a\in A^{\phi}$. 
		Hence $\overline{y}\mid \overline{a}$ (cf. \thref{adm}(iii)).
		By \thref{dhm}, $\phi$ induces a non-trivial exponential map $\phi_1$ on $A_1$ such that $\overline a \in A_1^{\phi_1}$. Therefore $\overline{y}\in A_1^{\phi_1}$ (cf. \thref{lemma : properties}(i)). 
		By \thref{Admissibilty 2}, there exists an admissible $\Z$-filtration induced by the degree function $w_j$ on $A_{j-1}, 1\leqslant j\leqslant m$, where $A_0 := {A},\, A_i := \gr(A_{i-1}),\, 1\leqslant i\leqslant m$ such that 
		\begin{equation*}
			\tilde{A} := A_m\cong \frac{k[\X,Y,Z,T]}{(X_1^{r_1}X_2^{r_2}\dots X_m^{r_m}Y-f(Z,T))} = k[\tilde{x}_1,\dots,\tilde{x}_m,\tilde{y},\tilde{z},\tilde{t}], r_i\geqslant 2,\, 1\leqslant i \leqslant m,
		\end{equation*} 
		where for each $a \in A$, $\tilde{a}$ denotes its image in $\tilde{A}$.
		By repeated application of \thref{dhm}, $\phi_1$ induces a non-trivial exponential map $\tilde\phi$ on $\tilde{A}$ with $\tilde{y}\in \tilde{A}^{\tilde{\phi}}$. 
		Hence by \thref{DK},
		$\dk(\tilde{A})=\tilde{A}$. 
		%$k[\tilde{x}_1,\dots,\tilde{x}_m,\tilde{y},\tilde{z},\tilde{t}]=\tilde{A}=\dk(\tilde{A})$ (cf. \thref{lem0}). 
		%		by \thref{lem0}(i).
		Thus the first statement is proved.
		Now by \thref{com2}, the last two  statements follow.
	\end{proof}

	As a consequence we see that if $f$ cannot be made linear by change of coordinates, then the ring $A$ is non-trivial.
	More precisely:
	
	\begin{cor}\thlabel{notpoly}
		Let $A$ be as in \thref{Admissibilty 2} with ${\bf r}\in \mathbb{Z}_{>1}^{m}$.	
		Suppose that $f(Z,T)$ has the property that for any $Z_1,T_1\in \overline{k}[Z,T]$ with $\overline{k}[Z,T]=\overline{k}[Z_1,T_1]$ there does not exist $a_0,a_1\in \overline{k}^{[1]}$ for which $f(Z,T)= a_0(Z_1)+a_1(Z_1)T_1$. Then $\ml(A)\neq k$ and $\dk(A)\neq A$. In particular, $A\not\cong k^{[m+2]}$.
	\end{cor}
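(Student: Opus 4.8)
The plan is to argue contrapositively via Theorem~C. Suppose $\ml(A)=k$ or $\dk(A)=A$; I will then produce the presentation of $f$ that the hypothesis forbids. First observe that $A$ is exactly of the form treated in \thref{Admissibilty 2} with $\mathbf r\in\mathbb Z_{>1}^m$, so both \thref{lin} and \thref{lin2} are at our disposal: if $\ml(A)=k$ apply \thref{lin}, and if $\dk(A)=A$ then (trivially, or because $y\in\dk(A)$ has $w_1$-degree $r_1>0$) the hypothesis of \thref{lin2} is met and we apply \thref{lin2}. In either case part~(i) of the relevant theorem produces a Generalised Asanuma domain
\[
\tilde A\cong\frac{k[\X,Y,Z,T]}{(X_1^{r_1}X_2^{r_2}\cdots X_m^{r_m}Y-f(Z,T))},\qquad r_i\geqslant 2,
\]
with $\dk(\tilde A)=\tilde A$; here $f(Z,T)\neq 0$ since $A$ is a domain.

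When $m=1$ or $k$ is infinite, part~(ii) of \thref{lin}/\thref{lin2} already gives coordinates $\{Z_1,T_1\}$ of $k[Z,T]$ and $a_0,a_1\in k^{[1]}$ with $f=a_0(Z_1)+a_1(Z_1)T_1$; as these remain coordinates of $\overline k[Z,T]$, this contradicts the hypothesis. To handle the remaining case ($k$ finite, $m>1$) uniformly I would pass to $\overline k$: by \thref{lemma : properties}(iv) every non-trivial exponential map on $\tilde A$ extends to one on $\tilde A\otimes_k\overline k$ with invariant ring extended accordingly, so $\dk(\tilde A)=\tilde A$ forces $\dk(\tilde A\otimes_k\overline k)=\tilde A\otimes_k\overline k$; moreover $\tilde A\otimes_k\overline k$ is again a Generalised Asanuma domain over the infinite field $\overline k$, because $\gcd_{\overline k[\X,Z,T]}(X_1^{r_1}\cdots X_m^{r_m},f)=1$ makes the defining polynomial irreducible. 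Applying \thref{com2}(i) to $\tilde A\otimes_k\overline k$ over $\overline k$ yields coordinates $\{Z_1,T_1\}$ of $\overline k[Z,T]$ and $a_0,a_1\in\overline k^{[1]}$ with $f(Z,T)=a_0(Z_1)+a_1(Z_1)T_1$ --- precisely the configuration excluded by hypothesis. This contradiction establishes $\ml(A)\neq k$ and $\dk(A)\neq A$.

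For the final assertion, if $A\cong k^{[m+2]}$ then, since $m\geqslant 1$ gives $m+2\geqslant 2$, \thref{MLDK} yields $\ml(A)=\ml(k^{[m+2]})=k$ (equivalently $\dk(A)=A$), contradicting what was just proved; hence $A\not\cong k^{[m+2]}$.

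The only step needing a little care --- the main obstacle, such as it is --- is the base change in the second paragraph: one must check that both ``$\dk(\,\cdot\,)$ is the whole ring'' and ``being a Generalised Asanuma domain'' descend along $-\otimes_k\overline k$, so that \thref{com2}(i), which requires an infinite ground field once $m>1$, applies to $\tilde A\otimes_k\overline k$. All the ingredients are already in place: \thref{lemma : properties}(iv) controls the extension of exponential maps and their rings of invariants, and $X_1^{r_1}\cdots X_m^{r_m}Y-f(Z,T)$ is irreducible over any field because $f\neq 0$ involves no $X_i$.
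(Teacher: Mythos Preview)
Your proof is correct and follows the same contrapositive strategy as the paper, but your execution is more circuitous. The paper base-changes to $\overline{k}$ at the very start: from $\ml(A)=k$ or $\dk(A)=A$ one gets $\ml(\overline{A})=\overline{k}$ or $\dk(\overline{A})=\overline{A}$ for $\overline{A}=A\otimes_k\overline{k}$, and since $\overline{A}$ itself satisfies the hypotheses of \thref{Admissibilty 2} over the infinite field $\overline{k}$, part~(ii) of \thref{lin} or \thref{lin2} applies directly to yield $f=a_0(Z_1)+a_1(Z_1)T_1$ with $\overline{k}[Z,T]=\overline{k}[Z_1,T_1]$ --- no case split on $m=1$ versus $k$ infinite, and no detour through the intermediate domain $\tilde A$ and \thref{com2}. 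Your route works because the same base-change argument you use on $\tilde A$ works equally well on $A$; doing it first just removes the need for everything else.
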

	\begin{proof}
		Suppose, if possible, that $\ml(A)=k$ or $\dk(A)=A$. 
		Let $\overline{A}=A\otimes_k\overline{k}$.
		Then $\ml(\overline{A})=\overline{k}$ or $\dk(\overline{A})=\overline{A}$.
		Therefore, by \thref{lin}(ii) or \thref{lin2}(ii), we have $f=a_0(Z_1)+a_1(Z_1)T_1$, for some $a_0,a_1\in \overline{k}^{[1]}$ and $\overline{k}[Z,T]=\overline{k}[Z_1,T_1]$, a contradiction to the given hypothesis. 
		Thus $\ml(A)\neq k$ and $\dk(A)\neq A$. Hence $A\not\cong k^{[m+2]}$ (cf. \thref{MLDK}).
	\end{proof}
	%The above corollary is illustrated with examples below.
	\begin{rem}\thlabel{EXa}
		{\rm 
			\thref{notpoly} enables one to construct (or recognise) a large class of affine domains of Russell-Koras type: rings which are regular, factorial, contractible and satisfy many  properties of polynomial rings, without being a polynomial ring itself.
			For instance, consider 
			$$A_1:=\dfrac{k[X,Y,Z,T]}{(X^2(X+1)^2Y-(Z^2+T^3)-Xh_1(X,Z,T))}, \text{ for some } h_1\in k^{[3]}
			$$
			and 
			$$A_2:=\dfrac{k[X_1,X_2,Y,Z,T]}{(X_1X_2^2(X_1+X_2^2)Y -(Z^2+T^3)-X_2h_2(X_1,X_2,Z,T))},\text{ for some }h_2\in k^{[4]}.
			$$
			By a suitable choice of $h_1$ and $h_2$, the rings $A_1$ and $A_2$ can be made to satisfy several properties of polynomial rings. However, as $Z^2 + T^3$ cannot be linear in any system of coordinates,  Examples~\ref{ex1}~and~\ref{ex2} and \thref{notpoly} show that both the Makar-Limanov invariants and the Derksen invariants of $A_1$ and $A_2$ are non-trivial; in particular, they are not polynomial rings.
			
	}\end{rem}

	Using the arguments in \thref{Admissibilty 2} and Theorems~\ref{lin}~and~\ref{lin2}, a more general but technical version of Theorem~C has been established by the third author in her thesis (\cite{APalT}).
	For the interested reader, we give the precise statement below.
	\begin{thm}\thlabel{THGC}
		Let $k$ be an infinite field.
		Let 
		$$
		A=\dfrac{k[\X,Y,Z,T]}{(\alpha(\X)Y-F(\X,Z,T))}
		$$
		be a domain such that 
		\begin{enumerate}[\rm(a)]
			\item $f(Z,T):=F(0,\dots,0,Z,T)\neq 0$.
			\item For ${\bf r}=(r_1,\dots,r_m)$, $\alpha$ is ${\bf r}$-divisible with respect to $\{\X\}$ in $k^{[m]}$ and 	$\alpha_1 := \alpha(\X)/X_1^{r_1}$, $
			\alpha_i := \alpha_{i-1}(0,X_i,\dots,X_m)/X_i^{r_i},\, 2\leqslant i\leqslant m$. 
			\item $r_i>1$, for each $i\in \{1,\dots,m\}$.
			\item $\gcd(\alpha_i(0,X_{i+1},\dots,X_m), F(0,\dots,0,X_{i+1},\dots,X_m,Z,T))=1$, for all $i\in \{1,\dots,m\}$.
			\item Either $\ml(A)= k$ or $\dk(A)=A$. 
		\end{enumerate} 
		Then there exist a system of coordinates $\{Z_1,T_1\}$ of $k[Z,T]$ and $a_0,a_1 \in k^{[1]}$, such that $f(Z,T)=a_0(Z_1)+a_1(Z_1)T_1$. 
		Furthermore, if $f$ is a line in $k[Z,T]$ i.e., $k[Z,T]/(f)~=~k^{[1]}$, then $k[Z,T]=k[f]^{[1]}$.
	\end{thm}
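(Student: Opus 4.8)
The plan is to run the arguments of \thref{Admissibilty 2} and of Theorems~\ref{lin}~and~\ref{lin2} essentially verbatim, with the single divisibility hypothesis ``$X_1\mid h$'' used there replaced by the stagewise coprimality conditions~(d), which are precisely what is needed to invoke \thref{Admissible1} at each level of an iterated graded construction.

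First I would build, exactly as in \thref{Admissibilty 2}, a tower $A_0:=A$, $A_j:=\gr(A_{j-1})$ for $1\leqslant j\leqslant m$, where $A_{j-1}$ is equipped with the degree function $w_j$ defined by $w_j(x_j)=-1$, $w_j(y)=r_j$, and $w_j=0$ on the remaining generators. By the ${\bf r}$-divisibility of $\alpha$, the ring $A_{j-1}$ has the shape $k[\X,Y,Z,T]/\bigl(X_j^{r_j}\bigl(\prod_{i<j}X_i^{r_i}\bigr)\alpha_j(X_j,\dots,X_m)\,Y-F_{j-1}\bigr)$, where $F_{j-1}$ denotes $F$ with the first $j-1$ of the variables $X_1,\dots,X_m$ set to $0$; the only hypothesis of \thref{Admissible1} that is not automatic here --- coprimality of the coefficient of $Y$ evaluated at $X_j=0$ with $F_j$ --- reduces, after discarding the monomial factor $\prod_{i<j}X_i^{r_i}$ (which involves none of the variables occurring in $F_j$), to exactly condition~(d) for the index $j$. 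In particular each $A_j$ is an integral domain, and since $r_i>1$ for all $i$ and $\alpha_{m+1}\in k^*$, the $m$-th stage is the Generalised Asanuma domain $\tilde A:=A_m\cong k[\X,Y,Z,T]/(X_1^{r_1}\cdots X_m^{r_m}Y-f(Z,T))$ with $f=F(0,\dots,0,Z,T)$.

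Next I would extract from hypothesis~(e) a non-trivial exponential map $\phi$ on $A$ together with an element of strictly positive $w_1$-degree in $A^{\phi}$. If $\dk(A)=A$, then $y\in\dk(A)$ and $w_1(y)=r_1>0$, so, exactly as in the proof of \thref{lin2}, some $A^{\phi}$ contains an element of positive $w_1$-degree. If $\ml(A)=k$, choose $\phi$ with $x_1\notin A^{\phi}$ (possible, for otherwise $x_1\in\ml(A)=k$); by \thref{adm}(ii) and \thref{lemma : properties}(i) every element of $A^{\phi}$ has non-negative $w_1$-degree, and the transcendence-degree argument of \thref{lin} excludes the possibility that all of $A^{\phi}$ lies in $w_1$-degree $0$. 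The one genuinely new ingredient is that condition~(d) for the index $1$ makes $\beta:=\alpha_1(0,x_2,\dots,x_m)$ a non-zero divisor in $A/x_1A\cong k[X_2,\dots,X_m,Y,Z,T]/(F(0,X_2,\dots,X_m,Z,T))$, which is exactly what the step ``$x_1q\in\beta^{s}A\Rightarrow x_1$ divides an element of $A^{\phi}$'' requires. With such a $\phi$ in hand, \thref{dhm} together with \thref{adm}(iii) and \thref{lemma : properties}(i), applied successively along $A_0\to A_1\to\cdots\to A_m$, transports $\phi$ to a non-trivial exponential map $\tilde\phi$ on $\tilde A$ with $\tilde y\in\tilde A^{\tilde\phi}$. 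By \thref{DK}, $k[\tilde x_1,\dots,\tilde x_m,\tilde z,\tilde t]\subseteq\dk(\tilde A)$, while $\tilde y\in\dk(\tilde A)$ and $\tilde y\notin k[\tilde x_1,\dots,\tilde x_m,\tilde z,\tilde t]$ (else $X_1^{r_1}\cdots X_m^{r_m}$ would divide $f$ in $k[\X,Z,T]$); hence $\dk(\tilde A)=\tilde A\supsetneq k[\tilde x_1,\dots,\tilde x_m,\tilde z,\tilde t]$.

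Finally, since $k$ is infinite, \thref{com2}(i) applied to the Generalised Asanuma domain $\tilde A$ yields a system of coordinates $\{Z_1,T_1\}$ of $k[Z,T]$ and $a_0,a_1\in k^{[1]}$ with $f(Z,T)=a_0(Z_1)+a_1(Z_1)T_1$, and \thref{com2}(ii) gives $k[Z,T]=k[f]^{[1]}$ whenever $k[Z,T]/(f)=k^{[1]}$. I expect the main obstacle to be the $\ml(A)=k$ half of the third step: transplanting the delicate transcendence-degree computation of \thref{lin} to a fully general $F$, where one must recheck that the normal forms of \thref{adm} (in particular the identity $\alpha_1(0)^{r}g=g'(z,t)+xu$ for $w$-degree-$0$ elements) and the non-zero-divisor property of $\beta$ modulo $x_1$ all survive; keeping careful track of which instance of condition~(d) is invoked at which level of the tower is the other point demanding care.
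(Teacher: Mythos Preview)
Your proposal is correct and is precisely the approach the paper indicates: the paper does not supply an independent proof of \thref{THGC} but states explicitly that it ``has been established by the third author in her thesis'' \emph{using the arguments in \thref{Admissibilty 2} and Theorems~\ref{lin} and~\ref{lin2}}, and your outline carries out exactly that transplantation, correctly identifying condition~(d) as the replacement for the divisibility hypothesis $X_1\mid h$ at each stage of the tower (both to feed \thref{Admissible1} and to guarantee that $\beta$ is a non-zero divisor modulo $x_1$ in the transcendence-degree step of \thref{lin}). Your caveats about the two places needing care---the normal form of \thref{adm}(iv) and the non-zero-divisor argument in the $\ml(A)=k$ branch---are exactly the adjustments the paper's phrase ``using the arguments'' glosses over.
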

	
	It follows from the above theorem that the variety defined by 
	$$
	H=X_1X_2^2(X_1+X_2^2)Y -(Z^2+T^3)-h_3(X_1,X_2,Z,T),\, h_3\in k^{[4]},\, h_3(0,0,Z,T)=0.
	$$
	is not an affine space.

	\subsection{On Theorem D}\label{THD}
	
	\smallskip
	
	In this subsection we prove an extended version of Theorem~D for an affine domain $A$ defined as follows
	\begin{equation}\label{AAA}
		A = \dfrac{k[\X,Y,Z,T]}{(\alpha(X_1,\dots,X_m)Y -f(Z,T)- h(X_1,\dots, X_m,Z,T))},
	\end{equation}
	where $H:=\alpha(X_1,\dots,X_m)Y -f(Z,T)- h(X_1,\dots, X_m,Z,T)$ such that
	\begin{enumerate}[\rm(a)]
		\item
		For ${\bf r}=(r_1,\dots,r_m)\in \Z_{>1}^m$,
		$\alpha$ is ${\bf r}$-divisible in the system of coordinates $\{X_1-\lambda_1,\dots,X_m-\lambda_m\}$, for some $\lambda_i\in \overline{k},\mi$.
		\item Every prime divisor of $\alpha(\X)$ divides $h$ in $k[\X,Z,T]$.
		%	\item $A^{[l]}=k^{[l+m+2]}$, for some $l\geqslant 0$.
	\end{enumerate}
	Let $x_1,\ldots,x_m,y,z,t$ denote the images of $X_1,\ldots,X_m,Y,Z,T$ in $A$ respectively.
	Since $A$ is a domain $f(Z,T)\neq 0$.	
	%				We first state a proposition concerning $f$ when $A$ is a stably polynomial ring.	
	We now prove 
	%an important criterion for the ring $A$, when it is stably polynomial and one of the invariant $\ml(A)$ or $\dk(A)$ is trivial.
	%This criterion is 
	a crucial step towards proving Theorem~D.	
	
	\begin{prop}\thlabel{line}
		Let $A$ be an affine domain as in \eqref{AAA} and 
		$k_1=k(\lambda_1,\dots,\lambda_m)$.
		Suppose that $A^{[l]}=k^{[l+m+2]}$ for some $l\geqslant 0$ and either $\ml(A)= k$ or $\dk(A)=A$.
		Then $k_1[Z,T]=k_1[f]^{[1]}$. 
	\end{prop}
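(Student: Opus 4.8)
The plan is to descend to the field $k_1$, apply Theorem~C (that is, \thref{lin} and \thref{lin2}) over an appropriate ground field, and then feed the output into \thref{stably}. Put $A_1:=A\otimes_k k_1$; since $k_1/k$ is a finite (hence free) extension, $A_1$ is again an integral domain and $A_1^{[l]}=A^{[l]}\otimes_k k_1=k_1^{[l+m+2]}$. Applying the $k_1$-automorphism of $k_1[\X,Y,Z,T]$ fixing $Y,Z,T$ and sending $X_i\mapsto X_i+\lambda_i$, the defining polynomial of $A_1$ becomes $\alpha'Y-f(Z,T)-h'(\X,Z,T)$, where now $\alpha'$ is ${\bf r}$-divisible in the \emph{standard} coordinate system $\{\X\}$ of $k_1^{[m]}$ and $h'\in k_1[\X,Z,T]$. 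I would then check that every prime factor of $\alpha'$ in $k_1[\X]$ divides $h'$: a prime factor of $\alpha$ in $k_1[\X]$ divides some prime factor of $\alpha$ in $k[\X]$, which by the standing hypothesis on $H$ divides $h$ in $k[\X,Z,T]$, and the automorphism preserves all these divisibilities; in particular $X_1\mid h'$. Hence, after relabelling, $A_1$ fits the setting of \thref{Admissibilty 2}, \thref{lin}, \thref{lin2} over $k_1$ and also the setting of \eqref{AA} over $k_1$, with $A_1^{[l]}=k_1^{[l+m+2]}$.

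Next I would transfer the hypothesis on invariants. An exponential map $\phi$ on $A$ base changes to an exponential map $\phi\otimes\mathrm{id}$ on $A_1$, and $(A_1)^{\phi\otimes\mathrm{id}}=A^{\phi}\otimes_k k_1$ by flatness of $k_1$ over $k$; since $k_1$ is free of finite rank over $k$, intersections of $k$-subspaces commute with $-\otimes_k k_1$, so $\ml(A)=k$ forces $\ml(A_1)=k_1$. Likewise, if $\dk(A)=A$, then every element of a finite generating set of $A$, together with $k_1$, lies in $\dk(A_1)$, whence $\dk(A_1)=A_1$. Thus ``$\ml(A)=k$ or $\dk(A)=A$'' yields ``$\ml(A_1)=k_1$ or $\dk(A_1)=A_1$''.

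Finally I would split on whether $k$ (equivalently $k_1$) is infinite. If $k$ is infinite, then \thref{lin}(ii) or \thref{lin2}(ii) applied to $A_1$ over $k_1$ produces a coordinate system $\{Z_1,T_1\}$ of $k_1[Z,T]$ and $a_0,a_1\in k_1^{[1]}$ with $f=a_0(Z_1)+a_1(Z_1)T_1$; read in the coordinates $\{Z_1,T_1\}$ the ring $A_1$ still has the shape \eqref{AA} over $k_1$, so \thref{stably} (using $A_1^{[l]}=k_1^{[l+m+2]}$) gives $k_1[Z,T]=k_1[f]^{[1]}$. If $k$ is finite, then $k_1$ is a finite field, hence perfect, so $\overline{k}=\overline{k_1}$ is a \emph{separable} extension of $k_1$; base changing $A_1$ further to $\overline{k}$ (where $\overline{A}:=A\otimes_k\overline{k}$ is a domain, $\overline{A}^{[l]}=\overline{k}^{[l+m+2]}$, and $\ml(\overline{A})=\overline{k}$ or $\dk(\overline{A})=\overline{A}$ by \thref{lemma : properties}(iv) and the transfer argument), \thref{lin}(ii)/\thref{lin2}(ii) over the infinite field $\overline{k}$ gives $f=a_0(Z_1)+a_1(Z_1)T_1$ over $\overline{k}$, then \thref{stably} over $\overline{k}$ gives $\overline{k}[Z,T]=\overline{k}[f]^{[1]}$, and finally \thref{sepco} applied to $\overline{k}/k_1$ yields $k_1[Z,T]=k_1[f]^{[1]}$, as required.

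The bulk of the work is the base-change bookkeeping: confirming that after translating the variables by $\lambda_i$ over $k_1$ the ring $A_1$ simultaneously meets all the divisibility hypotheses needed ($X_1\mid h'$ for \thref{Admissibilty 2}, and ``every prime factor of $\alpha'$ divides $h'$'' for \eqref{AA} and \thref{stably}), and verifying that $\ml$ and $\dk$ behave correctly under the finite extension $k\hookrightarrow k_1$ (\thref{lemma : properties}(iv) being stated only for $\overline{k}$). The case division is essentially forced: one cannot descend a conclusion from $\overline{k}$ to a possibly imperfect $k_1$ through \thref{sepco}, so for infinite $k$ one must argue directly over $k_1$, while for finite $k$ it is exactly the perfectness of finite fields that legitimises the descent from $\overline{k}$.
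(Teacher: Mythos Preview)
Your proposal is correct and follows essentially the same route as the paper: base change to $k_1$, translate the $X_i$ by $\lambda_i$ so that $\alpha$ becomes ${\bf r}$-divisible in the standard coordinates, transfer the $\ml$/$\dk$ hypothesis, apply \thref{lin}(ii)/\thref{lin2}(ii) to get $f$ linear in some coordinate system, invoke \thref{stably}, and treat the finite-field case by passing to $\overline{k}$ and descending via \thref{sepco}. Your write-up is in fact more scrupulous than the paper's on two points the paper leaves implicit: the verification that the translated $h'$ inherits the divisibility hypotheses (both $X_1\mid h'$ for \thref{Admissibilty 2} and ``every prime factor of $\alpha'$ divides $h'$'' for \thref{stably}), and the justification that $\ml$ and $\dk$ behave well under the finite extension $k\hookrightarrow k_1$ (the paper simply asserts this).
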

	
	\begin{proof}
		Since $\alpha$ is ${\bf r}$-divisible with respect to $\{X_1-\lambda_1,\dots,X_m-\lambda_m\}$ of $k_1^{[m]}$, for each $i\in \{1,\dots,m\}$, there exists $\alpha_i\in k_1^{[m-i+1]}$ such that
		$$
		\alpha(\X) = (X_1-\lambda_1)^{r_1}\alpha_1(X_1-\lambda_1,\dots, X_m-\lambda_m),\, (X_1-\lambda_1)\nmid \alpha_1,
		$$ 
		and, for $2\leqslant i\leqslant m$,
		$$
		{\alpha_{i-1}(0,X_i-\lambda_i,\dots,X_m-\lambda_m)}={(X_i-\lambda_i)^{r_i}}\alpha_i(X_{i}-\lambda_i,\dots,X_m-\lambda_m),\, (X_i-\lambda_i)\nmid \alpha_i.
		$$ 
		Therefore, without loss of generality, we can assume that
		$$A^{\prime}:=
		A\otimes_k k_1 \cong\dfrac{{k_1}[\X,Y,Z,T]}{(X_1^{r_1}{{\alpha_1}}(\X)Y- f(Z,T)-h_1(\X,Z,T))},
		$$ 
		where $h_1(\X,Z,T) := h(X_1+\lambda_1,\dots, X_m+\lambda_m, Z,T)$. 
		Note that now $X_1^{r_1}{\alpha_1}$ is ${\bf r}$-divisible in the system of coordinates $\{\X\}$ in $k_1^{[m]}$.
		Now $\ml(A^{\prime})= k_1$ or $\dk(A^{\prime})=A^{\prime}$ according as $\ml(A)= k$ or $\dk(A)=A$.
		%				Let	${s_{i}}$ be the highest power of $X_i$ dividing $\overline{\alpha_{i-1}}(0,X_i,\dots,X_m)$ and $$\overline{\alpha_i}(X_{i},\dots,X_m):={\overline{\alpha_{i-1}}(0,X_i,\dots,X_m)}/{X_i^{s_i}},\, 2\leqslant i\leqslant m.$$ 
		%				Note that $$\overline {\alpha_j}(0,X_{j+1},\dots,X_m) = \alpha_j(X_{j+1}+\lambda_{j+1},\dots,X_m+\lambda_m), \text{ when } 1\leqslant j\leqslant m-1.$$ 
		%				Therefore $r_i = s_i>1$ for all $ i,\,\mi$. 
		
		Suppose $k$ is an infinite field. 
		Then by \thref{lin}(ii) or \thref{lin2}(ii) according as $\ml(A^{\prime})= k_1$ or $\dk(A^{\prime})=A^{\prime}$, there exist a system of coordinates $\{Z_1,T_1\}$ of $k_1[Z,T]$ and $a_0,a_1\in k_1^{[1]}$ such that $f=a_0(Z_1)+a_1(Z_1)T_1$.
		%	Again, since $A^{[l]}=k^{[l+3]}$, $A$ is a UFD. Therefore by \thref{ufdA}, $f$ is irreducible in $k(\lambda_1,\dots,\lambda_m)[Z,T]$.
		%	Next by \thref{line}$\rm (i)$, $\left(\frac{k(\lambda_1,\dots,\lambda_m)[Z,T]}{(f)}\right)^{*}=k(\lambda_1,\dots,\lambda_m)^{*}.$ 
		Also note that $(A^{\prime})^{[l]}=k_1^{[l+m+2]}$.
		Therefore, by \thref{stably}, we have
		$k_1[Z,T]=k_1[f]^{[1]}$.
		
		When $k$ is a finite field, then from the above paragraph, we have $\overline{k}[Z,T] = \overline{k}[f]^{[1]}$.  Therefore, by \thref{sepco}, ${k}[Z,T] = {k}[f]^{[1]}$ and hence $k_1[Z,T]=k_1[f]^{[1]}$.
	\end{proof}
	
	Next we prove an easy lemma. 
	
	\begin{lem}\thlabel{linear}
		Let $f= a_0(Z) + a_1(Z)T$, for some $a_0,a_1\in k^{[1]}$, be an irreducible polynomial of $k[Z,T]$ with $\left( \frac{k[Z,T]}{(f)} \right)^{*}=k^{*}$. Then $k[Z,T] = k[f]^{[1]}$. In particular, if $\frac{k[Z,T]}{(f)} = k^{[1]}$, then $k[Z,T] = k[f]^{[1]}$.	
	\end{lem}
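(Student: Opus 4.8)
The plan is to reduce the whole statement to the single assertion that $a_1(Z)\in k^*$: once we know this, $f=a_0(Z)+a_1T$ with $a_1\in k^*$ is visibly a coordinate, since $T=a_1^{-1}(f-a_0(Z))\in k[Z,f]$ forces $k[Z,T]=k[Z,f]$, and $Z,f$ are algebraically independent over $k$, so $k[Z,f]=k[f]^{[1]}$.

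First I would dispose of the degenerate case $a_1=0$. Then $f=a_0(Z)$ is an irreducible element of $k[Z]$, so $L:=k[Z]/(a_0(Z))$ is a finite field extension of $k$ and $\frac{k[Z,T]}{(f)}\cong L[T]$, whose group of units is $L^*$. The hypothesis $\bigl(\frac{k[Z,T]}{(f)}\bigr)^*=k^*$ forces $L^*=k^*$, hence $L=k$, hence $\deg_Z a_0=1$, and $f$ is a coordinate as above. Now assume $a_1\neq 0$. Writing $d:=\gcd_{k[Z]}(a_0,a_1)$ and $f=d(Z)\bigl(a_0'(Z)+a_1'(Z)T\bigr)$ with $a_1'\neq 0$, irreducibility of $f$ in $k[Z,T]$ forces $d\in k^*$ (otherwise both factors are non-units), so after rescaling we may assume $\gcd_{k[Z]}(a_0,a_1)=1$. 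The key step is then: \emph{$a_1$ maps to a unit of $\frac{k[Z,T]}{(f)}$}. Indeed, choose $p,q\in k[Z]$ with $pa_0+qa_1=1$ by B\'ezout; modulo $f$ we have $a_1T\equiv -a_0$, so $a_1(q-pT)\equiv qa_1-pa_1T\equiv qa_1+pa_0=1$, which exhibits the inverse of the image of $a_1$. Since $\bigl(\frac{k[Z,T]}{(f)}\bigr)^*=k^*$, the image of $a_1$ lies in $k^*$, say it equals $c$; then $a_1-c\in (f)$, but $a_1-c\in k[Z]$ has $T$-degree $0$ whereas every nonzero element of $(f)$ has $T$-degree $\geqslant 1$, so $a_1=c\in k^*$. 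This finishes the reduction and hence the first assertion; for the ``in particular'' clause, if $\frac{k[Z,T]}{(f)}=k^{[1]}$ then this quotient is a domain (so $(f)$ is prime and the non-constant $f$ is irreducible) with unit group $k^*$, so the hypotheses of the first assertion are met.

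I do not anticipate a genuine obstacle here: the only observation with any content is that coprimality of $a_0$ and $a_1$ — which is automatic from irreducibility of $f$ — makes $a_1$ a unit modulo $f$, after which the triviality-of-units hypothesis does all the remaining work. The only points requiring a little care are the verification that $\gcd(a_0,a_1)\in k^*$ (via the factorization of $f$) and the $T$-degree argument forcing $a_1-c=0$ at the end.
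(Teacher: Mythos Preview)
Your proof is correct and follows essentially the same approach as the paper: split into the cases $a_1=0$ and $a_1\neq 0$, use irreducibility of $f$ to get $\gcd(a_0,a_1)=1$, then show that $a_1$ becomes a unit in $k[Z,T]/(f)$ and invoke the unit hypothesis to force $a_1\in k^*$. The only cosmetic difference is that the paper identifies the quotient directly as $k[Z,1/a_1(Z)]$ (from which $a_1$ being a unit is immediate), whereas you exhibit the inverse of $a_1$ explicitly via B\'ezout and then use a $T$-degree count to pull $a_1\in k^*$ back to $k[Z]$; both arguments carry the same content.
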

	\begin{proof}
		Consider the following two cases.
		
		\medskip
		\noindent
		{\it Case} 1: If $a_1(Z)=0$, then $f=a_0(Z)$ is an irreducible polynomial. As $\left( \frac{k[Z,T]}{(f)} \right)^{*}=k^{*}$, it follows that $f$ is linear in $Z$, and hence $k[Z,T]=k[f]^{[1]}$.
		
		\medskip
		\noindent
		{\it Case} 2: If $a_1(Z) \neq 0$, then $\gcd(a_0,a_1)=1$  in $k[Z]$ as $f$ is irreducible in $k[Z,T]$.
		Hence $\frac{k[Z,T]}{(f)}=k\left[Z, \frac{1}{a_1(Z)}\right]$.
		Now since $\left(\frac{k[Z,T]}{(f)}\right)^{*}=k^{*}$, it follows that $a_1(Z) \in k^{*}$ and hence $f$ is linear in $T$. Therefore $k[Z,T]=k[f]^{[1]}$.	
	\end{proof}
	We now prove an extended version of Theorem~D.
	
	\begin{thm}\thlabel{main}
		Let $A$ and $H$ be as in \eqref{AAA}
		%				\begin{equation*}\label{A1}
			%					A = \dfrac{k[\X,Y,Z,T]}{(\alpha(X_1,\dots,X_m)Y - f(Z,T) - h(X_1,\dots, X_m,Z,T))}
			%				\end{equation*} 
		%				with
		%				$H:=\alpha(X_1,\dots,X_m)Y - f(Z,T) - h(X_1,\dots, X_m,Z,T)$
		%		be such that 
		and for each  $i,\, \mi$,  $\lambda_i$ is separable over $k$, i.e., $k_1:=k(\lambda_1,\dots,\lambda_m)$ is separable over $k$. 
		Let $E:=k[\x]$ be a subring of $A$.		
		Then the following statements are equivalent.
		\begin{enumerate}[\rm(i)]
			
			\item  $k[\X,Y,Z,T]=k[\X,H]^{[2]}$.
			
			\item  $k[\X,Y,Z,T]=k[H]^{[m+2]}$.
			
			\item $A=k[\x]^{[2]}=E^{[2]}$.
			
			\item $A=k^{[m+2]}$.
			
			\item $k[Z,T]=k[f(Z,T)]^{[1]}$.
			
			\item	$A^{[l]}=k^{[l+m+2]}$ for some $l \geqslant 0$ and $\ml(A)=k$.
			
			\item $f(Z,T)$ is a line in $k[Z,T]$ and $\ml(A)=k$.
			
			\item $A$ is an $\mathbb{A}^{2}$-fibration over $E$ and $\ml(A)=k$.	
			
			\item
			\begin{enumerate}[\rm(a)]
				\item When $m=1$, $A$ is a UFD, $\ml(A)=k$ and $\left(\frac{ k_1[Z,T]}{(f(Z,T))}\right)^{*} = {k_1}^{*}.$
				
				\item When $m>1$, $A\otimes_k k_1$ is a UFD, $\ml(A)=k$ and $\left(\frac{ k_1[Z,T]}{(f(Z,T))}\right)^{*} = {k_1}^{*}.$
			\end{enumerate}
			
			\item $A^{[l]}=k^{[l+m+2]}$ for some $l \geqslant 0$ and $\dk(A)=A$.
			
			\item $f(Z,T)$ is a line in $k[Z,T]$ and $\dk(A) = A$.	
			
			\item $A$ is an $\mathbb{A}^{2}$-fibration over $E$ and $\dk(A)=A$.				
			
			\item 
			\begin{enumerate}[\rm(a)]
				\item When $m=1$, $A$ is a UFD, $\dk(A)=A$ and $\left(\frac{ k_1[Z,T]}{(f(Z,T))}\right)^{*} = {k_1}^{*}.$ 
				\item When $m>1$, $A\otimes_k k_1$ is a UFD, $\dk(A)=A$ and $\left(\frac{ k_1[Z,T]}{(f(Z,T))}\right)^{*} = {k_1}^{*}$.
			\end{enumerate}
			
			\item 
			$A\otimes_k\overline{k}$ is a UFD, $f(Z,T)\notin k$ and there exists a non-trivial exponential map $\psi$ on $A$ such that $\x, a\in A^{\psi}$ with $w_1(a)>0$ ($w_1$ is defined as in \thref{lin2}).
		\end{enumerate}
	\end{thm}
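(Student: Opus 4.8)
The plan is to establish $\rm(iv)\Leftrightarrow\rm(v)$ as a hub and route every other statement through it. The implications $\rm(i)\Rightarrow\rm(ii)$, $\rm(i)\Rightarrow\rm(iii)$ (reduce the presentation of $\rm(i)$ modulo $(H)$, a retract argument), $\rm(ii)\Rightarrow\rm(iv)$ and $\rm(iii)\Rightarrow\rm(iv)$ are immediate, and $\rm(v)\Rightarrow\rm(i)$ is Lemma~\thref{RS} (its hypothesis is exactly that every prime divisor of $\alpha$ divides $h$, which holds here). The one substantial link in the core cycle is $\rm(iv)\Rightarrow\rm(v)$: if $A=k^{[m+2]}$ then $\ml(A)=k$ by Lemma~\thref{MLDK} and $A^{[0]}=k^{[m+2]}$, so Proposition~\thref{line} applies with $l=0$ and gives $k_1[Z,T]=k_1[f]^{[1]}$, where $k_1=k(\lambda_1,\dots,\lambda_m)$; since $k_1/k$ is separable, Lemma~\thref{sepco} then yields $k[Z,T]=k[f]^{[1]}$. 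This closes $\rm(i)\Leftrightarrow\rm(ii)\Leftrightarrow\rm(iii)\Leftrightarrow\rm(iv)\Leftrightarrow\rm(v)$, and it remains to prove, for each $X$ among $\rm(vi)$--$\rm(xiv)$, that $\rm(iv)\Rightarrow X$ and $X\Rightarrow\rm(v)$.

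The implications $\rm(iv)\Rightarrow X$ are all short. Through the core cycle, $\rm(iv)$ already provides $k[\X,Y,Z,T]=k[\X,H]^{[2]}$, $A=E^{[2]}$ and $k[Z,T]=k[f]^{[1]}$; hence $\ml(A)=k$ and $\dk(A)=A$ (Lemma~\thref{MLDK}), $A$ is a constant $\mathbb{A}^2$-fibration over $E$, $A\otimes_k\overline{k}$ and $A\otimes_k k_1$ are polynomial rings (in particular UFDs), $f$ is a coordinate of $k[Z,T]$ (hence a line, $f\notin k$, and $k_1[Z,T]/(f)=k_1^{[1]}$ has unit group $k_1^*$), and the exponential map required in $\rm(xiv)$ is read off from the coordinate presentation of $A\otimes_k k_1$ after the translation $X_i\mapsto X_i+\lambda_i$, with $\x$ and the image of $y$ (of positive $w_1$-degree) among its invariants.

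For $X\Rightarrow\rm(v)$ I would first base change to $k_1$ — and, when $k$ is finite and $m>1$, further to $\overline{k}$, so as to meet the ``$k$ infinite'' hypothesis of Theorems~\thref{lin}(ii) and~\thref{lin2}(ii) — and apply the translation $X_i\mapsto X_i+\lambda_i$; this brings $A\otimes_k k_1$ (resp. $A\otimes_k\overline{k}$) to the form of Proposition~\thref{Admissibilty 2}, with $X_1$ dividing the translated $h$, and converts $\ml(A)=k$, $\dk(A)=A$ into $\ml=k_1$ (resp. $\overline{k}$), $\dk=$ the whole ring, exactly as in the proof of Proposition~\thref{line}. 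Then: $\rm(vi),\rm(x)\Rightarrow\rm(v)$ directly by Proposition~\thref{line} and Lemma~\thref{sepco}; $\rm(vii),\rm(xi)\Rightarrow\rm(v)$ by passing ``$f$ a line in $k[Z,T]$'' to ``$f$ a line in $k_1[Z,T]$'', upgrading to ``$f$ a coordinate of $k_1[Z,T]$'' via Theorem~\thref{lin}(iii) or~\thref{lin2}(iii), and then applying Lemma~\thref{sepco}; $\rm(viii),\rm(xii)\Rightarrow\rm(v)$ by applying Lemma~\thref{Fib2} to the maximal ideal $\m:=(X_1-\lambda_1,\dots,X_m-\lambda_m)\cap k[\X]$ of $E$, which contains $\alpha$ and has separable residue field $k_1$, to obtain $k_1[Z,T]/(f)=k_1^{[1]}$ and so reduce to the case $\rm(vii)/\rm(xi)$; $\rm(ix),\rm(xiii)\Rightarrow\rm(v)$ by using Theorem~\thref{lin}(ii) or~\thref{lin2}(ii) to write $f=a_0(Z_1)+a_1(Z_1)T_1$ in a coordinate system $\{Z_1,T_1\}$ of $k_1[Z,T]$, noting that $(k_1[Z,T]/(f))^*=k_1^*$ forces $f$ irreducible there (a proper factorization would create nilpotents or nontrivial units in the quotient), and then invoking Lemma~\thref{linear} and Lemma~\thref{sepco}; and $\rm(xiv)\Rightarrow\rm(v)$ by extending $\psi$ to $A\otimes_k\overline{k}$ (Lemma~\thref{lemma : properties}(iv)) so that its Derksen invariant contains an element of positive $w_1$-degree, applying Theorem~\thref{lin2}(ii) to obtain $f=a_0(Z_1)+a_1(Z_1)T_1$ over $\overline{k}$, using that $A\otimes_k\overline{k}$ is a UFD with $f\notin k$ to force $f$ irreducible in $\overline{k}[Z,T]$ (Lemma~\thref{UFDline}), deducing $a_1\in\overline{k}^*$ so that $f$ is a coordinate of $\overline{k}[Z,T]$, and finishing with Lemma~\thref{sepco}.

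The real content is already packaged in Proposition~\thref{line} and Theorems~\thref{lin},~\thref{lin2}, so the rest is mostly bookkeeping. The hard parts I anticipate are: (1) keeping track of the nested base changes and of the splits $m=1$ vs.\ $m>1$ and $k$ finite vs.\ infinite, which are forced by the hypotheses of Theorems~\thref{lin}(ii),~\thref{lin2}(ii) and by Lemma~\thref{sepco}; and (2) the implication $\rm(xiv)\Rightarrow\rm(v)$, where from ``$A\otimes_k\overline{k}$ a UFD'' together with a single well-placed exponential invariant one must extract the linearity of $f$ — concretely, ruling out $a_1\notin\overline{k}^*$ in $f=a_0(Z_1)+a_1(Z_1)T_1$, presumably by confronting the factoriality of $A\otimes_k\overline{k}$ with the unit group of $\overline{k}[Z,T]/(f)=\overline{k}[Z_1,1/a_1(Z_1)]$.
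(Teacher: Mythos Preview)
Your overall route --- establishing $\rm(i)$--$\rm(v)$ as a core cycle via \thref{RS}, \thref{line} and \thref{sepco}, and then showing $\rm(iv)\Rightarrow X\Rightarrow\rm(v)$ for each remaining $X$ --- is sound and close to the paper's. Two of your implications, however, do not close as written.

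\medskip
\noindent\textbf{(xiv) $\Rightarrow$ (v).} You never use the hypothesis $\x\in A^{\psi}$, and without it your step~5 (``deducing $a_1\in\overline{k}^*$'') cannot be carried out: after \thref{lin2}(ii) and \thref{UFDline} you have $f=a_0(Z_1)+a_1(Z_1)T_1$ irreducible in $\overline{k}[Z,T]$, but the hypotheses of (xiv) say nothing about the unit group of $\overline{k}[Z,T]/(f)$, so \thref{linear} is unavailable. The paper uses $\x\in A^{\psi}$ essentially: passing through the proof of \thref{lin2} (not just its statement), the induced exponential map $\phi_1$ on the Generalised Asanuma domain $\tilde A$ fixes $\tilde x_1,\dots,\tilde x_m$ \emph{and} $\tilde y$; localizing at these invariants (\thref{lemma : properties}(iii)) gives a non-trivial exponential map on $K[Z,T]/(\beta-f)\cong K[Z_1,1/a_1(Z_1)]$ for a field $K$, and such a ring is rigid unless $a_1$ is constant. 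That is what forces $a_1\in k^*$.

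\medskip
\noindent\textbf{(ix)(b), (xiii)(b) $\Rightarrow$ (v) when $k$ is finite and $m>1$.} Your plan is to base change to $\overline{k}$ so that \thref{lin}(ii)/\thref{lin2}(ii) applies, but the UFD and unit hypotheses in (ix)(b)/(xiii)(b) are stated over $k_1$: you obtain the linear form of $f$ over $\overline{k}$, while irreducibility of $f$ and the unit condition hold only over $k_1$, and these do not combine to feed \thref{linear} over a single field. The paper sidesteps the finite-field restriction differently: it applies \thref{lin}(i)/\thref{lin2}(i) over $k_1$ to reach a Generalised Asanuma domain $\tilde A$ with $\dk(\tilde A)=\tilde A$, then invokes \thref{GA2} to pass to $\tilde A_l$ over the \emph{infinite} field $k_1(X_l)$, where \thref{com2}(i) yields the linear form; irreducibility of $f$ and the unit condition transfer from $k_1$ to $k_1(X_l)$, so \thref{linear} and \thref{sepco} finish. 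Separately, your shortcut ``$(k_1[Z,T]/(f))^*=k_1^*$ forces $f$ irreducible'' is false over small fields (take $k_1=\mathbb{F}_2$ and $f=Z(Z+1)$, where both sides of the unit condition are trivial); you should obtain irreducibility from the UFD hypothesis via \thref{UFDline} (for $m>1$) or \thref{ufdg} applied to the prime factor of $\alpha$ with root $\lambda_1$ (for $m=1$), as the paper does.
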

	
	\begin{proof}
		Since $\alpha(\X)$ is ${\bf r}$-divisible in the system of coordinates $\{X_1-\lambda_1,\dots,X_m-\lambda_m\}$,
		for each $i\in \{1,\dots,m\}$, there exists $\alpha_i\in k_1^{[m-i+1]}$ such that
		$$\alpha(\X) = (X_1-\lambda_1)^{r_1}\alpha_1(X_1-\lambda_1,\ldots,X_m-\lambda_m),\, X_1-\lambda_1\nmid \alpha_1$$ $${\alpha_{i-1}(0,X_i-\lambda_i,\dots,X_m-\lambda_m)}={(X_i-\lambda_i)^{r_i}}\alpha_i(X_{i}-\lambda_i,\dots,X_m-\lambda_m),\, X_i-\lambda_i\nmid \alpha_i,\text{ for } 2\leqslant i\leqslant m.$$ 
		Therefore, without loss of generality, we can assume that
		$$
		A^{\prime}:=
		A\otimes_k k_1 \cong\dfrac{{k_1}[\X,Y,Z,T]}{(X_1^{r_1}\alpha_1(\X)Y- f(Z,T)-h_1(\X,Z,T))},
		$$ 
		where $h_1(\X,Z,T) := h(X_1+\lambda_1,\dots, X_m+\lambda_m, Z,T)$. 
		Note that $X_1^{r_1}\alpha_1$ is now ${\bf r}$-divisible in the system of coordinates $\{\X\}$.
		
		We are going to prove the above equivalence of statements in the following sequence:
		$$
		\begin{tikzcd}[column sep=small]
			{\rm(i)} \arrow[r, Rightarrow]\arrow[d, Rightarrow]\arrow[rrd, Rightarrow] & {\rm(ii)} \arrow[r, Rightarrow]& {\rm(iv)}\arrow[r, Rightarrow] & {\rm(vi)}\arrow[r,Rightarrow]& {\rm (vii)}\arrow[r, Leftrightarrow]& {\rm(viii)}\arrow[r, Rightarrow]&{\rm(ix)}\arrow[r, Rightarrow] &{\rm(v)}\arrow[r, Rightarrow]& {\rm(i)} \\
			{\rm(xiv)} &  &{\rm(iii)}\arrow[r, Rightarrow]& {\rm(x)}\arrow[r, Rightarrow] &{\rm(xi)}\arrow[r, Leftrightarrow]&{\rm(xii)}\arrow[r, Rightarrow] & {\rm(xiii)}\arrow[ru, Rightarrow]& {\rm(xiv)}\arrow[u,Rightarrow]
		\end{tikzcd}
		$$
		\smallskip
		\noindent
		$\rm (i) \Rightarrow (xiv)$: 
		Since $k[\X,Y,Z,T]= k[\X,H]^{[2]}$, $A= k[\x]^{[2]}=k[\x,h_1,h_2]$, for some $h_1, h_2\in A$. Therefore, $A\otimes_k\overline{k}= \overline{k}[\x]^{[2]}$ is a UFD. 
		Further since $A$ is a domain with $A^*=k^*$, it follows that $f(Z,T) \notin k$.
		Now $y\in A$ and $w_1(y)>0$. Since $w_1(x_i)\leqslant 0$, for all $i\in \{1,\dots,m\}$, either $w_1(h_1)>0$ or $w_1(h_2)>0$. Without loss of generality, let $w_1(h_1)>0$. 
		Now we consider the exponential map $\phi: A\rightarrow A[U]$ defined as follows 
		\begin{center}
			$\phi(h_2) = h_2+U,\, \phi(h_1) = h_1,\, \phi(x_i) = x_i,\, i\in \{1,\dots,m\}$.
		\end{center}
		Then one can easily check that $A^{\phi} = k[\x,h_1]$.
		
		\smallskip
		\noindent
		$\rm (vi) \Rightarrow (vii)$ and $\rm (x) \Rightarrow (xi)$: By \thref{line}, we have, $k_1[Z,T] = k_1[f]^{[1]}$. 
		Since  $k_1$ is a separable extension over $k$, by \thref{sepco}, we have $k[Z,T] = k[f]^{[1]}$, i.e., $\frac{k[Z,T]}{(f(Z,T))}= k^{[1]}$. 
		
		\smallskip
		\noindent
		{$\rm (vii) \Leftrightarrow (viii)$ and $\rm (xi) \Leftrightarrow (xii)$: Let  $\p:=(x_1-\lambda_1,\dots,x_m-\lambda_m)\overline{k}[\x]\cap E\in Max(E)$. Then $\alpha\in \p$ and $\frac{E_{\p}}{\p E_{\p}}\hookrightarrow k_1$ is a separable field extension of $k$. Hence from \thref{fib1} the assertions follow.}
		
		\smallskip
		\noindent
		$\rm (viii) \Leftrightarrow (vii)\Rightarrow (ix)$ and $\rm(xii)\Leftrightarrow(xi)\Rightarrow (xiii):$ 
		Since $f(Z,T)$ is a line in $k[Z,T]$, $\frac{k_1[Z,T]}{(f(Z,T))}= k_1^{[1]}$ and $f(Z,T)$ is irreducible in $\overline{k}[Z,T]$. Hence $\left(\frac{k_1[Z,T]}{(f(Z,T))}\right)^{*} = k_1^{*}$ and by \thref{UFDC}, we have $A\otimes_k L$ is a UFD for every algebraic extension $L$ of $k$. Thus the assertions follow.

		\smallskip
		\noindent
		$ \rm(xiii) \Rightarrow (v)$: 
		%		\begin{enumerate}
			
			\noindent
			$\rm(a)$:
			%			\item 
			Let $m=1$. 
			Since $A$ is a UFD and $\left(\frac{{k_1}[Z,T]}{(f(Z,T))}\right)^{*} ={k_1}^{*}$, by \thref {ufdg},  $f(Z,T)$ is irreducible in $k_1[Z,T]$ as $k(\lambda_1)=k_1$. 
			Now as $\dk(A)=A$, we have $\dk(A^{\prime})=A^{\prime}$.
			Thus, by applying \thref{lin2}(ii) for $A^{\prime}$, we have $f(Z,T) = a_0(Z) + a_1(Z)T$, for some $a_0,a_1\in k_1^{[1]}$. Therefore by \thref{linear}, $k_1[Z,T] = k_1[f(Z,T)]^{[1]}$ and hence by \thref{sepco}, $f$ is a coordinate in $k[Z,T]$ as $k_1$ is separable over $k$.
			
			\smallskip
			\noindent
			\rm(b):
			Henceforth $m>1$. 
			$ A^{\prime}$ is a UFD and $\left(\frac{{k_1}[Z,T]}{(f(Z,T))}\right)^{*} ={k_1}^{*}$. Therefore, by \thref{UFDline}, $f(Z,T)$ is irreducible in $k_1[Z,T]$.
			Now since $\dk(A) = A$, it follows that $\dk( A^{\prime}) =  A^{\prime}$. Therefore, by \thref{lin2}(i) there exists an integral domain $\tilde{A}$ such that
			\begin{equation*}
				\tilde{A} = \dfrac{k_1[\X,Y,Z,T]}{(X_1^{r_1}X_2^{r_2}\dots X_m^{r_m}Y - f(Z,T))},
			\end{equation*}
			with $ \dk(\tilde{A})= \tilde{A}$.
			By \thref{GA2}, there exist $l\in \{1,\dots,m\}$ and an integral domain $\tilde{A_l}$ such that 
			\begin{equation*}
				\tilde{A_l} \cong \dfrac{k_1(X_l)[X_1,\dots,X_{l-1},X_{l+1},\dots,X_m,Y,Z,T]}{(X_1^{r_1}X_2^{s_2}\dots X_m^{s_m}Y- f(Z,T))}
			\end{equation*}
			and $\dk(\tilde{A_l}) = \tilde{A_l}$. Therefore, by \thref{com2}(i), there exist $Z_1,T_1\in k_1(X_l)[Z,T]$ and $a_0,a_1\in k_1(X_l)^{[1]}$ such that $k_1(X_l)[Z,T] = k_1(X_l)[Z_1,T_1]$ and 
			$$ 
			f(Z,T) = a_0(Z_1) + a_1(Z_1)T_1. 
			$$
			Now  $\left(\frac{k_1[Z,T]}{(f(Z,T))}\right)^{*} = k_1^{*}$, therefore $\left(\frac{k_1(X_l)[Z,T]}{(f(Z,T))}\right)^{*} =\left(\frac{k_1[Z,T]}{(f(Z,T))}[X_l]\otimes_{k_1[X_l]}{k_1(X_l)}\right)^{*}= k_1(X_l)^{*}$,
			and since $f$ is irreducible in $k_1[Z,T]$, $f$ is irreducible in $k_1(X_l)[Z,T]$.
			Thus, by \thref{linear}, $k_1(X_l)[Z,T] = k_1(X_l)[f(Z,T)]^{[1]}$
			and hence by \thref{sepco}, $f$ is a coordinate of $k[Z,T]$.

			\noindent
			$\rm (ix) \Rightarrow (v)$:
			When $m=1$, since $A$ is a UFD and $\left(\frac{{k_1}[Z,T]}{f(Z,T)}\right)^{*} ={k_1}^{*},$ therefore by \thref {ufdg}, $f(Z,T)$ is irreducible in $k_1[Z,T]$.
			Now as $\ml(A)=k$, we have $\ml(A^{\prime})=k_1$.
			Thus, by applying \thref{lin}(ii) for $A^{\prime}$, we have $f(Z,T) = a_0(Z) + a_1(Z)T$, for some $a_0,a_1\in k_1^{[1]}$.
			
			Similarly, when $m>1,$
			$A^{\prime}$ is a UFD
			and $\left(\frac{{k_1}[Z,T]}{f(Z,T)}\right)^{*} ={k_1}^{*}$. Therefore, by \thref{UFDline}, $f(Z,T)$ is irreducible in $k_1[Z,T]$.
			Now since $\ml(A) = k$, it follows that $\ml(A^{\prime}) = k_1$. Therefore, by \thref{lin}(i), there exists an integral domain $\tilde{A}$ such that
			\begin{equation*}
				\tilde{A} := \dfrac{k_1[\X,Y,Z,T]}{(X_1^{r_1}X_2^{r_2}\dots X_m^{r_m}Y - f(Z,T))}
			\end{equation*}
			with $\dk(\tilde{A}) = \tilde{A}$. Following the same arguments as in $\rm(xiii)\Rightarrow \rm(v)$, the implication follows. 
			
			\noindent
			$\rm (xiv) \Rightarrow (v)$: We can extend $\psi$ to a non-trivial exponential map $\phi$ on $A^{\prime}$ (cf. \thref{lemma : properties}(iv)) such that $\x,a\in  (A^{\prime})^{\phi}$. Thus $a\in\dk(A^{\prime})$ and $w_1(a)>0$.
			The proof of \thref{lin2} shows that there exists an affine domain $\tilde{A}$ of the form 
			\begin{equation*}
				\tilde{A} =  \frac{k_1[\X,Y,Z,T]}{(X_1^{r_1}X_2^{r_2}\dots X_m^{r_m}Y-f(Z,T))} 
			\end{equation*}
			on which $\phi$ induces a non-trivial exponential map $\phi_1$. 
			For each $g \in A^{\prime}$, let  $\tilde{g}$ denote the image of $g$ in $\tilde{A}$.
			Again the proof of \thref{lin2} shows that $\tilde{y}\in \tilde{A}^{\phi_{1}}$ and $\tilde{g}\in \tilde{A}^{\phi_1}$, for all $g\in (A^{\prime})^{\phi}$.  
			Therefore, $\tilde{x}_1,\dots,\tilde{x}_m,\tilde{y}\in \tilde{A}^{\phi_1}$ and hence
			$k_1[\tilde{x}_1,\dots,\tilde{x}_m,\tilde{z},\tilde{t}]\subsetneqq\dk(\tilde{A})$ (cf. \thref{DK}).
			Now we consider two cases: 
			\\
			{\it{ Case 1:}}
			Let $k$ be an infinite field. 
			Without loss of generality, by \thref{com2}(i), we have $f(Z,T) = a_0(Z)+a_1(Z)T$ for some $a_0,a_1\in k_1^{[1]}$. 
			Now since $A\otimes_k\overline{k}$ is a UFD and $f\notin k$, by \thref{UFDline}, we have $f(Z,T)$ is irreducible in $\overline{k}[Z,T]$. 
			
			If $a_1(Z) =0$, then $f(Z,T)=a_0(Z)$ is irreducible in $\overline{k}[Z,T]$. Therefore, $f$ is linear in $Z$. Hence ${k[Z,T]}=k[{f(Z,T)}]^{[1]}.$ 
			
			Next let us assume that $a_1(Z)\neq 0$. 
			By \thref{lemma : properties}(iii), $\phi_1$ induces a non-trivial exponential map on 
			\begin{equation*}
				\tilde{A_1}:=\tilde{A}\otimes_{k_1[\tilde{x}_1,\dots,\tilde{x}_m,\tilde{y}]}k_1(\tilde{x}_1,\dots,\tilde{x}_m,\tilde{y})\cong\frac{k_1(\tilde{x}_1,\dots,\tilde{x}_m,\tilde{y})[Z,T]}{(\beta-a_0(Z)-a_1(Z)T)},
			\end{equation*}
			for some $\beta\in k_1(\tilde{x}_1,\dots,\tilde{x}_m,\tilde{y})$.
			Since $f$ is irreducible, $\gcd(a_0(Z),a_1(Z))=1$.
			Then it follows that $\gcd(\beta-a_0(Z),a_1(Z))=1$. Now as $\tilde{A_1}$ is not rigid, it follows that $a_1(Z) \in k^*$, and thus
			${k[Z,T]}=k[{f(Z,T)}]^{[1]}.$\\
			{\it{Case 2:}}
			Let $k$ be a finite field.
			Then, as in {\it Case 1}, we can conclude that ${\overline{k}[Z,T]}=\overline{k}[{f(Z,T)}]^{[1]}$. 
			Since $\overline{k}$ is a separable extension of $k$, by \thref{sepco}, we have ${k[Z,T]}=k[{f(Z,T)}]^{[1]}.$
			
			\noindent
			$\rm (v) \Rightarrow (i):$
			Follows from \thref{RS}.
			
			The rest of the equivalences follows trivially.		
		\end{proof}
		
		Note that the family of hypersurfaces given by
		$$
		a_1(X_1) \cdots a_m(X_m) Y -f(Z,T)-  h(X_1,\ldots, X_m,Z,T),
		$$
		where every prime divisor of $a_1(X_1) \cdots a_m(X_m)$ in $k[X_1,\ldots,X_m]$ divides $h$, and every $a_i(X_i)$ has a separable multiple root $\lambda_i$ over $k$, are included in the family of hypersurfaces mentioned in the above theorem.

		Now we state a result connecting \thref{main} and the Zariski Cancellation Problem.
		\begin{cor}\thlabel{czcp}
			Let $k$ be a field of positive characteristic.
			Suppose that $f(Z,T)$ is a non-trivial line over $k[Z,T]$ {\rm (\thref{NonL})}.
			Then an affine domain $A$ considered in \thref{main} 
			provides a counterexample to the ZCP and to the $\mathbb{A}^2$-fibration problem over $k^{[m]}$ in positive characteristic.
		\end{cor}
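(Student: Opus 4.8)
The plan is to combine the chain of equivalences in \thref{main} with the stable-triviality Proposition \thref{p1}. Let $A$ be as in \thref{main}, write $E:=k[\x]=k^{[m]}$, and assume $f(Z,T)$ is a non-trivial line in $k[Z,T]$, so that $k[Z,T]/(f)=k^{[1]}$ but $k[Z,T]\neq k[f]^{[1]}$ (\thref{NonL}). First I would observe that statement (v) of \thref{main} then fails, whence \emph{every} statement equivalent to it fails; in particular (iii) gives $A\neq E^{[2]}$ and (iv) gives $A\not\cong_k k^{[m+2]}$.

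Next I would show that $A$ is nonetheless stably polynomial, namely $A^{[1]}\cong k^{[m+3]}$. Write $\alpha=\prod_{i=1}^{n}p_i^{r_i}$ for the prime factorization of $\alpha$ in $R:=k[\X]$ and put $\pi:=p_1\cdots p_n$ and $G:=f(Z,T)+h(\X,Z,T)\in R[Z,T]$. By \eqref{Cond} each $p_i$ divides $h$ in $k[\X,Z,T]$; since the $p_i$ are pairwise non-associate primes of the UFD $k[\X]$, this forces $\pi\mid h$ in $R[Z,T]$, so $G\equiv f\pmod{\pi R[Z,T]}$ and hence
\[ R[Z,T]/(\pi,G)=(R/\pi)[Z,T]/(f)\;\cong\;(R/\pi)\otimes_k\bigl(k[Z,T]/(f)\bigr)\;\cong\;(R/\pi)\otimes_k k^{[1]}=(R/\pi)^{[1]} \]
as $R$-algebras, using $k[Z,T]/(f)=k^{[1]}$. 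Since $A=R[Z,T,Y]/(p_1^{r_1}\cdots p_n^{r_n}Y-G)$, \thref{p1} gives $A^{[1]}=R^{[3]}=k^{[m+3]}$. Together with $A\not\cong_k k^{[m+2]}$ from the first paragraph, this exhibits $A$ (equivalently $\Spec A$) as a counterexample to the ZCP for $\mathbb{A}^{m+2}_k$ in positive characteristic, with $l=1$.

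For the $\mathbb{A}^2$-fibration claim I would appeal to \thref{Fib2}: for every $\p\in\Spec(E)$ with $\alpha(\x)\in\p$, base-changing the identity $k[Z,T]/(f)=k^{[1]}$ along $k\hookrightarrow E_\p/\p E_\p$ yields $(E_\p/\p E_\p)[Z,T]/(f)\cong (E_\p/\p E_\p)^{[1]}$, so $A$ is an $\mathbb{A}^2$-fibration over $E=k^{[m]}$; and by the first paragraph $A\neq E^{[2]}$, so this fibration is non-trivial. The bulk of the argument is thus bookkeeping: verifying $\pi\mid h$ and the two base-change isomorphisms associated with $k[Z,T]/(f)=k^{[1]}$, and checking that the standing hypotheses on $A$ (in particular separability of the $\lambda_i$) place us within the scope of \thref{main}. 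I do not expect a genuine obstacle here, since the hard inputs — the equivalences in \thref{main} and Proposition \thref{p1} — are already available; the one point needing a little care is ensuring that all the displayed isomorphisms are $R$-algebra (resp.\ $E_\p/\p E_\p$-algebra) isomorphisms, so that \thref{p1} and \thref{Fib2} apply verbatim.
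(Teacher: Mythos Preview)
Your proof is correct and follows essentially the same approach as the paper: use \thref{p1} for $A^{[1]}=k^{[m+3]}$, \thref{Fib2} for the $\mathbb{A}^2$-fibration structure, and the failure of (v) in \thref{main} to get $A\neq k^{[m+2]}$ and $A\neq E^{[2]}$. You have simply spelled out in more detail why the hypotheses of \thref{p1} and \thref{Fib2} are met (the verification that $\pi\mid h$ and the base-change of $k[Z,T]/(f)=k^{[1]}$), which the paper leaves implicit.
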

		\begin{proof}
			%%		Since $f$ is a line, by \cite[Theorem~3.7]{adv}, $A^{[1]}= k^{[m+3]}$ and since
			%%		$k[Z,T]\neq k[f]^{[1]}$, by \thref{main}, $A\neq k^{[m+2]}$.  
			Since $f$ is a line, by \thref{p1}, $A^{[1]}= k^{[m+3]}$ and  by \thref{Fib2} $A$ is an $\A^2$-fibration over $k[\x]$.
			Now
			$k[Z,T]\neq k[f]^{[1]}$ therefore by \thref{main}((v)$\Leftrightarrow$(iv)), $A\neq k^{[m+2]}$. Hence $A$ is a counterexample to the ZCP.
			Again, by  \thref{main}((v)$\Leftrightarrow$(iii)), $A\not\cong k[\x]^{[2]}$ and 
			thus $A$ is a non-trivial 
			$\A^2$-fibration over $k[\x]$.
		\end{proof}

		%\begin{proof}
		%%		Since $f$ is a line, by \cite[Theorem~3.7]{adv}, $A^{[1]}= k^{[m+3]}$ and since
		%%		$k[Z,T]\neq k[f]^{[1]}$, by \thref{main}, $A\neq k^{[m+2]}$.  
		%Since $f$ is a line, by \thref{p1}, $A^{[1]}= k^{[m+3]}$ and since
		%$k[Z,T]\neq k[f]^{[1]}$, by \thref{main}, $A\neq k^{[m+2]}$. Hence $A$ is a counterexample to the ZCP.
		%Again, by  \thref{main}, $A\not\cong k[\x]^{[2]}$ and 
		%$A$ is an $\A^2$-fibration over $k[\x]$ by \thref{Fib2}. 
		%Thus $A$ is a non-trivial 
		%$\A^2$-fibration over $k[\x]$.
		%\end{proof}
		
		%	\begin{rem}	
			%		{\rm Let $k$ be a field of positive characteristic.
				%			Suppose that $f(Z,T)$ is a non-trivial line over $k[Z,T]$.
				%			Then $k[Z,T]\neq k[f]^{[1]}$ hence for all the affine domains $A$ satisfying the assumptions of \thref{main}, .
				%			Next since $k[Z,T]/(f)=k^{[1]}$,  by , $A$ is a non-trivial $\mathbb{A}^2$-fibration over $k[\x]$.}
			%	\end{rem}
		\begin{rem}
			{\rm	Let $k$ be a field of characteristic zero, ${\bf r}=(r_1,\dots,r_m)\in \Z_{>0}^m$ and $H:= \alpha(\X)Y -f(Z,T)-h(\X,Z,T)$ be such that $f\neq 0$, $\alpha$ is ${\bf r}$-divisible in the system of coordinates $\{\X\}$ in $k^{[m]}$ and every prime factor of $\alpha$ divides $h$ in $k[\X,Z,T]$. Suppose that $k[\X,Y,Z,T]/(H)=k^{[m+2]}$. If $r_i=1$ for some $i,\mi$, then $\alpha$ has a simple prime factor in $k[\X]$ and hence by \thref{mainch0}(I), $k[\X,Y,Z,T]=k[\X,H]^{[2]}$.}
		\end{rem}

		\bigskip
		
		\noindent
		{\bf Acknowledgements.}
%		The authors thank Professor S. M. Bhatwadekar for providing guidance which led to the proof of \thref{lem4}. 
		%and Professor A. K. Dutta for going through the draft and giving valuable suggestions.
		The second author acknowledges Department of Science and Technology (DST), India for their INDO-RUSS project (DST/INT/RUS/RSF/P-48/2021).

	\end{document}